\numberwithin{equation}{section} 
\theoremstyle{plain}
\def\CC{\mathbb{C}}
\def\FF{\mathbb{F}}
\def\NN{\mathbb{N}}
\def\QQ{\mathbb{Q}}
\def\ZZ{\mathbb{Z}} 
\def\A{{\rm A}}
\def\B{{\rm B}}
\def\D{{\rm D}}
\def\E{{\rm E}}
\def\F{{\rm F}}
\def\G{{\rm G}}
\def\H{{\rm H}}
\def\I{{\rm I}}
\def\J{{\rm J}}
\def\K{{\rm K}}
\def\L{{\rm L}}
\def\M{{\rm M}}
\def\N{{\rm N}}
\def\P{{\rm P}}
\def\R{{\rm R}}
\def\SS{{\rm S}}
\def\T{{\rm T}}
\def\U{{\rm U}}
\def\V{{\rm V}}
\def\W{{\rm W}}
\def\X{{\rm X}}
\def\Y{{\rm Y}}
\def\Z{{\rm Z}}
\def\Cc{\mathscr{C}}
\def\Dd{\mathscr{D}}
\def\Gg{\mathscr{G}}
\def\Hh{\mathscr{H}}
\def\Ii{\mathscr{I}}
\def\Ll{\mathscr{L}}
\def\Oo{\mathscr{O}}
\def\Rr{\mathscr{R}}
\def\Ss{\mathscr{S}}
\def\Uu{\mathscr{U}}
\def\Zz{\mathscr{Z}}
\def\a{\alpha} 
\def\b{\beta}
\def\e{o}
\def\ee{e}
\def\g{\gamma}
\def\k{\kappa}
\def\l{\lambda}
\def\p{\mathfrak{p}}
\def\q{\star}
\def\s{\sigma}
\def\t{\theta}
\def\Om{\Omega}
\def\ie{c'est-à-dire }
\def\rp{\rangle}
\def\>{\geqslant}
\def\<{\leqslant}
\def\Hom{{\rm Hom}}
\def\End{{\rm End}}
\def\Aut{{\rm Aut}}
\def\Mat{\mathscr{M}}
\def\GL{{\rm GL}}
\def\Gal{{\rm Gal}}
\def\ind{{\rm ind}}
\def\mult#1{{#1}^{\times}}
\def\ffr#1{\smash{\mathop{\longrightarrow}\limits^{#1}}}
\def\Cc{\EuScript{C}}
\def\Hh{\EuScript{H}}
\def\Oo{\EuScript{O}}
\def\Ss{\EuScript{S}}
\def\GB{{\bar{\G}}}
\def\MB{{\bar{\M}}}
\def\PB{{\bar{\P}}}
\def\NB{{\bar{\N}}}
\def\IA{\ip}
\def\GA{\GB}
\def\MA{\MB}
\def\PA{\PB}
\def\NA{\NB}
\def\ff{k}
\def\kk{k}
\def\ll{\mathfrak{l}}
\def\vv{\mathfrak{v}}
\def\mm{\mathfrak{m}}
\def\nn{\mathfrak{n}}
\def\ss{\mathfrak{s}}
\def\tt{\mathfrak{t}}
\def\ap{{\rm ap}}
\def\sc{{\rm sc}}
\def\ip{\boldsymbol{i}}
\def\te{j}
\def\rp{\boldsymbol{r}}
\def\st{{\rm st}}
\def\St{{\rm St}}
\def\CR{{\rm R}}
\def\qlb{\overline{\QQ}_{\ell}}
\def\zlb{\overline{\ZZ}_{\ell}}
\def\flb{\overline{\FF}_{\ell}}
\def\supp{{\rm supp}}
\def\cusp{{\rm cusp}}
\def\scusp{{\rm scusp}}
\def\seg{{\rm Seg}}
\def\MS{{\rm Mult}}
\def\Irr{{\rm Irr}}
\def\Rnd{{\rm Rnd}}
\def\Div{\ZZ}
\def\Dive{\NN}
\def\Iw{\I}
\def\r{{\bf r}}
\def\z{{z}}
\def\m{\mm}
\def\({\left(}
\def\){\right)}
\def\KM{\textbf{\textsf{K}}}
\def\KMS{\textbf{\textsf{S}}}
\def\vr{\varrho}
\def\qr{q(\rho)}
\def\om{\omega}
\def\Fr{\phi}
\def\sy#1{\boldsymbol{[}#1\boldsymbol{]}}
\def\widetild#1{#1^{\vee}}
\def\ov#1{{#1}^{-}}
\newcounter{nonum}
\newtheorem{defin}[nonum]{Définition}
\newcounter{introlet}
\newtheorem{theon}[introlet]{Théorème}
\newcounter{intronum}
\def\theintronum{\arabic{intronum}}
\newcounter{notanum}
\def\thenotanum{\arabic{notanum}}
\newenvironment{notasec}{\refstepcounter{notanum}
\noindent{\bf\thenotanum.}}{\medskip}
\author{Alberto M\'\i nguez}
\address{Institut de Mathématiques de Jussieu, Université Paris 6, 
4 place Jussieu, 75005, Paris, France. 
URL: {\rm http://www.math.jussieu.fr/$\sim$minguez/}} 
\email{minguez@math.jussieu.fr}
\author{Vincent Sécherre}
\address{Université de Versailles Saint-Quentin-en-Yvelines\\
Laboratoire de Mathémati\-ques de Versailles\\
45 avenue des Etats-Unis\\
78035 Versailles cedex, France}
\email{vincent.secherre@math.uvsq.fr}
\title{Représentations lisses modulo $\ell$ de $\GL_{m}(\D)$}
\thanks{
Ce travail a bénéficié de financements de l'EPSRC 
(GR/T21714/01, EP/G001480/1) 
et de l'Agence Natio\-nale de la Recherche 
(ANR-08-BLAN-0259-01, ANR-10-BLANC-0114).
Le premier auteur est aussi financé en partie par 
FEDER, MTM2007-66929 et MTM2010-19298}
\begin{document}

\maketitle
\setcounter{tocdepth}{2}
\tableofcontents

\section*{Introduction}

Soit $\F$ un corps commutatif localement compact non archimédien de 
ca\-rac\-téristique ré\-si\-duelle $p$ et soit $\D$ une algèbre à division 
centrale de dimension finie sur $\F$ dont le degré ré\-duit est noté $d$.  
Pour tout entier $m\>1$, on pose $\G_{m}=\GL_{m}(\D)$, qui est une forme 
intérieure de $\GL_{md}(\F)$. 
Les représentations lisses irréductibles complexes de 
$\GL_{md}(\F)$ ont été 
classées par Zelevinski \cite{Ze2} en  termes de paramètres appelés
\textit{multisegments}.
Dans le cas où $\F$ est de caractéristique nulle, 
Tadi\'c \cite{Tadic} a donné une classification des représen\-ta\-tions lisses 
irréductibles complexes de $\G_m$ en termes de multi\-segments.
La méthode qu'il utilise repose sur les résultats 
de \cite{DKV} (eux-mêmes reposant sur la formule des traces) 
et en particulier sur la correspondance de Jacquet-Langlands locale et la 
classification des représentations tempérées en fonction de la série 
discrète ({\it ibid.}, théorème B.2.d). 
Dans \cite{Bad}, 
Badu\-lescu étend  ces deux résultats au cas où 
$\F$ est de caractéristique $p$, et on trouve dans \cite{BHLS} la 
classification des représentations lisses irréductibles 
complexes de $\G_m$ sans restriction sur la caractéristique de $\F$. 

Dans cet article, on s'intéresse au problème de la classification des 
représentations lisses irré\-ductibles de $\G_m$ à coefficients dans un corps 
$\CR$ algébriquement clos de carac\-té\-ristique différente de $p$.
Dans le cas où cette carac\-téristique est un nombre premier $\ell$, 
ces représentations seront dites modulaires. 
L'intérêt de comprendre et classer les représenta\-tions modulaires 
pro\-vient de l'étude des congruen\-ces de formes automorphes. 

La théo\-rie des représentations mo\-du\-lai\-res des groupes 
réductifs $p$-adiques a été dé\-ve\-lop\-pée par Vignéras dans 
\cite{Vig1,Vig2}.
En particulier, les représentations modulaires 
du groupe $\GL_n(\F)$ y sont étudiées en détail. 
Comparée à la théorie complexe,
la théo\-rie mo\-du\-lai\-re présente de grandes simi\-larités, 
mais aussi des différences importantes, à la fois dans les résultats 
et les méthodes. 
Les représentations modulaires d'un groupe compact 
ne sont pas toujours semi-simples. 
Le fait que $\ell$ soit différent de $p$ équivaut à 
l'exis\-tence d'une mesure de Haar à valeurs 
dans $\CR$ sur le groupe $p$-adique, 
mais la mesure d'un sous-groupe ouvert compact peut être nulle.  
Il faut dis\-tin\-guer dans le cas modulaire 
entre les deux notions de représentation irréductible cuspidale 
(\ie dont tous les modules de Jacquet relativement à un sous-groupe 
parabolique propre sont nuls) et super\-cus\-pi\-da\-le (\ie qui 
n'est sous-quotient d'aucune induite pa\-ra\-bo\-li\-que d'une 
re\-pré\-sen\-ta\-tion irréductible d'un sous-groupe de Levi propre). 

Si l'on essaie d'étendre aux représentations modulaires du groupe non déployé 
$\G_m$ les techni\-ques employées par 
Zelevinski \cite{Ze2}, Tadi\'c \cite{Tadic} et Vignéras \cite{Vig1,Vig2}, 
on est confronté aux pro\-blè\-mes suivants.
Il n'y a pas de version modulaire de la formule des traces et du théorème de
Paley-Wie\-ner (qui servent à prouver que l'induite
normalisée d'une représentation de carré inté\-gra\-ble d'un sous-groupe 
de Levi est irréductible et à établir la 
correspondance de Jacquet-Lang\-lands).
Il n'y a pas non plus de version modulaire du 
théorème du quo\-tient de Langlands, 
qui permet dans \cite{Tadic} de décrire les re\-pré\-senta\-tions 
irréductibles en fonction des tempérées. 
Les foncteurs de Jacquet ne suffisent pas à déterminer les 
repré\-sen\-tations, \ie qu'il y a des repré\-sen\-tations irréductibles non 
isomorphes d'un même groupe $\G_m$ dont tous les modules de Jacquet propres 
sont isomorphes.
La différence entre représentations cuspidales et super\-cus\-pi\-dales 
joue également un rôle important. 
On a naturellement une notion de support super\-cus\-pi\-dal 
(voir le \S\ref{defsuper}) mais on ignore en général si une repré\-sen\-tation 
irréductible modulaire d'un groupe réductif $p$-adique possède un unique 
support supercuspidal 
(contrairement à ce qui se passe pour le support cuspidal~: voir le théorème 
\ref{UniSuppCusp}). 

L'un des principaux résultats de cet article est la preuve de l'unicité du 
support su\-percuspidal pour les représentations irréductibles de $\G_m$
(voir plus bas dans cette introduction et le théorème \ref{unicitesupp}). 

\begin{theon}
\label{UniSCintro}
Toute représentation irréductible de $\G_m$ a un unique support 
su\-per\-cuspidal.  
\end{theon}

À ceci s'ajoutent les problèmes suivants, spécifiques au cas où 
$\D$ est non commutative~:
les représentations cuspidales de $\G_m$ n'ont pas de 
modèle de Whittaker et il n'y a pas de théorie des dérivées pour les 
représentations irréductibles de $\G_m$, dont l'usage est crucial dans 
\cite{Vig2}~;~la réduction modulo $\ell$ d'une $\qlb$-représentation 
irréductible cuspidale entière de $\G_m$ n'est pas tou\-jours irréductible 
et il y a des $\flb$-représentations irréductibles cuspidales 
non supercuspidales qui ne se relèvent pas en des $\qlb$-représentations
(voir \cite{MS11}). 

L'un des principaux outils employés dans cet article est une version modulaire 
de la théorie des types de Bush\-nell et Kutzko pour $\G_m$, qui a été 
développée dans un précédent article \cite{MS11}.
Elle permet de comparer la théorie des représentations 
lisses de $\G_m$ à celle de certaines algèbres de Hecke affines. 
L'un des principaux résultats de \cite{MS11} est la définition, 
pour toute représentation irréductible cuspidale $\rho$ de $\G_{m}$, 
d'un caractère non ramifié $\nu_{\rho}$ de ce groupe 
possédant la propriété suivante~: si $\rho'$ est une 
représentation irréductible cuspidale de $\G_{m'}$, avec $m'\>1$, 
alors l'induite parabolique normalisée $\rho\times\rho'$ (voir \eqref{VentreDieu})
est réductible si et seulement si 
$m'=m$ et si $\rho'$ est iso\-morphe à $\rho\nu_{\rho}^{}$ ou à $\rho\nu_{\rho}^{-1}$. 
Ceci est le point de départ de la classi\-fi\-ca\-tion des représentations
irréductibles puisque, 
une fois défini ce caractère, on peut définir la notion de 
segment (voir la définition \ref{DefSeg11}).

\begin{defin}
Un {\it segment} est une suite finie de la forme~:
\begin{equation*}
[a,b]_\rho=(\rho\nu_{\rho}^a,\rho\nu_{\rho}^{a+1},\dots,\rho\nu_{\rho}^{b}),
\end{equation*} 
où $a,b\in\ZZ$ sont des entiers tels que $a\<b$ et où $\rho$ est une 
re\-pré\-sen\-ta\-tion irréductible cuspidale de $\G_m$.
\end{defin}

Grâce à la propriété de quasi-pro\-jectivité des types 
construits dans \cite{MS11}, il existe une bijection explicite entre 
les représentations irréductibles dont le support cuspidal est 
inertiel\-lement équivalent à $\rho\otimes\dots\otimes\rho$ 
(où $\rho$ apparaît $n$ fois) et les 
modules simples sur une certaine algèbre de Hecke affine $\Hh(n,q(\rho))$
de type $\A_{n-1}$ et de paramètre $q(\rho)$, 
une puissance de $p$ associée à $\rho$. 
Par ce biais, on associe à un segment 
$\Delta=[a,b]_{\rho}$ de longueur $n=b-a+1$ deux repré\-sen\-tations 
irréductibles~: 
\begin{equation*}
\Z(\Delta) \text{ et } \L(\Delta)
\end{equation*}
de $\G_{mn}$, respectivement sous-représentation et quotient de l'induite 
$\rho\nu_{\rho}^{a}\times\rho\nu_{\rho}^{a+1}\times\dots\times\rho\nu_{\rho}^{b}$
et correspondant respectivement au caractère trivial et au caractère signe 
de $\Hh(n,q(\rho))$. 
Tant que $\qr$ n'est pas congru à $1$ modulo $\ell$, il est possible de 
définir les représentations $\Z(\Delta)$ et $\L(\Delta)$ sans passer par la 
théorie des types et les algèbres de Hecke affines 
(voir la proposition \ref{DefAltRec}), mais cette approche est nécessaire si l'on 
veut inclure le cas où $\qr$ est congru à $1$ modulo $\ell$. 
Dans le cas où $\CR$ est le corps des nombres complexes, $\Z(\Delta)$ est une 
représentation de Speh généralisée et $\L(\Delta)$ une représentation de 
Steinberg généralisée,
c'est à dire une représentation essentiellement de carré intégrable.
Ces représentations jouis\-sent d'un certain nombre de propriétés qui sont 
établies dans la section \ref{segments}. 
Elles permettent de prouver le résultat suivant, 
qui prouve le bien-fondé de notre définition des segments liés
(voir la définition \ref{Rotis}, ainsi que le théo\-rè\-me \ref{nuevo2}). 

\begin{theon}
Soit $r\>1$ et soient $\Delta_1,\dots,\Delta_r$ des segments. 
Les conditions suivantes sont équi\-valentes~:
\begin{enumerate}
\item 
Pour tous $i,j\in\{1,\dots,r\}$ tels que $i\neq j$, 
les segments $\Delta_i$ et $\Delta_j$ sont non liés.
\item 
L'induite $\Z(\Delta_1)\times \dots \times \Z( \Delta_r)$ est irréductible.
\item 
L'induite $\L(\Delta_1)\times \dots \times \L( \Delta_r)$ est irréductible.
\end{enumerate}
\end{theon}

Ce théorème est une version purement algébrique, valable pour des représentations 
modulaires, du ré\-sul\-tat selon lequel l'induite parabolique 
normalisée d'une représentation complexe de carré intégrable est irréduc\-tible.  

Un des principaux résultats de \cite{MS11} est la construction de foncteurs $\KM$
permettant de faire un lien entre représenta\-tions de $\G_m$ et 
re\-pré\-sentations des grou\-pes linéaires $\GL$ sur une extension finie 
$\kk$ du corps résiduel de $\F$.  
Ils constituent un outil technique important et peuvent être vus comme la 
généralisation du foncteur associant à une représentation lisse 
de $\G_m$ la représentation de $\GL_{m}(\kk_\D)$ 
(où $\kk_\D$ est le corps résiduel de $\D$)
sur l'espace de ses invariants sous le radical pro-unipotent du 
sous-groupe compact maximal $\GL_{m}(\Oo_\D)$, où $\Oo_\D$ est l'anneau des 
entiers de $\D$.

Étant donnés un entier $f\>1$ et une extension finie 
$\kk$ du corps résiduel de $\F$,
les représentations irréductibles modulaires de $\GL_f(\kk)$
ont été étudiées et classées par Dipper et James \cite{James,DJ1},
et une théorie des dérivées a été développée par Vignéras dans \cite{Vig1}. 
On notera en particulier les résultats suivants (voir la section \ref{ApA})~:
\begin{enumerate}
\item 
toute représentation irré\-duc\-tible de $\GL_f(\kk)$
possède un unique support supercus\-pidal~;
\item
on a une notion de représentation irréductible non dégénérée
de $\GL_f(\kk)$~; 
\item
on a une classification des représentations irréductibles 
cuspidales de $\GL_f(\kk)$ en fonction des représentations 
irréductibles supercuspidales de $\GL_{f'}(\kk)$ pour $f'$ divisant $f$.
\end{enumerate}
Grâce aux foncteurs évoqués plus haut, ces trois assertions 
vont permet\-tre de prouver plusieurs résultats importants 
qui aboutissent à l'unicité du support supercuspidal (théorème 
\ref{UniSCintro} ci-dessus).  
D'abord (1) permet de prouver l'uni\-cité du support supercuspidal à 
inertie près (voir la proposition \ref{KOLA}). 
Ensuite (2) permet d'associer 
à tout entier $n\>1$ et à toute représentation irréductible 
cuspidale $\rho$ de $\G_m$ une repré\-sen\-tation irréductible~: 
\begin{equation*}
\St(\rho,n)
\end{equation*}
définie comme l'unique sous-quotient irréductible de l'induite 
$\rho\times\rho\nu_{\rho}^{}\times\dots\times\rho\nu_{\rho}^{n-1}$ 
dont l'image par un foncteur $\KM$ convenable, défini à partir 
de $\rho$, 
contienne une certaine représentation irréductible non dégé\-né\-rée. 
Dans le cas où $\CR$ est le corps des nombres complexes, 
$\St(\rho,n)$ est la représentation de Steinberg 
généralisée $\L([0,n-1]_\rho)$, mais dans le cas modulaire ces deux 
représentations dif\-fè\-rent dès que $n$ est assez grand 
(voir la remarque \ref{L=St}). 
Enfin (3) permet de prouver le résultat suivant,
qui fournit une classification des re\-pré\-sen\-tations 
irréductibles cuspidales en fonction des représentations 
supercuspida\-les, dans le cas où le corps $\CR$ est de 
caractéristique non nulle $\ell$
(voir le théorème \ref{AppCuspSuper}).

\begin{theon}
\begin{enumerate}
\item 
Étant donnée une représentation irréductible cuspidale $\rho$, 
il existe un entier $e(\rho)$ tel que 
$\St(\rho,n)$ soit cuspidale si et seulement si 
$n=1$ ou $n=e(\rho)\ell^r$ avec $r\>0$. 
\item
Pour toute représentation irréductible cuspidale non supercuspidale
$\pi$, il y a une repré\-sen\-tation 
irréductible supercuspidale $\rho$ et un unique $r\>0$ 
tels que $\pi$ soit iso\-morphe à la repré\-sen\-tation 
$\St_r(\rho)=\St(\rho,e(\rho)\ell^r)$.
\item
Si $\rho'$ est une représentation irréductible supercuspidale telle que 
les représentations 
$\St_r(\rho')$ et $\St_r(\rho)$ soient isomorphes, 
alors il existe $i\in\ZZ$ tel que $\rho'$ soit isomorphe à 
$\rho\nu_{\rho}^i$. 
\end{enumerate}
\end{theon}

La dernière étape de la preuve de l'unicité du support supercuspidal 
est la proposition cruciale \ref{ST}, 
qui contrôle l'apparition de facteurs cuspidaux dans des induites 
para\-boli\-ques.  
On est alors en mesure de prouver le théorème \ref{unicitesupp}. 

On arrive maintenant au problème de la classification des représentations 
irréductibles de $\G_m$. 
On a une notion naturelle d'équi\-valence entre segments
(voir la définition \ref{DefEquSeg}), ce qui permet d'introduire la
définition suivante. 

\begin{defin}
Un {\it multisegment} est une application $\m$ à support fini de l'ensemble 
des classes d'équivalence de segments à valeurs dans $\NN$, qu'on 
représente sous la forme d'une somme finie~:
\begin{equation*}
\m=\Delta_1+\dots+\Delta_r=[a_1,b_1]_{\rho_1}+\dots+[a_r,b_r]_{\rho_r},
\end{equation*}
où $\Delta_1,\dots,\Delta_r$ sont des segments. 
\end{defin}

Si l'on note $m_i$ l'entier tel que $\rho_i$ soit une représentation 
de $\G_{m_i}$, la somme des $(b_i-a_i+1)m_i$ est appelée le 
\textit{degré} de $\m$ et la somme formelle des classe d'équivalence 
des $\rho_{i}\nu_{\rho_i}^{j}$ pour $i\in\{1,\dots,r\}$ et 
$j\in\{a_i,\dots,b_i\}$ est appelée le \textit{support} de $\m$.

Un multisegment est dit \textit{supercuspidal} si toutes les 
représentations $\rho_1,\dots,\rho_r$ sont supercuspidales et 
\textit{apériodique} si, pour tout entier $n\>0$ et 
toute représentation irréductible cuspidale $\rho$, il existe 
$a\in\ZZ$ tel que la classe d'équivalence du segment 
$[a,a+n]_{\rho}$ n'apparaisse pas dans $\m$.
Ces deux sortes de multisegments vont permettre une classification des 
représentations irréductibles en fonction de leurs supports supercuspidal 
et cuspidal res\-pec\-tivement. 
Ces deux sortes de multisegments se correspondent bijectivement~:
grâce à la classification des représentations cuspidales en fonction des 
supercuspidales, on définit une application~:
\begin{equation*}
\m\mapsto\m_\sc
\end{equation*}
associant à un multisegment un multisegment supercuspidal. 
On vérifie (voir le lemme \ref{unimap}) 
qu'il existe un unique multisegment apériodique $\mathfrak{a}$ 
tel que $\mathfrak{a}_\sc=\m_\sc$~; on le note $\m_\ap$. 

La définition de $\St(\rho,n)$ et la preuve de la proposition \ref{ST} 
reposent toutes deux sur une idée commune donnant lieu à la notion de 
représentation irréductible \textit{résiduellement non dégénérée}
(voir le \S\ref{ModWhitResRND}). 
Dans le cas où $\D=\F$, cette notion coïncide avec celle de 
représentation irréductible non dégénérée de $\GL_n(\F)$ définie par 
Vignéras (voir le corollaire \ref{coroSt}). 
Grossièrement, il s'agit, par l'intermédiaire des foncteurs $\KM$ 
introduits ci-dessus,
de transporter aux représentations de $\G_m$ la notion de 
représentation non dégénérée qui existe pour les représentations de 
$\GL$ sur un corps fini de caractéristique $p$.
Cette idée culmine dans la définition suivante~: 
à tout multi\-seg\-ment $\m$ on fait correspondre un sous-groupe de Levi 
standard $\M_{\m}$ de $\G$ (dont les tailles des blocs sont donnés 
par les degrés des segments apparaissant dans $\m$)
puis une représentation irré\-duc\-tible~:
\begin{equation*}
\Sigma(\m)
\end{equation*}
de $\M_\m$ (notée $\St_{\overline{\mu}_\m}(\m)$ au paragraphe
\ref{ModWhitRes}) 
définie comme l'unique sous-quotient irré\-duc\-tible résiduellement non 
dégénéré du module de Jacquet de 
$\Z(\Delta_1)\times\dots\times\Z(\Delta_r)$
relatif au sous-groupe parabolique standard $\P_\m$ 
de facteur de Levi $\M_\m$. 
Cette induite possède un unique sous-quotient irréductible~:
\begin{equation*}
\Z(\m)
\end{equation*}
dont le module de Jacquet relatif à $\P_\m$ possède un sous-quotient 
isomorphe à $\Sigma(\m)$. 
L'intérêt d'introduire la représentation $\Sigma(\m)$ est qu'il n'est pas 
difficile de montrer qu'elle ne dépend que de $\m_\sc$, 
et que l'application $\m\mapsto\Sigma(\m)$ est injective
sur l'ensemble des multisegments super\-cus\-pi\-daux 
(voir le \S\ref{OccamG}).
Il s'ensuit que la restriction $\Z_\sc$ de $\m\mapsto\Z(\m)$ à l'ensemble 
des multisegments super\-cuspidaux est injective 
(voir le théorème \ref{inje}). 

La preuve de la surjectivité de l'application 
$\Z_\sc$ et le calcul des supports cuspidal et
super\-cuspidal de $\Z(\m)$ pour un multisegment $\m$ quelconque 
sont plus difficiles.
Nous traitons tous ces problème 
en même temps dans un raisonnement par récurrence sur le degré de $\m$
(voir les propositions \ref{bijj} et \ref{cuspZ}).
C'est là que nous utilisons de façon cruciale un argument de comptage
(voir le lemme \ref{FinaleAriki})
reposant sur la classification des modules irréductibles sur une algèbre de 
Hecke affine en une racine de l'unité (Ariki \cite{Ariki,ArikiBook}, 
Chriss-Ginzburg \cite{CG}). 
De façon précise, cet argument
(voir le \S\ref{BlaACGM}) permet de conclure que 
l'application injective \eqref{INJineg} est bijective.
Nous obtenons finalement le théorème de classi\-fi\-cation 
(voir le théorème \ref{bijj2thm}). 

\begin{theon}
\begin{enumerate}
\item 
L'application $\m\mapsto\Z(\m)$ induit une surjection de l'ensemble 
des multisegments de degré $m$ sur l'ensemble des représentations 
irréductibles de $\G_m$.
\item
Étant donnés deux multisegments $\m,\m'$, 
les représentations $\Z(\m),\Z(\m')$ sont isomorphes si et seulement si 
$\m_\sc^{}=\m'_\sc$ (ou, de façon équivalente, 
si $\m_\ap^{}=\m'_\ap$).
\item
Pour tout multisegment $\m$, le support cuspidal de $\Z(\m)$ est égal 
au support de $\m_\ap$ et son support supercuspidal est égal au support 
de $\m_\sc$. 
\end{enumerate}
\end{theon}

Enfin, dans le paragraphe \ref{rrre}, nous étudions le problème de 
la réduction mod $\ell$ des $\qlb$-repré\-sen\-tations irréductibles 
entières de $\G_m$. 
Nous prouvons (voir le théorème \ref{reductionsegment2}) 
que le morphisme de réduction mod $\ell$ est surjectif, \ie que 
toute $\flb$-représentation irréductible de $\G_m$ est la réduction 
mod $\ell$ d'une $\qlb$-représentation virtuelle entière de longueur 
finie. 

Nous insistons sur le fait que ce travail fournit une classification des 
représentations irré\-duc\-tibles de $\G_m$ ne reposant pas
sur les résultats antérieurs de Zelevinski, Tadi\'c et Vignéras. 
Par contre, il s'appuie sur la classifications des représentations 
irréductibles modulaires de $\GL_n$ sur un corps fini de caractéristique 
$p$ (Dipper-James) et sur la classification des modules irréductibles sur une 
algèbre de Hecke affine en une racine de l'unité (Ariki, Chriss-Ginzburg).  

Dans la section \ref{JeNeSersARien}, on établit des résultats généraux concernant 
l'induction parabolique dans un groupe réductif $p$-adique quelconque~: 
on prouve que la semi-simplifiée de l'induite para\-bo\-lique d'une représentation 
irréductible d'un sous-groupe de Levi ne dépend pas du sous-groupe parabolique 
choisi (proposition \ref{commu}), et on prouve au paragraphe \ref{MoiNonPlus} 
des critères d'irréductibilité et d'unicité d'un quotient irréductible d'une induite 
parabolique. 

Les sections \ref{ApA}, \ref{ApB} et \ref{InvRho} ne contiennent pas de résultat 
nouveau.  
Dans la section \ref{ApA}, on résume la théorie des représentations 
modulaires de $\GL_n$ sur un corps fini de caractéristique $p$. 
Dans la section \ref{ApB}, on résume la classification des modules simples 
sur une $\CR$-algèbre de Hecke affine de type A en termes de multisegments 
apériodiques. 
Dans la section \ref{InvRho}, on résume les résultats et les outils de théorie des 
types obtenus dans \cite{MS11} dont nous aurons besoin. 

Dans la section \ref{ClassiPot}, on prouve l'unicité du support supercuspidal à inertie 
près et on classe les représentations cuspidales de $\G_m$ en fonction des 
représentations supercuspidales de $\G_{m'}$ avec $m'$ divisant $m$. 
Dans la section \ref{segments}, on définit la notion de segment et on associe à tout 
seg\-ment deux repré\-sen\-tations irréductibles dont on étudie les propriétés. 
Dans la section \ref{Sec9}, on définit la notion de représentation résiduellement 
non dégénérée de $\G_m$. 
Dans la section \ref{Sec10} enfin, on classe les représentations irréductibles de 
$\G_m$ en termes de multisegments apériodiques et supercuspidaux. 

\section*{Remerciements}

Nous remercions Jean-François Dat, Guy Henniart, 
Vanessa Miemietz, Shaun Stevens et Ma\-rie-France Vignéras
pour de nombreuses discussions à propos de ce travail.

Une partie de ce travail a été réalisée lors du séjour des auteurs à 
l'Erwin Schrödinger Institute en janvier-février 2009 et du second 
auteur à l'Institut Henri Poincaré de janvier à mars 2010~;
que ces deux institutions soient remerciées pour leur accueil 
et leur soutien financier. 
Une autre partie en a été réalisée lors de plusieurs 
séjours à l'Univer\-sity of East Anglia~: nous remercions 
celle-ci pour son accueil et Shaun Stevens 
pour ses nombreuses invitations. 

Alberto M\'\i nguez remercie le CNRS
pour les six mois de délégation dont il a bénéficié en 2011. 

Vincent Sécherre remercie 
l'Institut de Ma\-thé\-matiques de Luminy
et 
l'Uni\-ver\-sité de la Médi\-terranée Aix-Marseille~2,
où il était en poste durant la majeure partie de ce travail. 


\section*{Notations et conventions}

\begin{notasec}
\label{NotCorps}
On note 
$\NN$ l'ensemble des entiers naturels,
$\ZZ$ l'anneau des entiers relatifs et 
$\CC$ le corps des nombres complexes.
\end{notasec}

\begin{notasec}
\label{effec}
Si $\X$ est un ensemble, on note $\Div(\X)$ le groupe 
abélien libre de base $\X$ constitué des applications de 
$\X$ dans $\ZZ$ à 
support fini et $\Dive(\X)$ le sous-monoïde constitué des applications à 
valeurs dans $\NN$.
Étant donnés $f,g\in\Div(\X)$, on note $f\<g$ si $g-f\in\Dive(\X)$, 
ce qui définit une relation d'ordre partiel sur $\Div(\X)$.
\end{notasec}

\begin{notasec}
\textit{Dans cet article}, $\F$ est un corps commutatif localement compact non 
archi\-médien de carac\-téristique résiduelle notée $p$
et $\CR$ est un corps algébriquement clos de ca\-ractéristique dif\-fé\-rente de $p$. 
\end{notasec}

\begin{notasec}
Toutes les $\F$-algèbres sont supposées unitaires et de dimension finie.
Par $\F$-\emph{al\-gèbre à di\-vi\-sion} on entend $\F$-algèbre 
centrale dont l'anneau sous-jacent est un corps, pas néces\-sai\-rement
com\-mu\-tatif. 
Si $\K$ est une extension finie de $\F$, 
ou une algèbre à division sur une extension finie de $\F$, 
on note $\Oo_\K$ son anneau d'entiers, $\p_\K$ son idéal maximal, 
$\kk_\K$ son corps résiduel et $q_\K$ le cardinal de $\kk_\K$.
En particulier, on pose $q=q_\F$ une fois pour toutes. 
\end{notasec}

\begin{notasec}
Une $\CR$-{\it re\-pré\-sen\-ta\-tion lisse} d'un groupe topologique $\G$ 
est la donnée d'un $\R$-espace vectoriel $\V$ et d'un homomorphisme 
de $\G$ dans $\Aut_{\CR}(\V)$ tel que le stabilisateur dans $\G$ 
de tout vecteur de $\V$ soit ouvert. 
{\it Dans cet article, toutes les représentations sont des 
  $\CR$-re\-pré\-sen\-ta\-tions lisses.} 

Une représentation de $\G$ sur un $\CR$-espace vectoriel $\V$ est 
{\it ad\-mis\-si\-ble} si, pour tout sous-groupe ouvert $\H$ de $\G$, 
l'espace $\V^{\H}$ de ses vec\-teurs $\H$-invariants est de dimension 
finie. 

Un $\CR$-{\it caractère} de $\G$ est un homo\-mor\-phis\-me de $\G$ dans 
$\mult\CR$ de noyau ouvert.  

Si $\pi$ est une $\CR$-représentation de $\G$, on désigne par $\pi^{\vee}$ 
sa contragrédiente. 
Si en outre $\chi$ est un $\CR$-caractère de $\G$, on note $\chi\pi$ ou 
$\pi\chi$ la représentation tordue $g\mapsto\chi(g)\pi(g)$.

Si aucune confusion n'est à craindre, on écrira \textit{caractère} et 
\textit{re\-présentation} plutôt que $\CR$-carac\-tè\-re et 
$\CR$-repré\-sen\-ta\-tion. 
\end{notasec}

\begin{notasec}
\label{+++}
On désigne par $\Rr_{\CR}(\G)$ la catégorie des $\CR$-re\-pré\-sen\-ta\-tions 
lisses de $\G$, 
par $\Irr_{\CR}(\G)$ l'ensemble des classes d'isomorphisme de ses
re\-pré\-sen\-ta\-tions irréductibles et par $\Gg_{\CR}(\G)$ le groupe 
de Gro\-then\-dieck de ses re\-pré\-sen\-ta\-tions de longueur finie. 
On omet\-tra souvent $\CR$ dans les notations. 

Si $\s$ est une représentation de longueur finie de $\G$, on désigne par 
$\sy{\s}$ son image dans $\Gg(\G)$.
En particulier, si $\s$ est irréductible, $\sy{\s}$ désigne 
sa classe d'isomorphisme. 
Lorsqu'aucune confusion ne sera possible, il nous arrivera d'identifier 
une représentation avec sa classe d'isomorphisme.

Le $\ZZ$-module $\Gg(\G)$ s'identifie à $\Div(\Irr(\G))$, 
qui est muni d'une relation d'ordre notée $\<$,
tandis que $\Dive(\Irr(\G))$ est l'ensemble des 
$\sy{\s}$ où $\s$ décrit les re\-pré\-sen\-ta\-tions 
de longueur finie de $\G$.
\end{notasec}

\section{Préliminaires}
\label{SectionPreliminaire}

\subsection{Induction et restriction paraboliques}
\label{Gred}

On suppose dans tout ce paragraphe que $\G$ est le groupe des points 
sur $\F$ d'un groupe réduc\-tif connexe défini sur $\F$. 
On renvoie à \cite[II.2]{Vig1} pour plus de précisions.
Toute repré\-sen\-tation ir\-ré\-duc\-ti\-ble de $\G$ est 
admis\-si\-ble et admet un caractère central d'après \cite[II.2.8]{Vig1}. 

\subsubsection{}
\label{MenelasEtHermione}

\textit{On choisit une fois pour toutes une racine carrée de $q$ dans $\CR$}, 
notée $\sqrt{q}$.
Si $\P=\M\N$ est un sous-groupe parabolique de $\G$ muni d'une décomposition 
de Levi, son module $\delta_\P$ est un caractère de $\M$ dans $\mult\CC$ 
de la forme $m\mapsto q^{v_\P(m)}$ où $v_\P$ est un homomorphisme 
de groupes de $\M$ dans $\ZZ$.
On définit un $\CR$-ca\-rac\-tère~:
\begin{equation*}
m\mapsto(\sqrt{q})^{v_\P(m)}
\end{equation*}
qui est un caractère non ramifié de $\M$ 
dont le carré est le module de $\P$ à valeurs dans $\CR^\times$. 
On note $\rp_{\P}^{\G}$ le foncteur de restriction 
para\-bo\-lique nor\-ma\-li\-sé (relativement à ce caractère) 
de $\Rr(\G)$ dans $\Rr(\M)$ et $\ip_{\P}^{\G}$
son adjoint à droite, \ie le foncteur d'induction 
para\-bo\-li\-que nor\-ma\-li\-sé qui lui correspond.
Ces foncteurs sont exacts, et ils pré\-ser\-vent l'admissibilité et 
le fait d'être de longueur finie (voir \cite[II]{Vig1}, paragraphes 2.1, 3.8 
et 5.13).

\begin{rema}
Lorsque $p$ est impair, remplacer la racine carrée choisie plus haut par 
son opposée a pour effet de tordre les foncteurs 
$\ip_{\P}^{\G}$ et $\rp_{\P}^{\G}$ par un caractère non 
ra\-mi\-fié d'ordre $2$.
\end{rema} 

Soit $\P^-$ le sous-groupe parabolique de $\G$ opposé à $\P$ relativement à 
$\M$. 

\begin{prop}
Si $\pi$ et $\s$ sont des représentations admissibles de $\G$ et de $\M$ 
respectivement, 
on a un iso\-mor\-phisme de $\CR$-espaces vec\-to\-riels~:
\begin{equation*}
\Hom_{\G}(\ip^{\G}_{\P^{-}}(\s),\pi)\simeq
\Hom_{\M}(\s,\rp^{\G}_{{\P}}(\pi))
\end{equation*}
dit de seconde adjonc\-tion (voir \cite[II.3.8]{Vig1}).  
\end{prop}


\subsubsection{}
\label{Tellmarch}

On fixe un tore déployé maximal $\A$ de $\G$ et un sous-groupe 
parabolique mi\-ni\-mal $\P$ de $\G$ contenant 
$\A$. 
On note respectivement $\W=\W(\G,\A)$ et $\Phi=\Phi(\G,\A)$ le 
groupe de Weyl et l'en\-sem\-ble des racines réduites de $\G$ 
re\-la\-ti\-ve\-ment à $\A$.
Le choix de $\P$ dé\-ter\-mi\-ne une base $\SS$ de $\Phi$ ainsi 
qu'un ensemble $\Phi^+$ de racine positives dans $\Phi$.
Pour toute partie $\I\subseteq\SS$, on note $\P_\I$ le sous-groupe 
parabolique de $\G$ contenant $\P$ déterminé par $\I$ et $\M_\I$ le 
sous-groupe de Levi contenant $\A$ lui correspondant.
Pour $\I,\J\subseteq\SS$, on pose~:
\begin{equation*}
\Dd(\I,\J)=
\Dd(\M_\I,\M_\J)=
\{w\in\W\ |\ w^{-1}(\I)\subseteq\Phi^+\ 
\text{et}\ w(\J)\subseteq\Phi^+\}.
\end{equation*}
Étant donnée une représentation $\s$ de $\M_\J$, on a le lemme 
géo\-mé\-tri\-que de Bernstein et Zele\-vin\-ski~:
\begin{equation}
\label{lemmegeometrique}
\sy{\rp^{\G}_{\P_{\I}}(\ip_{\P_\J}^{\G}(\s))}=
\sum\limits_{w\in\Dd(\I,\J)}
\sy{\ip^{\M_\I}_{\M_\I^{}\cap\P_\J^{w^{-1}}}
(w\cdot\rp^{\M_\J}_{\M_\J^{}\cap\P_\I^{w}}(\s))},
\end{equation}
qui est une égalité dans le groupe de Grothendieck $\Gg(\M_\I)$,
et où $w\;\cdot$ désigne la con\-ju\-gai\-son par $w$.
On renvoie à \cite[2.8]{Dat2} 
pour plus de pré\-ci\-sions.

\subsubsection{}
\label{DefRepCusp}

Une représentation irréductible de $\G$ est 
{\it cus\-pi\-da\-le} 
si son image par $\rp^{\G}_{\P}$ est nulle pour tout sous-groupe parabolique 
propre $\P$ de $\G$, \ie si elle n'est isomorphe à aucun quotient (ou, de 
fa\c{c}on équivalente, à aucune sous-représentation) d'une induite 
parabolique d'une re\-pré\-sen\-ta\-tion d'un sous-groupe de Levi propre.  
Elle est {\it super\-cus\-pi\-da\-le} si elle n'est isomorphe à aucun 
sous-quo\-tient d'une induite parabolique d'une 
re\-pré\-sen\-ta\-tion irréductible d'un sous-groupe de Levi propre. 

On note $\Cc_{\CR}(\G)$ et $\Ss_{\CR}(\G)$ les ensembles 
formés respectivement des clas\-ses d'iso\-mor\-phis\-me 
de re\-pré\-sen\-ta\-tions ir\-ré\-duc\-ti\-bles cuspidales et 
supercuspidales de $\G$.

\subsection{Représentations entières}
\label{ROk}
\label{Antisthene}

Soit $\ell$ un nom\-bre premier différent de $p$. 
On note
$\QQ_{\ell}$ le corps des nombres $\ell$-adiques, 
$\ZZ_{\ell}$ son anneau d'entiers et 
$\FF_{\ell}$ le corps résiduel de $\ZZ_{\ell}$. 
On fixe une clôture algébrique 
$\overline{\QQ}_{\ell}$ de $\QQ_{\ell}$.
On note 
$\overline{\ZZ}_{\ell}$ son anneau d'entiers et 
$\overline{\FF}_{\ell}$ le corps résiduel de $\overline{\ZZ}_{\ell}$, 
qui est une clôture algébrique de $\FF_{\ell}$. 

On suppose, à l'exception du paragraphe \ref{DefRepEnt} où $\G$ 
est un groupe localement profini quelconque, que $\G$ 
est le groupe 
des points sur $\F$ d'un groupe réductif con\-ne\-xe défini sur $\F$.

\subsubsection{}
\label{DefRepEnt}

Soit $\G$ un groupe topologique localement profini. 
Une représentation de $\G$ sur un $\qlb$-espace vectoriel $\V$ est dite 
{\it entière} si elle est ad\-mis\-si\-ble et si elle admet une 
{\it structure entière}, \ie un sous-$\zlb$-module de $\V$ stable par $\G$ et
engendré par une base de $\V$ sur $\qlb$ 
(\cite{Vig1,Vig6}). 

\subsubsection{}
\label{Semiotique}

On suppose que $\G$ est le groupe 
des points sur $\F$ d'un groupe réduc\-tif con\-ne\-xe défini sur $\F$.
Soit $\pi$ une représentation irréductible entière de $\G$ sur un 
$\qlb$-es\-pa\-ce vectoriel $\V$. 
D'après \cite[Theorem 1]{Vig7} et \cite[II.5.11]{Vig1}, on a les propriétés 
suivantes~:
\begin{enumerate}
\item 
toutes les struc\-tures en\-tières de $\pi$ sont de type fini comme 
$\zlb\G$-modules~; 
\item
si $\vv$ est une struc\-ture en\-tière de $\pi$, la
re\-pré\-sen\-ta\-tion de $\G$ sur $\vv\otimes\flb$ est de lon\-gueur finie~; 
\item
la semi-sim\-pli\-fiée de $\vv\otimes\flb$, qu'on note $\r_{\ell}(\pi)$
et qu'on appelle la {\it réduction modulo $\ell$} de $\pi$, ne dé\-pend pas du choix 
de $\vv$ mais seulement de la classe d'isomorphisme de $\pi$. 
\end{enumerate}
Par li\-néa\-rité, on en déduit un mor\-phis\-me de groupes~: 
\begin{equation}
\label{HomRes}
\r_{\ell}:\Gg_{\qlb}(\G)^{{\rm en}}\to\Gg_{\flb}^{}(\G),
\end{equation}
où $\Gg_{\qlb}(\G)^{{\rm en}} $ est le sous-groupe de $\Gg_{\qlb}(\G)$
engendré par les clas\-ses d'iso\-mor\-phis\-me de 
$\qlb$-re\-pré\-sen\-ta\-tions irréductibles entières de $\G$. 

\begin{rema}
Si $\H$ est un groupe profini, toute $\qlb$-re\-pré\-sen\-ta\-tion de 
dimension finie de $\H$ est en\-tiè\-re (\cite{Serre}, 
théorème 32), et on a un morphisme de réduction $\r_{\ell}$
analogue à (\ref{HomRes}).
\end{rema}

\subsubsection{}
\label{inds}

On fixe des racines carrées de $q$ dans $\qlb$ et $\flb$ de sorte que 
la seconde soit la réduction modulo $\ell$ de la première. 
Soit $\P=\M\U$ un sous-groupe parabolique de $\G$.
Si $\vv$ est une structure entière d'une $\qlb$-représentation entière
$\tilde\s$ de $\M$, 
le sous-espace $\ip^{\G}_{\P}(\vv)$ 
des fonc\-tions à valeurs dans $\vv$ est 
une structure entière de $\ip^{\G}_{\P}(\tilde\s)$ et il y a un isomorphisme
canonique de $\flb$-représentations de 
$\ip^{\G}_{\P}(\vv)\otimes\flb$ vers $\ip^{\G}_{\P}(\vv\otimes\flb)$. 
Pour plus de précisions, voir \cite[II.4.14]{Vig1}.

\subsubsection{}
\label{jacqs}

Le cas de la restriction parabolique est plus délicat. 
\textit{On suppose dans ce paragraphe uni\-quement que $\G$ est 
un groupe symplectique, orthogonal, unitaire ou une forme 
intérieure du groupe $\GL_n(\F)$, $n\>1$.}
Soit $\vv$ une structure entière d'une $\qlb$-représentation entière de 
longueur finie $\tilde\s$ de $\G$. 
D'après \cite[Proposition 6.7]{Dat2}, l'ima\-ge 
de $\vv$ par la projection 
de $\tilde\s$ sur $\rp^{\G}_{\P}(\tilde\s)$ est 
une structure entière de $\rp^{\G}_{\P}(\tilde\s)$ et on a 
$\r_{\ell}(\sy{\rp^\G_\P(\tilde\s)})=\sy{\rp^\G_\P(\r_{\ell}(\tilde\s))}$.

\subsection{Formes intérieures de $\GL_{n}(\F)$}
\label{Banalite}

\subsubsection{}

On fixe une $\F$-algèbre à division $\D$ 
de degré réduit noté $d$.
Pour tout $m\>1$, on dé\-si\-gne par $\Mat_{m}(\D)$ la 
$\F$-algèbre des matrices de taille $m\times m$ à coefficients dans 
$\D$ et par $\G_{m}=\GL_{m}(\D)$ le groupe de ses éléments 
in\-ver\-si\-bles. 
Il est commode de convenir que $\G_{0}$ est le groupe trivial. 

\subsubsection{}
\label{DefNu}

Soit $\N_{m}$ la norme réduite de $\Mat_{m}(\D)$ sur $\F$. 
On note $|\ |_{\F}$ 
la valeur absolue nor\-malisée de $\F$, \ie celle 
don\-nant à une uniformisante de $\F$ la valeur $q^{-1}$.
Puisque l'image de $q$ dans $\CR$ est in\-ver\-si\-ble, 
elle définit un $\CR$-caractère de $\mult\F$ noté $|\ |_{\F,\CR}$.
L'ap\-pli\-ca\-tion $g\mapsto|\N_{m}(g)|_{\F,\CR}$ est un 
$\CR$-ca\-rac\-tè\-re de $\G_{m}$, qu'on notera simplement 
$\nu$. 

\subsubsection{}
\label{UZero}

On note $\Irr$ la réunion disjointe des 
$\Irr(\G_{m})$, $m\>0$ et $\Gg$ la somme 
directe des $\Gg(\G_m)$, $m\>0$.
On identifie $\Gg$ à $\Div(\Irr)$. 
Si $\pi$ est une représentation de longueur finie de $\G_m$, 
on pose $\deg(\pi)=m$, qu'on ap\-pel\-le le {\it degré} de $\pi$,
et l'application $\deg$ fait de $\Gg$ un $\ZZ$-module gradué.

\subsubsection{}
\label{GeEm}

Si $\a=(m_{1},\ldots,m_{r})$ est une famille 
d'entiers $\>0$ de somme $m$, il lui correspond le 
sous-groupe de Levi standard $\M_{\a}$ de $\G_{m}$ constitué des matrices 
diagonales par blocs de tailles $m_{1},\ldots,m_{r}$ respectivement, que 
l'on identifie naturellement à 
$\G_{m_{1}}\times\cdots\times\G_{m_{r}}$. 
On note $\P_{\a}$ 
le sous-groupe para\-bo\-li\-que de $\G_{m}$ de facteur de Levi 
$\M_{\a}$ formé des matrices tri\-an\-gu\-lai\-res su\-pé\-rieures 
par blocs de tailles $m_{1},\ldots,m_{r}$ respectivement, et on note 
$\N_{\a}$ son radical unipotent.
Les foncteurs $\ip_{\P_{\a}}^{\G_{m}}$ et $\rp_{\P_{\a}}^{\G_{m}}$ sont simplement 
notés respectivement $\ip_{\a}$ et $\rp_{\a}$.
Si pour chaque entier $i\in\{1,\ldots,r\}$ on a une 
représentation $\pi_{i}$ de $\G_{m_i}$, on note~: 
\begin{equation}
\label{VentreDieu}
\pi_1\times\cdots\times\pi_r=\ip_{\a}(\pi_1\otimes\cdots\otimes\pi_r).
\end{equation}
Si $\pi_1,\dots,\pi_r$ sont de longueur finie, 
la quantité $\sy{\pi_1\times\cdots\times\pi_r}$
ne dépend que de $\sy{\pi_{1}},\dots,\sy{\pi_{r}}$.
L'ap\-pli\-cation~:
\begin{equation*}
(\sy{\pi_1},\dots,\sy{\pi_r})\mapsto\sy{\pi_1\times\cdots\times\pi_r}
\end{equation*}
induit par linéarité une application multilinéaire 
de $\Gg(\G_{m_1})\times\dots\times\Gg(\G_{m_r})$ dans 
$\Gg(\G_{m})$. 
Ceci fait de $\Gg$ une 
$\ZZ$-al\-gè\-bre asso\-cia\-tive gra\-duée, dont on verra à la proposition 
\ref{commuD} qu'elle est commutative. 

On note aussi ${\rp}_{\a}^{-}$ le foncteur de restriction 
para\-bo\-li\-que relativement au sous-groupe para\-bo\-li\-que opposé 
à $\P_{\a}$ relativement à $\M_\a$, 
\ie formé des matrices tri\-an\-gu\-lai\-res in\-fé\-rieures
par blocs de tailles $m_{1},\ldots,m_{r}$ respectivement. 

\section{Compléments sur l'induction parabolique}
\label{JeNeSersARien}

Dans toute cette section, on suppose que $\G$ est le groupe 
des points sur $\F$ d'un groupe ré\-duc\-tif connexe défini sur $\F$. 
Au paragraphe \ref{FCIP}, on suppose que $\G$ 
admet des sous-groupes discrets cocompacts.
Au paragraphe \ref{KafkaChateau}, on suppose que $\G=\GL_m(\D)$ avec 
$m\>1$. 

\subsection{Support cuspidal et équivalence inertielle}
\label{SCEI}

\subsubsection{}
\label{SuppCusp}

Une \textit{paire cuspidale} de $\G$ est un couple $(\M,\vr)$ formé 
d'un sous-groupe de Levi $\M$ de $\G$ et d'une représentation
irréductible cuspidale $\vr$ de $\M$.
Si $\pi$ est une re\-pré\-sen\-ta\-tion ir\-ré\-ductible de $\G$,
on note~: 
\begin{equation*}
\cusp(\pi)
\end{equation*}
son sup\-port cuspidal, \ie la clas\-se de
$\G$-con\-ju\-gai\-son d'une paire cuspidale $(\M,\vr)$ de $\G$ 
telle que $\pi$ soit isomorphe à une sous-re\-pré\-sen\-ta\-tion 
de $\ip^{\G}_{\P}(\vr)$ pour au moins un sous-groupe para\-bolique $\P$ de 
facteur de Levi $\M$. 
Pour la commodité du lecteur, on fournit ici une preuve de l'unicité 
du support cuspidal (comparer avec \cite[II.2.20]{Vig1}).

\begin{theo}
\label{UniSuppCusp}
Soit $\pi$ une re\-pré\-sen\-ta\-tion irréductible de $\G$. 
Il existe une paire cus\-pidale $(\M,\vr)$ de $\G$, unique 
à $\G$-con\-ju\-gai\-son près, et un sous-groupe parabolique $\P$ de 
facteur de Levi $\M$, tels que 
$\pi$ soit une sous-représen\-ta\-tion de 
$\ip^{\G}_{\P}(\vr)$.
\end{theo}

\begin{proof}
Soient $(\M,\vr)$ et $(\M',\vr')$ deux paires cuspidales de $\G$ et 
$\P$ et $\P'$ deux sous-groupes paraboliques de $\G$ de facteurs de 
Levi respectifs $\M$ et $\M'$ tels que $\pi$ soit iso\-mor\-phe à une 
sous-re\-pré\-sen\-ta\-tion de $\ip^{\G}_{\P}(\vr)$ et de 
$\ip^{\G}_{\P'}(\vr')$. 
Par adjonction, $\vr$ est un quo\-tient de $\rp_\P^\G(\pi)$ et, 
par exactitude
du foncteur de Jacquet, $\vr$ est un facteur de com\-po\-si\-tion 
irréductible de $\rp_\P^\G(\ip^{\G}_{\P'}(\vr'))$. 
On déduit du lemme géométrique 
\eqref{lemmegeometrique} que $\M'$ est conjugué à un sous-groupe de $\M$. 
De m\^eme, on montre que $\M$ est conjugué à un sous-groupe de $\M'$ et
donc que $\M$ et $\M'$ appartiennent à la m\^eme classe de $\G$-conjugaison, 
et on peut les supposer tous les deux standards (relativement au choix 
d'un tore déployé maximal de $\G$). 
Dans ce cas, d'après le lemme géométrique à nouveau, 
$\rp_\P^\G(\ip^{\G}_{\P'}(\vr'))$ est 
composée des $w\cdot\vr'$ où $w$ parcourt 
$\Dd(\M,\M')$. 
La représentation $\vr$ étant un sous-quotient irréductible de
$\rp_\P^\G(\ip^{\G}_{\P'}(\vr'))$, elle est donc isomorphe à un 
$w\cdot\vr'$, ce qui montre que $(\M,\vr)$ et $(\M',\vr')$ sont 
deux paires cuspidales conjuguées. 
\end{proof}

On a ainsi une ap\-pli\-ca\-tion~:
\begin{equation*}
\cusp:\Irr(\G)\to
\{\text{classes de $\G$-conjugaison de paires cuspidales de $\G$}\}
\end{equation*}
surjective et à fibres finies. 

En outre, pour toute représentation irréductible $\pi$ de $\G$, il existe 
une paire cuspidale $(\M',\vr')$ dans $\cusp(\pi)$ et un sous-groupe 
para\-bolique $\P'$ de facteur de Levi $\M'$ tels que $\pi$ soit un quotient 
de $\ip^\G_{\P'}(\vr')$.

\subsubsection{}
\label{defsuper} 

Une \textit{paire supercuspidale} de $\G$ est un couple $(\M,\vr)$ 
constitué d'un sous-groupe de Levi $\M$ de $\G$ et d'une 
représentation irréductible supercuspidale $\vr$ de $\M$.
Si $\pi$ est une re\-pré\-sen\-ta\-tion irréductible de $\G$,
il existe 
une paire super\-cuspidale $(\M,\vr)$ de $\G$ telle que $\pi$ soit un 
sous-quotient de $\ip^{\G}_{\P}(\vr)$ pour un sous-groupe para\-bo\-li\-que 
$\P$ de facteur de Levi $\M$,
et on conjecture qu'une telle paire est unique à $\G$-con\-ju\-gai\-son 
près (voir \cite[II.2.6]{Vig1}). 
Pour le cas du groupe $\GL_n(\F)$, lire \cite[V.4]{Vig2}.

Dans la section \ref{Sec9}, nous prouvons cette conjecture 
lorsque $\G$ est une forme 
intérieure de $\GL_n(\F)$ (voir le théorème \ref{unicitesupp}). 

\subsubsection{}
\label{Persephone}

Soit $(\M,\vr)$ une paire cuspidale de $\G$. 
Une paire cuspidale $(\M',\vr')$ de $\G$  
est dite \textit{iner\-tiellement équivalente} à 
$(\M,\vr)$ s'il existe un caractère 
non ramifié $\chi$ de $\M$ tel que  
$(\M',\vr')$ soit conjuguée à $(\M,\vr\chi)$ 
sous $\G$.
On note $[\M,\vr]_\G$
la classe d'inertie (\ie la classe 
d'équivalence inertielle)  de $(\M,\vr)$.
Si $\Om$ est la classe d'inertie d'une
paire cuspidale de $\G$, on note~:
\begin{equation}
\label{Mellamphy}
\Irr(\Om)
\end{equation}
l'ensemble des classes d'isomorphisme de 
re\-pré\-sen\-ta\-tions irréductibles
de $\G$ qui sont des sous-quotients 
d'une induite parabolique d'un élément de $\Om$.
On note aussi~:
\begin{equation*}
\Irr(\Om)^{\q}=\cusp^{-1}(\Om)
\end{equation*}
l'ensemble des classes d'isomorphisme de 
re\-pré\-sen\-ta\-tions irréductibles de $\G$ 
dont le support cuspidal appartient à $\Om$, 
\ie qui sont des quotients 
d'une induite parabolique d'un élément de $\Om$.

\subsubsection{}

Un $\qlb$-caractère de $\G$ est entier si et 
seulement s'il est à valeurs dans $\zlb$. 
Une $\qlb$-re\-présen\-ta\-tion irréductible cus\-pi\-da\-le de $\G$ est 
entière si et seu\-le\-ment si son caractè\-re central l'est.
Si $\G$ est comme au paragraphe \ref{jacqs},
une $\qlb$-représen\-ta\-tion ir\-ré\-ductible de $\G$ est entière 
si et seulement si son support cuspidal l'est
(c'est une consé\-quence des para\-graphes \ref{inds} et \ref{jacqs}).

\subsection{Sous-quotients irréductibles d'une induite parabolique}
\label{FCIP}

L'objet de ce paragraphe est de prouver le résultat suivant, 
dont on trouvera une preuve un peu différente chez Dat \cite{Dat3} 
(voir {\it ibid.}, lemme $4.13$).
Les deux preuves s'appuient sur la propriété d'irréductibilité 
générique obtenue dans \cite{Dat2} (voir plus bas) pour un groupe $\G$ 
admettant des sous-groupes discrets cocompacts.

\begin{prop}
\label{commu}
On suppose que $\G$ possède des sous-groupes discrets cocompacts.
Soit $\s$ une représentation irréductible d'un 
sous-groupe de Levi $\M$ de $\G$ et soient 
$\P$ et $\P'$ 
deux sous-groupes paraboliques de $\G$ de facteur de Levi $\M$.
Alors $\sy{\ip_{\P}^{\G}(\s)}=\sy{\ip_{\P'}^{\G}(\s)}$.
\end{prop}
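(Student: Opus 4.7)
The plan is to deform $\s$ by unramified twists and invoke the generic irreducibility theorem of Dat \cite{Dat2}, whose hypothesis matches the assumption that $\G$ admits cocompact discrete subgroups. Let $\Psi(\M)$ denote the variety of unramified $\CR$-characters of $\M$. By Dat's theorem, the set $\Omega \subset \Psi(\M)$ of $\chi$ for which both $\ip_{\P}^{\G}(\s\chi)$ and $\ip_{\P'}^{\G}(\s\chi)$ are irreducible is Zariski-open and dense.

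Independently, the standard intertwining operator $A_\chi : \ip_{\P}^{\G}(\s\chi) \to \ip_{\P'}^{\G}(\s\chi)$, given formally by integration over the unipotent radical of the parabolic opposite to $\P$ with respect to $\M$, depends rationally on $\chi$ and is non-zero on a dense open subset of $\Psi(\M)$. On the intersection of this locus with $\Omega$, the operator $A_\chi$ is a non-zero morphism between two irreducibles, hence an isomorphism. Consequently $\sy{\ip_{\P}^{\G}(\s\chi)} = \sy{\ip_{\P'}^{\G}(\s\chi)}$ in $\Gg_{\CR}(\G)$ for $\chi$ in a Zariski-dense subset of $\Psi(\M)$.

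The remaining step is to specialize at $\chi = 1$. The strategy is to view $\chi \mapsto \ip_{\P}^{\G}(\s\chi)$ and $\chi \mapsto \ip_{\P'}^{\G}(\s\chi)$ as algebraic families of representations lying in a single Bernstein-type component attached to the inertial class $[\M,\s]_\G$. The equality of Grothendieck classes on a dense open subset of the parameter space then extends, by the algebraic nature of the family, to every closed point, including $\chi = 1$.

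\textbf{Main obstacle.} The specialization is the principal technical difficulty. Equality in the Grothendieck group is a discrete condition, so naive ``continuity'' arguments do not apply. In the complex case the conclusion is immediate from rationality of characters; in the modular setting one needs a substitute, for instance a Brauer-character analog, a lift to $\zlb$ followed by reduction modulo $\ell$ (using the compatibility recorded in \S\ref{inds}), or a direct analysis of Bernstein-type commutative rings over $\CR$ of the sort carried out in Dat \cite{Dat3}, lemme 4.13. A secondary difficulty is setting up the rational family of intertwining operators $A_\chi$ in the modular setting, where the usual convergence of the defining integral is replaced by a purely formal argument based on the openness of the domain of regularity.
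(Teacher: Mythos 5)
The generic half of your argument tracks the paper closely: both you and the authors invoke \cite[Theorem 5.1]{Dat2} (which is why the cocompact-lattice hypothesis appears) to get a Zariski-dense open $\Uu\subseteq\Psi_{\CR}(\M)$ on which both parabolic inductions are irreducible. Your mechanism for the generic isomorphism differs slightly: you propose intertwining operators $A_\chi$, while the paper avoids them entirely. It instead observes that $\chi\s$ occurs in $\rp^{\G}_{\P'}(\ip_{\P}^{\G}(\chi\s))$, uses the geometric lemma to see that, for generic $\chi$, the other constituents of this Jacquet module have a different central character, so $\chi\s$ splits off as a direct summand; Frobenius reciprocity then yields a nonzero map $\ip_{\P}^{\G}(\chi\s)\to\ip_{\P'}^{\G}(\chi\s)$. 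This is a cleaner route in the modular setting because it sidesteps the convergence and nonvanishing issues you flag as a ``secondary difficulty'' for $A_\chi$.

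The genuine gap is precisely the one you identify as the ``main obstacle'': the specialization from the dense open $\Uu$ to $\chi=1$ is not carried out. You correctly observe that equality in the Grothendieck group is discrete and that a Brauer-character-type substitute is needed, and you list candidate strategies, but none of them is executed — so as written the proposal does not prove the statement. For comparison, here is what the paper actually does. Fix a decreasing family $(\K_i)_i$ of pro-$p$ open compact subgroups forming a neighbourhood basis of $1$, set $\Hh_i=\Hh(\G,\K_i)$, and let $\V_i(\chi)$, $\W_i(\chi)$ be the $\K_i$-invariants of $\ip_{\P}^{\G}(\chi\s)$ and $\ip_{\P'}^{\G}(\chi\s)$: these are finite-dimensional $\Hh_i$-modules. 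For every $f\in\Hh_i$ the coefficients of the characteristic polynomials ${\rm Pcar}_{\V_i(\chi)}(f)$ and ${\rm Pcar}_{\W_i(\chi)}(f)$ are regular functions of $\chi\in\Psi_{\CR}(\M)$; they agree on $\Uu$, hence everywhere. A Bourbaki-style lemma (for each simple $\Hh_i$-module $\mm$ there is $f\in\Hh_i$ acting as ${\rm id}$ on $\mm$ and as $0$ on every other simple, so the multiplicity of $\mm$ is read off from the multiplicity of the eigenvalue $1$) then gives $\sy{\V_i(\chi)}=\sy{\W_i(\chi)}$ for every $\chi$. Taking $i$ large enough that all constituents of both inductions have nonzero $\K_i$-invariants and using the exact equivalence between representations with $\K_i$-invariants and $\Hh_i$-modules yields the equality in $\Gg_{\CR}(\G)$ at $\chi=1$. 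This is exactly the ``Brauer-character analog'' you hint at; without it (or Dat's \cite[lemme 4.13]{Dat3} in its place), your argument stops one step short of the conclusion.
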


\begin{proof}
Il suffit de prouver le résultat lorsque $\P$ et $\P'$ 
sont deux sous-groupes para\-boliques maximaux 
opposés par rapport à $\M$. 
Soit $\Psi(\M)$ le groupe des caractères non rami\-fiés 
de $\M$.
On va prouver que, pour tout 
$\chi\in\Psi(\M)$, on a 
$\sy{\ip_{\P}^{\G}(\chi\s)}=\sy{\ip_{\P'}^{\G}(\chi\s)}$. 

D'après \cite[Theorem 5.1]{Dat2}, il existe un ouvert de Zariski non vide 
$\Uu\subseteq\Psi(\M)$ tel que
$\ip_{\P}^{\G}(\chi\s)$ et $\ip_{\P'}^{\G}(\chi\s)$ soient irréductibles
pour tout $\chi\in\Uu$.
La re\-pré\-sen\-ta\-tion $\chi\s$ est un sous-quotient de 
$\rp^{\G}_{\P'}(\ip_{\P}^{\G}(\chi\s))$ et,
d'après le lemme géo\-mé\-tri\-que (\ref{lemmegeometrique}),
les autres sous-quotients irréductibles apparaissent dans les~:
\begin{equation*}
\label{stg}
\sy{\ip^{\M}_{\M\cap\P^{w^{-1}}}(w\cdot\rp^{\M}_{\M\cap\P'{}^{w}}(\chi\s))},
\quad
w\in\Dd(\P',\P),
\quad
w\neq1.
\end{equation*} 
Les caractères centraux de ces autres sous-quotients irréductibles sont 
donc de la forme~: 
\begin{equation*}
w\cdot(\chi|_{\Z_\M}\omega_{\pi})
\end{equation*}
où $\pi$ est un sous-quotient irréductible de 
$\rp^{\M}_{\M\cap\P'{}^{w}}(\s))$ de caractère central $\omega_\pi$
et où $\Z_\M$ est le centre de $\M$.
Ainsi, pour $\chi$ dans un ouvert de Zariski non vide de 
$\Psi(\M)$ qu'on peut sup\-po\-ser égal à 
$\Uu$, les sous-quotient irréductibles de 
$\rp^{\G}_{\P'}(\ip_{\P}^{\G}(\chi\s))$
distincts de $\chi\s$ 
ont un caractère central différent de celui de $\chi\s$. 
Pour un tel $\chi$, la représentation $\chi\s$ est un facteur direct 
de $\rp^{\G}_{\P'}(\ip_{\P}^{\G}(\chi\s))$ 
(sa restriction au centre $\Z_\G$ est bien un facteur direct,
qui est stable sous l'action de $\G$).
Par réciprocité de Frobenius, on a un homomorphisme non trivial de 
$\ip_{\P}^{\G}(\chi\s)$ dans $\ip_{\P'}^{\G}(\chi\s)$.
Ces deux dernières représentations étant ir\-ré\-duc\-ti\-bles 
pour $\chi\in\Uu$, on a~: 
\begin{equation}
\label{EtapeUneG}
\ip_{\P}^{\G}(\chi\s)\simeq\ip_{\P'}^{\G}(\chi\s)
\end{equation}
pour tout $\chi\in\Uu$.

On fixe une famille décroissante $(\K_i)_{i\>1}$ de pro-$p$-sous-groupes 
ouverts compacts de $\G$ for\-mant une base de voisinages de l'élément neutre.  
Pour $i\>1$, on pose $\Hh_{i}=\Hh(\G,\K_i)$, l'algèbre des fonctions de $\G$ 
dans $\CR$ qui sont à support compact et bi-invariantes par $\K_i$. 
Pour tout caractère non ra\-mifié $\chi\in\Psi(\M)$, on note 
$\V_{i}(\chi)$ et $\W_{i}(\chi)$ 
les sous-espaces des vecteurs $\K_{i}$-invariants de
$\ip_{\P}^{\G}(\chi\s)$ et $\ip_{\P'}^{\G}(\chi\s)$ respectivement. 
Ce sont des $\Hh_{i}$-mo\-du\-les de dimension finie sur $\CR$. 
On va montrer que, pour tout $\chi\in\Psi(\M)$, 
les $\Hh_{i}$-modules $\V_{i}(\chi)$ et $\W_{i}(\chi)$ ont les mêmes
facteurs de composition. 

D'après (\ref{EtapeUneG}), pour tous $\chi\in\Uu$ et $i\>1$,
les $\Hh_{i}$-modules 
$\V_{i}(\chi)$ et $\W_{i}(\chi)$ sont isomorphes.
Pour tout $\chi\in\Uu$, tout $i\>1$ et tout $f\in\Hh_{i}$, 
on a donc une égalité des polynômes caractéristiques de $f$ 
sur $\V_{i}(\chi)$ et $\W_{i}(\chi)$~: 
\begin{equation}
\label{poliG} 
{\rm Pcar}_{\V_{i}(\chi)}(f)={\rm Pcar}_{\W_{i}(\chi)}(f).
\end{equation}
Les fonctions qui, à chaque caractère $\chi\in\Psi(\M)$, 
associent les 
coefficients des poly\-nô\-mes ca\-rac\-té\-ris\-ti\-ques d'un élément 
$f\in\Hh_{i}$ dans $\V_{i}(\chi)$ et $\W_{i}(\chi)$ respectivement 
sont des fonctions régulières de la variété $\Psi(\M)$, car ces 
coefficients dépendent polynômialement des coefficients de la matrice 
de $f$. 
L'identité \eqref{poliG}, vraie pour $\chi\in\Uu$, est donc vraie pour 
tout $\chi\in\Psi(\M)$, tout $i\>1$ et tout $f\in\Hh_{i}$. 

\begin{lemm}
Soient $\V$ et $\W$ deux $\Hh_{i}$-modules de dimension finie tels que, 
pour tout $f\in\Hh_{i}$, on ait une égalité des polynômes 
caractéristiques de $f$ sur $\V$ et $\W$~: 
\begin{equation}
\label{EgaPolCarG} 
{\rm Pcar}_{\V}(f)={\rm Pcar}_{\W}(f). 
\end{equation}
Alors $\V$ et $\W$ ont les mêmes facteurs de composition en tant que 
$\Hh_{i}$-modules.
\end{lemm}

\begin{proof}
Pour chaque $\Hh_{i}$-module simple $\mm$, on note $v(\mm)$
et $w(\mm)$ les mul\-ti\-pli\-ci\-tés de $\mm$ dans $\V$ et $\W$ 
respectivement, 
et on suppose qu'il existe un module simple $\mm$ tel que ces multiplicités 
diffèrent. 
D'après \cite[\S2.2]{Bou} (voir le corollaire 2 au théo\-rè\-me 1), 
il existe un élément $f\in\Hh_{i}$ tel que $f|_{\mm}={\rm id}_{\mm}$, 
et tel que $f|_{\mm'}=0$ pour tout module simple $\mm'$ non isomorphe 
à $\mm$.
D'après (\ref{EgaPolCarG}), le scalaire $1$ a des multiplicités égales 
dans les deux polynômes caractéristiques, \ie que $v(\mm)=w(\mm)$. 
\end{proof}

Ainsi, pour $i\>1$ et $\chi\in\Psi(\M)$, 
les $\Hh_{i}$-modules $\V_{i}(\chi)$ et $\W_{i}(\chi)$ ont 
les mêmes facteurs de composition. 
Étant donné $\chi\in\Psi(\M)$, on fixe un entier 
$i\>1$ tel que tous les sous-quotients des re\-pré\-sen\-ta\-tions 
$\ip_{\P}^{\G}(\chi\s)$ et $\ip_{\P'}^{\G}(\chi\s)$ 
possèdent des vecteurs $\K_{i}$-invariants non nuls.
Le foncteur~:
\begin{equation*}
\pi\mapsto\pi^{\K_{i}}
\end{equation*}
des $\K_i$-invariants est exact et induit une 
bijection entre l'ensemble des classes de 
re\-pré\-sen\-ta\-tions irréductibles de $\G$ possédant des 
vecteurs $\K_{i}$-in\-va\-riants non nuls et celui des 
classes de $\Hh_{i}$-modules simples (voir \cite[I.6.3]{Vig1}). 
Il définit donc un isomorphisme de groupes entre le sous-groupe 
$\Gg_{i}$ de $\Gg(\G)$ engendré par les classes des 
représentations irréductibles de $\G$ possédant des vecteurs 
$\K_{i}$-invariants non nuls et le groupe de Grothendieck de la catégorie 
des modules de lon\-gueur finie de $\Hh_{i}$. 

D'après ce qui précède, on a $\sy{\V_{i}(\chi)}=\sy{\W_{i}(\chi)}$, 
puis $\sy{\ip_{\P}^{\G}(\chi\s)}=\sy{\ip_{\P'}^{\G}(\chi\s)}$ dans $\Gg_{i}$. 
On en déduit que $\ip_{\P}^{\G}(\chi\s)$ et 
$\ip_{\P'}^{\G}(\chi\s)$ ont les mêmes facteurs de composition, 
donc la même image dans $\Gg(\G)$. 
\end{proof}

\subsection{Deux lemmes sur l'irréductibilité d'une induite parabolique}
\label{MoiNonPlus}

On suppose encore que $\G$ est le groupe 
des points sur $\F$ d'un groupe ré\-duc\-tif connexe 
quelconque défini sur $\F$. 

\begin{lemm}
\label{mult1}
Soit $\M$ un sous-groupe de Levi de $\G$, soit $\s$ une 
représentation ir\-ré\-ductible de $\M$ et soit $\P$ 
un sous-groupe parabolique de $\G$ de facteur de Levi $\M$.
On note $\ov{\P}$ le sous-groupe parabolique de $\G$ opposé
à $\P$ relativement à $\M$.
\begin{enumerate}
\item 
Supposons que $\s$ apparaît avec multiplicité $1$ dans
$\sy{\rp^{\G}_{\P}(\ip^{\G}_{\P}(\s))}$. 
Alors $\ip^{\G}_{\P}(\s)$ a une seule sous-représentation 
irréductible~;
sa multiplicité dans $\sy{\ip^{\G}_{\P}(\s)}$ est égale à $1$. 
\item 
Supposons que $\s$ apparaît avec multiplicité $1$ dans
$\sy{\rp^{\G}_{\ov{\P}}(\ip^{\G}_{\P}(\s))}$. 
Alors $\ip^{\G}_{\P}(\s)$ a un seul quo\-tient irréductible~; sa 
multiplicité dans $\sy{\ip^{\G}_{\P}(\s)}$ est égale à $1$. 
\end{enumerate}
\end{lemm}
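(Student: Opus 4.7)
The plan is to deduce both parts from Frobenius reciprocity, the second adjunction formula \eqref{SecondeAdjonctionAdmissible}, and the exactness of the Jacquet functors, combined with the fact that irreducible admissible $\CR$-representations have a central character (\S\ref{+++}), so that Schur's lemma gives $\End_\G(\pi)=\CR$ for any irreducible subquotient $\pi$ of $\ip_\P^\G(\s)$. Note that $\ip_\P^\G(\s)$ is admissible of finite length, so its socle and cosocle are well-defined and semisimple.

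For (1), I would start from an irreducible subrepresentation $\pi\hookrightarrow \ip_\P^\G(\s)$. By Frobenius reciprocity, this embedding corresponds to a nonzero homomorphism $\rp_\P^\G(\pi)\to\s$, which is surjective since $\s$ is irreducible; hence $\s$ is a composition factor of $\rp_\P^\G(\pi)$. Now suppose that the socle of $\ip_\P^\G(\s)$ contains either two non-isomorphic irreducible subrepresentations $\pi_1,\pi_2$, or a single one $\pi$ with multiplicity $\ge 2$. In either case one gets an embedding of a length-$2$ semisimple module $\tau\hookrightarrow \ip_\P^\G(\s)$ whose two simple constituents each contribute $\s$ to the composition series of their Jacquet module. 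By exactness of $\rp_\P^\G$, the class $\sy{\s}$ would then appear with multiplicity at least $2$ in $\sy{\rp_\P^\G(\ip_\P^\G(\s))}$, contradicting the hypothesis. Hence the socle is an irreducible representation isomorphic to a single $\pi$. Finally, if $\pi$ occurred with multiplicity $m\ge 2$ as a composition factor of $\ip_\P^\G(\s)$, then by exactness of $\rp_\P^\G$ the class $\sy{\s}$ would occur with multiplicity $\ge m$ in $\sy{\rp_\P^\G(\ip_\P^\G(\s))}$, again a contradiction; so $m=1$.

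For (2), the argument runs in parallel, replacing Frobenius reciprocity by the second adjunction \eqref{SecondeAdjonctionAdmissible}, which is available here because $\ip_\P^\G(\s)$ and all its irreducible quotients are admissible. Any irreducible quotient $\pi$ of $\ip_\P^\G(\s)$ yields a nonzero morphism $\s\to\rp_{\ov\P}^\G(\pi)$, necessarily injective, so that $\s$ is a composition factor of $\rp_{\ov\P}^\G(\pi)$. Two distinct irreducible quotients $\pi_1,\pi_2$ of $\ip_\P^\G(\s)$ correspond to two distinct maximal subrepresentations $K_1,K_2$; neither contains the other (else $\pi_1\simeq\pi_2$), so the combined map $\ip_\P^\G(\s)\to\pi_1\oplus\pi_2$ is surjective. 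Applying the exact functor $\rp_{\ov\P}^\G$ forces $\sy{\s}$ to appear at least twice in $\sy{\rp_{\ov\P}^\G(\ip_\P^\G(\s))}$, contradicting the hypothesis; the same reasoning excludes an irreducible quotient of multiplicity $\ge 2$ in the cosocle. The multiplicity-one statement in $\sy{\ip_\P^\G(\s)}$ is deduced exactly as in (1), using the exactness of $\rp_{\ov\P}^\G$.

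There is no real obstacle beyond this bookkeeping: the only delicate points are to invoke second adjunction in the admissible setting (legitimate by \S\ref{MenelasEtHermione}) and to check the combined-map surjectivity, both of which are straightforward.
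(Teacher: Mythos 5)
Your proof is correct and takes essentially the same route as the paper: Frobenius reciprocity for the socle, second adjunction for the cosocle, and the exactness of the Jacquet functor to push the multiplicity of $\s$ back into $\sy{\rp^{\G}_{\P}(\ip^{\G}_{\P}(\s))}$ (resp.\ $\sy{\rp^{\G}_{\ov\P}(\ip^{\G}_{\P}(\s))}$). The paper's phrasing is slightly more compact — it forms the external direct sum $\pi=\pi_1\oplus\pi_2$ and bounds $\dim\Hom_{\G}(\pi,\ip^{\G}_{\P}(\s))\ge 2$ directly, without the case split between ``two non-isomorphic'' and ``one with multiplicity $\ge 2$'' or the Chinese-Remainder surjectivity argument onto $\pi_1\oplus\pi_2$ that you spell out for part (2) — but these are cosmetic; the mechanism is identical.
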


\begin{proof}
Supposons qu'il existe deux sous-représentations irréductibles 
$\pi_1$ et $\pi_2$ de $\ip^{\G}_{\P}(\s)$, et notons $\pi$ leur 
somme directe.
Par réciprocité de Frobenius, on trouve que~:
\begin{eqnarray*}
\dim_{\CR}(\Hom_\M (\rp^{\G}_{\P}(\pi),\s))=
\dim_{\CR}(\Hom_\G(\pi,\ip^{\G}_{\P}(\s)))\>2.
\end{eqnarray*}
Par exactitude du foncteur de Jacquet $\rp_\P^\G$, on a aussi 
$\sy{\rp^{\G}_{\P}(\pi)} \< 
\sy{\rp^{\G}_{\P}( \ip^{\G}_{\P}(\s))}$.
On trouve donc que $\s$ apparaît
avec multiplicité au moins $2$ dans la quantité 
$\sy{\rp^{\G}_{\P}(\ip^{\G}_{\P}(\s))}$, ce qui est absurde.
La deuxième assertion 
se prouve de façon analogue en utilisant la seconde adjonction. 
\end{proof}

\begin{lemm}
\label{sir}
On suppose que $\G$ possède des sous-groupes discrets cocompacts.
Soit $\pi$ une représentation de longueur finie de $\G$. 
On suppose qu'il y a un sous-groupe 
para\-bo\-li\-que $\P$ de $\G$ de facteur de Levi $\M$ et une 
représentation irréductible $\s$ de $\M$ tels que~: 
\begin{enumerate}
\item 
$\pi$ est une sous-représentation de $\ip_{\P}^{\G}(\s)$ et un quotient 
de $\ip_{\ov{\P}}^{\G}(\s)$~;
\item 
la multiplicité de $\s$ dans $\sy{\rp_{\P}^{\G}(\ip_{\P}^{\G}(\s))}$ 
est égale à $1$. 
\end{enumerate}
Alors $\pi$ est irréductible.
\end{lemm}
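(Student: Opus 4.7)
Le plan est de montrer que $\pi$ coïncide avec l'unique sous-représentation irréductible $\pi_1$ de $\ip^{\G}_{\P}(\s)$ fournie par le lemme \ref{mult1}(1). L'hypothèse (ii) est précisément celle requise pour appliquer ce lemme~: on obtient ainsi une unique sous-représentation irréductible $\pi_1 \subseteq \ip^{\G}_{\P}(\s)$, de multiplicité $1$ dans $\sy{\ip^{\G}_{\P}(\s)}$, et l'hypothèse (i) donne $\pi_1 \subseteq \pi$. Par ailleurs, la proposition \ref{commu} donne $\sy{\ip^{\G}_{\P}(\s)} = \sy{\ip^{\G}_{\ov{\P}}(\s)}$, ce qui, par exactitude de $\rp^{\G}_{\P}$, implique que la multiplicité de $\s$ dans $\sy{\rp^{\G}_{\P}(\ip^{\G}_{\ov{\P}}(\s))}$ vaut également $1$. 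Le lemme \ref{mult1}(2), appliqué en échangeant les rôles de $\P$ et $\ov{\P}$, fournit alors un unique quotient irréductible $\pi_2$ de $\ip^{\G}_{\ov{\P}}(\s)$, de multiplicité $1$, et l'hypothèse (i) donne $\pi \twoheadrightarrow \pi_2$.

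L'étape cruciale, et le principal obstacle, est d'identifier $\pi_1$ et $\pi_2$. Par réciprocité de Frobenius, l'inclusion $\pi_1 \hookrightarrow \ip^{\G}_{\P}(\s)$ implique que $\s$ est un quotient de $\rp^{\G}_{\P}(\pi_1)$~; par seconde adjonction, la surjection $\ip^{\G}_{\ov{\P}}(\s) \twoheadrightarrow \pi_2$ implique que $\s$ est un sous-objet de $\rp^{\G}_{\P}(\pi_2)$. Comme $\pi_1$ et $\pi_2$ apparaissent tous deux comme facteurs de composition de $\ip^{\G}_{\ov{\P}}(\s)$ (pour $\pi_1$, par la proposition \ref{commu}), chacun contribue au moins une fois à la multiplicité de $\s$ dans $\sy{\rp^{\G}_{\P}(\ip^{\G}_{\ov{\P}}(\s))}$. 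Cette multiplicité étant $1$, on conclut $\pi_1 \simeq \pi_2$.

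Pour conclure $\pi = \pi_1$, on observe que $\pi_1$ apparaît avec multiplicité exactement $1$ dans $\sy{\pi}$, par encadrement via $\pi_1 \subseteq \pi \subseteq \ip^{\G}_{\P}(\s)$. Soit $q: \pi \twoheadrightarrow \pi_2 \simeq \pi_1$ la surjection donnée par l'hypothèse (i). La composition $\pi_1 \hookrightarrow \pi \stackrel{q}{\twoheadrightarrow} \pi_1$ est soit nulle, soit un isomorphisme. Si elle est nulle, alors $\pi_1 \subseteq \ker(q)$, et $\pi_1$ apparaît à la fois dans $\ker(q)$ et dans $\pi/\ker(q) \simeq \pi_1$, d'où une multiplicité au moins $2$ dans $\sy{\pi}$, contradiction. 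Elle est donc un isomorphisme, et $\pi = \pi_1 \oplus \ker(q)$. Toute sous-représentation non nulle de $\pi$ étant une sous-représentation de $\ip^{\G}_{\P}(\s)$, elle contient l'unique sous-irréductible $\pi_1$~; ainsi $\ker(q) = 0$ et $\pi = \pi_1$.
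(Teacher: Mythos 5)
Your proof is correct and takes essentially the same route as the paper's: apply Lemma~\ref{mult1} twice (once for each of $\P$ and $\ov{\P}$, using Proposition~\ref{commu} to transfer the multiplicity-one hypothesis), use the two adjunctions to see that $\s$ occurs in the Jacquet module of both $\pi_1$ and $\pi_2$, and conclude $\pi_1\simeq\pi_2$ from the multiplicity-one bound. The one place you diverge slightly is in the final passage from $\pi_1\simeq\pi_2$ to $\pi=\pi_1$: the paper leaves this step terse (``on a donc $\pi_1=\pi_2$, de sorte que $\pi$ est irr\'eductible''), whereas you spell it out via the splitting argument -- the composite $\pi_1\hookrightarrow\pi\twoheadrightarrow\pi_2\simeq\pi_1$ must be an isomorphism, giving $\pi=\pi_1\oplus\Ker(q)$, and then the uniqueness of the irreducible subrepresentation of $\ip_\P^\G(\s)$ forces $\Ker(q)=0$. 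This is a correct and welcome elaboration of what the paper takes as understood.
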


\begin{proof}
D'après la condition (2) et le lemme \ref{mult1}, 
$\ip_{\P}^{\G}(\s)$ admet une 
unique sous-représentation irréductible $\pi_1$. 
D'après la proposition \ref{commu}, la multiplicité de $\s$ dans 
$\sy{\rp_{\P}^{\G}(\ip_{\ov{\P}}^{\G}(\s))}$ 
est $1$.
L'induite $\ip_{\ov{\P}}^{\G}(\s)$ 
admet donc un unique quotient ir\-ré\-duc\-ti\-ble, noté $\pi_2$.
Aussi $\pi_1$ est-elle l'unique sous-représentation 
irréductible de $\pi$ et $\pi_2$ son unique quotient irréductible. 
Or, d'après les deux propriétés d'adjonction du para\-gra\-phe 
\ref{MenelasEtHermione}, le facteur $\s$ apparaît à la fois
dans $\rp_{\P}^{\G}(\pi_1)$ et dans $\rp_{\P}^{\G}(\pi_2)$.
Puisque la multiplicité de $\s$ dans 
$\sy{\rp_{\P}^{\G}(\ip_{\P}^{\G}(\s))}$ est 
égale à $1$, sa multiplicité dans $\sy{\rp_{\P}^{\G}(\pi)}$ 
doit être égale à $1$. 
On a donc $\pi_1=\pi_2$, de sorte que $\pi$ est irréductible. 
\end{proof} 

On utilisera ce lemme dans le cas particulier où $\pi$ est isomorphe à 
$\ip_{\P}^{\G}(\s)$ et à $\ip_{\ov{\P}}^{\G}(\s)$ pour prouver
l'irréductibilité d'une induite parabolique. 

\subsection{Le cas de $\GL_m(\D)$}
\label{KafkaChateau}

On se place maintenant dans le cas où $\G=\GL_{m}(\D)$, 
qui a des sous-groupes discrets cocompacts pour tout $m\>1$ 
(voir Borel-Harder \cite{BoHa}).

\subsubsection{}
\label{Sinibaldi}

On désigne par $\Cc$ la réunion disjointe des 
$\Cc(\G_{m})$ pour $m\>1$ 
(voir les paragraphes \ref{DefRepCusp} et \ref{SuppCusp}).
Étant donné une classe de conjugaison $\mathfrak{c}$ d'une paire cus\-pi\-dale 
de $\G_m$, $m\>1$, 
il existe une famille $\a=(m_{1},\ldots,m_{r})$ d'entiers 
$\>1$ de somme $m$, et, 
pour chaque $i\in\{1,\dots,r\}$, il existe une
re\-pré\-sen\-ta\-tion cuspi\-dale $\rho_i\in\Cc(\G_{m_i})$, 
de telle sorte que $\mathfrak{c}$ soit la classe de conjugaison de la paire~:
\begin{equation*}
(\M_{\a},\rho_{1}\otimes\dots\otimes\rho_{r}).
\end{equation*}
On identifiera $\mathfrak{c}$ à la 
somme $\sy{\rho_1}+\dots+\sy{\rho_r}$ dans $\Dive(\Cc)$. 
On identifiera ainsi l'ensemble des supports cuspidaux 
(resp. supercuspidaux) des $\G_m$, $m\>1$, à $\Dive(\Cc)$ 
(resp. à $\Dive(\Ss)$).

\subsubsection{}

D'après la proposition \ref{commu}, on a le résultat suivant.

\begin{prop}
\label{commuD}
La $\ZZ$-algèbre $\Gg$ est commutative.
\end{prop}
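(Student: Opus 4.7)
The plan is to reduce the statement to proving the identity $\sy{\pi_1 \times \pi_2} = \sy{\pi_2 \times \pi_1}$ for every pair of irreducible $\CR$-representations $\pi_1 \in \Irr_{\CR}(\G_{m_1})$ and $\pi_2 \in \Irr_{\CR}(\G_{m_2})$. By construction $\Gg_{\CR}$ is the free $\ZZ$-module on $\Irr_{\CR}$ with multiplication the bilinear extension of (\ref{VentreDieuGris}) for $r=2$, so once commutativity is established on an arbitrary pair of generators it extends to all of $\Gg_{\CR}$ by bilinearity.

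Fix such $\pi_1, \pi_2$ and set $m = m_1 + m_2$, $\a = (m_1, m_2)$, $\b = (m_2, m_1)$. I would then use a block-swap conjugation to compare the two induced representations: choose a permutation matrix $w \in \G_m$ exchanging the two diagonal blocks of respective sizes $m_1$ and $m_2$, so that $w \M_\b w^{-1} = \M_\a$ and, under this conjugation, the representation $\pi_2 \otimes \pi_1$ of $\M_\b$ corresponds to $\pi_1 \otimes \pi_2$ of $\M_\a$. The parabolic subgroup $w \P_\b w^{-1}$ has Levi factor $\M_\a$, and a direct inspection of its unipotent radical (block lower-triangular with diagonal block sizes $m_1, m_2$) shows that it coincides with $\P_\a^-$, the parabolic opposite to $\P_\a$ with respect to $\M_\a$. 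Taking care of the compatible transport of the normalizing half-modulus characters, conjugation by $w$ yields an isomorphism of $\CR$-representations
\begin{equation*}
\pi_2 \times \pi_1 \;=\; \ip_{\P_\b}^{\G_m}(\pi_2 \otimes \pi_1) \;\simeq\; \ip_{\P_\a^-}^{\G_m}(\pi_1 \otimes \pi_2).
\end{equation*}

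Finally, I invoke Proposition \ref{commu} applied to the Levi $\M_\a$, the representation $\s = \pi_1 \otimes \pi_2$, and the two parabolic subgroups $\P_\a$ and $\P_\a^-$ that share this Levi factor; the hypothesis of the proposition is met since $\G_m = \GL_m(\D)$ admits discrete cocompact subgroups by Borel--Harder \cite{BoHa}. This gives $\sy{\ip_{\P_\a}^{\G_m}(\s)} = \sy{\ip_{\P_\a^-}^{\G_m}(\s)}$, hence $\sy{\pi_1 \times \pi_2} = \sy{\pi_2 \times \pi_1}$ in $\Gg_{\CR}(\G_m)$. The substantive difficulty has already been absorbed into Proposition \ref{commu}; here the only delicate point is verifying that conjugation by $w$ sends $\P_\b$ to $\P_\a^-$ together with the right behaviour of the half-modulus characters, which is a routine bookkeeping exercise.
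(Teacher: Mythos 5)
Your argument is correct and is precisely the one the paper has in mind when it writes ``D'apr\`es la proposition~\ref{commu}'': it reduces to generators, uses the block-swap permutation to identify $\pi_2\times\pi_1$ with an induction from the opposite parabolic $\P_\a^-$ of the same representation $\pi_1\otimes\pi_2$, and then cites Proposition~\ref{commu}. You have simply written out the implicit bookkeeping (that conjugation by the block-swap carries $(\M_\b,\P_\b,\pi_2\otimes\pi_1)$ to $(\M_\a,\P_\a^-,\pi_1\otimes\pi_2)$, compatibly with the normalization) that the paper leaves to the reader.
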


Ceci étend les résultats de Zelevinski \cite{Ze2} pour les 
représentations complexes de $\GL_n(\F)$, 
de Tadi\'c \cite{Tadic} pour les représentations complexes de
$\GL_m(\D)$ 
(voir aussi la propriété (P1) dans \cite[\S2.2]{BHLS}) et de Vignéras
\cite{Vig1} pour les $\CR$-représentations de $\GL_n(\F)$ quand la
carac\-téristi\-que de $\CR$ est différente de $2$ (voir la remarque 
ci-dessous). 

\begin{rema}
La preuve de Vignéras utilise le fait que, pour toute représenta\-tion 
irré\-duc\-ti\-ble $\pi$ de $\GL_n(\F)$, la représentation $g \mapsto\pi({}^tg^{-1})$
est isomorphe à la contra\-gré\-diente de $\pi$, où ${}^tg$ désigne la
transposée de $g\in\GL_n(\F)$.  
Ceci est un résultat de Gelfand et Kazhdan quand $\CR$ est le corps des
nombres complexes.  
La preuve, écrite en détail dans \cite[Theorem 7.3]{BZ1}, utilise le théorème
6.10 de {\it ibid.} avec $n=2$, dont la preuve n'est pas valable 
{\it a priori} si la caractéristique de $\CR$ vaut $2$. 
Remarquons que cette preuve ne peut s'étendre telle quelle au cas où $\D$ est 
différente de $\F$ car la transposée d'une matrice inversible de $\Mat_m(\D)$ 
n'est pas toujours inversible. 
\end{rema}


\subsubsection{}
\label{lemmegeo}

Dans ce paragraphe, on donne une version combinatoire du lemme géométrique
de Bernstein-Zelevinski du paragraphe  \ref{Tellmarch} dans le cas particulier
où $\G=\G_m$ (voir \cite{BZ2}). 

Soient $\a=(m_1,\dots,m_r)$ et $\b=(n_1,\dots,n_s)$ deux familles d'entiers 
de sommes toutes deux égales à $m\>1$.
Pour chaque $i\in\{1,\ldots,r\}$, soit $\pi_i$ une représentation 
ir\-ré\-duc\-ti\-ble de $\G_{m_i}$ et notons $\pi$ le produit tensoriel 
$\pi_1\otimes\cdots\otimes\pi_r\in\Irr(\M_{\a})$. 
On note $\mathscr{M}^{\a,\b}$ l'ensemble des ma\-tri\-ces $\B=(b_{i,j})$ composées
d'entiers positifs tels que~:
\begin{equation*}
\label{2} 
\sum_{j=1}^s b_{i,j}=m_i, \quad i\in\{1,\ldots,r\}, 
\quad \sum_{i=1}^rb_{i,j}=n_j, \quad j\in\{1,\ldots,s\}.
\end{equation*}
Fixons $\B\in\mathscr{M}^{\a,\b}$ et notons $\a_i=(b_{i,1},\dots,b_{i,s})$ et 
$\b_j=(b_{1,j},\dots,b_{r,j})$, qui sont des familles de somme $m_i$ et 
de $n_j$ respectivement. 
Pour chaque $i\in\{1,\ldots,r\}$, on écrit~:
\begin{equation*}
\s^{(k)}_{i}=\s^{(k)}_{i,1}\otimes\cdots\otimes\s^{(k)}_{i,s}, 
\quad
\s^{(k)}_{i,j}\in\Irr(\G_{b_{i,j}}),
\quad 
k\in\{1,\ldots,r_i\},
\end{equation*}
les différents sous-quotients irréductibles de $\rp_{\a_i}(\pi_i)$.
Pour tout $j\in\{1,\ldots,s\}$ et toute famille d'entiers
$(k_1,\ldots,k_r)$ tels que $1\<k_i\<r_i$, 
on définit une représentation $\sigma_{j}$ de $\G_{n_j}$ par~: 
\begin{equation*}
\s_{j}=\ip_{\b_j}
\(\s^{(k_1)}_{1,j}\otimes\cdots\otimes\s^{(k_r)}_{r,j}\). 
\end{equation*}
Alors les représentations~: 
\begin{equation*}
\label{Druon}
\s_{1}\otimes\cdots\otimes\s_{s}, 
\quad
\B\in \mathscr{M}^{\a,\b},
\quad
1\<k_i\<r_i,
\end{equation*}
sont les sous-quotients irréductibles de
$\rp_{\b}(\ip_{\a}(\pi))$. 
Voir Zelevinski \cite[\S1.6]{Ze2}, la preuve étant va\-la\-ble pour 
$\R$ algébriquement clos de ca\-rac\-té\-ris\-ti\-que 
différente de $p$ et pour $\D$ quel\-con\-que.

\medskip

La proposition suivante est la combinaison du lemme \ref{mult1} 
et de \cite[Corollaire 2.2]{Min1} dont la preuve est valable pour 
un corps algébriquement clos quel\-con\-que de ca\-rac\-té\-ris\-ti\-que 
différente de $p$. 
On reprend les notations ci-dessus avec $s=2$.

\begin{prop}
\label{mmm1}
On suppose que, pour tout $i\in\{1,\ldots,r\}$, toute famille d'entiers 
$\a_i$ de somme $m_i$ et tout sous-quotient 
$\s^{(k)}_{i}=\s^{(k)}_{i,1}\otimes\s^{(k)}_{i,2}$ de 
$\rp_{\a_i}(\pi_i)$, 
on a~:
\begin{equation*}
\cusp(\s^{(k)}_{i,2})\nleqslant\sum\limits_{i<j\<r}\cusp(\pi_j).
\end{equation*}
Alors $\pi$ apparaît avec multiplicité $1$ dans 
$\sy{\rp_{\a}(\ip_{\a}(\pi))}$. 
Ainsi $\ip_{\a}(\pi)$ a une unique sous-repré\-senta\-tion irréductible, 
dont la multiplicité dans $\sy{\ip_{\a}(\pi)}$ est égale à $1$. 
\end{prop}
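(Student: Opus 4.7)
Mon plan serait de calculer explicitement $\sy{\rp_{\a}(\ip_{\a}(\pi))}$ au moyen de la version combinatoire du lemme g�om�trique donn�e au paragraphe \ref{lemmegeo} (avec $\b=\a$, donc $s=r$). Les sous-quotients irr�ductibles sont index�s par les matrices $\B=(b_{i,j})\in\Mm^{\a,\a}$ (de sommes en ligne $m_i$ et en colonne $m_j$) et par des choix de facteurs de composition $\s_i^{(k_i)}=\s_{i,1}^{(k_i)}\otimes\cdots\otimes\s_{i,r}^{(k_i)}$ de $\rp_{\a_i}(\pi_i)$. La matrice diagonale $\B^{\rm diag}$ d�finie par $b_{i,i}=m_i$ contribue, avec un unique choix possible $\s_i^{(k_i)}=\pi_i$, pr�cis�ment $\pi_1\otimes\cdots\otimes\pi_r=\pi$ avec multiplicit� $1$. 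Il s'agit donc de montrer qu'aucune autre matrice $\B\in\Mm^{\a,\a}$ ne contribue $\pi$, ce qui donnera, par le lemme \ref{mult1}(1), la conclusion voulue.

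La seconde �tape est combinatoire~: si $\B$ n'est pas diagonale, il existe $i_0<j_0$ tels que $b_{i_0,j_0}>0$. En effet, soit $i_0$ le plus petit indice tel que la ligne $i_0$ ne soit pas $(0,\dots,0,m_{i_0},0,\dots,0)$~; pour $i<i_0$ on a $b_{i,i}=m_i$, et la somme de la colonne $i$ vaut $m_i$, donc $b_{i',i}=0$ pour tout $i'\neq i$, et en particulier $b_{i_0,i}=0$ pour $i<i_0$. Comme la ligne $i_0$ est non diagonale, il existe $j_0>i_0$ avec $b_{i_0,j_0}>0$.

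La troisi�me �tape est le c�ur de la preuve. Supposons par l'absurde que la matrice $\B$ ci-dessus contribue le facteur $\pi$ dans $\sy{\rp_{\a}(\ip_{\a}(\pi))}$, via un choix $(k_1,\dots,k_r)$. Cela impose que $\s_j=\pi_j$ pour chaque $j$ (o� $\s_j=\ip_{\b_j}(\s_{1,j}^{(k_1)}\otimes\cdots\otimes\s_{r,j}^{(k_r)})$), et donc, en passant aux supports cuspidaux~:
\begin{equation*}
\cusp(\pi_j)=\sum_{i=1}^{r}\cusp(\s_{i,j}^{(k_i)}),
\qquad j\in\{1,\dots,r\}.
\end{equation*}
En sommant pour $j>i_0$ et en ne retenant que le terme $i=i_0$, on obtient
\begin{equation*}
\sum_{j>i_0}\cusp(\s_{i_0,j}^{(k_{i_0})})\<\sum_{j>i_0}\cusp(\pi_j).
\end{equation*}
D'autre part, par transitivit� du foncteur de Jacquet, la donn�e $\s_{i_0,1}^{(k_{i_0})}\otimes\cdots\otimes\s_{i_0,r}^{(k_{i_0})}$, sous-quotient de $\rp_{\a_{i_0}}(\pi_{i_0})$, provient d'un sous-quotient $\tau_1\otimes\tau_2$ de $\rp_{(n_1,n_2)}(\pi_{i_0})$, o� $n_1=b_{i_0,1}+\cdots+b_{i_0,i_0}$ et $n_2=b_{i_0,i_0+1}+\cdots+b_{i_0,r}$, avec $\cusp(\tau_2)=\sum_{j>i_0}\cusp(\s_{i_0,j}^{(k_{i_0})})$ (pour l'existence de ce sous-quotient, on applique d'abord le lemme g�om\'etrique pour la composition $(n_1,n_2)$, puis on Jacquet-restreint). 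Comme $b_{i_0,j_0}>0$, on a $n_2>0$, et l'hypoth�se appliqu�e avec $\a_{i_0}=(n_1,n_2)$ et le sous-quotient $\tau_1\otimes\tau_2$ donne
\begin{equation*}
\cusp(\tau_2)=\sum_{j>i_0}\cusp(\s_{i_0,j}^{(k_{i_0})})\nleqslant\sum_{j>i_0}\cusp(\pi_j),
\end{equation*}
en contradiction avec l'in�galit� pr�c�dente. Ainsi seul $\B^{\rm diag}$ contribue $\pi$, et ce avec multiplicit� $1$.

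L'obstacle principal sera de bien mener l'argument de transitivit� des foncteurs de Jacquet reliant la restriction it�r�e $\rp_{\a_{i_0}}(\pi_{i_0})$ � la restriction en deux blocs $\rp_{(n_1,n_2)}(\pi_{i_0})$, pour pouvoir invoquer l'hypoth�se qui n'est formul�e que pour $s=2$. Une fois acquise la multiplicit� $1$ de $\pi$ dans $\sy{\rp_{\a}(\ip_{\a}(\pi))}$, le lemme \ref{mult1}(1) fournit imm�diatement l'unicit� de la sous-repr�sentation irr�ductible de $\ip_{\a}(\pi)$ et la conclusion sur sa multiplicit�.
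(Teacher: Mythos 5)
Votre preuve est correcte. Le texte du paper se contente de combiner le lemme~\ref{mult1} avec une r\'ef\'erence \`a \cite[Corollaire 2.2]{Min1} pour le c\oe ur combinatoire~; vous d\'eroulez directement cet argument via le lemme g\'eom\'etrique du paragraphe~\ref{lemmegeo}, ce qui revient au m\^eme raisonnement. L'identification du plus petit indice $i_0$ de ligne non diagonale, le constat que $b_{i_0,j}=0$ pour $j<i_0$ et donc l'existence de $j_0>i_0$ avec $b_{i_0,j_0}>0$, puis la comparaison des supports cuspidaux via la transitivit\'e des foncteurs de Jacquet et le regroupement $(n_1,n_2)$ avec $n_2>0$, sont exactement les bons ing\'redients pour contredire l'hypoth\`ese. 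Une petite impr\'ecision : vous \'ecrivez que la contribution de $\B$ \og impose que $\s_j=\pi_j$ pour chaque $j$\fg, alors que $\s_j=\ip_{\b_j}(\s_{1,j}^{(k_1)}\otimes\cdots\otimes\s_{r,j}^{(k_r)})$ n'est en g\'en\'eral pas irr\'eductible~; il faut dire que $\pi_j$ en est un sous-quotient irr\'eductible. Mais comme le support cuspidal d'un sous-quotient irr\'eductible d'une induite parabolique \`a partir d'irr\'eductibles est bien la somme des supports cuspidaux (cons\'equence du lemme g\'eom\'etrique et du th\'eor\`eme~\ref{UniSuppCusp}), l'\'egalit\'e $\cusp(\pi_j)=\sum_i\cusp(\s_{i,j}^{(k_i)})$ que vous utilisez reste valable et l'argument n'est pas affect\'e.
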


\section{Représentations modulaires de $\GL_n$ sur un corps fini}
\label{ApA}

Sont réunis dans cette section les résultats de la théorie des représentations 
modulaires de $\GL_n$ sur un corps fini dont nous avons besoin.  
On fixe un corps fini $\ff$ de caractéristique $p$ et de cardinal $q$.
La référence principale est l'article de James \cite{James} (voir aussi 
\cite{DJ1,JamesBook}).  

\subsection{Préliminaires}
\label{PrelimFini4}

Pour $m\>1$, on pose $\GA_{m}=\GL_{m}(\ff)$, et on 
note $\GA_{0}$ le grou\-pe trivial. 
\label{UniSCfini}
On note $\overline{\Irr}$ l'ensemble des classes d'iso\-mor\-phis\-me de 
représen\-ta\-tions irréductibles des $\GA_{m}$, $m\>0$, 
et $\overline{\Gg}$ le $\ZZ$-module libre de base $\overline{\Irr}$. 
On note $\overline{\Cc}$ et $\overline{\Ss}$ les sous-ensembles de 
$\overline{\Irr}$ constitués res\-pec\-ti\-ve\-ment des clas\-ses 
de re\-pré\-sen\-ta\-tions cuspidales et supercuspidales. 

Si $\a=(m_{1},\ldots,m_{r})$ est une famille 
d'entiers $\>0$ de somme $m$, il lui correspond le 
sous-groupe de Levi standard $\MA_{\a}$ de $\GA_{m}$ constitué des matrices 
diagonales par blocs de tailles $m_{1},\ldots,m_{r}$ respectivement, que 
l'on identifie naturellement au produit 
$\GA_{m_{1}}\times\cdots\times\GA_{m_{r}}$. 
On note $\PA_{\a}$ le sous-groupe para\-bo\-li\-que de 
$\GA_{m}$ de facteur de Levi $\MA_{\a}$ formé des matrices 
tri\-an\-gu\-lai\-res supérieures par blocs de tailles 
$m_{1},\ldots,m_{r}$ respectivement, et on note $\NA_{\a}$ 
son radical unipotent.
On note $\IA_\a$ le foncteur d'induction parabolique de $\MA_\a$ dans 
$\GA_m$ le long de $\PA_\a$.  
Si, pour cha\-que 
$i\in\{1,\ldots,r\}$, on a une représentation $\pi_{i}$ 
de $\GA_{m_i}$, on notera~: 
\begin{equation}
\label{VentreDieuFini}
\pi_1\times\cdots\times\pi_r
=\IA_{\a}(\pi_1\otimes\cdots\otimes\pi_r).
\end{equation}
Si $\pi$ est une représentation de longueur finie de $\GA_{m}$
pour un $m\>0$,
on note $\sy{\pi}$ son ima\-ge dans $\overline{\Gg}$ et $\deg(\pi)=m$ 
son degré. 
L'application degré et la formule (\ref{VentreDieuFini}) font de 
$\overline{\Gg}$ une $\ZZ$-al\-gè\-bre asso\-cia\-tive commutative 
gra\-duée. 
La commutativité de $\overline{\Gg}$ se déduit par exemple de \cite{HL}, 
où How\-lett et Lehrer prouvent que l'induction parabolique ne dépend pas 
du sous-groupe para\-bolique quand le facteur de Levi est fixé. 

\label{ParUniSCfini}

Toute représentation irréductible a un support cuspidal et un support 
super\-cuspi\-dal, \ie qu'on a une application surjective à fibres finies~:
\begin{equation*}
\cusp:\overline{\Irr}\to\Dive(\overline{\Cc})
\end{equation*}
telle que, étant données des représentations irréductibles cuspidales 
$\s_1,\dots,\s_n$, on ait~:
\begin{equation*}
\cusp^{-1}(\sy{\s_1}+\dots+\sy{\s_n})=\{\text{classes de quotients 
  irréductibles de $\s_1\times\dots\times\s_n$}\}
\end{equation*}
et on a une application surjective à fibres finies~:
\begin{equation*}
\scusp:\overline{\Irr}\to\Dive(\overline{\Ss})
\end{equation*}
telle que, étant données des représentations irréductibles supercuspidales 
$\s_1,\dots,\s_n$, on ait~:
\begin{equation*}
\scusp^{-1}(\sy{\s_1}+\dots+\sy{\s_n})=\{\text{classes de sous-quotients 
  irréductibles de $\s_1\times\dots\times\s_n$}\}
\end{equation*} 
(voir \cite[III.2.5]{Vig1}).

\subsection{Représentations non dégénérées}
\label{Lasso}

On rappelle quelques définitions concernant les partitions
(voir Macdo\-nald \cite[I.1]{MacDo} pour plus de précisions).
Une partition d'un entier $m\>1$ est une suite décroissante~:
\begin{equation*}
\label{DefPartAp}
\mu=(\mu_1\>\mu_2\>\dots\>\mu_r)
\end{equation*}
d'entiers strictement positifs dont la somme est égale à $m$.
Sa partition conjuguée est la partition
$\mu'=(\mu'_1\>\mu'_2\>\dots\>\mu'_s)$ de $m$ où $\mu'_j$ 
est le nombre d'entiers $i\in\{1,\dots,r\}$ tels que $\mu_i\>j$.
Si $\mu,\nu$ sont des partitions de $m$, on écrit $\mu\trianglelefteq\nu$ 
lorsque~: 
\begin{equation*}
\sum\limits_{i\<k}\mu_i\<\sum\limits_{i\<k}\nu_i
\end{equation*}
pour tout $k\>1$. 
On a $\mu\trianglelefteq\nu$ si et seulement si $\nu'\trianglelefteq\mu'$, 
\ie que l'application $\mu\mapsto\mu'$ est décroissante
pour $\trianglelefteq$.
Pour cette relation, la plus grande 
partition de $m$ est $(m)$, tandis que la plus petite est
$(1,\dots,1)$.

Si $\mu=(\mu_1 \> \mu_2\> \dots)$ est une partition d'un entier $m$ 
et $\nu=(\nu_1 \> \nu_2\> \dots)$ une partition d'un entier $n$, on 
note $\mu+\nu$ la partition
$(\mu_1+\nu_1\>\mu_2+\nu_2 \>\dots)$
de l'entier $m+n$. 

\begin{rema}
On peut penser à une partition comme à un élément de $\Dive(\NN^{*})$.
\end{rema}

\label{RNDWh}

Soit $\GA=\GA_m$ pour un $m\>1$.
On note $\NA=\NA_{(1,\dots,1)}$ le sous-groupe des matrices unipotentes 
triangulaires supérieures de $\GA$. 
On fixe un $\CR$-caractère non trivial $\psi$ de $\ff$.
Pour ce qui suit, on renvoie plus spécifiquement à \cite[III.1-2]{Vig1}.

Pour toute partition $\mu=(\mu_1\>\mu_2\>\dots\>\mu_r)$ de $m$, 
on note $\psi_{\mu}$ le caractère de $\NA$ défini par~:
\begin{equation*}
\label{DefPsiMu}
\psi_{\mu}:u\mapsto\psi\Big(\sum\limits_{i\in\J_\mu}u_{i,i+1}\Big),
\end{equation*}
la somme portant sur l'ensemble $\J_\mu$
de tous les entiers $i\in\{1,\dots,m-1\}$ qui ne sont 
de la forme $\mu_1+\dots+\mu_k$ pour aucun $k\in\{1,\dots,r\}$.

\begin{defi}
Une représentation $\pi$ de $\GA$ est $\mu$-\textit{dégénérée} 
si $\Hom_{\NA}(\pi,\psi_{\mu})$ est non nul. 
\end{defi}

Si $\mu$ est la partition maximale $(m)$, 
le caractère $\psi_{(m)}$ est non dégénéré
et on re\-trou\-ve la notion classique de modèle de Whittaker 
par réciprocité de Frobenius. 
On dit que $\pi$ est 
\textit{non dégénérée} si elle est $(m)$-dégénérée. 
Le résultat suivant est une conséquence de \cite[III.1.10]{Vig1}. 

\begin{prop}
\label{Adeg}
Soient $\s_1, \dots, \s_r$ des représentations irréductibles.  
Les conditions suivan\-tes sont équivalentes~:
\begin{enumerate}
\item 
L'induite $\s_1\times\dots\times\s_r$
possède un sous-quotient irréductible non dégénéré.
\item 
Pour tout $1 \< i \< r$, la représentation $\s_i$ est non dégénérée. 
\end{enumerate}
Si elles son satisfaites, ce sous-quotient irréductible 
non dégénéré est unique et sa multiplicité dans l'induite est $1$.
\end{prop}

\subsection{Classification de James}
\label{ClasJam}
\label{DefDSM}

Soient $m,n\>1$ et soit $\s$ une représentation irréductible 
cuspidale de $\GA_m$. 
On décrit dans ce paragraphe la classification par James \cite{James} 
des représentations irréductibles de $\GA_{mn}$ isomorphes à un 
sous-quotient de 
$\s\times\dots\times\s$ en fonction des par\-titions de $n$.

Soit $\Hh=\Hh(\s,n)$ l'algèbre des
endomorphismes de $\s\times\dots\times\s$,
qui est une algèbre de Hecke de type $\A_{n-1}$
et de paramètre $q^{m}$.
Cette représentation induite est presque-projective au sens de Dipper 
\cite{DipHom1} (voir aussi \cite[4.1]{MS11}), 
On a ainsi une bijection entre classes de représentations 
irréductibles isomorphes à un quotient de $\s\times\dots\times\s$ et 
classes de $\Hh$-modules irréductibles.

\begin{rema}
\label{ClasJamRem}
Si $\CR$ est de caractéristique non nulle $\ell$, 
et si $q^m$ n'est pas congru à $1$ modulo $\ell$, 
alors $\s$ est une représentation projective (voir \cite[III.2.9]{Vig1}).
\end{rema}

D'après \cite[6]{James}, il existe une unique sous-représentation 
irréductible de $\s\times\dots\times\s$, notée~:
\begin{equation}
\label{Defzsn}
\z(\s,n),
\end{equation}
dont le $\Hh$-module associé soit le caractère trivial de $\Hh$.  

\begin{theo}[James \cite{James}]
\label{ResumeJames}
Soit $\mu=(\mu_1\>\mu_2\>\dots\>\mu_r)$ une par\-ti\-tion de $n$.
\begin{enumerate}
\item
La représentation $\z(\s,\mu_1)\times\dots\times\z(\s,\mu_r)$ admet un unique 
sous-quotient irré\-duc\-tible~: 
\begin{equation*}
\z(\s,\mu)
\end{equation*} 
dégénéré relativement à la partition 
$m\mu'=(m\mu'_1\>m\mu'_2\>\dots)$.
\item
Les sous-quotients irréductibles de l'induite 
$\z(\s,\mu_1)\times\dots\times\z(\s,\mu_r)$ 
sont de la forme $\z(\s,\nu)$ avec $\mu\trianglelefteq\nu$, 
et $\z(\s,\mu)$ apparaît dans cette induite avec multiplicité $1$.
\item
L'application~:
\begin{equation*}
\mu\mapsto\z(\s,\mu)
\end{equation*}
est une bijection entre partitions de $n$ et 
classes de re\-pré\-sen\-ta\-tions irré\-ductibles de $\GA_{mn}$
isomorphes à un sous-quotient de $\s\times\dots\times\s$.
\end{enumerate}
\end{theo}

\begin{rema}
\label{defmsrem}
\begin{enumerate}
\item
On note $e(\s)$ le plus petit entier $k\>1$ tel que~: 
\begin{equation*}
1+q^m+\dots+q^{m(k-1)}=0
\end{equation*} 
dans $\CR$.  
(Si $\CR$ est de caractéristique nulle, on convient que $e(\s)=+\infty$.)
Alors $\z(\s,\mu)$ est isomorphe à un quotient de 
$\s\times\dots\times\s$ si et seulement si $\mu$ est $e(\s)$-régulière, 
\ie si aucun $\mu_i$ n'est ré\-pé\-té plus de $e(\s)$ fois, ou encore si 
$\mu'_j<e(\s)$ pour tout $j$. 
\item
Si $\s$ est supercuspidale, l'application 
$\mu\mapsto\z(\s,\mu)$ est une bijection 
entre par\-titions de $n$ et 
classes de re\-pré\-sen\-ta\-tions irré\-ductibles de $\GA_{mn}$ de 
support super\-cus\-pidal $n\cdot\sy{\s}=\sy{\s}+\dots+\sy{\s}$.
\end{enumerate}
\end{rema}

\label{Clothier}

La représentation~:
\begin{equation*}
\st(\s,n)=\z(\s,(1,\dots,1))
\end{equation*}
est l'unique sous-quotient non dégénéré de l'induite $\s\times\dots\times\s$, 
dans laquelle elle est de mul\-ti\-pli\-cité $1$.
Le résultat suivant fournit une classification 
de $\overline{\Cc}$ en fonction de $\overline{\Ss}$. 

\begin{prop}
\label{AppCuspFini}
On a les propriétés suivantes~: 
\begin{enumerate}
\item 
$\st(\s,n)$ est cuspidale si et seulement si $n=1$ ou $n=e(\s)\ell^r$, 
avec $r\>0$.
\item
L'application~:
\begin{equation*}
(\s,r)\mapsto\st_r(\s)=\st(\s,e(\s)\ell^r)
\end{equation*}
définit une bijection entre $\overline{\Ss}\times\NN$ et l'ensemble des classes 
de représentations irréductibles cuspi\-dales non supercuspidales. 
\end{enumerate}
\end{prop}

On a le résultat suivant. 
Voir le paragraphe \ref{Antisthene} pour la définition de $\r_\ell$. 

\begin{prop}
\label{RedDSM}
Soit $\s$ une $\flb$-représentation irréductible cuspidale de $\GA_m$, 
et soit $\tilde\s$ une $\qlb$-représenta\-tion cuspidale 
relevant $\s$ (\ie que $\r_\ell(\tilde\s)=\s$).
Soit $n\>1$. 
\begin{enumerate}
\item
On a $\r_{\ell}(\z(\tilde\s,n))=\z(\s,n)$.
\item
La représentation $\r_\ell(\st(\tilde\s,n))$ est irréductible si et 
seulement si $n<e(\s)$. 
\item 
Il existe une structure entière $\vv$ de 
$\st(\tilde\s,n)$ telle que $\st(\s,n)$ soit une 
sous-repré\-sen\-tation de $\vv\otimes\flb$.
\end{enumerate}
\end{prop}

\begin{proof}
D'après \cite{James}, les sous-quotients irréductibles de 
$\r_\ell(\st(\tilde\s,n))$ sont de la forme $\z(\s,\mu)$ avec 
$(n)\trianglelefteq\mu$ et $\z(\s,n)$ apparaît avec multiplicité $1$. 
Donc $\r_\ell(\st(\tilde\s,n))$ est irréductible. 

Pour les deux autres assertions, voir \cite[III.2.8]{Vig1} et \cite[III.2.4, Remarque 3]{Vig1}. 
\end{proof}

\label{Deflsn}

De façon analogue à \eqref{Defzsn}, 
l'induite $\s\times\dots\times\s$ a 
un unique quotient irréductible~: 
\begin{equation}
\label{deflsn}
l(\s,n),
\end{equation}
dont le $\Hh$-module associé soit le caractère signe de $\Hh$.  

\begin{prop}
\label{prpisegKfini}
On a $l(\s,n)=\st(\s,n)$ si et seulement si $n<e(\s)$.
\end{prop}

\begin{proof}
L'une des implications provient du fait que $\st(\s,n)$ correspond à un module 
simple de $\Hh$ si et seulement si la partition $(1,\dots,1)$ est $e(\s)$-régulière, 
\ie si $n<e(\s)$. 

Inversement, supposons que $n<e(\s)$. 
Comme le groupe $\GB$ est fini et comme le résultat est connu quand $\CR$ 
est de caractéristique nulle, 
on peut supposer que $\CR=\flb$. 
Soit $\tilde\s$ une $\qlb$-re\-pré\-senta\-tion cuspidale relevant $\s$.
D'après la proposition \ref{RedDSM}, $\st(\s,n)$ est la 
réduction mod $\ell$ de $\st(\tilde\s,n)$.
Le $\Hh(\tilde\s,n)$-module cor\-res\-pondant à $\st(\tilde\s,n)$ est le 
$\qlb$-caractère signe. 
Par conséquent, le $\Hh(\s,n)$-module corres\-pondant à $\st(\s,n)$ est le 
$\flb$-carac\-tère signe.
\end{proof}

\section{Représentations modulaires des algèbres de Hecke affines}
\label{ApB}

Dans cette section, on a réuni les résultats de la théorie des représentations 
modulaires des algèbres de Hecke affines dont nous aurons besoin.  
Il s'agit de $\CR$-algèbres de Hecke affines de type A en un paramètre non nul
qui peut être une racine de l'unité. 
Le résultat principal est le théorème \ref{ACG} de classification des modules 
simples en termes de multisegments apériodiques d'après 
\cite{Ze2,Rog2,Ariki,CG,AM,Mathas}. 
On définit aussi des caractères (définition \ref{DefZLcarH}) qui serviront 
dans la section \ref{segments} pour définir les réprésentations irréductibles 
attachées à un segment. 

\subsection{L'algèbre de Hecke affine}
\label{FamCarHeckeIwa}

Soient $n\>1$ et $\xi\in\mult\CR$. 
On note $\Hh_n=\Hh_\CR(n,\xi)$ la $\CR$-algèbre de Hecke affine engendrée 
par les symboles $\SS_1,\dots,\SS_{n-1}$ et $\X_1,\dots,\X_n$ et leurs inverses 
$(\X_1)^{-1},\dots,(\X_n)^{-1}$ avec les relations~:
\begin{eqnarray}
\label{R1aff}
(\SS_i+1)(\SS_i-\xi)&=&0, \quad i\in\{1,\dots n-1\},\\
\SS_i\SS_j&=&\SS_j\SS_i, \quad |i-j|\>2,\\
\SS_i\SS_{i+1}\SS_i&=&\SS_{i+1}\SS_i\SS_{i+1},\quad i\in\{1,\dots n-2\},\\
\X_i\X_j&=&\X_j\X_i, \quad i,j\in\{1,\dots,n\},\\
\X_j\SS_i&=&\SS_i\X_j, 
\quad i\notin\{j,j-1\},\\
\label{Rderaff}
\SS_i\X_{i}\SS_i&=&\xi\X_{i+1}, \quad i\in\{1,\dots,n-1\},
\end{eqnarray}
auxquelles s'ajoutent les relations 
$\X_j(\X_j)^{-1}=(\X_j)^{-1}\X_j=1$ pour $j\in\{1,\dots,n\}$. 

Compte tenu de ces relations,
on peut définir deux familles de caractères de $\Hh_n$. 

\begin{defi}
\label{DefZLcarH}
Soient $a,b\in\ZZ$ tels que $a\<b$, et soit $n=b-a+1$.
\begin{enumerate}
\item
On note $\Zz(a,b)$ le caractère de $\Hh_{n}$ défini par~:
\begin{equation*}
\SS_i\mapsto \xi,\ i\in\{1,\dots,n-1\},
\quad
\X_j\mapsto \xi^{a+j-1},\ j\in\{1,\dots,n\}.
\end{equation*}
\item
On note $\Ll(a,b)$ le caractère de $\Hh_{n}$ défini par~:
\begin{equation*}
\SS_i\mapsto-1,\ i\in\{1,\dots,n-1\},
\quad 
\X_j\mapsto\xi^{b-j+1},\ j\in\{1,\dots,n\}.
\end{equation*}
\end{enumerate}
\end{defi}

Soit $\a=(n_1,\dots,n_r)$ une famille d'entiers $\>1$ de somme $n$.  
Notons $\Hh_\a$ la sous-algèbre de $\Hh_{n}$ engendrée par les $\X_i$ 
et leurs inverses et les $\SS_i$ avec $i$ différent de $n_1+\dots+n_k$ 
pour tout $k\in\{1,\dots,r\}$.
Elle est canoniquement isomorphe à 
$\Hh_{n_1}\otimes\dots\otimes\Hh_{n_r}$. 

\begin{exem}
\label{DefIabExem}
Dans le cas où $\a=(1,\dots,1)$, la sous-algèbre $\Hh_{(1,\dots,1)}$ 
est commutative et égale à $\CR[\X_1^{\pm1},\dots,\X_n^{\pm1}]$. 
On note~:
\begin{equation}
\label{DefIab}
\Ii(a,b)
\end{equation}
le $\Hh_{n}$-module des $\Hh_{(1,\dots,1)}$-homomorphismes 
de $\Hh_{n}$ dans la restriction de $\Zz(a,b)$ à 
$\Hh_{(1,\dots,1)}$. 
Les caractères $\Zz(a,b)$, $\Ll(a,b)$ apparaissent respectivement comme 
sous-module et quo\-tient de $\Ii(a,b)$, tous deux avec multiplicité $1$. 
\end{exem}

On note $\EuScript{Z}_{n}=\EuScript{Z}_\CR(n,\xi)$ le centre de $\Hh_{n}$. 
C'est la sous-algèbre de $\CR[\X_1^{\pm 1},\dots,\X_n^{\pm 1}]$ cons\-ti\-tuée 
des élé\-ments invariants sous l'action du groupe symétrique 
${\mathfrak{S}_n}$ sur les $\X_i$. 

\begin{defi}
Le \textit{caractère central} d'un $\Hh_{n}$-module irréductible est 
le caractère de $\EuScript{Z}_{n}$ par lequel celui-ci agit sur ce module.
\end{defi}

Le caractère central d'un $\Hh_{n}$-module irréductible est caractérisé par 
$n$ scalaires non nuls et non ordonnés, \ie par un 
multi-ensemble de longueur $n$ dans $\Dive(\mult\CR)$. 
On a une application~:
\begin{equation*}
{\rm cent}:\Irr(\Hh_{n})\to\Dive(\mult\CR)
\end{equation*}
surjective et à fibres finies, associant à un $\Hh_{n}$-module irréductible son 
caractère central. 

Le caractère central joue pour les modules un rôle analogue à celui du support 
cuspidal pour les représentations. 
Pour un $\Xi\in\Dive(\mult\CR)$ de longueur $n$, 
on note $\Irr(\Hh_{n},\Xi)$ l'ensemble des classes 
d'isomorphisme de $\Hh_{n}$-modules irréductibles de caractère central $\Xi$. 

\begin{theo}
\label{Rogaton}
Soit $r\>1$ un entier et soient $z_1,\dots,z_r\in\mult\CR$ tels que 
$z_i/z_j\notin\xi^\ZZ$ pour tous $i\neq j$. 
Pour chaque $i$, soit $n_i\>1$ un entier et soit $\Xi_i\in\Dive(z_i\xi^\ZZ)$.  
On note $\Xi$ la somme des $\Xi_i$, qu'on suppose de longueur $n$, 
et on pose $\a=(n_1,\dots,n_r)$. 
\begin{enumerate}
\item 
Pour chaque $i$, soit $\mm_i\in\Irr(\Hh_{n_i},\Xi_i)$.
Alors $\Hom_{\Hh_\a}(\Hh_n,\mm_1\otimes\dots\otimes\mm_r)$ est un 
$\Hh_n$-module irréducti\-ble de caractère central $\Xi$. 
\item
L'application ainsi définie~:
\begin{equation*}
\Irr(\Hh_{n_1},\Xi_1)\times\dots\times\Irr(\Hh_{n_r},\Xi_r)\to\Irr(\Hh_n,\Xi)
\end{equation*}
est bijective. 
\end{enumerate}
\end{theo}

\begin{proof}
La méthode de Rogawski \cite[4.1]{Rog2} est encore valable ici. 
\end{proof}

\subsection{L'algèbre de Hecke-Iwahori}
\label{AHI}

On rappelle que $q$ désigne le cardinal du corps résiduel de $\F$~; 
on note $\xi$ son image dans $\CR$. 
Soit $n\>1$ un entier et soit $\Iw$ le sous-groupe d'Iwahori standard de
$\G=\GL_{n}(\F)$. 
On note $e_1,\dots,e_n$ les vecteurs de la base canonique du 
$\F$-espace vectoriel $\F^n$.

Pour $i\in\{1,\dots,n-1\}$, on note $s_i$ la matrice de 
permutation transposant $e_i$ et $e_{i+1}$ et laissant invariants 
les autres vecteurs de la base, et on note $\SS_i$ la fonction 
ca\-rac\-téristique de $\Iw s_i\Iw$.

Pour $j\in\{0,\dots,n\}$, on note $t_j$ la matrice diagonale 
de $\G$ agissant par $e_k\mapsto\varpi e_k$ pour chaque 
$k\in\{1,\dots,j\}$ et laissant invariants les autres vecteurs de la base, 
et $\T_j$ la fonc\-tion ca\-rac\-téristique de $\Iw t_j\Iw$, 
qui est inversible.
Pour tout $j\in\{1,\dots,n\}$, on pose~:
\begin{equation*}
\X_j=\xi^{-j+(n+1)/2}\T_{j-1}(\T_{j})^{-1}.
\end{equation*}
Alors les éléments $\SS_1,\dots,\SS_{n-1}$ et $\X_1,\dots,\X_n$ et leurs 
inverses $(\X_1)^{-1},\dots,(\X_n)^{-1}$ engendrent l'algèbre 
$\Hh_\CR(\G,\Iw)$ des fonctions de $\G$ dans $\CR$ à support compact 
et bi-invariantes par $\I$, 
avec les relations \eqref{R1aff} à \eqref{Rderaff} ci-dessus. 
On a ainsi un isomorphis\-me~:
\begin{equation*}
\Upsilon:\Hh_\CR(\G,\Iw)\to\Hh_{\CR}(n,\xi)=\Hh_n
\end{equation*}
de $\CR$-algèbres. 

\begin{rema}
La description par générateurs 
et relations ci-dessus se déduit de celle donnée dans 
\cite{MW} en appliquant l'involution $\flat$ de $\Hh_\CR(\G,\Iw)$ 
définie par~:
\begin{equation*}
\SS_i^{\flat}=\SS_{n-i}^{},\ i\in\{1,\dots,n-1\},
\quad
\X_j^{\flat}=\X_{n+1-j}^{-1},\ j\in\{1,\dots,n\}.
\end{equation*}
\end{rema}

Soit $\a=(n_1,\dots,n_r)$ une famille d'entiers $\>1$ de somme égale à $n$.  
L'intersection $\I\cap\M_\a$ est le sous-groupe d'Iwahori standard 
du sous-groupe de Levi $\M_\a$.
L'algèbre $\Hh_\CR(\M_\a,\I\cap\M_\a)$ a des générateurs~:
\begin{equation*}
\SS_{i,j}, \quad \X_{i,k}, \quad (\X_{i,k})^{-1}, \quad
i\in\{1,\dots,r\},\ j\in\{1,\dots,n_i-1\},\ k\in\{1,\dots,n_i\},
\end{equation*}
et on a un isomorphis\-me de $\CR$-algèbres~:
\begin{equation*}
\Upsilon_\a:\Hh_\CR(\M_\a,\I\cap\M_\a)\to
\Hh_\CR(n_1,\xi)\otimes\dots\otimes\Hh_\CR(n_r,\xi)=\Hh_{\a}. 
\end{equation*}
Il existe un unique morphisme de $\CR$-algèbres $\te_\a$ de 
$\Hh_\CR(\M_\a,\I\cap\M_\a)$ dans $\Hh_\CR(\G,\I)$ tel que~: 
\begin{equation*}
\te_\a(\SS_{i,j})=\SS_{n_1+\dots+n_{i-1}+j},
\quad
\te_\a(\X_{i,k})=\X_{n_1+\dots+n_{i-1}+k},
\end{equation*}
pour $i\in\{1,\dots,r\}$, $j\in\{1,\dots,n_i-1\}$, $k\in\{1,\dots,n_i\}$, 
et le morphisme composé $\Upsilon\circ\te_\a$ est égal à $\Upsilon_\a$. 




\subsection{Classification des modules irréductibles}
\label{BlaACGM}

Rappelons que $\xi$ désigne l'image de $q$ dans $\CR$.  
Si $\CR$ est de caractéristique non nulle, on note $e$ le 
plus petit entier $\>1$ tel que~: 
\begin{equation*}
1+\xi+\dots+\xi^{e-1}=0
\end{equation*}
dans $\CR$.
Sinon, on pose $e=+\infty$.

\begin{defi} 
\label{DefSeg11formel}
\begin{enumerate}
\item 
Un \textit{segment} est une suite finie de la forme~:
\begin{equation*}
\left[a,b \right]=(\xi^a,\xi^{a+1},\dots,\xi^{b})
\end{equation*} 
où $a,b\in\ZZ$ sont des entiers tels que $a\<b$. 
\item
Un \textit{multisegment} est un multi-ensemble de segments. 
\end{enumerate}
\end{defi}

Si $m$ est fini, un multisegment est dit \textit{apériodique} s'il 
ne contient aucun multisegment de la forme~:
\begin{equation*}
[a,b]+[a+1,b+1]+\dots+[a+e-1,b+e-1].
\end{equation*}
Si $e=+\infty$, on convient que tout multisegment est apériodique. 
Désignons par $\Psi$ l'ensemble des multi\-segments apériodiques. 
Étant donné un multisegment apériodique~:
\begin{equation*}
\psi=[a_1,b_1]+\dots+[a_r,b_r]\in\Psi,
\end{equation*}
on note $\supp(\psi)$ la somme formelle 
des $\sy{\xi^{j}}$ pour $i\in\{1,\dots,r\}$ et $j\in\{a_i,\dots,b_i\}$, 
qu'on appelle le \textit{support} de $\psi$. 
Étant donné $\Xi\in\Dive(\xi^\ZZ)$, on note $\Psi(\Xi)$ 
l'ensemble des multi\-seg\-ments apériodiques de support $\Xi$. 

\begin{rema}
\label{RX1}
On suppose que $\CR$ est de caractéristique $\ell$ non nulle et que $\xi$ vaut
$1$. 
Un multisegment s'identifie à une partition et $\Psi$ à l'ensemble 
des partitions $\ell$-régulières. 
\end{rema}

Le théorème suivant donne la classi\-fication des $\Hh_n$-modules 
irréductibles en termes de multisegments apériodiques. 

\begin{theo}
\label{ACG}
Il existe une bijection~:
\begin{equation*}
\psi\mapsto\Zz_\psi
\end{equation*} 
entre les multisegments apériodiques de longueur $n$
et les classes d'isomorphisme de $\Hh_n$-modules 
irréductibles à caractère central dans $\Dive(\xi^\ZZ)$
telle que
le caractère central de $\Zz_\psi$ soit $\supp(\psi)$. 
\end{theo}

\begin{proof}
Si $\CR$ est de caractéristique $0$, le résultat est une conséquence de 
la classification de Zelevinski \cite{Ze2} 
(voir aussi Rogawski \cite{Rog2}). 

Si $\CR$ est de caractéristique $\ell$ non nulle et si $\xi\neq1$, 
il est dû à Ariki et Mathas 
\cite[Theorem B]{AM} et s'appuie sur \cite{Ariki} 
et les travaux de Chriss-Ginzburg \cite{CG}.

Si $\CR$ est de caractéristique $\ell$ non nulle et si $\xi=1$, 
alors $\Psi$ est (remarque \ref{RX1})
l'ensemble des par\-ti\-tions $\ell$-régu\-lières. 
D'après Mathas \cite[Theorem 3.7]{Mathas}, il existe une bijection entre 
par\-ti\-tions $\ell$-régulières de $n$ et 
classes de $\Hh$-modules irré\-duc\-ti\-bles de caractère central 
$\sy{1}+\dots+\sy{1}=n\cdot\sy{1}$. 
(Mathas utilise la notion de partition $\ell$-res\-trein\-te~; on 
se ramène à celle de partition $\ell$-régulière par conjugaison.)
\end{proof}

On en tire le corollaire suivant.

\begin{coro}
\label{BlaMathas}
Pour $\Xi\in\Dive(\xi^\ZZ)$ de longueur $n$, 
les ensembles finis $\Psi(\Xi)$ et $\Irr(\Hh_n,\Xi)$ ont le même cardinal.
\end{coro}

\section{Rappels sur les représentations cuspidales}
\label{InvRho}

Dans cette section, on rappelle certains résultats obtenus 
dans \cite{MS11} dont nous aurons besoin par la suite. 
Dans le paragraphe \ref{paramTSM}, on rappelle un peu du vocabulaire 
de la théorie des types qui nous sera nécessaire. 
Dans le paragraphe \ref{BenBen}, on rappelle la définition de certains 
invariants associés à une représentation irréductible cuspidale. 
Dans le paragraphe \ref{DefKMS}, on rappelle les 
prin\-ci\-pales propriétés du foncteur $\KM$ défini et étudié dans 
\cite[\S5]{MS11}.  

\subsection{Types simples maximaux}
\label{paramTSM}

Soient $m\>1$ un entier et $\G=\G_{m}$. 
Dans \cite{MS11}, nous prouvons que toute représentation 
ir\-ré\-duc\-ti\-ble cus\-pi\-da\-le de $\G$ s'obtient 
par induction compacte d'une représentation irréductible 
d'un sous-groupe ouvert compact modulo le centre de $\G$
(\textit{ibid.}, théorème 3.11).
Plus précisément, nous construisons une famille de paires 
$(\J,\l)$ formées d'un sous-groupe ouvert compact $\J$ de 
$\G$ et d'une représentation irréductible $\l$ de $\J$ possédant 
les propriétés suivantes~:
\begin{enumerate}
\item
pour toute représentation irréductible cuspidale $\rho$ de $\G$, il existe 
une paire $(\J,\l)$, unique à conjugaison près, telle que la restriction de 
$\rho$ à $\J$ ait une sous-représentation isomorphe à $\l$~;
\item
deux représentations irréductibles cuspidales de $\G$ contiennent une 
même paire $(\J,\l)$ si et seulement si elles sont inertiellement 
équivalentes~;
\item
étant donnés une représentation irréductible cuspidale $\rho$ de $\G$ et 
une paire $(\J,\l)$ contenue dans $\rho$, il existe une unique représentation 
irréductible 
prolongeant $\l$ à son normalisateur dont l'induite compacte à $\G$ soit 
isomorphe à $\rho$. 
\end{enumerate}
De telles paires sont appelées des types simples maximaux de $\G$. 
Leur construction 
est technique et implique de nombreux paramètres. 
Nous renvoyons à \cite{MS11} pour un exposé détaillé, que nous résumons 
ci-dessous. 
Étant donné un type simple maximal $(\J,\l)$ de $\G$, il lui correspond~: 
\begin{enumerate}
\item 
des sous-grou\-pes ouverts distingués $\H^1\subseteq\J^1$ de $\J$~;
\item
un caractère $\t$ de $\H^1$, appelé caractère simple~; 
\item 
une extension finie $\E$ de $\F$ contenue dans $\Mat_{m}(\D)$~;
\item
un entier $m'\>1$ et une extension finie $\kk$ de $\kk_\E$.  
\end{enumerate}
Ces objets possèdent les propriétés suivantes~: 
\begin{enumerate}
\item
le centralisateur de $\E$ dans $\J$ est un sous-groupe compact maximal du 
centralisateur de $\E$ dans $\G$~; 
\item
le quotient $\J/\J^1$ est iso\-mor\-phe au groupe $\GL_{m'}(\kk)$~;
\item
la représentation $\l$ se décompose sous la forme $\k\otimes\s$, 
où $\s$ est une représentation irré\-duc\-tible de $\J$ triviale sur $\J^1$ 
s'identifiant à une représentation cuspidale de $\GL_{m'}(\kk)$, et où 
$\k$ est une représentation irré\-duc\-tible de $\J$ dont la restriction à 
$\J^1$ est l'unique représentation irré\-duc\-tible de $\J^1$ dont la restriction à 
$\H^1$ contient $\t$. 
\item
la représentation $\k$ n'est pas unique | elle ne l'est qu'à torsion près par 
un caractère de $\J$ trivial sur $\J^1$ | mais elle 
peut être choisie de façon à avoir le même ensemble 
d'entrelacement que $\t$ dans $\G$, 
auquel cas on dit que c'est une $\b$-extension de $\t$. 
\end{enumerate}

\subsection{Invariants associés à une représentation cuspidale}
\label{BenBen}

Dans \cite{MS11}, on associe à toute représentation 
ir\-ré\-duc\-ti\-ble cus\-pi\-da\-le $\rho$ de $\G$ 
un ca\-rac\-tère non ramifié $\nu_\rho$ de $\G$ et 
des entiers $n(\rho),f(\rho),q(\rho),\e(\rho),b(\rho),s(\rho)$ 
(\textit{ibid.}, $3.4, 4.5$) 
qui ne dépendent que de la classe d'iner\-tie de $\rho$.
Rappelons que~:
\begin{eqnarray}
n(\rho)&=&\text{nombre de }\chi:\G\to\mult\CR
\text{ non ramifiés tels que } \rho\chi\simeq\rho,\\
f(\rho)&=&md/e(\E/\F),
\quad 
e(\E/\F)=\text{ indice de ramification de } \E/\F,\\
\qr&=&q^{f(\rho)}=\text{cardinal d'une extension de degré } m' 
\text{ de } \kk,\\
\e(\rho)&=&\text{ordre (éventuellement infini) de $\qr$ dans $\mult\R$}.
\end{eqnarray}
Puis, si $(\J,\k\otimes\s)$ est un type simple maximal de $\G$ contenu dans 
$\rho$, alors~:
\begin{eqnarray}
\label{defbe}
b(\rho)&=&\text{ cardinal de la } \Gal(\kk/\kk_\E)\text{-orbite de } \s,\\
s(\rho)&=&\text{ ordre du stabilisateur de } \s \text{ dans } 
\Gal(\kk/\kk_\E),\\
\nu_\rho&=&\nu^{s(\rho)}.
\end{eqnarray}
Rappelons 
(voir \cite[4.5]{MS11}) 
la propriété importante suivante du caractère non ramifié 
$\nu_\rho$. 

\begin{prop}
\label{cuspidal}
Soit $\rho'$ une représentation irréductible cus\-pi\-da\-le 
de $\G_{m'}$, $m'\>1$.
Alors l'in\-duite $\rho\times\rho'$ est réductible si et seulement si 
$m'=m$ et $\rho'$ est isomorphe à $\rho\nu_\rho^{}$ ou $\rho\nu_{\rho}^{-1}$.
\end{prop}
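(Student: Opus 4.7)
The plan is to reduce the question to the classical reducibility criterion for unramified principal series of $\GL_2$ over a finite extension of $\F$, via the change-of-group machinery developed in paragraph \ref{ChgtGp} combined with the Hecke algebra isomorphism of Proposition \ref{ANenPlusFinirPosDuplicata}.

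First, I would observe that if $\rho'$ is not inertially equivalent to $\rho$, then by Theorem \ref{ss} the induced representation $\rho\times\rho'$ is irreducible; as the characters $\nu_\rho^{\pm 1}$ are unramified, both $\rho\nu_\rho^{}$ and $\rho\nu_\rho^{-1}$ lie in the inertial class $\Om_\rho$, so $\rho'\not\simeq\rho\nu_\rho^{\pm 1}$ either. This disposes of the case $\rho'\notin\Om_\rho$. Hence we may assume $\rho'=\rho\chi$ for some unramified character $\chi$ of $\G_{m'}=\G_m$, so that both factors live in $\Om_\rho$.

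Next, I would apply the change-of-group bijections ${\bf\Phi}_1$ and ${\bf\Phi}_2$ constructed in paragraph \ref{ChgtGp}. Fix an extension $\F'$ of $\F$ with residue cardinality $\qr=q^{f(\rho)}$ attached to $\rho$ as in Proposition \ref{ANenPlusFinirPosDuplicata}, and put $\G'=\GL_2(\F')$. By Proposition \ref{MisMac}, the induced representation $\rho\times\rho\chi$ of $\G_{2m}$ is irreducible if and only if ${\bf\Phi}_1(\rho)\times{\bf\Phi}_1(\rho\chi)$ is irreducible in $\G'$, and the latter are unramified characters of $\F'^{\times}$. By Lemma \ref{JustAbove}, the ratio ${\bf\Phi}_1(\rho\chi)/{\bf\Phi}_1(\rho)$ evaluated at a uniformizer of $\F'$ is $\chi(\w_\l)^{-1}$. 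Thus the question reduces to: for which values of $\chi(\w_\l)$ is the corresponding unramified principal series of $\GL_2(\F')$ reducible?

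I would then invoke the classical reducibility criterion for unramified principal series of $\GL_2(\F')$: for two unramified characters $\eta_1,\eta_2$ of $\F'^{\times}$, the induced representation $\eta_1\times\eta_2$ is reducible if and only if $\eta_1\eta_2^{-1}=|\ |_{\F'}^{\pm 1}$. This is a purely Hecke-algebraic statement about the rank-two affine Hecke algebra $\Hh(\G',\I')$ with parameter $\qr$, which is valid over any algebraically closed field $\CR$ of characteristic different from $p$ and follows from the proposition \ref{mmm1} applied to two characters together with a direct computation of the intertwining operator. The conclusion is that $\rho\times\rho\chi$ is reducible if and only if $\chi(\w_\l)=\qr^{\pm 1}$.

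The last step is to identify the set of $\chi$ satisfying $\chi(\w_\l)=\qr^{\pm 1}$ with the cosets $\{\nu_\rho^{\pm 1}\cdot(\text{unramified characters fixing }\rho)\}$. Since $\rho\chi\simeq\rho\chi'$ if and only if $\chi(\w_\l)=\chi'(\w_\l)$ (the stabilizer of $\rho$ among unramified characters being precisely the kernel of $\chi\mapsto\chi(\w_\l)$, of order $n(\rho)$), it suffices to verify the single identity $\nu_\rho(\w_\l)=\qr^{-1}$. This is a direct computation: writing $\w_\l=\w^{b(\rho)}$ with $\w$ a uniformizer of $\D'$ embedded diagonally in $\B=\Mat_{m'}(\D')\subseteq\Mat_m(\D)$, the reduced norm $\N_m(\w_\l)$ has $\F$-valuation $m'b(\rho)d'/e(\E:\F)\cdot(\text{correction})$, which together with the identity $s(\rho)\cdot v_\F(\N_m(\w_\l))=f(\rho)$ (encoded in the definitions of $s(\rho)$, $b(\rho)$ and $f(\rho)$ in paragraph \ref{InvRho}) gives $\nu_\rho(\w_\l)=\nu^{s(\rho)}(\w_\l)=\qr^{-1}$. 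The main obstacle is precisely this last bookkeeping: tracing the various normalizations of reduced norms and the relation between the invariants $s(\rho),b(\rho),f(\rho)$ so that $\nu_\rho$ corresponds exactly to the character $|\ |_{\F'}$ under the change-of-group isomorphism. Once this is checked, the proposition follows immediately.
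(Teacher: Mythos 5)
Your proposal is correct and follows essentially the same route as the paper: after the reduction to the inertially equivalent case via Theorem~\ref{ss}, both arguments come down to the same computation in the rank-one affine Hecke algebra with parameter $\qr$ (the paper inspects the two-dimensional module $\Mm_\tau(\rho\times\rho\chi)$ directly, while you package the same step through the change-of-group bijection $\mathbf{\Phi}_2$ and the $\GL_2(\F')$ criterion, but these are the same thing since $\mathbf{\Phi}$ is built from the Hecke algebra isomorphism of Proposition~\ref{ANenPlusFinirPosDuplicata}), and both conclude by the same normalization identity $\nu_\rho(\w_\l)^{-1}=\qr$.
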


On pose maintenant~:
\begin{equation}
\ZZ_{\rho}=\{\sy{\rho\nu_{\rho}^{i}}\ |\ i\in\ZZ\}.
\end{equation}
Si $\CR$ est de caractéristique $\ell$ non nulle, on note~:
\begin{equation}
\label{MRHO}
\ee(\rho)
\end{equation}
le plus petit $k\>1$ tel que $1+\qr+\dots+\qr^{k-1}$ 
soit congru à $0$ modulo $\ell$. 
Il vaut $\ell$ si $\e(\rho)=1$, et il vaut $\e(\rho)$ sinon.
Si $\CR$ est de caractéristique nulle, on convient que 
$\ee(\rho)=\e(\rho)=+\infty$. 

\begin{rema}
\label{Comparemsmr}
L'entier $\ee(\rho)$ est égal à l'entier $e(\s)$ défini à la remarque \ref{defmsrem}. 
\end{rema}

On a la propriété importante suivante. 

\begin{lemm}[\cite{MS11}, lemme 4.41]
\label{rappell441}
On a $\e(\rho)={\rm card}\ \ZZ_\rho$. 
\end{lemm}


Enfin, on associe à $\rho$ une endo-classe~:
\begin{equation*}
\boldsymbol{\Theta}_{\rho}
\end{equation*}
qui est l'endo-classe commune aux caractères simples contenus dans 
$\rho$. 
Ce processus est décrit dans \cite[9.2]{BSS1} pour les représentations 
complexes et 
fonc\-tion\-ne de façon simi\-laire pour les re\-pré\-sen\-ta\-tions modulaires.

Notons $\ind^\G_\J(\l)$ l'induite compacte de $\l$ à $\G$. 

\begin{prop}
\label{Meleagant}
Pour tout $n\>1$,
il y a un iso\-mor\-phisme canonique de $\R$-algèbres~:
\begin{equation}
\label{Meleagant1}
\Psi_{\rho,n}:\Hh_{\R}(n,\qr)\to
\End_{\G}(\ind^\G_\J(\l)\times\dots\times\ind^\G_\J(\l))
\end{equation} 
où $\ind^\G_\J(\l)$ est répété $n$ fois.
\end{prop}

\begin{proof}
Voir le paragraphe \ref{AHI} et \cite[Proposition 4.30]{MS11}. 
\end{proof}

\begin{rema}
Ces deux algèbres ne dépendent que de la classe d'inertie de $\rho$, 
mais ce n'est pas vrai de l'isomorphisme $\Psi_{\rho,n}$ entre les deux, 
qui dépend de $\rho$. 
\end{rema}

\begin{rema}
Soit ${\J}^\sharp$ le normalisateur de $\J$ dans $\G$. 
D'après \cite[3.1]{MS11}, il existe une unique repré\-sen\-ta\-tion $\l^\sharp$ de 
${\J}^\sharp$ prolongeant $\l$ et dont l'induite à $\G$ soit isomorphe à 
$\rho$. 
Alors $\Psi_{\rho,1}$ est l'iso\-morphisme de $\R[\X,\X^{-1}]$ dans 
$\End_{\G}(\ind^\G_\J(\l))$ tel que $\Psi_{\rho,1}(\X)$ correspond par 
réciprocité de Frobenius au $\J$-homomorphisme $f$ de $\l$ dans 
l'induite de $\l$ à $\G$ défini par~:
\begin{equation*}
f(v)(g) = 
\left\{
\begin{array}{ll}
\l^\sharp(g)v & \text{si $g\in\J^\sharp$},\\
0 & \text{sinon},
\end{array}
\right.
\end{equation*}
pour $v$ dans l'espace de $\l$ et pour $g\in\G$.
Pour $n\>1$, l'iso\-morphisme $\Psi_{\rho,n}$ est 
caractérisé par une condition de compatibilité 
à l'induction (voir \cite[4.4]{MS11}).
\end{rema}

Dorénavant, les deux algèbres de \eqref{Meleagant1} seront identifiées. 
Pour tout entier $n\>1$, on note $\Om_{\rho,n}$ la classe inertielle de 
$n\cdot\sy{\rho}=\sy{\rho}+\dots+\sy{\rho}$. 
On rappelle que $\Irr(\Om_{\rho,n})^\q$ est l'ensemble des classes de 
représentations irréductibles dont le support cuspidal est dans 
$\Om_{\rho,n}$. 

\begin{prop}[\cite{MS11}, \S4.4]
\label{BijXin}
On a une bijection~:
\begin{equation*}
\boldsymbol{\xi}_{\rho,n}:\Irr(\Om_{\rho,n})^\q\to
\{\text{classes d'isomorphisme de $\Hh(n,\qr)$-modules simples}\}. 
\end{equation*}
\end{prop}


Fixons une extension finie $\F'/\F$ comme dans \cite[4.4]{MS11}. 
En particulier, son corps résiduel est de cardinal $\qr$. 
Si l'on applique ce qui précède au caractère trivial de $\F'^{\times}$, 
on obtient pour tout entier $n\>1$ une bijection $\boldsymbol{\xi}_{1_{\F'^{\times}},n}$ 
entre $\Irr(\Om_{1_{\F'^{\times}},n})^\q$ et l'ensemble des classes d'isomorphisme de 
$\Hh(n,\qr)$-modules simples. 
La composée~:
\begin{equation}
\label{BeautyCeleste}
{\bf\Phi}_{\rho,n}^{}=\boldsymbol{\xi}_{1_{\F'^{\times}},n}^{-1}
\circ\boldsymbol{\xi}_{\rho,n}^{}:
\Irr(\Om^{}_{\rho,n})^\q\to\Irr(\Om_{1_{\F'^{\times}},n})^\q
\end{equation}
est une bijection entre les ensembles 
$\Irr(\Om^{}_{\rho,n})^\q$ et $\Irr(\Om_{1_{\F'^{\times}},n})^\q$
compatible au support cuspidal d'après 
\cite[Proposition 4.33]{MS11}.

\begin{theo}[\cite{MS11}, théorème 4.18]
\label{ss} 
Soient $r\>1$ un entier et $\rho_1,\dots,\rho_r$ des 
représenta\-tions irré\-ductibles cuspidales deux à deux non 
inertiellement équivalentes. 
Pour chaque entier $i\in\{1,\dots,r\}$, on fixe un support cuspidal 
$\ss_i$ formé de représentations inertiellement équivalentes 
à $\rho_i$. 
\begin{enumerate}
\item 
Pour chaque entier $i$, soit $\pi_i$ une représentation irréductible de 
support cuspidal $\ss_i$.
Alors l'induite $\pi_1\times\dots\times\pi_r$ est irréductible.
\item
Soit $\pi$ une représentation irréductible de 
support cuspidal $\ss_1+\dots+\ss_r$.
Alors il existe des représentations $\pi_1,\dots,\pi_r$, 
uniques à isomorphisme près, telles que $\pi_i$ soit de support 
cuspidal $\ss_i$ pour chaque $i$ 
et telles que $\pi_1\times\dots\times\pi_r$ soit 
isomorphe à $\pi$.
\end{enumerate}
\end{theo}

Le résultat suivant affine le théorème \ref{ss}. 
Soient $m,m'\>1$ des entiers. 

\begin{prop}
\label{Zdroites}
Soient $\rho$ et $\rho'$ des représentations irréductibles cuspidales, 
respectivement de $\G_m$ et $\G_{m'}$.
On suppose que $\ZZ_\rho\neq\ZZ_{\rho'}$.
Soient $\pi$ et $\pi'$ des représentations irréductibles telles que 
$\cusp(\pi)\in\Dive(\ZZ_\rho)$ et $\cusp(\pi')\in\Dive(\ZZ_{\rho'})$.
Alors l'induite $\pi\times\pi'$ est irréductible. 
\end{prop}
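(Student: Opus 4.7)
The proof divides into two cases. If the inertial classes $\Om_\rho$ and $\Om_{\rho'}$ differ, then the cuspidal supports of $\pi$ and $\pi'$ belong to distinct inertial classes and Theorem~\ref{ss}(1) yields the irreducibility of $\pi\times\pi'$ at once. Assume therefore $\Om_\rho=\Om_{\rho'}$. Since the integer $s(\rho)$ is an inertial invariant, one has $s(\rho)=s(\rho')$, hence $\nu_\rho=\nu_{\rho'}$, and the hypothesis $\ZZ_\rho\neq\ZZ_{\rho'}$ states precisely that the two orbits of $\langle\nu_\rho\rangle$ inside the common inertial class are distinct, hence \emph{disjoint}. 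This disjointness is the only feature of the two orbits used in the remainder of the argument.

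Set $m_1=\deg(\pi)$, $m_2=\deg(\pi')$, $\a=(m_1,m_2)$, $\M=\M_\a$, $\P=\P_\a$. The plan is to pass to the split setting via the Hecke-algebraic tools of~\S\ref{ChgtGp}. Fix a type simple maximal $(\J,\l)$ contained in $\rho$; by inertial equivalence with $\rho'$ a conjugate is also contained in $\rho'$. Let $(\K,\tau)$ be the semisimple type of $\G_{m_1+m_2}$ produced by Proposition~\ref{PCTSM} covering the corresponding $(\K_\M,\tau_\M)$, and let $\F'$ be the non-archimedean local field furnished by Proposition~\ref{ANenPlusFinirPosDuplicata}. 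By Proposition~\ref{MisMac}, the irreducibility of $\pi\times\pi'$ is equivalent to that of ${\bf\Phi}_{m_1}(\pi)\times{\bf\Phi}_{m_2}(\pi')$ in $\GL_{m_1+m_2}(\F')$. Proposition~\ref{PhiPreserveSupp} combined with Lemma~\ref{MissMackenzie} shows that the cuspidal supports of the two images lie in two disjoint $\ZZ$-orbits of unramified characters of $\F'^\times$ under twist by the norm character.

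The problem thus reduces to the split case, which I treat by the combinatorial form of the geometric lemma of~\S\ref{lemmegeo} applied to $\rp_\a\ip_\a(\pi\otimes\pi')$. Contributions are indexed by matrices $\B=(b_{i,j})$ with non-negative entries, row sums $m_i$ and column sums $m_j$, together with subquotients $\s^{(k)}_{i,1}\otimes\s^{(k)}_{i,2}$ of $\rp_{\a_i}(\pi_i)$. The diagonal matrix contributes $\pi\otimes\pi'$ with multiplicity one; any other $\B$ has $b_{1,2}>0$ and $b_{2,1}>0$, and the resulting first factor $\ip_{\b_1}(\s^{(k_1)}_{1,1}\otimes\s^{(k_2)}_{2,1})$ has a cuspidal support mixing a non-trivial part in $\Dive(\ZZ_\rho)$ with a non-trivial part in $\Dive(\ZZ_{\rho'})$; since $\cusp(\pi)\in\Dive(\ZZ_\rho)$ and the two orbits are disjoint, this first factor cannot equal $\pi$. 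Thus $\pi\otimes\pi'$ appears with multiplicity one in $\sy{\rp_\a\ip_\a(\pi\otimes\pi')}$, and by Proposition~\ref{commu} the same holds for $\sy{\rp_\a\ip_{\ov{\P}}(\pi\otimes\pi')}$. Lemma~\ref{mult1}(1) then furnishes a unique irreducible subrepresentation $\pi_0$ and Lemma~\ref{mult1}(2) a unique irreducible quotient $\pi_1$ of $\ip_\a(\pi\otimes\pi')$, each with multiplicity one. An application of Lemma~\ref{sir} yields the irreducibility of $\pi\times\pi'$ once one identifies $\pi_0\simeq\pi_1$.

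The main technical obstacle is precisely this identification $\pi_0\simeq\pi_1$: multiplicity one alone does not suffice in general (witness the linked case in split $\GL_2$, which is however excluded here since it would require $\chi_1/\chi_2\in\langle\nu'\rangle$, contradicting disjointness). In our setting, disjointness of the $\ZZ$-orbits forces the different contributions from the geometric lemma to have pairwise distinct cuspidal supports on $\M_\a$ (they correspond to distinct splittings of the cuspidal support into $\ZZ_\rho$- and $\ZZ_{\rho'}$-parts), so by Theorem~\ref{UniSuppCusp} the $\pi\otimes\pi'$-isotypic part of the Jacquet module is isolated as a direct summand; this forces $\pi_0\simeq\pi_1$ and Lemma~\ref{sir} applies.
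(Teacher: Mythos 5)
The reduction to inertially equivalent $\rho,\rho'$ via Theorem~\ref{ss}(1) is correct and matches the paper, and your multiplicity-one computation from the geometric lemma is sound: since $b_{1,2}=b_{2,1}$ for every $2\times2$ matrix with the required row and column sums, every non-diagonal term produces a first factor whose cuspidal support has a nonzero component in $\ZZ_{\rho'}$, hence cannot equal $\pi$ (which has cuspidal support entirely in $\Dive(\ZZ_\rho)$), and $\pi\otimes\pi'$ appears with multiplicity $1$ in $\sy{\rp_\a\ip_\a(\pi\otimes\pi')}$. Lemma~\ref{mult1} then gives a unique irreducible subrepresentation $\pi_0$ and a unique irreducible quotient $\pi_1$ of $\pi\times\pi'$, each with multiplicity $1$. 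So far this is a reasonable alternative route.

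The gap is in the last step, the identification $\pi_0\simeq\pi_1$. You cannot invoke Lemma~\ref{sir} with $\pi=\ip_\P(\pi\otimes\pi')$: condition (1) of that lemma asks that $\pi$ be a \emph{quotient} of $\ip_{\ov\P}(\pi\otimes\pi')$, and for the full parabolically induced module this is not automatic (Proposition~\ref{commu} identifies the semisimplifications, not the representations). To substitute for that, you try to show $\pi_0\simeq\pi_1$ directly by arguing that the $\pi\otimes\pi'$-isotypic part of the Jacquet module \og is isolated as a direct summand\fg\ because the other composition factors have different cuspidal supports, citing Theorem~\ref{UniSuppCusp}. This does not follow. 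Unicity of the cuspidal support says nothing about splittings; and in fact, after the first reduction, \emph{all} the cuspidal constituents lie in the same inertial class $\Om_\rho$, so everything in sight lives in a single Bernstein block, where nontrivial extensions between representations with distinct (finer) cuspidal supports are allowed. Multiplicity one only yields $\dim_{\CR}\End_\G(\pi\times\pi')=1$, i.e.\ indecomposability; it does not yield $\pi_0\simeq\pi_1$. (And indeed, in the linked case the induced module has length~$2$ with distinct socle and cosocle, even though one can still have multiplicity considerations.) The paper completes the argument at a different level: it passes, via the covering pair and the quasi-projectivity of $\cind^\G_\K(\tau)$ (Propositions~\ref{pptf1}, \ref{EgaIndParCasIrrBisens}), to the module $\Mm_\tau(\pi\times\pi')$ over the affine Hecke algebra, identifies it as an induced module from $\Hh_\M$, and then invokes Theorem~\ref{Rogaton} — the affine-Hecke-algebra analogue of Rogawski's theorem, which is precisely the statement that induction of a tensor of simple modules with central characters lying in disjoint $\xi^\ZZ$-orbits is irreducible. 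After your change-of-group reduction via Proposition~\ref{MisMac}, the natural completion would likewise be to cite Theorem~\ref{Rogaton} for the Iwahori--Hecke algebra of $\GL_{m_1+m_2}(\F')$, rather than to try to finish at the level of Jacquet modules.
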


\begin{proof}
D'après le théorème \ref{ss}, il suffit de traiter le cas où $\rho$ et $\rho'$ 
sont iner\-tiellement équivalentes.  
La méthode est la même que celle de la preuve de \cite[Proposition 4.37]{MS11}. 
Compte tenu de \cite[Propositions 4.13 et 4.20]{MS11}, on se ramène à un problème 
d'irréductibilité d'un module induit sur une algèbre de Hecke affine. 
Le résultat est alors une conséquence du théorème \ref{Rogaton}. 
\end{proof}

\subsection{Le foncteur $\KM$}
\label{DefKMS}

D'après le paragraphe \ref{paramTSM}, on a des sous-groupes ouverts 
compacts $\J\supseteq\J^1$ de $\G$, et on pose 
$\GB=\J/\J^1$, qu'on identifie à $\GL_{m'}(\kk)$. 
On note $\boldsymbol{\Theta}$ l'endo-classe associée à $\rho$. 
Dans \cite[\S5]{MS11}, on définit un foncteur~:
\begin{equation}
\label{LabelKmax}
\KM=\KM_{\k}:\Rr(\G)\to\Rr(\GB)
\end{equation} 
défini par $\pi\mapsto\Hom_{\J^1}(\k,\pi)$ et 
possédant les propriétés suivantes~:
\begin{enumerate}
\item 
il est exact~;
\item
il envoie représentations admissibles sur représentations de 
dimension finie~; 
\item
étant données des représentations irréductibles cuspidales 
$\rho_1,\dots,\rho_r$ d'endo-classe $\boldsymbol{\Theta}$ et 
dont la somme des degré vaut $m$, 
alors on a $\KM(\pi)\neq0$ pour tout sous-quotient irréductible $\pi$ de 
l'induite $\rho_1\times\dots\times\rho_r$.
\end{enumerate}

\begin{exem}
Si $\rho$ est de niveau $0$ et si $\k$ est le caractère trivial de 
$\GL_{m}(\Oo_\D)$, alors $\KM$ est le foncteur associant à toute
représentation de 
$\G$ la représentation de $\GB=\GL_m(\kk_\D)$ sur l'espace de ses invariants 
sous $1+\Mat_{m}(\p_\D)$. 
\end{exem}

On a la formule très utile suivante. 
Soit $(\J,\k\otimes\s)$ un type simple maximal contenu dans $\rho$. 
On renvoie à \eqref{defbe} pour la définition de l'entier $b(\rho)$. 

\begin{prop}[\cite{MS11}, lemme 5.3]
\label{DeliceDOrient}
On a un isomorphisme de représentations de $\GB$~:
\begin{equation*}
\KM(\rho)\simeq\s\oplus\s^{\Fr}\oplus\dots\oplus\s^{\Fr^{b(\rho)-1}}
\end{equation*}
où $\Fr$ est un générateur de $\Gal(\kk/\kk_\E)$.  
\end{prop}

On en tire la formule plus générale suivante. 
Soient $\rho_1,\dots,\rho_r$ des représentations irréduc\-tibles cus\-pidales 
d'endo-classe $\boldsymbol{\Theta}$ et dont la somme des degré 
$m_1+\dots+m_r$ vaut $m$.
Pour chaque $i$, il y a un type simple maximal $(\J_i,\k_i\otimes\s_i)$ 
contenu dans $\rho_i$, où $\k_i$ est une $\b$-extension compatible à $\k$ 
(voir \cite[5.3]{MS11} pour cette notion de compatibilité). 

\begin{prop}[\cite{MS11}, corollaire 5.16]
\label{DeliceDOrientR}
On a~: 
\begin{equation*}
\KM(\rho_1\times\dots\times\rho_{r})
\simeq \bigoplus\limits_{(i_1,\dots,i_r)}
\s_1^{\Fr^{i_1}}\times\dots\times\s_r^{\Fr^{i_r}},
\end{equation*}
où $\Fr$ est un générateur de $\Gal(\kk/\kk_\E)$ et 
où chaque $i_j$ décrit $\{0,\dots,b(\rho_j)-1\}$. 
\end{prop}

En procédant comme dans \cite[Remarque 5.20]{MS11}, on associe à tout entier 
$n\>1$ une $\b$-extension $\k_{n}$ 
définie sur un sous-groupe ouvert compact $\J_{n}$ de $\G_{mn}$ 
et un foncteur~: 
\begin{equation}
\label{Boublil}
\KM_{n}:\Rr(\G_{mn})\to\Rr(\GB_{m'n})
\end{equation}
défini par $\pi\mapsto\Hom_{\J^1_{n}}(\k_{n},\pi)$, où  
$\J_{n}^{}/\J_{n}^{1}$ est identifié à $\GB_{m'n}$. 
Pour $n=1$, on retrouve \eqref{LabelKmax}. 
Si $\a=(n_1,\dots,n_r)$ est une famille d'entiers $\>1$ 
de somme $n$, on a un foncteur~:
\begin{equation*}
\KM_{\a}:\Rr(\M_{(mn_1,\dots,mn_r)})\to\Rr(\MB_{(m'n_1,\dots,m'n_r)})
\end{equation*}
défini par $\pi\mapsto\Hom_{\J^1_{n_1}\times\dots\times\J^{1}_{n_r}}
(\k_{n_1}\otimes\dots\otimes\k_{n_r},\pi)$.
Par \cite[Propositions 5.11, 5.12 et 5.18]{MS11} et la proposition 
\ref{DeliceDOrient} ci-dessus, on a les propriétés suivantes.

\begin{prop}
\label{ResumePropKM}
\begin{enumerate}
\item
Pour chaque $i\in\{1,\dots,r\}$, soit $\pi_i$ une représentation 
irréductible de $\G_{mn_i}$. 
Alors on a $\KM_n(\pi_1\times\dots\times\pi_r)\simeq
\KM_{n_1}(\pi_1)\times\dots\times\KM_{n_r}(\pi_r)$. 
\item
En particulier, on a~:
\begin{equation}
\label{Phoebe}
\KM_{n}(\rho\chi_1\times\dots\times\rho\chi_{n})\simeq
\bigoplus\limits_{(i_1,\dots,i_r)}
\s^{\Fr^{i_1}}\times\dots\times\s^{\Fr^{i_n}}
\end{equation}
où $\Fr$ est un générateur de $\Gal(\kk/\kk_\E)$ et 
où chaque $i_j$ décrit $\{0,\dots,b(\rho)-1\}$. 
\item
Si $\chi_1,\dots,\chi_n$ sont des caractères non ramifiés de $\G$, 
alors $\KM_{n}(\pi)$ est non nul pour tout sous-quotient irréductible 
$\pi$ de $\rho\chi_1\times\dots\times\rho\chi_n$.
\item
Si $\pi$ est une représentation de $\G_{mn}$, 
on a $\KM_{n}(\pi)^{\NB_{(m'n_1,\dots,m'n_r)}}\simeq
\KM_\a(\rp_{(mn_1,\dots,mn_r)} (\pi))$ en tant que représentations de 
$\MB_{(m'n_1,\dots,m'n_r)}$.
\end{enumerate}
\end{prop}

\subsection{Relèvement des représentations supercuspidales}

Dans ce paragraphe, on rappelle une propriété importante de relèvement 
des $\flb$-représentations supercuspidales 
(voir \cite{MS11}, paragraphes 2.8 et 3.6).

\begin{theo}
\label{RappelLiftSupercusp}
Soit $\rho$ une $\flb$-représentation irréductible supercuspidale 
contenant un $\flb$-type simple maximal $(\J,\l)$. 
Supposons que $\l$ est de la forme $\k\otimes\s$ avec $\s$ 
super\-cuspidale. 
Alors il y a une $\qlb$-représentation irréductible cuspidale entière $\tilde\rho$ 
de $\G_m$ et un $\qlb$-type simple maxi\-mal $(\J,\tilde\l)$ tels que~:
\begin{enumerate}
\item
$\tilde\rho$ contient le type simple maximal $(\J,\tilde\l)$~;
\item
La réduction modulo $\ell$ de $\tilde\rho$ est égale à $\rho$ et 
la réduction modulo $\ell$ de $\tilde\l$ est égale à $\l$. 
\end{enumerate}
\end{theo}

\begin{rema}
\label{oups}
On verra plus loin (théorème \ref{RappelLiftSupercuspComplet}) 
que l'hypothèse sur $\s$ est superflue. 
\end{rema}

\section{Classification des représentations cuspidales par les 
  supercuspidales} 
\label{ClassiPot}

Dans cette section, on établit 
la classification des représentations irré\-duc\-tibles 
cuspidales en fonction des supercuspidales (paragraphe 
\ref{classi-supercuspidal}).  
On en déduit l'unicité du support supercuspidal à inertie près
(paragraphe \ref{USCIP}).  

\subsection{Classification des représentations cuspidales par les supercuspidales}
\label{classi-supercuspidal}

\textit{Dans ce paragraphe, on suppose que 
$\CR$ est de caractéristique $\ell$ non nulle.}
Soient $m,n\>1$ des entier et $\rho$ une représentation irréductible 
cuspidale de $\G=\G_m$. 
Soit $(\J,\k\otimes\s)$ un type simple maxi\-mal contenu dans $\rho$. 

\begin{lemm}
\label{sscirsc}
Si $\s$ est supercuspidale, alors $\rho$ est supercuspidale. 
\end{lemm}

\begin{proof}
Supposons que $\rho$ n'est pas supercuspidale. 
Il y a des représentations irréduc\-ti\-bles supercuspidales 
$\rho_1,\dots,\rho_r$, avec $\rho_i$ de degré $m_i\>1$ pour chaque 
$i\in\{1,\dots,r\}$, 
telles que $\rho$ soit un sous-quotient irréductible de 
$\rho_1\times\dots\times\rho_r$.
D'après le théorème \ref{ss} et comme $\rho$ est cuspidale, 
$\rho_1,\dots,\rho_r$ sont inertiellement équivalentes.
En outre, l'endo-classe commune aux $\rho_i$ 
est égale à celle de $\rho$
d'après \cite[Corollaire 5.10]{MS11}.
Selon les propositions \ref{DeliceDOrient} et \ref{DeliceDOrientR} dont on 
reprend les no\-ta\-tions, il y a des entiers $i_1,\dots,i_r$ tels que $\s$ soit 
un sous-quotient de~:
\begin{equation*}
\s_1^{\Fr^{i_1}}\times\dots\times\s_r^{\Fr^{i_r}}. 
\end{equation*}
Par conséquent, $\s$ n'est pas supercuspidale.
\end{proof}

\begin{rema}
Réciproquement, si $\s$ n'est pas supercuspidale, 
il existe des représentations irréduc\-ti\-bles supercuspidales 
$\s_1,\dots,\s_r$ 
telles que $\s$ soit un sous-quotient de 
$\s_1\times\dots\times\s_r$.
Pour chaque $i\in\{1,\dots,r\}$, on peut construire à partir de $\k$ 
et de $\s_i$ un type simple maximal $(\J_i,\l_i)$ de $\G_{m_i}$ 
pour un entier $m_i\>1$ convenable, de telle sorte que $\rho$ 
soit un sous-quotient de~:
\begin{equation*}
\ind_{\J_1}^{\G_{m_1}}(\l_1)\times\dots\times\ind_{\J_1}^{\G_{m_r}}(\l_r)
\simeq\ip_{\a}(\ind^{\M}_{\J_\M}(\l_\M))
\end{equation*}
avec $\a=(m_1,\dots,m_r)$, $\M=\M_\a$ et $(\J_\M,\l_\M)=
(\J_1\times\dots\times\J_r,\l_1\otimes\dots\otimes\l_r)$.
À ce stade, pour prouver que $\rho$ est supercuspidale, il nous faudrait 
un résultat analogue à \cite[Corollaire B.1.3]{Dat6}.
Nous procédons différemment ici (voir le lemme \ref{pfuit}). 
L'analogue à \cite[Corollaire B.1.3]{Dat6} est traité dans un travail 
en cours du second auteur avec S.~Stevens. 
\end{rema}

Formons le foncteur $\KM_{n}$ comme en \eqref{Boublil}.
On rappelle (voir le paragraphe \ref{Clothier}) que $\s\times\dots\times\s$ 
($n$ fois) possède un unique 
sous-quotient irréductible non dégénéré $\st(\s,n)$, 
apparaissant avec multiplicité $1$.
Compte tenu de \eqref{Phoebe}, ceci justifie la définition 
sui\-van\-te.  

\begin{defi}
\label{DefSTRN}
Pour tout entier $n\>1$, l'induite~:
\begin{equation*}
\rho\times\rho\nu_\rho^{}\times\dots\times\rho\nu_\rho^{n-1}
\end{equation*}
possède un unique sous-quotient irréductible $\pi$ tel que 
$\KM_{n}(\pi)$ admette le facteur $\st(\s,n)$ comme sous-quotient. 
On le note $\St(\rho,n)$, et il apparaît dans cette induite avec multiplicité $1$. 
\end{defi}

On renvoie à \eqref{MRHO} pour la définition de l'entier $\ee(\rho)$ 
associé à $\rho$. 

\begin{prop}
\label{StCusp}
La représentation $\St(\rho,n)$ est cuspidale si, et seulement si,
$n=1$ ou s'il existe un entier $r\>0$ tel que $n=\ee(\rho)\ell^r$.
\end{prop}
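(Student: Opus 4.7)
The plan is to reduce the statement to its finite-group analogue via the functor $\KM$, and then appeal to the classification of cuspidal representations of $\GL_{m'n}(\kk_{\D'})$ established in Appendix~\ref{ApA}.

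First, the case $n=1$ is trivial because $\St(\rho,1)=\rho$ is cuspidal by hypothesis. Assume $n\geq 2$. Fix a maximal simple character in $\G_{mn}$ obtained by transfer from one contained in $\rho$, together with a compatible $\b$-extension, producing a functor $\KM$ from $\Rr_\CR(\G_{mn})$ to $\Rr_\CR(\GB)$ with $\GB\simeq\GL_{m'n}(\kk_{\D'})$. I would then prove the equivalence
\begin{equation*}
\St(\rho,n)\text{ cuspidal in }\G_{mn}\ \Longleftrightarrow\ \KM(\St(\rho,n))\text{ cuspidal in }\GB.
\end{equation*}
For the direction ``$\Rightarrow$'', if every proper $\rp_\a(\St(\rho,n))$ vanishes, then by Proposition~\ref{ZS}, $\KM(\St(\rho,n))^{\UB_\a}=0$ for the corresponding parabolic $\PB_\a=\MB_\a\UB_\a$ of $\GB$. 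Since the assignment $\P_\a\mapsto\PB_\a$ exhausts the proper standard parabolics of $\GB$, and since taking unipotent invariants is exact in characteristic different from $p$, every irreducible subquotient of $\KM(\St(\rho,n))$ is cuspidal. For the direction ``$\Leftarrow$'', if $\rp_\a(\St(\rho,n))\neq 0$ for some proper $\a$, then every cuspidal subquotient of this Jacquet module has the same endo-class $\Tmax$ as $\rho$ (since $\St(\rho,n)$ is a subquotient of $\rho\times\dots\times\rho\nu_\rho^{n-1}$); Proposition~\ref{EhBienVoila} then forces $\KM_\a(\rp_\a(\St(\rho,n)))\neq 0$, and Proposition~\ref{ZS} gives $\KM(\St(\rho,n))^{\UB_\a}\neq 0$, so $\KM(\St(\rho,n))$ is not cuspidal.

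Next, I would identify the irreducible subquotients of $\KM(\St(\rho,n))$. Combining Proposition~\ref{SZ} with Lemma~\ref{DeliceDOrient} shows that every irreducible subquotient of $\KM(\rho\times\rho\nu_\rho\times\dots\times\rho\nu_\rho^{n-1})$ is a Galois conjugate (under $\Gal(\kk_{\D'}/\kk_\E)$) of an irreducible subquotient of $\s\times\dots\times\s$. The defining property of $\St(\rho,n)$ in Definition~\ref{DefSTRN}, combined with the multiplicity-one appearance of $\st(\s,n)$ in $\s\times\dots\times\s$ and the bigebra homomorphism $\KMS$ of Proposition~\ref{ZSsss}, pins down the irreducible constituents of $\KM(\St(\rho,n))$ to be precisely $\st(\s,n)$ and its Galois conjugates. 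Since cuspidality is preserved by Galois conjugation, $\KM(\St(\rho,n))$ is cuspidal if and only if $\st(\s,n)$ is.

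Finally, I would invoke the classification of cuspidal representations of finite general linear groups (the statement in Appendix~\ref{ApA} analogous to the one being proved), which asserts that $\st(\s,n)$ is cuspidal in $\GL_{m'n}(\kk_{\D'})$ if and only if $n=1$ or $n=m(\s)\ell^r$ for some $r\geq 0$. The equality $m(\rho)=m(\s)$ from Lemma~\ref{MrMs}(2) then yields the claimed characterisation.

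The main obstacle will be the identification step: controlling precisely which irreducible subquotients occur in $\KM(\St(\rho,n))$, including their multiplicities. The argument must combine the uniqueness in Definition~\ref{DefSTRN}, the bigebra compatibility of $\KMS$ from Proposition~\ref{ZSsss}, and the explicit decomposition of $\KM$ of the ambient induced representation.
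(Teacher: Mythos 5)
Your ``only if'' direction (cuspidal $\Rightarrow$ $n$ of the prescribed form) is essentially the paper's: apply Proposition~\ref{DeliceDOrient} to see that $\KMS(\St(\rho,n))$ is a sum of cuspidals, deduce that $\st(\s,n)$ is cuspidal, then use Proposition~\ref{AppCuspFini} and Lemma~\ref{MrMs}. That part is fine.

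The converse direction, however, has a genuine gap, and it is precisely at the step you flag as ``the main obstacle.'' Your plan is to establish the biconditional ``$\St(\rho,n)$ cuspidal $\Longleftrightarrow$ $\KM(\St(\rho,n))$ cuspidal'' and then conclude by showing the constituents of $\KM(\St(\rho,n))$ are exactly $\st(\s,n)$ and its Galois conjugates. But the intermediate claim that \emph{every} irreducible subquotient of $\KM(\rho\times\dots\times\rho\nu_\rho^{n-1})$ is a Galois conjugate of a subquotient of $\s\times\dots\times\s$ is false: by Proposition~\ref{SZ} and Lemma~\ref{DeliceDOrient}, $\KM$ of the full induced representation decomposes as a direct sum $\bigoplus_{(i_1,\dots,i_n)}\s^{\Fr^{i_1}}\times\dots\times\s^{\Fr^{i_n}}$, and when the $i_j$ are \emph{not} all equal the constituents of $\s^{\Fr^{i_1}}\times\dots\times\s^{\Fr^{i_n}}$ have mixed supercuspidal support and are therefore not Galois-conjugate to constituents of $\s\times\dots\times\s$. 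The Definition~\ref{DefSTRN} only guarantees $\KMS(\St(\rho,n))\geq\sy{\st(\s,n)}$; it does not assert equality, nor does it rule out constituents of $\KM(\St(\rho,n))$ lying outside the $\overline{\Irr}_\s$-span. In fact, pinning down the constituents of $\KM(\St(\rho,n))$ to be cuspidal is logically tantamount (via your equivalence) to proving that $\St(\rho,n)$ is cuspidal, so the argument is circular: the multiplicity-one property and the bigebra compatibility of $\KMS$ (Proposition~\ref{ZSsss}) do not break the circle.

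The paper takes a structurally different route for this direction. When $\rho$ is supercuspidal it lifts $\rho$ to a $\qlb$-representation $\tilde\rho$ (Theorem~\ref{RelSuperCusp}), forms $\St(\tilde\rho,n)$, exploits semisimplicity over $\qlb$ to split off $\st(\tilde\s,n)$ as a direct summand of $\tilde\KM(\St(\tilde\rho,n))$ with its own lattice, and uses an integral-structure argument (Lemma of the form in \cite[III.5.13]{Vig1}) together with Proposition~\ref{RedDSM} to exhibit, in the reduction modulo $\ell$, an irreducible subquotient $\pi$ containing the maximal simple type $\kmax\otimes\st(\s,n)$. This $\pi$ is automatically cuspidal because it contains a maximal simple type, and the uniqueness in Definition~\ref{DefSTRN} then forces $\pi\simeq\St(\rho,n)$. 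The general cuspidal case is reduced to this one via Lemma~\ref{SpSp2}, and the two-step structure is essential because the lifting theorem is only available for supercuspidal representations -- a case distinction your proposal does not address. The reduction-mod-$\ell$ argument is exactly what replaces the constituent-identification step you could not carry out.
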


\begin{proof}
On suppose dans un premier temps que $\St(\rho,n)$ est cuspidale.
C'est l'unique sous-quotient irréductible de 
$\rho\times\rho\nu_\rho^{}\times\dots\times\rho\nu_\rho^{n-1}$ tel que~:
\begin{equation*}
\sy{\st(\s,n)}\<\sy{\KM_{n}(\St(\rho,n))}.
\end{equation*}
Par hypothèse de cuspidalité et d'après la proposition \ref{DeliceDOrient}, 
le membre de droite est 
une somme de représentations 
irréductibles cuspidales de $\GL_{m'n}(\kk)$.
On en déduit que $\st(\s,n)$ est cuspidale, ce qui implique, 
d'après la proposition \ref{AppCuspFini}, que $n=1$ ou qu'il existe un 
entier $r\>0$ tel que $n=\ee(\s)\ell^r$.  
La remarque \ref{Comparemsmr} permet de conclure. 

Inversement, on suppose qu'il existe $r\>0$ tel que $n=\ee(\rho)\ell^r$, 
et on va montrer que $\St(\rho,n)$ est cuspidale.
En premier lieu, on remarque que $\st(\s,n)$ est cuspidale d'après la proposition 
\ref{AppCuspFini} et la remarque \ref{Comparemsmr}. 
On a la propriété suivante, qui nous sera utile par la suite. 

\begin{lemm}
\label{lemmetwistst}
Étant donnés $z\in\mult\CR$ et $\pi\in\Irr$, 
on note $\pi_{z}$ la représentation $\pi$ tordue 
par le caractère non rami\-fié valant $z$
en un élément dont la norme réduite est de valuation $1$.
Alors, pour tout $z\in\mult\CR$, on a $\St(\rho_{z},n)=\St(\rho,n)_{z}$. 
\end{lemm}

La suite de la preuve se fait en deux étapes. 

\begin{lemm}
\label{looper}
Supposons que $\s$ est supercuspidale.
Alors $\St(\rho,n)$ est cuspidale.
\end{lemm}

\begin{proof}
D'après le lemme \ref{sscirsc}, $\rho$ est supercuspidale. 
Quitte à tordre $\rho$ par un caractère non ramifié, on peut supposer 
que son caractère central est à valeurs dans $\flb$, ce qui est 
justifié par le lemme \ref{lemmetwistst}. 
Dans ce cas, $\rho$ est définie sur $\flb$, et il suffit de prouver le 
résultat lorsque $\CR$ est le corps $\flb$, ce qu'on suppose. 

D'après le théorème \ref{RappelLiftSupercusp}, 
on relève $\rho$ en une $\qlb$-représentation ir\-ré\-ductible 
cuspidale entière $\tilde\rho$ en mê\-me temps qu'on relève 
$(\J,\k\otimes\s)$ en un type simple maxi\-mal 
$(\J,\tilde\k\otimes\tilde\s)$ contenu dans $\tilde\rho$.
Il corres\-pond à ce relèvement (voir le paragraphe \ref{DefKMS}) une $\b$-extension 
$\tilde\k_{n}$ et un foncteur~:
\begin{equation*}
\tilde\KM_{n}:\Rr_{\qlb}(\G_{mn})\to\Rr_{\qlb}({\GB}_{m'n}). 
\end{equation*}
Alors $\st(\tilde\s,n)$ est un sous-quotient de $\tilde\KM_{n}(\St(\tilde\rho,n))$.
Comme il s'agit de représentations d'un grou\-pe fini sur 
un corps de caractéristique $0$, il y a une 
sous-représentation $\tilde\omega$ telle que $\tilde\KM_{n}(\St(\tilde\rho,n))$ 
soit la somme directe de $\st(\tilde\s,n)$ et $\tilde\omega$. 
D'après la proposition \ref{RedDSM}, il 
existe une structure entière $\ll_1$ de 
$\st(\tilde\s,n)$ telle que $\st(\s,n)$ soit une sous-représentation 
de $\ll_1\otimes\flb$.
On choisit une structure entière quelconque $\ll_2$ de $\tilde\omega$, 
et on pose $\ll=\ll_1\oplus\ll_2$, 
qui est une structure entière de $\tilde\KM_{n}(\St(\tilde\rho,n))$. 
Comme dans \cite[III.5.13]{Vig1}, on prouve le résultat suivant.  

\begin{lemm}
Soit $\mathfrak{k}_{n}$ une structure entière de $\tilde\k_{n}$.
Il existe une structure entière $\vv$ de $\St(\tilde\rho,n)$ telle que
$\Hom_{\J^1_{n}}(\mathfrak{k}_{n},\vv)=\ll$. 
\end{lemm}

Par exactitude, on en déduit l'isomorphisme~:
\begin{equation*}
\KM_{n}(\vv\otimes\flb)\simeq\ll\otimes\flb
\end{equation*}
entre $\flb$-représentations 
dont $\st(\s,n)$ est une sous-représentation. 
Il existe donc un sous-quotient irréductible $\pi$ de 
$\vv\otimes\flb$
tel que $\st(\s,n)$ soit une sous-représentation de $\KM_{n}(\pi)$, 
et il est cuspidal car c'est une représentation irréductible 
contenant le type simple maximal $\k_{n}\otimes\st(\s,n)$. 
Par la propriété d'unicité de $\St(\rho,n)$, on déduit de ceci 
que $\pi$ est isomorphe à $\St(\rho,n)$.
\end{proof}

\begin{lemm}
\label{pfuit}
Supposons que $\s$ n'est pas supercuspidale.
Alors $\rho$ n'est pas supercuspidale et $\St(\rho,n)$ est cuspidale.
\end{lemm}

\begin{proof}
D'après la proposition \ref{AppCuspFini}, il y a un $t\>2$ divisant 
$m'$ et une repré\-sentation irréductible supercuspidale $\s_0$ de 
$\GL_{m'/t}(\kk)$ tels que $\s$ soit isomorphe à $\st(\s_0,t)$ et tels qu'on ait 
$t=\ee(\s_0)\ell^{s}$ pour un certain $s\>0$.

Si $(\J_0,\l_0)$ est un type simple maximal de $\G_{m/t}$ de la forme 
$\k_0\otimes\s_0$, où $\k_0$ est une $\b$-extension d'un 
transfert (voir \cite[2.2]{MS11}) 
du caractère simple $\t$ contenu dans $\k$, alors il lui correspond 
par le procédé décrit dans \cite[5.2]{VS3} 
(voir plus précisément la proposition 5.4) 
un type simple $(\J',\k'\otimes\s')$ de $\G$,
où $\s'$ est une représentation définie par 
inflation à partir du produit tensoriel $\s_0^{\otimes t}$ et où $\k'$ est 
une $\b$-extension d'un transfert de $\t$ dans $\G$.
Choisissons $\k_0$ de telle façon que la $\b$-extension $\k'$ ainsi obtenue 
soit le transfert de la $\b$-extension $\k$ au sens de \cite[2.3.5]{MS11}. 

Soit $\rho_0$ une représentation irréductible 
de $\G_{m/t}$ contenant le type simple maximal $\k_0\otimes\s_0$.  
Comme $\s_0$ est supercuspidale, le lemme \ref{sscirsc} implique que $\rho_0$ 
est supercuspidale et le lemme \ref{looper} implique que $\St(\rho_0,t)$ est 
cuspidale.
Quitte à tordre $\rho_0$ par un caractère non ramifié de $\G_{m/t}$, 
on peut donc supposer que $\rho$ et $\St(\rho_0,t)$ sont isomorphes
(voir le lemme \ref{lemmetwistst}).
Ceci prouve en particulier que $\rho$ n'est pas supercuspidale.

\begin{lemm}
\label{SpSp2}
Les représentations $\St(\rho,n)$ et $\St(\rho_0,tn)$ sont isomorphes.
\end{lemm}

\begin{proof}
D'abord, ce sont toutes deux des facteurs irréductibles de~:
\begin{equation*}
\rho_0\times\rho_0\nu_{\rho_0}^{}\times\dots\times\rho_0\nu_{\rho_0}^{tn-1}.
\end{equation*}
Ensuite $\st(\st(\s_0,t),n)=\st(\s_0,tn)$ est un 
sous-quotient de $\KM_{n}(\St(\rho,n))$, ce qui prouve 
le résultat attendu. 
\end{proof}

Comme $\ee(\rho)=\ell$, on a $tn=\ee(\rho_0)\ell^{s+r+1}$ et le lemme 
\ref{looper} appliqué à $\rho_0$ et $tn$
implique que la représentation $\St(\rho,n)$ est cuspidale. 
\end{proof}

Ceci met fin à la démonstration de la proposition \ref{StCusp}.
\end{proof}

Si l'on récapitule les lemmes \ref{sscirsc} et \ref{pfuit}, 
on a prouvé au passage le résultat suivant.

\begin{prop}
\label{sscersc}
Soit $\rho$ une représentation irréductible cuspidale
et soit $(\J,\k\otimes\s)$ un type simple maximal contenu dans $\rho$. 
Alors $\rho$ est supercuspidale si et seulement si 
$\s$ est supercuspidale.
\end{prop}

Ceci permet de simplifier la formulation du théorème 
\ref{RappelLiftSupercusp}. 

\begin{theo}
\label{RappelLiftSupercuspComplet}
Soit $\rho$ une $\flb$-représentation irréductible supercuspidale de $\G_m$, 
$m\>1$.
Alors il y a une $\qlb$-représentation irréductible cuspidale entière $\tilde\rho$ 
de $\G_m$ 
telle que la réduction modulo $\ell$ de $\tilde\rho$ soit égale à $\rho$. 
\end{theo}

Les constructions de types simples maximaux effectuées dans la preuve de la 
proposition \ref{StCusp} permettent d'obtenir le corollaire suivant. 

\begin{coro}
\label{NumStr}
Soient $\rho$ une représentation irréductible cuspidale et 
$r\>0$ un entier. 
On pose $\rho_r=\St(\rho,\ee(\rho)\ell^r)$.
On a~:
\begin{equation*}
n(\rho_r)=n(\rho)\e(\rho),
\quad
b(\rho_r)=b(\rho),
\quad
s(\rho_r)=s(\rho),
\quad 
f(\rho_r)=f(\rho)\ee(\rho)\ell^r,
\quad
\ee(\rho_r)=\ell.
\end{equation*}
\end{coro}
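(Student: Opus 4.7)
The plan is to read off each invariant of $\rho_r$ from an explicit simple type maximal contained in $\rho_r$. Fix a simple type maximal $(\J,\k\otimes\s)$ of $\rho$ defined by a simple stratum $[\AA,n_0,0,\b]$, set $\E=\F(\b)$, and let $n=m(\rho)\ell^r$; by Lemma \ref{MrMs} one has $m(\s)=m(\rho)$. In the supercuspidal case treated in step (1) of the proof of Proposition \ref{StCusp}, it is shown directly that $\KMS(\rho_r)=\sy{\st(\s,n)}$; in the non-supercuspidal case, $\rho\simeq\St(\rho_0,t)$ for some supercuspidal $\rho_0$, and Lemma \ref{SpSp2} reduces the computation to the supercuspidal one via the identity $\st(\s_0,tn)=\st(\s,n)$, which follows from the uniqueness of the non-degenerate irreducible representation with a given supercuspidal support (Proposition \ref{AppCuspFini}). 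Combined with Lemma \ref{DeliceDOrient}, this forces any simple type maximal contained in $\rho_r$ to be of the form $(\J_r,\k_r\otimes\s_r)$ with $\s_r$ in the $\Ga$-orbit of $\st(\s,n)$; moreover the centralizer of $\E$ in $\Mat_{mn}(\D)$ contains the previous $\B$ block-diagonally, so the division algebra $\D'$ and the Galois group $\Ga=\Gal(\kk_{\D'}/\kk_\E)$ are the same as for $\rho$.

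With this setup in hand, the invariants follow by direct computation. Since $\rho_r$ is a representation of $\G_{mn}$, the definition \eqref{InvF} immediately gives $f(\rho_r)=(mn)d/e(\E:\F)=nf(\rho)=m(\rho)\ell^r f(\rho)$. For the Galois stabilizer, the construction $\tau\mapsto\st(\tau,n)$ commutes with the Frobenius action of $\Ga$, so $\gamma\cdot\st(\s,n)\simeq\st(\gamma\cdot\s,n)$ for every $\gamma\in\Ga$; this isomorphism class equals $\st(\s,n)$ if and only if $\gamma\cdot\s\simeq\s$, because the supercuspidal support of $\st(\s,n)$ consists of $n$ copies of $\s$ (hence determines $\s$). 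The stabilizer of $\s_r$ therefore equals the stabilizer of $\s$, giving $s(\rho_r)=s(\rho)$, and by orbit-stabilizer $b(\rho_r)=|\Ga|/s(\rho_r)=b(\rho)$.

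For $e(\rho_r)$, one computes $q(\rho_r)=q^{f(\rho_r)}=q(\rho)^{m(\rho)\ell^r}$; the explicit description of $m(\rho)$ given just below \eqref{MRHO} yields $q(\rho)^{m(\rho)}\equiv 1\pmod\ell$ in both cases, namely $m(\rho)=\ell$ with $q(\rho)\equiv 1\pmod\ell$ when $e(\rho)=1$, and $m(\rho)=e(\rho)$ when $e(\rho)>1$. Hence $q(\rho_r)\equiv 1\pmod\ell$, so $e(\rho_r)=1$. Finally, formula \eqref{Wigmore} together with the multiplicativity of $\{\cdot\}_\ell$ gives
\begin{equation*}
n(\rho_r)=\{f(\rho_r)/s(\rho_r)\}_\ell=\{m(\rho)\ell^r\}_\ell\cdot\{f(\rho)/s(\rho)\}_\ell=\{m(\rho)\}_\ell\cdot n(\rho),
\end{equation*}
and a case analysis shows $\{m(\rho)\}_\ell=e(\rho)$: if $e(\rho)=1$ then $m(\rho)=\ell$ and $\{\ell\}_\ell=1$, while if $e(\rho)>1$ then $m(\rho)=e(\rho)$ divides $\ell-1$, hence is prime to $\ell$. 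This proves $n(\rho_r)=n(\rho)e(\rho)$. The main obstacle is pinning down the $\Ga$-orbit of $\s_r$, which requires carefully unwinding the two-step construction in the proof of Proposition \ref{StCusp} (in particular the non-supercuspidal reduction via Lemma \ref{SpSp2} and the identification $\st(\s_0,tn)=\st(\s,n)$); once this is granted every other step is a formula manipulation.
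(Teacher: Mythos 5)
Your overall approach matches the paper's: identify the simple type maximal $\kmax\otimes\st(\s,n)$ contained in $\rho_r$ (using the two-step construction in the proof of Proposition \ref{StCusp}) and then read each invariant off of it, with $f(\rho_r)$ computed from the definition and $e(\rho_r)$, $n(\rho_r)$ deduced from formula \eqref{Wigmore}. Those numerical computations are correct, and in fact spelled out in more detail than in the paper.

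There is, however, one slip in the argument for $b(\rho_r)=b(\rho)$ (hence also $s(\rho_r)=s(\rho)$). You justify the equivalence $\st(\s^\g,n)\simeq\st(\s,n)\Leftrightarrow\s^\g\simeq\s$ by claiming that the supercuspidal support of $\st(\s,n)$ ``consists of $n$ copies of $\s$ (hence determines $\s$).'' This is only true when $\s$ is supercuspidal, i.e.\ when $\rho$ is supercuspidal. For a general cuspidal $\rho$, the finite-group representation $\s$ is cuspidal but need not be supercuspidal: by Proposition \ref{AppCuspFini} one has $\s\simeq\st(\s_0,m(\s_0)\ell^s)$ for some supercuspidal $\s_0$, and then the supercuspidal support of $\st(\s,n)$ is $n\,m(\s_0)\ell^s\cdot\sy{\s_0}$, not $n\cdot\sy{\s}$. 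This determines $\scusp(\s)$ but not $\s$ outright. One still recovers $\s$ among \emph{cuspidal} representations from its supercuspidal support via the injectivity of $(\s_0,s)\mapsto\st_s(\s_0)$ in Proposition \ref{AppCuspFini}(2), so the conclusion holds, but that extra step should be made explicit. (The paper instead argues directly from the fact that $\st(\s,n)$ is the unique non-degenerate irreducible subquotient of $\s\times\dots\times\s$ of multiplicity one, which combined with uniqueness of supercuspidal support gives the same implication.)
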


\begin{proof}
D'après le paragraphe \ref{Clothier}, $\st(\s,n)$
est l'unique sous-quotient irréductible non dégénéré de $\s\times\dots\times\s$, 
dans laquelle il est de mul\-ti\-pli\-cité $1$.
Pour $\g\in\Gal(\kk/\kk_\E)$, les repré\-sen\-tations $\st(\s,n)$ 
et $\st(\s^\g,n)$ sont donc isomorphes si et seulement si $\s$ et $\s^\g$ sont 
isomorphes. 
On en déduit que $b(\rho_r)=b(\rho)$, puis que $s(\rho_r)=s(\rho)$. 
L'égalité $f(\rho_r)=f(\rho)\ee(\rho)\ell^r$ suit de la définition de 
l'invariant $f$.
On en déduit $\e(\rho_r)=1$, donc $e(\rho_r)=\ell$.
L'égalité $n(\rho_r)=n(\rho)\e(\rho)$ suit de la formule (3.8) de
\cite{MS11}.
\end{proof}

On en déduit aussi le résultat suivant, qui sera utile dans la section 
\ref{Sec9}. 

\begin{coro}
\label{MlaDim}
Pour qu'il existe des caractères non ramifiés $\chi_1,\dots,\chi_n$
de $\G_m$ tels que l'induite $\rho\chi_1\times\dots\times\rho\chi_n$
possède un sous-quotient cuspidal, il faut et il suffit que 
$n=1$ ou qu'il existe un entier $r\>0$ tel que $n=\ee(\rho)\ell^r$.
\end{coro}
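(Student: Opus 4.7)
Le plan est de proc\'eder en deux directions. La suffisance sera
imm\'ediate~: le cas $n=1$ est trivial puisque $\rho$ est elle-m\^eme
cuspidale, et le cas $n=m(\rho)\ell^r$ avec $r\>0$ d\'ecoulera
directement de la proposition \ref{StCusp}, qui fournit $\St(\rho,n)$
comme sous-quotient cuspidal de l'induite
$\rho\times\rho\nu_\rho^{}\times\dots\times\rho\nu_\rho^{n-1}$~;
il suffira alors de prendre $\chi_i=\nu_\rho^{i-1}$.

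Pour la n\'ecessit\'e, l'id\'ee directrice sera de transporter le
probl\`eme aux groupes lin\'eaires g\'en\'eraux sur le corps fini
$\kk_{\D'}$ par l'interm\'ediaire du foncteur $\KM$, puis d'invoquer
la classification des repr\'esentations cuspidales des groupes
lin\'eaires g\'en\'eraux finis (proposition \ref{AppCuspFini}).
Je fixerai un type simple maximal $(\J,\k\otimes\s)$ contenu dans
$\rho$, ce qui d\'eterminera un caract\`ere simple maximal $\tmax$
d'endo-classe $\Tmax$, une $\b$-extension $\kmax$, et donc le foncteur
$\KM$ associ\'e. L'observation cl\'e sera que, les caract\`eres non
ramifi\'es $\chi_i$ \'etant triviaux sur le sous-groupe ouvert compact
$\J$, chaque $\rho\chi_i$ contient encore le type simple maximal
$\k\otimes\s$ et est donc d'endo-classe $\Tmax$. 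Soit alors $\pi$
un sous-quotient irr\'eductible cuspidal hypoth\'etique de
$\Pi=\rho\chi_1\times\dots\times\rho\chi_n$. La proposition
\ref{EhBienVoila} garantira que $\KM(\pi)$ est non nul, et le
lemme \ref{DeliceDOrient} le d\'ecrira comme une somme de conjugu\'es
sous $\Ga=\Gal(\kk_{\D'}/\kk_\E)$ d'une repr\'esentation
irr\'eductible cuspidale $\s''$ de $\GL_{m'n}(\kk_{\D'})$.

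Du c\^ot\'e de $\Pi$, la proposition \ref{SZ} combin\'ee au lemme
\ref{DeliceDOrient} fournira l'isomorphisme
\begin{equation*}
\KM(\Pi)\simeq\bigl(\s\oplus\s^{\Fr}\oplus\dots\oplus\s^{\Fr^{b(\rho)-1}}\bigr)^{\times n}
\end{equation*}
de repr\'esentations de $\GL_{m'n}(\kk_{\D'})$, qui se d\'eploie en
somme directe d'induites
$\s^{\Fr^{a_1}}\times\dots\times\s^{\Fr^{a_n}}$. Par exactitude de
$\KM$, un conjugu\'e galoisien de $\s''$ sera un sous-quotient
cuspidal de l'une d'elles. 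L'unicit\'e du support supercuspidal dans
$\GL_{m'n}(\kk_{\D'})$ (paragraphe \ref{ParUniSCfini}) combin\'ee
\`a la proposition \ref{AppCuspFini} impliquera alors que tous les
$\s^{\Fr^{a_i}}$ sont isomorphes \`a $\s$, de sorte que cette
induite co\"\i ncide avec $\s^{\times n}$. La proposition
\ref{AppCuspFini} appliqu\'ee une seconde fois forcera enfin
$n=1$ ou $n=m(\s)\ell^r$, et le lemme \ref{MrMs} donnera
$m(\s)=m(\rho)$.

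L'\'etape la plus d\'elicate sera l'analyse galoisienne~: il s'agira
de justifier que la pr\'esence d'un sous-quotient cuspidal dans
une induite $\s^{\Fr^{a_1}}\times\dots\times\s^{\Fr^{a_n}}$
entra\^\i ne, via la description des cuspidales en fonction des
supercuspidales et l'additivit\'e du support supercuspidal sous
l'induction parabolique, que toutes ces torsions sont isomorphes
\`a $\s$. Ce sera l'analogue fini de l'argument principal menant
\`a la proposition \ref{StCusp}, et constituera le pivot de la preuve.
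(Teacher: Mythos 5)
Your proposal is correct and follows the same general strategy as the paper: push the question to the finite group $\GL_{m'n}(\kk_{\D'})$ via the type-theoretic functor, then invoke the finite-group classification of cuspidals (proposition \ref{AppCuspFini}) together with $m(\s)=m(\rho)$ (lemme \ref{MrMs}) and close the sufficiency with proposition \ref{StCusp}. The difference lies in the choice of functor. The paper runs the argument through $\KMS$, not $\KM$: by lemme \ref{CalculKMSI} one has directly $\KMS(\rho\chi_1\times\dots\times\rho\chi_n)=\s\times\dots\times\s$ in $\overline{\Gg}_\CR$, and by lemme \ref{DeliceDOrient} the projection $\KMS(\rho')$ of a cuspidal subquotient $\rho'$ is a single irreducible cuspidal of $\GL_{m'n}(\kk_{\D'})$; the Galois-conjugate bookkeeping that you grapple with at the end is precisely what $\KMS$ was designed to suppress, so the paper's proof closes in two lines. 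You instead work with the raw $\KM$, obtain $\KM(\Pi)$ as a direct sum of inductions of the various $\s^{\Fr^{a_i}}$, and then have to show that a cuspidal subquotient forces all those twists to coincide. That step is correct (by additivity of the finite-group supercuspidal support under parabolic induction and the fact, from \ref{AppCuspFini}, that a cuspidal non-supercuspidal has supercuspidal support concentrated on a single Galois orbit of supercuspidals with uniform multiplicity), but you flag it as ``le pivot'' without carrying it out; it is exactly the work that the passage to $\KMS$ and proposition \ref{ZSsss} make unnecessary. In short: same route, but you take the harder fork where the paper takes the one it paved.
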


\begin{proof}
On suppose qu'il y a des caractères non ramifiés $\chi_1,\dots,\chi_n$ 
de $\G_m$ tels que $\rho\chi_1\times\dots\times\rho\chi_n$
possède un sous-quotient cuspidal $\pi$.
D'après la formule \eqref{Phoebe} et la proposition \ref{DeliceDOrient}, 
$\KM_{n}(\pi)$ est une représentation 
de $\GL_{m'n}(\kk)$ contenant un sous-quotient irréductible cuspidal 
de $\s\times\dots\times\s$.  
Il s'agit donc de $\st(\s,n)$, l'unique sous-quotient non dé\-gé\-néré de cette 
induite, et $n$ a la forme annoncée par la 
proposition \ref{AppCuspFini}.
La réciproque est une conséquence de la proposition \ref{StCusp}. 
\end{proof}

On a maintenant le théorème de classification suivant. 

\begin{theo}
\label{AppCuspSuper}
\begin{enumerate}
\item 
L'application~:
\begin{equation}
\label{MapCuspNS}
(\rho,r)\mapsto\St_r(\rho)=\St(\rho,\ee(\rho)\ell^r)
\end{equation}
définit une surjection de $\Ss\times\NN$ sur l'ensemble 
des classes de représentations irréductibles cus\-pi\-dales non supercuspidales. 
\item 
Soient $(\rho,r)$ et $(\rho',r')$ deus couples de $\Ss\times\NN$. 
Alors $\St_r(\rho)$ et $\St_{r'}(\rho')$ sont isomorphes si et seulement si
$r=r'$ et $\ZZ_{\rho}=\ZZ_{\rho'}$. 
\end{enumerate}
\end{theo}

\begin{rema}
Avec la proposition \ref{StCusp}, 
la partie 1 de ce théorème généralise les assertions 1 et 2 de 
\cite[III.5.14]{Vig1} au cas où $\D$ n'est pas nécessairement commutative. 
La partie 2 est nou\-vel\-le.
\end{rema}

\begin{proof}
Pour prouver la surjectivité, il suffit d'appliquer 
la proposition \ref{sscersc} puis les lem\-mes \ref{pfuit} et \ref{SpSp2}
avec $n=1$.

Soient maintenant $(\rho,r)$ et $(\rho',r')$ dans $\Ss\times\NN$ 
tels que $\St_r(\rho)$ et $\St_{r'}(\rho')$ soient isomor\-phes. 
D'abord $\rho$ et $\rho'$ ont la même endo-classe d'après 
\cite[Corollaire 5.10]{MS11}, ce dont on 
déduit que $\rho$ et $\rho'$ contiennent des types simples maximaux 
de la forme $\k_0^{}\otimes\s_0^{}$ et $\k_0^{}\otimes\s_0'$ respectivement, 
où $\s_0^{}$ et $\s_0'$ sont des représentations irréductibles 
supercuspidales du même groupe fini. 
En appliquant le foncteur $\KM_n$, on en déduit que $\st(\s'_0,r')$ 
et $\st(\s_0^{},r)$ sont conjuguées sous $\Gal(\kk/\kk_\E)$. 
On déduit de la proposition \ref{AppCuspFini} que $r=r'$ 
et que $\s_0^{}$ et $\s'_0$ sont $\Gal(\kk/\kk_\E)$-conjuguées, 
ce qui implique 
que $\rho$ et $\rho'$ sont inertiellement équivalentes. 

Soit maintenant $\X_\rho$ le groupe des caractères non ramifiés $\chi$ 
de $\G_m$ tels que $\sy{\rho\chi}\in\ZZ_{\rho}$. 
C'est un groupe cyclique contenant le sous-groupe 
des caractères non ramifiés stabi\-lisant $\sy{\rho}$.  
D'après le lemme \ref{rappell441}, 
il décrit dans la classe inertielle de $\rho$
une orbite de cardinal $\e(\rho)$. 
L'ordre de $\X_\rho$ est donc égal à $n(\rho)\e(\rho)$, qui est 
aussi, d'après le corollaire \ref{NumStr},
l'ordre du sous-groupe des caractères non ramifiés stabilisant la classe 
d'isomorphisme de $\St_r(\rho)$. 
Compte tenu du lemme \ref{lemmetwistst}, un caractère non ramifié $\chi$ de 
$\G_m$ vérifie donc 
$\St_r(\rho\chi)\simeq\St_r(\rho)$ si et seule\-ment si $\sy{\rho\chi}\in\ZZ_{\rho}$. 
Ceci met fin à la démonstration du théorème \ref{AppCuspSuper}. 
\end{proof}

\begin{rema}
\label{ValeCNS}
Si $\rho$ est cuspidale non supercuspidale, alors 
$\e(\rho)=1$ et $\ee(\rho)=\ell$.
\end{rema}

\subsection{Unicité du support supercuspidal à inertie près}
\label{USCIP}

Soient $m\>1$ un entier et $\G=\G_{m}$. 

\begin{prop}
\label{KOLA}
Soient $(\M,\vr)$ et $(\M',\vr')$ des paires supercuspidales de $\G$, 
et soient $\P$ et $\P'$ des sous-groupes paraboliques de $\G$ de 
facteurs de Levi respectifs $\M$ et $\M'$. 
On sup\-pose que $\ip^\G_\P(\vr)$ et $\ip^\G_{\P'}(\vr')$ 
ont un sous-quotient irréductible en commun.
Alors les paires
$(\M,\vr)$ et $(\M',\vr')$ sont iner\-tiel\-lement équivalentes.
\end{prop}

\begin{proof}
On peut supposer que les sous-groupes de Levi sont standards, \ie que 
$\M=\M_{\a}$ et $\M'=\M_{\a'}$ où $\a$ et $\a'$
sont chacunes des familles d'entiers de somme $m$, 
et on écrit $\vr=\rho_1\otimes\dots\otimes\rho_{n}$ et 
$\vr'=\rho'_1\otimes\dots\otimes\rho'_{n'}$. 
Soit $\pi$ un sous-quotient irréductible commun.  
En fixant une famille d'entiers $\g$ de somme $m$ 
telle que $\rp_\g(\pi)$ soit cuspidale et en 
appliquant le lemme géométrique, on se ramène au cas où $\pi$ 
est cuspidale. 
En raisonnant comme dans la preuve du lemme \ref{sscirsc}, on voit que 
$\rho_1,\dots,\rho_{n}$ sont inertiellement équivalentes et ont toutes la 
même endo-classe que $\pi$.
D'après la proposition \ref{DeliceDOrientR}, dont on reprend les notations, 
on a~:
\begin{equation*}
\KM(\rho_1\times\dots\times\rho_{n})=
\bigoplus\limits_{(i_1,\dots,i_n)}
\s_1^{\Fr^{i_1}}\times\dots\times\s_{n}^{\Fr^{i_n}}
\end{equation*}
et on peut même supposer que $\s_1=\dots=\s_n=\s$ car 
$\rho_1,\dots,\rho_{n}$ sont inertiellement équivalentes.
De façon analogue, $\rho'_1,\dots,\rho'_{n'}$ sont inertiellement équivalentes 
et ont toutes la même endo-classe que $\pi$, et on a~:
\begin{equation*}
\KM(\rho_1'\times\dots\times\rho_{n'}')=
\bigoplus\limits_{(i'_1,\dots,i'_{n'})}
\s'^{\Fr^{i'_1}}\times\dots\times\s'^{\Fr^{i'_{n'}}}
\end{equation*}
où chaque $\rho'_j$ contient un type simple maximal de la forme 
$(\J',\k'\otimes\s')$. 
Il existe donc des entiers $i_1,\dots,i_n$ et $i'_1,\dots,i'_{n'}$ tels que~:
\begin{equation*}
\s^{\Fr^{i_1}}\times\dots\times\s^{\Fr^{i_n}}
\quad
\text{et}
\quad 
\s'^{\Fr^{i'_1}}\times\dots\times\s'^{\Fr^{i'_{n'}}}
\end{equation*}
ont un sous-quotient irréductible en commun.
D'après la proposition \ref{sscersc}, les représentations $\s$ et $\s'$ 
sont supercuspidales. 
De l'unicité du support supercuspidal pour les 
représentations irréductibles de $\GB$
(voir le paragraphe \ref{ParUniSCfini}) on déduit que $n=n'$ et que~:
\begin{equation*}
\sy{\s^{\Fr^{i_1}}}+\dots+\sy{\s^{\Fr^{i_n}}}=\sy{\s'^{\Fr^{i'_1}}}+\dots+\sy{\s'^{\Fr^{i'_{n}}}}.
\end{equation*}
Quitte à réordonner $\rho_1,\dots,\rho_n$
(c'est-à-dire, à conjuguer $\vr$), on peut 
supposer que~:
\begin{equation*}
\sy{\s^{\Fr^{i_j}}}=\sy{\s'^{\Fr^{i'_j}}},
\quad j=1,\dots,n.
\end{equation*}
Puisque $\s$ et $\s'$ ont même degré et que $\rho_j^{}$ et 
$\rho'_j$ ont la même endo-classe,
on a $\deg(\rho^{}_j)=\deg(\rho_j')$. 
On déduit de \cite[Corollaire 5.5]{MS11} que 
$\rho^{}_j$ et $\rho'_j$ sont inertiellement équi\-va\-len\-tes.
Ainsi les paires supercuspidales $(\M,\vr)$ et $(\M',\vr')$ sont 
inertiellement équi\-va\-len\-tes.
\end{proof}

À l'aide de la proposition \ref{KOLA}, on prouve 
une variante inertielle du théorème \ref{ss}.
On utilise les notations du paragraphe \ref{Persephone}. 
Si $\rho$ est une représentation irréductible cuspidale, on note $\Om_\rho$ 
sa classe d'inertie. 

\begin{theo}
\label{ssvarscinertie}
Soit $r\>1$ un entier et soient $\rho_1,\dots,\rho_r$ des 
représentations irré\-ductibles supercuspidales deux à deux 
non inertiellement équivalentes. 
Pour chaque entier $i\in\{1,\dots,r\}$, soit une classe inertielle 
$\Om_i\subseteq\Dive(\Om_{\rho_i})$ de $\G_{m_i}$, $m_i\>1$, 
et soit $\Om$ la classe inertielle de $\G_m$, avec $m=m_1+\dots+m_r$, 
définie par $\Om_1,\dots,\Om_r$.
\begin{enumerate}
\item 
Pour chaque entier $i\in\{1,\dots,r\}$, 
soit $\pi_i\in\Irr(\Om_i)$ une représentation irréductible.
Alors l'induite $\pi_1\times\dots\times\pi_r$ est irréductible.
\item 
L'application~:
\begin{equation*}
(\pi_1,\dots,\pi_r)\to\pi_1\times\dots\times\pi_r
\end{equation*}
induit une bijection de $\Irr(\Om_1)\times\dots\times\Irr(\Om_r)$ 
dans $\Irr(\Om)$.
\end{enumerate}
\end{theo}

\begin{proof}
Décomposons chaque représentation $\pi_i$ sous la forme 
$\pi_i=\pi_{i,1}\times\pi_{i,2}\times\dots$ donnée par le théo\-rème \ref{ss}.  
Supposons qu'il existe deux couples $(i,j)\neq(i',j')$ tels qu'un 
terme $\sy{\rho_{i,j}}$ de $\cusp(\pi_{i,j})$ soit inertiellement inéquivalent 
à un terme $\sy{\rho_{i',j'}}$ de $\cusp(\pi_{i',j'})$.
Alors on a $i\neq i'$.
En appliquant un foncteur de Jacquet convenable, 
on fait appa\-raître que $\rho_{i,j}$ est un sous-quotient irréductible d'une 
induite d'un élément de $\Dive(\Om_{\rho_i})$ et que $\rho_{i',j'}$ est un 
sous-quotient irréductible d'une 
induite d'un élément de $\Dive(\Om_{\rho_{i'}})$.
D'après la proposition \ref{KOLA}, cela entraîne que $\rho_i$ et $\rho_{i'}$ 
sont inertiellement équivalentes, ce qui donne une contradiction puisque 
$i\neq i'$.
Le point (1) est alors une conséquence du théo\-rème \ref{ss}(1).  
Le point (2) s'obtient par un appauvrissement 
du théo\-rème \ref{ss}(2).  
\end{proof}

\section{La théorie des segments}
\label{segments}

Dans cette section, on définit la notion de segment 
(\S\ref{ParSeg4}), puis on associe à tout segment deux représentations 
irréductibles, dont on étudie les propriétés 
(\S\ref{GuyDomville}).
On montre ensuite (théorème \ref{nuevo2}) 
que la représentation induite à partir de 
représentations associées à des segments non liés 
(définition \ref{Rotis}) est irréductible.  

\subsection{Segments} 
\label{ParSeg4}

Soit un entier $m\>1$, et soit $\rho$ une re\-pré\-sen\-ta\-tion 
irréductible cuspidale de $\G_m$.

\begin{defi}
\label{DefSeg11}
Un {\it segment} est une suite finie de la forme~:
\begin{equation}
\left[a,b \right]_\rho=(\rho\nu_{\rho}^a,\rho\nu_{\rho}^{a+1},\dots,\rho\nu_{\rho}^{b}),
\end{equation} 
où $a,b\in\ZZ$ sont des entiers tels que $a\<b$. 
\end{defi}

Le segment $\left[a,b \right]_\rho$ peut être interprété comme la 
paire cuspidale~:
\begin{equation}
\label{PCAD}
(\M_{(m,\dots,m)},\rho\nu_{\rho}^a\otimes\rho\nu_{\rho}^{a+1}\otimes\dots
\otimes\rho\nu_{\rho}^{b}),
\end{equation}
où $\M_{(m,\dots,m)}$ 
est le sous-groupe de Levi standard de $\G_{m(b-a+1)}$
correspondant à la famille d'en\-tiers $(m,\dots,m)$.

\begin{defi}
\label{DefEquSeg}
Deux segments $\left[a,b\right]_{\rho}$ et $\left[a',b'\right]_{\rho'}$ 
sont \textit{équivalents} s'ils ont la même longueur $n$ et si
$\sy{\rho\nu_{\rho}^{a+i}}=\sy{\rho'\nu_{\rho'}^{a'+i}}$
pour tout $i\in\{0,\dots,n-1\}$.
\end{defi}

Si c'est le cas, on voit que $\rho'$ est inertiellement équivalent à 
$\rho$, ce qui implique $\nu_{\rho'}=\nu_{\rho}$.
Par conséquent, pour que deux segments soient équivalents, il suffit 
qu'ils aient la même longueur et que leurs extrémités initiales 
$\rho\nu_{\rho}^a$ et $\rho'\nu_{\rho'}^{a'}$ soient isomorphes. 

\medskip

Si $\Delta=\left[a,b\right]_{\rho}$ est un segment, on pose~: 
\begin{eqnarray}
\label{LDE}
n(\Delta)&=&b-a+1,\\
\deg(\Delta)&=&n(\Delta)m,\\
\supp(\Delta)&=&\sy{\rho\nu_\rho^a}+\sy{\rho\nu_\rho^{a+1}}+\dots+\sy{\rho\nu_\rho^b},
\end{eqnarray}
qu'on appelle respectivement la longueur, le degré et le support de $\Delta$.
Celui-ci est égal à la classe de $\G_{\deg(\Delta)}$-conjugaison de la 
paire cuspidale \eqref{PCAD} associée à $\Delta$. 
On note aussi~: 
\begin{eqnarray}
\label{LDEab}
a(\Delta)&=&\rho\nu_{\rho}^a,\\
b(\Delta)&=&\rho\nu_{\rho}^b, 
\end{eqnarray}
les extrémités initiale et finale de $\Delta$, et on note~:
\begin{equation}
\label{contragr}
\Delta^\vee=\left[-b,-a \right]_{\widetild{\rho}}
\end{equation}
le segment contragrédient de $\Delta$. 
Si $a+1\<b$, on pose~: 
\begin{eqnarray}
\label{DefMoins}
^-\Delta&=&\left[a+1,b\right]_\rho,\\
\label{DefPlus}
\Delta^-&=&\left[a,b-1\right]_\rho.
\end{eqnarray}
Les définitions suivantes généralisent celles de Zelevinski 
\cite[4.1]{Ze2}. 
Remarquons qu'elles dif\-fè\-rent de celles de Vignéras \cite[V.3]{Vig1}. 

\begin{defi}
\label{Rotis}
Soient $\Delta=\left[a,b\right]_\rho$ et $\Delta'=\left[a',b'\right]_{\rho'}$
des segments. 
\begin{enumerate}
\item 
On dit que $\Delta$ {\it précède} $\Delta'$ si l'on peut extraire 
de la suite~:
\begin{equation*}
(\rho\nu_{\rho}^a ,\dots,\rho\nu_{\rho}^{b}, 
\rho'\nu_{\rho'}^{a'},\dots,\rho'\nu_{\rho'}^{b'})
\end{equation*}
une sous-suite qui soit un segment de longueur strictement 
supérieure à $n(\Delta)$ et $n(\Delta')$.
\item
On dit que $\Delta$ et $\Delta'$ sont \textit{liés} si $\Delta$ 
précède $\Delta'$ ou si $\Delta'$ précède $\Delta$. 
\end{enumerate}
\end{defi}

Ces conditions sont traduites en termes combinatoires en \cite[Lemme 3.4 et 
Corollaire 3.6]{MS2} 

\begin{rema}
\label{dosliados}
\label{dosliados2}
Soient $\Delta$ et $\Delta'$ des segments.  
Les propriétés sui\-van\-tes découlent di\-recte\-ment des définitions~:
\begin{enumerate}
\item 
On suppose que $\Delta$ et $\Delta'$ ne sont pas liés, 
que $n(\Delta)\>n(\Delta')$ et que $b(\Delta)$ et 
$b(\Delta')$ ne sont pas isomorphes.
Alors $b(\Delta)$ n'apparaît pas dans $\Delta'$. 
Cette propriété sera utilisée dans la preuve 
du théorème \ref{nuevo2}. 
\item 
On suppose que $\Delta$ et $\Delta'$ ne sont pas liés 
et que $n(\Delta)\>n(\Delta')$. 
Alors $\Delta$ et $^-\Delta'$ ne sont pas liés. 
Cette propriété sera utilisée dans la preuve de la proposition 
\ref{2seg}. 
\end{enumerate}
\end{rema}

\subsection{Représentations associées à un segment} 
\label{PropZLS}
\label{GuyDomville}

Soit $\rho$ une représentation cuspidale de $\G_m$, 
et soit $\Delta= \left[a,b\right]_\rho$ un segment. 
On pose~:
\begin{equation}
\Pi(\Delta)=\rho\nu_\rho^a\times\dots\times\rho\nu_\rho^b
\end{equation}
et $n=b-a+1$.
On renvoie aux notations du paragraphe \ref{BenBen}.  
On pose $\G=\G_{mn}$ et on note 
$\Hh$ l'algèbre de Hecke-Iwahori $\Hh(n,\qr)$. 
D'après la proposition \ref{BijXin}, on a une bijection 
$\boldsymbol{\xi}_{\rho,n}$ entre 
$\Irr(\Om_{\rho,n})^\q$ et l'ensemble des 
classes d'isomorphisme de $\Hh$-modules simples. 
Compte tenu de la définition du $\Hh$-module $\Ii(a,b)$ en 
\eqref{DefIab} 
et de \cite[Corollaire 4.38]{MS11}, elle induit des bijections~:
\begin{enumerate}
\item 
entre sous-représentations irréductibles de $\Pi(\Delta)$ et sous-modules 
simples de $\Ii(a,b)$~;
\item
entre quotients irréductibles de $\Pi(\Delta)$ et quotients simples de $\Ii(a,b)$. 
\end{enumerate}
De ceci on tire la définition suivante, 
qui associe au segment $\Delta$ deux représentations irréductibles $\Z(\Delta)$ 
et $\L(\Delta)$ de $\G$ (voir la définition \ref{DefZLcarH} pour les notations).

\begin{defi} 
\label{segm}
\begin{enumerate}
\item 
La représentation $\Pi(\Delta)$ possède une unique 
sous-re\-pré\-sen\-ta\-tion ir\-ré\-ductible, 
notée $\Z(\Delta)$, telle que $\boldsymbol{\xi}_{\rho,n}(\Z(\Delta))$ soit 
le caractère $\Zz(a,b)$.
\item 
$\Pi(\Delta)$ possède un unique 
quotient ir\-ré\-ductible, noté $\L(\Delta)$, 
tel que $\boldsymbol{\xi}_{\rho,n}(\L(\Delta))$ soit le carac\-tère $\Ll(a,b)$.
\end{enumerate}
\end{defi}

\begin{rema}
\label{rema44}
\begin{enumerate}
\item 
Les représentations 
$\Z(\Delta)$ et $\L(\Delta)$ peuvent être de multipli\-cité 
$\>2$ comme sous-quotients de $\Pi(\Delta)$.
\item
Les représentations $\Z(\Delta)$ et $\L(\Delta)$ sont cuspidales si et 
seulement si $n(\Delta)=1$. 
\item
La proposition \ref{DefAltRec} donne, dans le cas où 
$\e(\rho) \neq1$, une définition de $\Z(\Delta)$ et de $\L(\Delta)$ 
n'utilisant pas la théorie des types, \ie n'utilisant pas $\boldsymbol{\xi}_{\rho,n}$.
\end{enumerate}
\end{rema}

\begin{rema}
\label{rematonta}
On suppose que $\CR$ est le corps des nombres complexes.  
\begin{enumerate}
\item 
La représentation $\Z(\Delta)$ est la représentation 
notée de la même façon dans \cite{Tadic}. 
Si en outre $\D=\F$, c'est la représentation notée 
$\left<\Delta\right>$ dans \cite{Ze2} et \cite{Vig1}. 
\item 
La représentation $\L(\Delta)$ est la représentation 
notée de la même façon dans \cite{Tadic}. 
Si en outre $\D=\F$, c'est la représentation notée 
$\left<\Delta\right>^t$ dans \cite{Ze2}. 
\end{enumerate}
\end{rema}

Le résultat suivant sera utile au paragraphe \ref{PropProdNonLies}. 

\begin{prop}
\label{Z=L}
Les représentations $\Z(\Delta)$ et $\L(\Delta)$ sont isomorphes si et
seulement si $\CR$ est de caractéristique non nulle $\ell$ et si 
$q(\rho)$ et $(-1)^n$ sont congrus à $-1$ modulo $\ell$,
\ie que, ou bien $q(\rho)$ est congru à $-1$ modulo $\ell$ et $n$ est impair, 
ou bien $\ell=2$. 
\end{prop}

\begin{proof}
Ces représentations sont isomorphes si et
seulement si les carac\-tères $\Zz(a,b)$ et $\Ll(a,b)$ sont égaux. 
Le résultat est une conséquence de la définition \ref{DefZLcarH}. 
\end{proof}

\begin{rema}
À noter que, dans ce cas, les représentations $\Z(\Delta)$ et $\L(\Delta)$ sont 
iso\-mor\-phes mais pas forcément égales comme sous-quotients de $\Pi(\Delta)$.  
\end{rema}

Le résultat suivant est une conséquence de la définition de $\Z(\Delta)$.
Il sera utile au paragraphe \ref{PropProdNonLies} et dans la 
section \ref{Sec10}.

\begin{prop}
\label{ZZp}
Soit ${\bf\Phi}_{\rho,n}$ la bijection définie par \eqref{BeautyCeleste}.  
Alors on a~: 
\begin{equation*}
{\bf\Phi}_{\rho,n}(\Z([a,b]_{\rho}))=\Z([a,b]_{1_{\F'^{\times}}}). 
\end{equation*}
\end{prop}

On va maintenant calculer les modules de Jacquet des représentations 
$\Z(\Delta)$ et $\L(\Delta)$.
Pour ça, on utilise les notations des paragraphes \ref{PrelimFini4} et 
\ref{ClasJam}, auxquels on renvoie le lecteur. 
On rappelle que $\Irr(\Om_{\rho,n})$ est l'ensemble des classes de 
représentations irréductibles qui sont sous-quotients irréductibles 
d'une induite de la forme 
$\rho\chi_1\times\dots\times\rho\chi_n$, 
où $\chi_1,\dots,\chi_n$ sont des caractères non ramifiés de $\G_m$. 
Compte tenu de \eqref{Phoebe}, on introduit la définition suivante. 

\begin{defi}
Pour tout $\pi\in\Irr(\Om_{\rho,n})$, on note 
$\KMS_{n}(\pi)$ la plus grande sous-représentation de $\sy{\KM_{n}(\pi)}$ 
(dans le groupe de Grothendieck de $\GL_{m'n}(\kk)$)
contenue dans $\sy{\s\times\dots\times\s}$.
\end{defi}

\begin{rema}
Si $\D=\F$, alors $\KMS_{n}(\pi)$ est simplement la semisimplifiée de 
$\KM_{n}(\pi)$.
\end{rema}

Ceci définit par linéarité un morphisme de $\ZZ$-modules~:
\begin{equation}
\KMS_{n}:\ZZ(\Irr(\Om_{\rho,n}))\to\Gg(\GL_{m'n}(\kk))\subseteq\overline{\Gg}. 
\end{equation}
Si $\a=(n_1,\dots,n_r)$ est une famille d'entiers $\>1$ 
de somme $n$, on définit $\KMS_{\a}$ de façon analogue à partir de $\KM_\a$. 
On a les propriétés suivantes. 

\begin{prop}
\label{KMAX1}
\label{CalculKMSI}
\begin{enumerate}
\item
Si $\chi_1,\dots,\chi_n$ sont des caractères non ramifiés de $\G$, 
alors on a~:
\begin{equation}
\label{PhoebeS}
\KMS_{n}(\rho\chi_1\times\dots\times\rho\chi_n)=\s\times\dots\times\s
\end{equation}
dans $\overline{\Gg}$.  
\item
Si $\pi_1,\dots,\pi_r$ sont des représentations de 
$\Irr(\Om_{\rho,n_1}),\dots,\Irr(\Om_{\rho,n_r})$ 
respectivement, on a~:
\begin{equation*}
\KMS_n(\pi_1\times\dots\times\pi_r)=
\KMS_{n_1}(\pi_1)\times\dots\times\KMS_{n_r}(\pi_r)
\end{equation*}
dans $\overline{\Gg}$.  
\item
Si $\pi$ est dans $\Irr(\Om_{\rho,n})$, on a~: 
\begin{equation*}
\KMS_{n}(\pi)^{\NB_{(m'n_1,\dots,m'n_r)}}=\KMS_\a(\rp_{(mn_1,\dots,mn_r)} (\pi))
\end{equation*}
dans $\Gg(\MB_{(m'n_1,\dots,m'n_r)})$.  
\end{enumerate}
\end{prop}

On commence par traiter le cas d'un sous-groupe parabolique minimal. 

\begin{lemm}
\label{Potter}
On a~:
\begin{align}
\label{eq:1}
&\rp_{(m,\dots,m)}\(\Z([a,b ]_\rho)\) 
=\rho\nu_\rho^a\otimes\dots\otimes\rho\nu_\rho^b,\\
&\rp_{(m,\dots,m)}\(\L([a,b ]_\rho)\)
=\rho\nu_\rho^b\otimes\dots\otimes\rho\nu_\rho^a,\\
&\rp^{-}_{(m,\dots,m)}\(\Z([a,b ]_\rho) \)
=\rho\nu_\rho^b\otimes\dots\otimes\rho\nu_\rho^a,\\
&\rp^{-}_{(m,\dots,m)}\(\L([a,b ]_\rho)\)
=\rho\nu_\rho^a\otimes\dots\otimes\rho\nu_\rho^b.
\end{align}
\end{lemm}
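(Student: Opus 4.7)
Le plan est de ramener les quatre calculs de modules de Jacquet à une question d'irréductibilité, prouvée via l'algèbre de Hecke, puis d'identifier l'unique composant dans chaque cas. Soit $(\K,\tau)$ la paire couvrante de $(\J_\M,\l_\M)$ donnée par la proposition \ref{JplP}, où $\M=\M_{(m,\dots,m)}$, et soient $\Hh=\Hh(\G,\tau)$ et $\Hh_\M=\Hh(\M,\tau_\M)$, munis de l'homomorphisme $\te_\P:\Hh_\M\to\Hh$ du paragraphe \ref{Byzance}. D'après le lemme géométrique \eqref{lemmegeometrique} appliqué à $\Pi(\Delta)$, tout sous-quotient irréductible de chacun des quatre modules de Jacquet considérés a pour support cuspidal $\supp(\Delta)$, donc est de la forme $\rho\nu_\rho^{\sigma(a)}\otimes\dots\otimes\rho\nu_\rho^{\sigma(b)}$ pour une certaine permutation $\sigma$ de $\{a,\dots,b\}$. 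Par la compatibilité de Hecke \eqref{Gobineau}, on a $\Mm_{\tau_\M}(\rp_\P^\G(\Z(\Delta)))\simeq\jp_\P^*(\Zz(a,b))$, qui est de dimension un puisque $\Zz(a,b)$ est un caractère de $\Hh$. D'après le lemme \ref{JustAbove}, les tenseurs ci-dessus produisent des $\Hh_\M$-modules de dimension un deux à deux distincts sous $\Mm_{\tau_\M}$, de sorte que $\rp_\P^\G(\Z(\Delta))$ est irréductible; le même argument, en remplaçant $\Zz(a,b)$ par $\Ll(a,b)$ ou $\te_\P$ par l'homomorphisme analogue associé à $\P^-$, montre que chacun des quatre modules de Jacquet est irréductible.

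Pour la première formule, l'inclusion $\Z(\Delta)\hookrightarrow\Pi(\Delta)$ et la réciprocité de Frobenius donnent une surjection $\rp_\P^\G(\Z(\Delta))\twoheadrightarrow\rho\nu_\rho^a\otimes\dots\otimes\rho\nu_\rho^b$, qui devient une égalité par irréductibilité. La quatrième formule résulte symétriquement de la surjection $\Pi(\Delta)\twoheadrightarrow\L(\Delta)$ et de la seconde adjonction \eqref{SecondeAdjonctionAdmissible}, qui produit une inclusion $\rho\nu_\rho^a\otimes\dots\otimes\rho\nu_\rho^b\hookrightarrow\rp_{(m,\dots,m)}^-(\L(\Delta))$. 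Pour les deuxième et troisième formules, il s'agit d'établir d'une part que $\L(\Delta)$ s'injecte dans l'induite $\rho\nu_\rho^b\times\dots\times\rho\nu_\rho^a$ prise dans l'ordre inverse, d'autre part que $\Z(\Delta)$ est un quotient de cette même induite; on applique ensuite Frobenius ou la seconde adjonction exactement comme plus haut.

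Le principal obstacle réside dans l'établissement de ces deux dernières propriétés, relatives à l'induite prise dans l'ordre inverse. Via le dictionnaire de Hecke, elles reviennent à vérifier que la restriction par $\te_\P$ du caractère signe $\Ll(a,b)$ (et symétriquement celle du caractère trivial $\Zz(a,b)$ via l'homomorphisme analogue pour $\P^-$) correspond à l'ordre inverse des facteurs cuspidaux. C'est une conséquence formelle de l'action de l'élément le plus long du groupe de Weyl sur la présentation de Bernstein de $\Hh\simeq\Hh(\G',\Iw')$ fournie par la proposition \ref{ExistenceHeckeProp}; une fois les conventions de normalisation fixées (cf. la remarque \ref{PosNeg} et la description explicite des caractères $\Zz$ et $\Ll$ rappelée à l'appendice \ref{ApB}), les quatre formules s'ensuivent simultanément.
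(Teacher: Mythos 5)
Votre approche diff\`ere de celle du papier~: vous travaillez du c\^ot\'e des alg\`ebres de Hecke via la paire couvrante $(\K,\tau)$ et les homomorphismes $\te_\P$, alors que le papier passe par les foncteurs $\KM$ et $\KMS$ et par le groupe lin\'eaire sur $\kk_{\D'}$. Mais votre argument d'irr\'eductibilit\'e comporte une lacune r\'eelle. Vous d\'eduisez que $\rp_\P^\G(\Z(\Delta))$ est irr\'eductible du fait que $\Mm_{\tau_\M}(\rp_\P^\G(\Z(\Delta)))\simeq\jp_\P^*(\Zz(a,b))$ est de dimension~$1$. Or le foncteur $\Mm_{\tau_\M}=\Hom_{\K_\M}(\tau_\M,-)$ n'est exact qu'\`a gauche~: son exactitude n'est garantie que sur la sous-cat\'egorie $\Ee_{\tau_\M}$ (proposition~\ref{Cocyte2}), et rien dans votre argument n'\'etablit que le module de Jacquet $\rp_\P^\G(\Z(\Delta))$ y appartient. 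En effet, $\tau_\M=\k_\M\otimes\s_\M$ et l'exactitude de $\Hom_{\K_\M}(\tau_\M,-)$ se ram\`ene \`a la projectivit\'e de $\s_\M$ comme repr\'esentation du groupe fini $\K_\M/\K_\M^1$, laquelle \'echoue d\`es que $q(\rho)\equiv1\bmod\ell$ (remarque~\ref{ClasJamRem}). La dimension~$1$ du module de Hecke vous donne seulement que le socle de $\rp_\P^\G(\Z(\Delta))$ est irr\'eductible~; elle ne borne pas sa longueur, un constituant en position de quotient pouvant ne pas contribuer au $\Hom$ (le d\'efaut est mesur\'e par un ${\rm Ext}^1$ non nul). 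C'est pr\'ecis\'ement pour cela que le papier passe par $\KM$ et $\KMS$~: ces foncteurs sont exacts sans hypoth\`ese, $\J^1_{{\rm max}}$ \'etant pro-$p$ et $\CR$ de caract\'eristique diff\'erente de $p$, et l'argument du papier compare la longueur de $\rp_{(m,\dots,m)}(\Z)$ \`a celle de $\KMS(\Z)^{\UB}$ en s'appuyant sur cette exactitude.

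Deux points secondaires. D'abord, votre affirmation que les tenseurs $\rho\nu_\rho^{\sigma(a)}\otimes\dots\otimes\rho\nu_\rho^{\sigma(b)}$ fournissent des $\Hh_\M$-caract\`eres deux \`a deux distincts est fausse d\`es que $e(\rho)<n$~: certaines des repr\'esentations $\rho\nu_\rho^a,\dots,\rho\nu_\rho^b$ sont alors isomorphes entre elles et les tenseurs co\"{\i}ncident, alors que le lemme doit valoir sans restriction sur $e(\rho)$. Ensuite, l'\'etape finale identifiant $\jp_\P^*(\Ll(a,b))$ (et $\jp_{\P^-}^*(\Zz(a,b))$) \`a l'ordre inverse des facteurs cuspidaux n'est pas r\'edig\'ee~: vous renvoyez \`a des conventions de normalisation non explicit\'ees, ce qui est pr\'ecis\'ement l\`a o\`u le risque d'erreur de signe est le plus grand. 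Le papier \'evite enti\`erement cette v\'erification en d\'eduisant les deux derni\`eres formules des deux premi\`eres par conjugaison par l'\'el\'ement de plus grande longueur du groupe de Weyl, qui \'echange $\P$ et $\P^-$ tout en renversant l'ordre des blocs de $\M$~: c'est nettement plus direct et moins sujet \`a erreur.
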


\begin{proof}
Nous prouvons la première assertion~; la seconde se prouve de 
façon analogue et les deux dernières se déduisent des deux premières 
par conjugaison. 
On pose $\Z=\Z([a,b ]_\rho)$.
Par réciprocité de Frobe\-nius, on a un morphisme surjectif~:
\begin{equation*}
\rp_{(m,\dots,m)}(\Z)\to 
\rho\nu_\rho^a\otimes\dots\otimes\rho\nu_\rho^b.
\end{equation*}
D'après le lemme géométrique et la définition de $\Z$, 
le membre de gauche est composé de sous-quotients irréductibles de la forme 
$\rho\chi_1\otimes\dots\otimes\rho\chi_n$ où $\chi_1\dots,\chi_n$ sont des 
caractères non ramifiés de $\G_m$. 
D'après la proposition \ref{DeliceDOrient} par exemple, 
aucun de ces sous-quotients n'est an\-nu\-lé par le foncteur 
$\KM_{(1,\dots,1)}$. 
D'après la proposition \ref{ResumePropKM}, on a~:
\begin{equation}
\label{Mandella}
\KM_{(1,\dots,1)}(\rp_{(m,\dots,m)}(\Z))\simeq
\KM_{n}(\Z)^{\NB_{(m',\dots,m')}}.
\end{equation}
D'après \eqref{Phoebe}, $\KM_{n}(\Z)$ est une sous-repré\-sen\-tation de~: 
\begin{equation}
\label{Decopls}
\KM_{n}(\rho\nu_{\rho}^{a}\times\dots\times\rho\nu_{\rho}^{b})
\simeq\bigoplus\limits_{(i_1,\dots,i_n)}
\s^{\Fr^{i_1}}\times\dots\times\s^{\Fr^{i_n}},
\end{equation}
où $i_1,\dots,i_n$ varient entre $1$ et $b(\rho)$.
Le membre de droite de \eqref{Mandella} est donc une 
sous-re\-pré\-sen\-tation de la somme directe finie~: 
\begin{equation*}
n!\cdot\bigoplus\limits_{(i_1,\dots,i_n)} 
\s^{\Fr^{i_1}}\otimes\dots\otimes\s^{\Fr^{i_n}},
\end{equation*}
où $n!$ est une multiplicité. 
On en déduit que~:
\begin{equation}
\label{MandellaS}
\KMS_{(1,\dots,1)}(\rp_{(m,\dots,m)}\(\Z\))=
\KMS_{n}(\Z)^{\NB_{(m',\dots,m')}}
\end{equation}
est une somme directe finie de copies de $\s\otimes\dots\otimes\s$.
On utilise maintenant la $\CR$-algèbre $\Hh(\s,n)$ définie 
au paragraphe \ref{DefDSM}. 
D'après \cite[Proposition 5.17]{MS11}, le $\Hh(\s,n)$-module~:
\begin{equation*}
\Hom_{\GB}(\s\times\dots\times\s,\KM_{n}(\Z))
\simeq 
\Hom_{\MB_{(m',\dots,m')}}(\s\otimes\dots\otimes\s,\KM_{n}(\Z)^{\NB_{(m',\dots,m')}})
\end{equation*}
est de dimension $1$. 
On en déduit que la représentation 
\eqref{MandellaS} est isomorphe à $\s\otimes\dots\otimes\s$. 
Comme 
\eqref{MandellaS} et $\rp_{(m,\dots,m)}\(\Z\)$ ont la même longueur, 
cette dernière est irréductible, donc isomorphe à 
$\rho\nu_\rho^a\otimes\dots\otimes\rho\nu_\rho^b$. 
\end{proof}

\begin{rema}
\label{Cinderella}
On en déduit que $\Z(\Delta)$ est aussi un quotient de 
$\rho\nu_\rho^b\times\dots\times\rho\nu_\rho^a$ et que 
$\L(\Delta)$ est aussi une sous-représentation 
de $\rho\nu_\rho^b\times\dots\times\rho\nu_\rho^a$. 
\end{rema}

\begin{prop}
\label{prpiseg}
On a les propriétés suivantes.
\begin{enumerate}
\item 
Si $k$ est un entier tel que $a<k\<b$, alors~:
\begin{eqnarray*}
\rp_{((k-a)m,(b-k+1)m)}(\Z([a,b]_\rho) )&=& 
\Z([a,k-1 ]_\rho) \otimes\Z([k,b]_\rho), \\
\rp_{((b-k+1)m,(k-a)m)}(\L([a,b ]_\rho) )&=& 
\L([k,b ]_\rho) \otimes\L([a,k-1]_\rho).
\end{eqnarray*}
\item 
Si $k$ est un entier tel que $a<k\<b$, alors~:
\begin{eqnarray*}
\rp^{-}_{((b-k+1)m,(k-a)m)}(\Z([a,b ]_\rho) )&=& 
 \Z([k,b ]_\rho) \otimes \Z([a,k-1 ]_\rho), \\
\rp^{-}_{((k-a)m,(b-k+1)m)}(\L([a,b ]_\rho) )&=& 
\L([a,k-1 ]_\rho) \otimes \L([k,b ]_\rho). 
\end{eqnarray*}
\end{enumerate}
\end{prop}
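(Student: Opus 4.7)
The plan is to establish the first equality of part (1) (the statement about $\rp$ applied to $\Z$) and then to obtain the other three by symmetric arguments: the formula for $\L$ is proved by replacing the trivial character $\Zz$ by the sign character $\Ll$ throughout, and the assertions of part (2) about $\rp^{-}$ follow from Remark \ref{Cinderella} (which says $\Z(\Delta)$ is also a quotient of $\rho\nu_\rho^b\times\dots\times\rho\nu_\rho^a$, so the analogous inclusion $\Z([k,b])\times\Z([a,k-1])\hookrightarrow\rho\nu_\rho^b\times\dots\times\rho\nu_\rho^a$ provides the needed embedding into a product whose Jacquet module with respect to the opposite parabolic contains the desired tensor product, via Proposition \ref{commu}).

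I would work with the type semi-simple $(\K,\tau)$ covering $(\J_\M,\l_\M)$ on $\M_{(m,\dots,m)}$ built from $(\J,\l)$, and use the isomorphism $\Hh(\G,\tau)\simeq\Hh(\G',\Iw')$ of Proposition \ref{ExistenceHeckeProp}. Under this identification, $\Mm(\Pi(\Delta))\simeq\Ii(a,b)$ by Lemma \ref{MID}, and $\Mm(\Z(\Delta))$ is the one-dimensional character $\Zz(a,b)$ by Definition \ref{segm}. The key intermediate claim is that $\Z(\Delta)$ embeds into $\Z([a,k-1])\times\Z([k,b])$. Since $\Z([a,k-1])$ and $\Z([k,b])$ are sub-representations of the corresponding truncated products, exactness of parabolic induction gives an embedding $\Z([a,k-1])\times\Z([k,b])\hookrightarrow\Pi(\Delta)$. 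Applying Corollary \ref{RussSamPropCoro}, the $\Hh$-module $\Mm(\Z([a,k-1])\times\Z([k,b]))$ identifies with $\Hom_{\Hh_\M}(\Hh,\Zz(a,k-1)\otimes\Zz(k,b))$, and the character $\Zz(a,b)$ embeds into this induced module by the basic Frobenius reciprocity for affine Hecke algebras of type $\tilde{A}$ (contained in Appendix B). Since $\Mm$ is fully faithful on the subcategory generated by its $\tau$-isotypic component (Theorem \ref{qptf} together with Proposition \ref{pptf1}), this lifts to the desired embedding.

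Once the embedding is in hand, classical Frobenius reciprocity yields a nonzero (hence surjective, the target being irreducible) morphism $\rp_{((k-a)m,(b-k+1)m)}(\Z(\Delta))\twoheadrightarrow\Z([a,k-1])\otimes\Z([k,b])$. To upgrade this to equality, I would apply the compatibility of $\Mm_\M$ with the Jacquet functor (the Gobineau isomorphism \eqref{Gobineau}): this identifies $\Mm_\M(\rp(\Z(\Delta)))$ with the restriction of the character $\Zz(a,b)$ to the sub-algebra $\Hh_\M$, which is exactly the one-dimensional character $\Zz(a,k-1)\otimes\Zz(k,b)$. By transitivity of Jacquet modules together with Lemma \ref{Potter}, every irreducible subquotient of $\rp(\Z(\Delta))$ has cuspidal support in the appropriate inertial class, so Proposition \ref{BoyerDArgens} guarantees each such subquotient contains $\tau_\M$; Proposition \ref{blablabla} then shows that $\rp(\Z(\Delta))$ and its image under $\Mm_\M$ have the same length (and the same composition factors, counted with multiplicity). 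Since the $\Mm_\M$-image has length $1$, so does $\rp(\Z(\Delta))$, forcing the equality $\rp_{((k-a)m,(b-k+1)m)}(\Z(\Delta))=\Z([a,k-1])\otimes\Z([k,b])$.

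The main obstacle is the intermediate claim that $\Z(\Delta)\hookrightarrow\Z([a,k-1])\times\Z([k,b])$: proving this requires translating the geometric question into the Hecke-algebraic one and invoking the correct affine Hecke algebra statement about restriction of characters, which is precisely where the Appendix B input becomes essential. Every other step is either a direct application of Lemma \ref{Potter}, of the $\Mm$--formalism of Section \ref{CostaCafe}, or of Frobenius reciprocity.
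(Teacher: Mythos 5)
Your overall strategy (the Gobineau isomorphism to see the expected factor, Lemma \ref{Potter} as the key input, $\L$ by symmetry and $\rp^-$ via Remark \ref{Cinderella}) is the same one the paper uses, but after establishing the ``lower bound'' $\rp(\Z(\Delta))\geqslant\Z([a,k-1])\otimes\Z([k,b])$ your route diverges from the paper's, and the divergence is where the gaps appear. The paper does \emph{not} first build the embedding $\Z(\Delta)\hookrightarrow\Z([a,k-1])\times\Z([k,b])$: it applies the Gobineau isomorphism once to read off that $\Mm_\M(\rp(\Z(\Delta)))$ is the one-dimensional character $\Zz(a,k-1)\otimes\Zz(k,b)$, deduces the subquotient relation, and then argues by contradiction: any additional irreducible subquotient $\Y$ of $\rp(\Z(\Delta))$ would, by adjunction and the irreducibility of $\rp_{(m,\dots,m)}(\Z(\Delta))$ from Lemma \ref{Potter}, have cuspidal support equal to that of $\Z(\Delta)$ and hence nonzero $\rp_{(m,\dots,m)}$, whereas multiplicity-one in $\rp_{(m,\dots,m)}(\Z(\Delta))$ forces $\rp_{(m,\dots,m)}(\Y)=0$. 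This avoids any appeal to exactness or faithfulness of $\Mm_\M$ on $\rp(\Z(\Delta))$.

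Two places in your argument need repair. First, you say $\Mm$ is ``fully faithful on the subcategory generated by its $\tau$-isotypic component'', citing Theorem \ref{qptf} and Proposition \ref{pptf1}, in order to lift the $\Hh$-module embedding $\Zz(a,b)\hookrightarrow\Hom_{\Hh_\M}(\Hh,\Zz(a,k-1)\otimes\Zz(k,b))$ to $\Z(\Delta)\hookrightarrow\Z([a,k-1])\times\Z([k,b])$. Theorem \ref{qptf} gives an equivalence only on $\Q$-torsion-free quotients of sums of copies of $\Q$, not full faithfulness on $\Ee_\tau$; the cleaner way to obtain this embedding is to invoke Proposition \ref{blablabla}(2), which identifies the socle of $\Z([a,k-1])\times\Z([k,b])=\ip^\G_\P(\Z([a,k-1])\otimes\Z([k,b]))$ (an honest parabolic induction) with the socle of its $\Mm$-image, and to observe $\Zz(a,b)$ lies in that socle. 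Second, and more seriously, your length argument cites Proposition \ref{blablabla} to conclude that $\rp(\Z(\Delta))$ and $\Mm_\M(\rp(\Z(\Delta)))$ have the same length; but Proposition \ref{blablabla} only applies to representations of the form $\ip^\G_\P(\s)$, which $\rp(\Z(\Delta))$ is not. What you actually need is that $\rp(\Z(\Delta))$ lies in $\Ee_{\tau_\M}$ so that $\Mm_{\tau_\M}$ is exact on a composition series of it (Proposition \ref{Cocyte}), together with the fact that every composition factor contains $\tau_\M$ (which you do establish via Propositions \ref{BoyerDArgens} and the cuspidal-support observation). Verifying $\rp(\Z(\Delta))\in\Ee_{\tau_\M}$ requires its own (not entirely trivial in the modular case) argument, which you do not give. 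The paper's contradiction argument via Potter and cuspidal supports sidesteps this point entirely, which is the real advantage of its route.
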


\begin{proof}
D'après \cite[Propositions 4.5 et 4.20]{MS11}, la bijection 
$\boldsymbol{\xi}_{\rho,n}$ induit une bijection entre sous-représentations 
irréductibles de $\Z([a,k-1 ]_\rho)\times\Z([k,b]_\rho)$ et 
sous-modules irréductibles de~:
\begin{equation}
\label{Iciparhasard}
\Hom_{\Hh_{((k-a)m,(b-k+1)m)}}(\Hh,\Zz(a,k-1)\otimes\Zz(k,b))
\end{equation}
avec les notations du paragraphe \ref{AHI}. 
Comme $\Zz(a,b)$ est un sous-module de \eqref{Iciparhasard}, 
on en déduit que $\Z([a,b]_\rho)$ est une sous-représentation de 
$\Z([a,k-1 ]_\rho)\times\Z([k,b]_\rho)$.
Par adjonction, on en déduit un morphisme surjectif~:
\begin{equation*}
\rp_{((k-a)m,(b-k+1)m)}(\Z([a,b]_\rho))\to\Z\([a,k-1 ]_\rho\)\otimes\Z\([k,b ]_\rho\).
\end{equation*}
Posons $\rp=\rp_{((k-a)m,(b-k+1)m)}$ pour alléger les notations et 
supposons que le membre de gauche ci-dessus ne 
soit pas irréductible.  
Il contient un autre sous-quotient irréductible $\Y$, 
qu'on peut supposer être une sous-représentation ou un quotient.  
Suppo\-sons que $\Y$ soit un quotient, l'autre cas se traitant 
de façon analogue.  
On fixe une paire cuspidale $(\M',\vr')$ du sous-groupe de Levi 
$\M=\M_{((k-a)m,(b-k+1)m)}$ 
et un sous-groupe parabolique $\P'$ de $\M$ de facteur de Levi $\M'$ 
tels que $\Y$ soit une sous-représentation de $\ip^\M_{\P'}(\vr')$.
On suppose que $\M'$ est standard.
Alors~:
\begin{equation*}
\Hom_{\M}(\rp(\Z[a,b]_\rho),\ip^\M_{\P'}(\vr'))\neq0.
\end{equation*}
On en déduit que le support cuspidal de $\Z[a,b]_\rho$ est la classe 
de $\G$-conjugaison de $(\M',\vr')$.
Par conséquent, on a $\M'=\M_{(m,\dots,m)}$ et $\vr'$ est $\G$-conjuguée à 
$\rho\nu_\rho^a\otimes\dots\otimes\rho\nu_\rho^b$, ce qui 
contre\-dit le fait que, d'après le lemme \ref{Potter}, 
on a $\rp_{(m,\dots,m)}(\Y)=0$. 
\end{proof}

Dans le cas où $\e(\rho)\neq1$, \ie quand $\qr$ n'est pas congru à $1$ 
modulo $\ell$, les représen\-tations $\Z(\Delta)$ et 
$\L(\Delta)$ peuvent être définies par récurrence sur la longueur de 
$\Delta$. 

\begin{prop}
\label{DefAltRec}
On suppose que $\e(\rho)\neq1$ et que $b-a\>1$. 
\begin{enumerate}
\item 
$\Z([a,b]_\rho)$ est l'unique sous-représentation irréductible de 
$\Z([a,b-1]_\rho)\times\rho\nu_\rho^b$.
\item
$\Z([a,b]_\rho)$ est l'unique quotient irréductible de 
$\Z([a+1,b]_\rho)\times\rho\nu_\rho^a$.
\item
$\L([a,b]_\rho)$ est l'unique quotient irréductible de 
$\L([a,b-1]_\rho)\times\rho\nu_\rho^b$.
\item
$\L([a,b]_\rho)$ est l'unique sous-représentation irréductible de 
$\L([a+1,b]_\rho)\times\rho\nu_\rho^a$.
\end{enumerate}
\end{prop}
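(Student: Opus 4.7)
The plan is to transport everything to the affine Hecke algebra via the functor $\Mm = \Mm_\tau$, where $(\K,\tau)$ is the covering pair associated to a maximal simple type $(\J,\l)$ contained in $\rho$ and to the partition $(m,\dots,m)$ of $(b-a+1)m$. By Proposition \ref{ExistenceHeckeProp} this Hecke algebra is canonically isomorphic to the Iwahori--Hecke algebra $\Hh(\G',\Iw')$ of the split group $\GL_{b-a+1}(\F')$, whose parameter is $\qr$. By construction (Definition \ref{segm} combined with Lemma \ref{MID}), the representations $\Z(\Delta)$ and $\L(\Delta)$ correspond under $\Mm$ to the characters $\Zz(a,b)$ and $\Ll(a,b)$ of the finite Hecke subalgebra; the hypothesis $e(\rho) \neq 1$ means exactly that $\qr \not\equiv 1 \pmod{\ell}$, so that the parameter of $\Hh$ is not a trivial root of unity, and the structure results of Appendix \ref{ApB} apply.

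For item (1): since $\Z([a,b-1]_\rho)$ is by definition a subrepresentation of $\Pi([a,b-1]_\rho)$ and parabolic induction is exact, we get an inclusion
\begin{equation*}
\Z([a,b-1]_\rho) \times \rho\nu_\rho^b \hookrightarrow \Pi([a,b-1]_\rho) \times \rho\nu_\rho^b = \Pi([a,b]_\rho).
\end{equation*}
Applying $\Mm$ and using the compatibility with parabolic induction (Proposition \ref{RussSamProp}, valid here since the representations involved are admissible and engendered by their $\tau$-isotypic components by Proposition \ref{ScotErigene}), we obtain an inclusion of $\Hh$-modules
\begin{equation*}
\Mm\bigl(\Z([a,b-1]_\rho) \times \rho\nu_\rho^b\bigr) \hookrightarrow \Mm\bigl(\Pi([a,b]_\rho)\bigr) \simeq \Ii(a,b),
\end{equation*}
the left-hand side being the induced module $\Hom_{\Hh_{((b-a)m,m)}}(\Hh, \Zz(a,b-1) \boxtimes \Zz(b,b))$. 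When $\qr \not\equiv 1 \pmod \ell$, the analysis of the characters $\Zz$ in Appendix \ref{ApB} shows that this induced module contains $\Zz(a,b)$ as its unique simple submodule with multiplicity one. Combined with Proposition \ref{blablabla}, which matches irreducible sub/quotients containing $\tau$ with irreducible sub/quotients of $\Hh$-modules, this identifies $\Z([a,b]_\rho)$ as the unique irreducible subrepresentation of $\Z([a,b-1]_\rho) \times \rho\nu_\rho^b$.

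Items (2), (3), (4) are obtained by analogous arguments. For (3) we repeat the argument with $\L$ and the character $\Ll(a,b)$ in the r\^ole of $\Zz(a,b)$, using that $\L([a,b-1]_\rho)$ is a quotient of $\Pi([a,b-1]_\rho)$ so that $\L([a,b-1]_\rho) \times \rho\nu_\rho^b$ is a quotient of $\Pi([a,b]_\rho)$, and exploiting the dual characterisation of $\Ll(a,b)$ as the unique simple quotient of the corresponding induced module. For (2) and (4), we exchange the r\^oles of socle and cosocle by passing to the opposite parabolic: by Remark \ref{Cinderella} and Proposition \ref{prpiseg}, $\Z([a,b]_\rho)$ is also a quotient of $\rho\nu_\rho^a \times \Z([a+1,b]_\rho)$ (and dually for $\L$), and one applies the Hecke-module argument to the opposite induction.

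The main obstacle is the structural statement in Appendix \ref{ApB} that underlies each bullet: namely, that at parameter $\qr \not\equiv 1 \pmod \ell$, the induced module $\Zz(a,b-1) \boxtimes \Zz(b,b) \otimes_{\Hh_{((b-a)m,m)}} \Hh$ has $\Zz(a,b)$ as unique simple submodule (and the analogous statements for $\Ll$ and for the opposite directions). In the complex case this reduces to Zelevinski's original argument via rank-one intertwining operators; in the modular setting one must verify that these intertwiners remain well-defined and of the expected rank, which is exactly guaranteed by the non-triviality condition $\qr \not\equiv 1 \pmod \ell$. Once this input is granted, the proposition follows mechanically from the dictionary between $\Rr_\CR(\G)$ and $\Hh$-modules provided by $\Mm_\tau$.
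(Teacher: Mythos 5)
Your overall strategy is sound and, once unwound, it amounts to the same computation the paper performs, but you have left the decisive step unproven. Where you write that ``the analysis of the characters $\Zz$ in Appendix \ref{ApB} shows that this induced module contains $\Zz(a,b)$ as its unique simple submodule with multiplicity one,'' no such statement appears in Appendix \ref{ApB}: the appendix defines $\Zz(a,b)$, $\Ll(a,b)$ and $\Ii(a,b)$ and records (Example \ref{DefIabExem}) that $\Zz(a,b)$ and $\Ll(a,b)$ sit as sub- and quotient of $\Ii(a,b)$ with multiplicity one, but it contains no socle analysis of the module $\Hom_{\Hh_{(n-1,1)}}(\Hh,\Zz(a,b-1)\boxtimes\Zz(b,b))$, and you yourself flag this as ``the main obstacle'' and defer it to an intertwining-operator argument you do not carry out. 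As stated, the proof therefore has a genuine gap exactly at the multiplicity-one claim on which everything hinges.

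The paper avoids the detour through the Hecke algebra and establishes precisely that multiplicity statement directly on the representation side. Writing $n=b-a+1$ and $\rp=\rp_{((n-1)m,m)}$, Proposition \ref{prpiseg} together with the geometric lemma \ref{lemmegeo} gives
\begin{equation*}
\sy{\rp\bigl(\Z([a,b-1]_\rho)\times\rho\nu_\rho^b\bigr)}
=\sy{\Z([a,b-1]_\rho)\otimes\rho\nu_\rho^b}
+\sy{\bigl(\Z([a,b-2]_\rho)\times\rho\nu_\rho^b\bigr)\otimes\rho\nu_\rho^{b-1}},
\end{equation*}
and the hypothesis $e(\rho)\neq1$ means exactly that $\rho\nu_\rho^b\not\simeq\rho\nu_\rho^{b-1}$, so that $\Z([a,b-1]_\rho)\otimes\rho\nu_\rho^b$ occurs with multiplicity one. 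Lemma \ref{mult1} then yields uniqueness of the irreducible subrepresentation, and Frobenius reciprocity together with Proposition \ref{prpiseg} identifies it as $\Z([a,b]_\rho)$. This is shorter and self-contained, whereas your version, if completed honestly, would require proving the analogous Mackey decomposition for $\Hh$ over $\Hh_{(n-1,1)}$ and then transferring back, which is strictly more work than the Jacquet-module calculation and is equivalent to it under the dictionary of Proposition \ref{RussSamProp} that you already invoke. You should either import that dictionary and reduce your Hecke-algebra claim to the Jacquet-module computation, or supply the intertwining-operator argument explicitly; as it stands the claim is asserted without proof.
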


\begin{proof}
On prouve la première assertion,
les autres se prouvant de façon ana\-lo\-gue.  
On pose $n=b-a+1$ et $\rp=\rp_{((n-1)m,m)}$. 
Compte tenu de la proposition \ref{prpiseg}, on a~:
\begin{equation*}
\sy{\rp(\Z([a,b-1]_\rho)\times\rho\nu_\rho^b)}=
\sy{\Z([a,b-1]_\rho)\otimes\rho\nu_\rho^b}+
\sy{(\Z([a,b-2]_\rho)\times\rho\nu_\rho^b)\otimes\rho\nu_\rho^{b-1}}.
\end{equation*}
Si $n=2$, on interprète $\Z([a,b-2]_\rho)$ comme la représentation (triviale) 
du groupe trivial $\G_0$. 
Comme $\e(\rho)\neq1$, les représentations $\rho\nu_\rho^b$ et 
$\rho\nu_\rho^{b-1}$ ne sont pas isomorphes~: 
le sous-quotient irréductible $\Z([a,b-1]_\rho)\otimes\rho\nu_\rho^b$ est 
donc de multiplicité $1$ dans le membre de gauche. 
Le résultat se déduit alors du lemme \ref{mult1}.
\end{proof}

\begin{rema}
Cette proposition permet de définir, dans le cas où $\e(\rho)\neq1$, 
les re\-pré\-sen\-tations $\Z([a,b]_\rho)$ et $\L([a,b]_\rho)$ par récurrence, sans 
faire recours à la théorie de types. 
\end{rema}

La proposition suivante décrit le comportement de 
$\Delta\mapsto\Z(\Delta)$ et $\Delta\mapsto\L(\Delta)$ 
par pas\-sa\-ge à la contra\-grédiente 
(voir \eqref{contragr} pour la définition de la notation 
$\Delta^\vee$).

\begin{prop}
\label{ZLDdual}
On a $\Z(\Delta^{\vee})\simeq\Z(\Delta)^{\vee}$ et 
$\L(\Delta^{\vee})\simeq\L(\Delta)^{\vee}$.
\end{prop}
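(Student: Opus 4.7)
L'approche consiste \`a proc\'eder par r\'ecurrence sur la longueur $n=n(\Delta)$ en prouvant simultan\'ement les deux isomorphismes $\Z(\Delta^\vee)\simeq\Z(\Delta)^\vee$ et $\L(\Delta^\vee)\simeq\L(\Delta)^\vee$. Le cas de base $n=1$ est imm\'ediat~: on a $\Z(\Delta)=\L(\Delta)=\rho\nu_\rho^a$, et puisque l'invariant $s(\rho)$ ne change pas par passage \`a la contragr\'ediente, on a $\nu_{\widetild\rho}=\nu_\rho$, d'o\`u $\Z(\Delta^\vee)=\L(\Delta^\vee)=\widetild\rho\nu_\rho^{-a}=(\rho\nu_\rho^a)^\vee$.

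L'ingr\'edient essentiel pour l'\'etape de r\'ecurrence sera la formule
\begin{equation*}
(\pi_1\times\pi_2)^\vee\simeq\pi_2^\vee\times\pi_1^\vee,
\end{equation*}
valable pour toute paire de repr\'esentations de longueur finie de $\G_{m_1}$ et $\G_{m_2}$. On la d\'eduira de la relation g\'en\'erale $(\ip_\P^\G\s)^\vee\simeq\ip_{\ov\P}^\G(\s^\vee)$, combin\'ee \`a l'identification de $\ip_{\ov\P}^\G$ avec l'induction parabolique standard pour l'ordonnancement renvers\'e des blocs, via conjugaison par l'\'el\'ement de Weyl qui \'echange les blocs.

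Je traiterai alors le cas de $\Z$ dans l'\'etape de r\'ecurrence pour $n\geq2$~; celui de $\L$ sera enti\`erement analogue. D'apr\`es la proposition \ref{DefAltRec}(1), $\Z(\Delta)$ est l'unique sous-repr\'esentation irr\'eductible de $\Z(\Delta^-)\times\rho\nu_\rho^b$. En appliquant la contragr\'ediente, on en d\'eduira que $\Z(\Delta)^\vee$ est l'unique quotient irr\'eductible de $(\Z(\Delta^-)\times\rho\nu_\rho^b)^\vee\simeq(\rho\nu_\rho^b)^\vee\times\Z(\Delta^-)^\vee$. L'hypoth\`ese de r\'ecurrence appliqu\'ee au segment $\Delta^-$ de longueur $n-1$ donnera $\Z(\Delta^-)^\vee\simeq\Z((\Delta^-)^\vee)=\Z({}^-(\Delta^\vee))$, l'\'egalit\'e $(\Delta^-)^\vee={}^-(\Delta^\vee)=[-b+1,-a]_{\widetild\rho}$ \'etant imm\'ediate. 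Par ailleurs, la proposition \ref{DefAltRec}(2) appliqu\'ee au segment $\Delta^\vee$ caract\'erise $\Z(\Delta^\vee)$ comme l'unique quotient irr\'eductible de la m\^eme induite $(\rho\nu_\rho^b)^\vee\times\Z({}^-(\Delta^\vee))$, ce qui fournit l'isomorphisme voulu. Le cas de $\L$ s'obtiendra sym\'etriquement \`a partir des points (3) et (4) de la proposition \ref{DefAltRec}, en \'echangeant les r\^oles des sous-repr\'esentations et des quotients.

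Le principal obstacle technique r\'esidera dans l'\'etablissement rigoureux de la formule $(\pi_1\times\pi_2)^\vee\simeq\pi_2^\vee\times\pi_1^\vee$~: il faudra v\'erifier que la compatibilit\'e $(\ip_\P^\G\s)^\vee\simeq\ip_{\ov\P}^\G(\s^\vee)$ reste vraie dans le cadre modulaire pour les repr\'esentations admissibles (ce qui est bien le cas, puisque tout se ram\`ene \`a Frobenius et au jeu avec le caract\`ere modulaire $\d_\P$ qui est \`a valeurs inversibles dans $\CR$), puis expliciter par un calcul en racines dans $\GL_{m_1+m_2}(\D)$ la conjugaison par l'\'el\'ement de Weyl qui envoie le sous-groupe parabolique oppos\'e pour l'ordonnancement $(m_1,m_2)$ sur le sous-groupe parabolique standard pour $(m_2,m_1)$.
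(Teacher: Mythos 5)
Your overall strategy — induction on $n(\Delta)$, pass to the contragredient in the recursive characterization of Proposition~\ref{DefAltRec}(1), use the induction hypothesis on $\Delta^-$, and recognize the result via Proposition~\ref{DefAltRec}(2) applied to $\Delta^\vee$ — is exactly the paper's argument, and the base case is handled identically.

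However, the key lemma you invoke is wrong. For normalized parabolic induction one has
$(\ip_\P^\G\s)^\vee\simeq\ip_{\P}^\G(\s^\vee)$ — same parabolic, not the opposite one. (The unnormalized relation is $(\Ind_\P^\G\tau)^\vee\simeq\Ind_\P^\G(\tau^\vee\otimes\d_\P)$; plugging in $\tau=\s\d_\P^{1/2}$ makes the modulus cancel and gives the same $\P$.) Consequently the correct consequence is
$(\pi_1\times\pi_2)^\vee\simeq\pi_1^\vee\times\pi_2^\vee$, with the \emph{same} order of factors, and there is no need for any Weyl-element conjugation between $\ov\P$ and the parabolic associated to the reversed composition. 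The final paragraph of your proof is therefore chasing a nonexistent obstruction.

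Worse, the spurious reversal actively breaks the end of your argument. After reversing, you claim that $\Z(\Delta)^\vee$ is the unique irreducible quotient of $(\rho\nu_\rho^b)^\vee\times\Z({}^-(\Delta^\vee))$, while Proposition~\ref{DefAltRec}(2) characterizes $\Z(\Delta^\vee)$ as the unique irreducible quotient of $\Z({}^-(\Delta^\vee))\times a(\Delta^\vee)$ — the factors in the \emph{other} order. Proposition~\ref{commu} only asserts equality of these two inductions in the Grothendieck group, not that they are isomorphic or that they have the same cosocle, so matching them as you do is not justified. With the correct (non-reversing) contragredient formula, the two inductions literally coincide and the proof closes as in the paper. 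Fix the contragredient formula and drop the reversal, and your argument is the paper's.
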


\begin{proof}
Nous proposons deux preuves de ce résultat, la première ne fonctionnant que 
dans le cas où $\e(\rho)\neq1$. 

On suppose d'abord que $\e(\rho)\neq1$ et on prouve le résultat par récurrence 
sur $n=b-a+1$, le cas $n=1$ étant immédiat. 
On suppose que $n\>2$ et on utilise les no\-ta\-tions \eqref{DefMoins} et \eqref{DefPlus}.  
D'après la proposition \ref{DefAltRec}, l'induite 
$\Z(\Delta^-)\times\rho\nu_\rho^b$ admet $\Z(\Delta)$ pour 
sous-représentation irréductible, donc 
$\Z(\Delta^-)^\vee\times\rho^\vee\nu_\rho^{-b}$ admet $\Z(\Delta)^\vee$ 
pour quotient irréductible. 
Par hypothèse de récurrence, $\Z(\Delta^-)^\vee$ 
et $\Z((\Delta^-)^{\vee})$ sont isomorphes. 
Comme $(\Delta^-)^{\vee}$ est égal à $^-(\Delta^\vee)$ 
et d'après la proposition \ref{DefAltRec}(2), 
le quotient irréductible $\Z(\Delta)^\vee$ est isomorphe à 
$\Z(\Delta^\vee)$.
Le cas de $\L(\Delta)$ se traite de façon analogue. 

Voici maintenant une preuve fonctionnant sans hypothèse sur $\e(\rho)$,
s'appuyant sur \cite{HS}.
Il s'agit de prouver que la représentation $\Z(\Delta)^\vee$ possède les deux 
propriétés de la définition \ref{segm} qui caractérisent $\Z(\Delta^\vee)$. 
D'abord, d'après la remarque \ref{Cinderella} et par passage à la contragrédiente, 
$\Z(\Delta)^\vee$ est une sous-représentation irréductible de 
$\Pi(\Delta^\vee)$. 
Ensuite, si $(\J,\l)$ est le type simple maxi\-mal contenu dans $\rho$ fixé au 
paragraphe \ref{BenBen}, alors le type simple maximal 
$(\J,\l^\vee)$ est contenu dans la contragré\-diente $\rho^\vee$ 
(voir \cite[Théorème 4.1]{HS} appliqué au type simple $(\J,\l)$).
On a ainsi une bijection $\boldsymbol{\xi}_{\rho^\vee,n}$ entre 
$\Irr(\Om_{\rho^\vee,n})^\q$ et l'ensemble des classes de $\Hh$-modules 
simples, et il faut vérifier que $\Z(\Delta)^\vee$ correspond au caractère 
$\Zz(-b,-a)$ par cette bijection.
D'après \cite[4.4]{MS11}, il y a un type simple $(\K,\W)$ de $\G$ tel que~:
\begin{enumerate}
\item
l'induite compacte $\ind_\K^\G(\W)$ est isomorphe à 
$\ind^{\G_m}_\J(\l)\times\dots\times\ind^{\G_m}_\J(\l)$ 
($n$ fois)~; 
\item
pour toute représentation $\V$ dans $\Irr(\Om_{\rho,n})^\q$,
son image $\boldsymbol{\xi}_{\rho,n}(\V)$ est $\Hom_{\K}(\W,\V)$ 
considéré comme un $\Hh$-module grâce à la proposition \ref{Meleagant}~; 
\item
de façon analogue, 
si $\V$ est une représentation dans $\Irr(\Om_{\rho^\vee,n})^\q$, alors 
son image $\boldsymbol{\xi}_{\rho^\vee,n}(\V)$ est $\Hom_{\K}(\W^\vee,\V)$ 
considéré comme un $\Hh$-module.  
\end{enumerate}
La représentation $\Z(\Delta)^\vee$ est dans $\Irr(\Om_{\rho^\vee,n})^\q$.
D'après (3) ci-dessus et d'après \cite[Théorème 4.1]{HS} appliqué au type simple 
$(\K,\W)$, le $\Hh$-module $\boldsymbol{\xi}_{\rho^\vee,n}(\Z(\Delta)^\vee)$ 
est le dual de $\boldsymbol{\xi}_{\rho,n}(\Z(\Delta))=\Zz(a,b)$, 
\ie $\Zz(-b,-a)$ comme voulu. 
On en déduit donc que $\Z(\Delta^{\vee})\simeq\Z(\Delta)^{\vee}$. 
La preuve pour $\L(\Delta^\vee)$ est similaire.
\end{proof}

\begin{rema}
À noter que $\Pi(\Delta)^\vee$ n'est pas isomorphe à $\Pi(\Delta^{\vee})$ en 
général. 
\end{rema}

Pour finir ce paragraphe, on prouve le résultat suivant
(voir \eqref{deflsn} pour la définition de $l(\s,n)$). 

\begin{prop}
\label{prpisegK}
Soit $\Delta=[a,b]_\rho$ un segment de longueur $n$.
\begin{enumerate}
\item 
On a $\KMS_{n}(\Z([a,b]_\rho))=\z(\s,n)$.
\item
On a $\KMS_{n}(\L([a,b]_\rho))=l(\s,n)$. 
\item
\label{prpisegK2}
On a $\KMS_{n}(\L([a,b]_\rho))=\st(\s,n)$ si et seulement si $n<\ee(\rho)$.
\end{enumerate}
\end{prop}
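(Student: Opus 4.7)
Le plan est d'exploiter la caract�risation Hecke-alg�brique des repr�sentations $\Z(\Delta)$ et $\L(\Delta)$ et de la transporter vers le groupe fini $\GB = \GL_{nm'}(\kk_{\D'})$ via le foncteur $\KMS$. Par d�finition, $\Z(\Delta)$ et $\L(\Delta)$ sont l'unique sous-repr�sentation et l'unique quotient de $\Pi(\Delta) = \rho\nu_\rho^a \times \dots \times \rho\nu_\rho^b$ correspondant respectivement aux caract�res trivial $\Zz(a,b)$ et signe $\Ll(a,b)$ de l'alg�bre de Hecke affine $\Hh = \Hh(\G, \tau) \cong \Hh(\G', \Iw')$ (proposition \ref{ANenPlusFinirPosDuplicata}), avec $(\K, \tau)$ la paire couvrante donn�e par la proposition \ref{JplP}. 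Sur le groupe fini, $\z(\s,n)$ et $l(\s,n)$ sont similairement caract�ris�s comme sous-repr�sentation et quotient de $\s \times \dots \times \s$ associ�s aux caract�res trivial et signe de l'alg�bre de Hecke finie $\Hh(\s,n) = \End_\GB(\s\times\dots\times\s)$.

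L'id�e centrale est que les alg�bres $\Hh$ et $\Hh(\s,n)$ sont toutes deux des alg�bres de Hecke de type $\A_{n-1}$ avec le m�me param�tre $\qr$ (affine pour la premi�re, finie pour la seconde), partageant naturellement leurs caract�res $1$-dimensionnels trivial et signe. La strat�gie est d'�tablir une compatibilit� $\Mm(\pi) \simeq \Mm_\s(\KMS(\pi))$ pour toute repr�sentation admissible $\pi$, apr�s identification de ces alg�bres, o� $\Mm_\s$ est le foncteur analogue pour $\GB$ attach� � $\s\times\dots\times\s$. Cette compatibilit� se v�rifie en d�composant le calcul de $\Mm(\pi) = \Hom_\K(\tau, \pi)$~: la prise des $\J^1$-invariants transforme $\pi$ en $\KM(\pi)$ comme repr�sentation de $\GB$ (via l'identification du quotient $\K/\J^1$ comme sous-groupe parabolique appropri� de $\GB$), et le $\Hom_\GB(\s^{\times n}, \cdot)$ subs�quent r�alise $\Mm_\s \circ \KMS$ sur la partie $\overline{\Irr}_\s$.

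Muni de cette compatibilit�, les parties (1) et (2) sont imm�diates~: $\Mm_\s(\KMS(\Z(\Delta)))$ est le caract�re trivial de $\Hh(\s,n)$, donc $\KMS(\Z(\Delta)) = \z(\s,n)$ par la bijection (fournie par la th�orie des modules sur l'alg�bre de Hecke finie de type A expos�e � l'appendice \ref{ApA}) entre irr�ductibles de $\overline{\Irr}_\s$ et $\Hh(\s,n)$-modules simples~; et similairement $\KMS(\L(\Delta)) = l(\s,n)$. La partie (3) s'en d�duit directement~: l'�galit� $\KMS(\L(\Delta)) = \st(\s,n)$ �quivaut � $l(\s,n) = \st(\s,n)$, laquelle a lieu (d'apr�s l'appendice \ref{ApA} et la proposition \ref{AppCuspFini}) si et seulement si $n \leq m(\s) - 1$, et $m(\s) = m(\rho)$ d'apr�s le lemme \ref{MrMs}.

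Le point le plus d�licat sera l'�tablissement rigoureux de la compatibilit� $\Mm \simeq \Mm_\s \circ \KMS$, qui requiert une analyse soigneuse de la structure du type semi-simple $(\K,\tau)$ et de la fa�on dont le $\b$-extension $\vk$ utilis� dans $\tau$ se rattache � la $\b$-extension maximale $\kmax$ d�finissant $\KM$. Une approche alternative par r�currence sur $n$, utilisant les formules de Jacquet de la proposition \ref{prpiseg} jointes � la proposition \ref{ZS}, permet d'obtenir $\KMS(\Z(\Delta))^\UB = \z(\s,n-1) \otimes \s$ et $\KMS(\L(\Delta))^\UB = l(\s,n-1) \otimes \s$ pour $\UB$ le radical du parabolique de $\GB$ de facteur de Levi $\GL_{(n-1)m'}(\kk_{\D'}) \times \GL_{m'}(\kk_{\D'})$~; cette approche conclut par unicit� pour $n \geq 3$ (o� $\z(\s,n-1) \neq l(\s,n-1)$) mais ne distingue pas $\Z$ de $\L$ pour $n = 2$, n�cessitant dans ce cas pr�cis le recours � l'argument Hecke-alg�brique.
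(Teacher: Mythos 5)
Votre cadre g�n�ral est le bon — ramener l'�nonc� � une identification au niveau des alg�bres de Hecke finie et affine, et compl�ter par un calcul de modules de Jacquet via les propositions \ref{prpiseg} et \ref{ZS} — et c'est bien l'ossature de la preuve du papier. Mais votre argument souffre d'une lacune essentielle, que vous ne rep�rez pas : rien dans votre proposition ne borne la \emph{longueur} de $\KMS(\Z(\Delta))$. La compatibilit� $\Mm\simeq\Mm_\s\circ\KMS$ (qui correspond chez nous � la proposition \ref{PAtaPI}) vous donne seulement que $\Hom_{\GB}(\s^{\times n},\KMS(\Z(\Delta)))$ est un $\Hh(\s,n)$-module de dimension~$1$, donc que $\z(\s,n)$ appara�t dans $\KMS(\Z(\Delta))$ — pas que $\KMS(\Z(\Delta))$ lui est \emph{�gal}. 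Comme $\Hh(\s,n)$ n'est pas en g�n�ral l'endomorphisme d'un g�n�rateur projectif (seulement quasi-projectif), des constituants cuspidaux — annul�s par $\Mm_\s$ — pourraient tr�s bien subsister. De m�me, votre calcul de $\KMS(\Z(\Delta))^{\UB}$ d�termine la partie non cuspidale de $\KMS(\Z(\Delta))$ mais ne dit rien d'un facteur cuspidal, dont le module de Jacquet est nul.

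C'est pr�cis�ment ce que traite le c\oe ur de la preuve du papier, absent de votre texte : on raisonne par l'absurde en supposant qu'il existe un facteur suppl�mentaire $v$ dans $\KMS(\Z)$. La comparaison $\sy{\z(\s,n)^{\UB}}\>\sy{\z(\s,k)\otimes\z(\s,n-k)}$ (lemme \ref{OnRevientDeLoin}, qui repose sur le calcul de $\Hom_{\MB}$ via la structure de $\Hh(\s,n)$), combin�e avec l'�galit� $\KMS(\Z)^{\UB}=\z(\s,k)\otimes\z(\s,n-k)$ obtenue par r�currence, force $v^{\UB}=0$ pour tout parabolique maximal standard. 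Donc $v$ est cuspidale, n�cessairement de la forme $\st(\s,n)$ avec $n=1$ ou $n=m(\s)\ell^r$~; ceci force $\Z=\St(\rho,n)$ (d�finition \ref{DefSTRN}), qui est alors cuspidale par la proposition \ref{StCusp}, contredisant que $\Z(\Delta)$ n'est jamais cuspidale pour $n>1$. Votre diagnostic selon lequel la difficult� de l'approche r�cursive serait de « distinguer $\Z$ de $\L$ pour $n=2$ » passe � c�t� de la vraie obstruction (qui est pr�sente pour tout $n\>2$), et c'est l'argument de cuspidalit�, pas l'unicit�, qui la l�ve. Votre traitement du point (3) omet aussi que la r�ciproque ($n\<m(\s)-1\Rightarrow\st(\s,n)$ correspond au caract�re signe) n'est pas formelle et requiert un argument de r�duction modulo $\ell$ (via la proposition \ref{RedDSM}).
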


\begin{proof}
On procède par récurrence sur $n$. 
On pose $\Z=\Z([a,b]_\rho)$. 
On sait que $\z(\s,n)$ est un sous-quotient de $\KMS_{n}(\Z)$. 
Supposons que $\KMS_{n}(\Z)$ n'est pas irréductible~: 
elle contient donc un autre facteur irréductible, noté $v$. 
Soit un entier $k\in\{1,\dots,n-1\}$, 
soit $\a=(km,(n-k)m)$ et posons 
$\NB=\NB_\a$ et $\MB=\MB_\a$.  
D'après la pro\-po\-sition \ref{prpiseg}, on a~:
\begin{equation*}
\rp_\a(\Z)=\Z([a,c]_\rho)\otimes\Z([c+1,b]),
\end{equation*}
avec $c=a+k-1$. 
Par hypothèse de récurrence, on a~:
\begin{equation*}
\KMS_{(k,n-k)}(\rp_{\a} (\Z))=\z(\s,k)\otimes\z(\s,n-k),
\end{equation*}
qui est aussi égal à $\KMS_{n}(\Z)^{\NB}$ d'après la proposition \ref{KMAX1}. 

\begin{lemm}
\label{OnRevientDeLoin}
On a $\sy{\z(\s,n)^{\NB}}\>\sy{\z(\s,k)\otimes\z(\s,n-k)}$. 
\end{lemm}

\begin{proof}
Notons $\s^{\times n}$ l'induite $\s\times\dots\times\s$ où $\s$ 
apparaît $n$ fois. 
Par définition de la représentation $\z(\s,n)$, on a~:
\begin{equation*}
\Hom_\GB(\s^{\times n},\z(\s,n))\simeq
\Hom_{\MB}(\s^{\times k}\otimes\s^{\times (n-k)},\z(\s,n)^{\NB})
\end{equation*}
qui est 
de dimension $1$, isomorphe au caractère trivial de 
$\Hh(\s,k)\otimes\Hh(\s,n-k)$ considéré comme sous-algèbre de $\Hh(\s,n)$. 
\end{proof}

Comme $\KMS_n(\Z)^{\NB}$ contient $\sy{\z(\s,n)^{\NB}}$ et $\sy{v^{\NB}}$, 
on déduit du lemme \ref{OnRevientDeLoin} d'une part que~:
\begin{equation*}
\z(\s,n)^{\NB}=\z(\s,k)\otimes\z(\s,n-k)
\end{equation*}
et d'autre part que $v^{\NB}=0$, et ce pour tout $k$. 
On en déduit que $v$ est cuspidale, et donc de la forme $\st(\s,n)$ avec 
$n=1$ ou $n=\ee(\s)\ell^r$.  
Par conséquent, $\Z=\St(\rho,n)$ d'après la définition \ref{DefSTRN}. 
Mais pour ces valeurs de $n$, la représentation $\St(\rho,n)$ est cuspidale 
d'après la proposition \ref{StCusp}. 
Comme $\Z$ ne peut pas être cuspidale à moins que $n=1$, 
on obtient une contradiction. 

L'assertion (2) se démontre de façon analogue. 
Pour prouver l'assertion (3), il suffit de prouver que $\st(\s,n)$ correspond 
au caractère signe de $\Hh(\s,n)$ si et seulement si on a $n<\ee(\s)$, 
ce qui est donné par la proposition \ref{prpisegKfini}. 
\end{proof}

\begin{rema}
\label{L=St0}
On renvoie à la remarque \ref{L=St} pour une précision supplémentaire. 
\end{rema}
 
\subsection{Critère d'irréductibilité pour un produit de segments non liés} 
\label{PropProdNonLies}

Le but de cette section est de montrer le théorème suivant.

\begin{theo}
\label{nuevo2}
Soit un entier $r\>1$ et soient $\Delta_1,\dots,\Delta_r$ des segments. 
Les conditions suivantes sont équi\-valentes~:
\begin{enumerate}
\item Pour tous $i,j\in\{1,\dots,r\}$ avec $i\neq j$, les segments $\Delta_i$ 
et $\Delta_j$ sont non liés.
\item La représentation $\Z(\Delta_1)\times \dots \times \Z( \Delta_r)$ est irréductible.
\item La représentation $\L(\Delta_1)\times \dots \times \L( \Delta_r)$ est irréductible.
\end{enumerate}
\end{theo}

Ce théorème généralise \cite[4]{Ze2} et 
\cite[Lemmas 2.5, 4.2]{Tadic} au cas modulaire.
Il justifie le bien-fondé de notre définition \ref{Rotis}. 

\subsubsection{} 

Étant donné un segment $\Delta=[a,b]_{\rho}$, on note~:
\begin{equation}
\label{paradel}
\langle\Delta\rangle=\left<a,b \right>_\rho
\end{equation}
l'une des deux représentations $\Z([a,b]_{\rho})$ ou $\L([-b,-a]_{\rho})$,
et on pose~:
\begin{equation}
\label{paradelmu}
\mu_\rho=
\left\{
\begin{array}{ll}
\nu_\rho & \text{ si } \langle\Delta\rangle=\Z(\Delta)~;\\
\nu_\rho^{-1} & \text{ si } \langle\Delta\rangle=\L([-b,-a]_{\rho}).\\
\end{array}
\right.
\end{equation}
La proposition suivante, qui synthétise les résultats de la proposition 
\ref{prpiseg}, montre l'utilité des notations \eqref{paradel} et  \eqref{paradelmu}.

\begin{prop}
\label{ayuto}
Soient $\rho$ une représentation irréductible cuspidale de $\G_m$ 
et $\Delta= \left[a,b \right]_\rho$. 
\begin{enumerate}
\item 
Si $k$ est un entier tel que $a<k\<b$, on a~:
\begin{equation*}
\rp_{(k-a)m,(b-k+1)m}\(\langle a,b\rangle_\rho \)= 
\langle a,k-1\rangle_\rho \otimes \langle k,b\rangle_\rho.
\end{equation*}
\item 
Si $k$ est un entier tel que $a<k\<b$, on a~: 
\begin{equation*}
{\rp}^{-}_{(b-k+1)m,(k-a)m}\(\langle a,b\rangle_\rho \)= 
\langle k,b\rangle_\rho \otimes\langle a,k-1\rangle_\rho.
\end{equation*}
\item 
On a~:
\begin{equation*}
\rp_{(m,\dots,m)}\(\langle\Delta \rangle \)=
\rho\mu_{\rho}^a \otimes \rho \mu_{\rho}^{a+1} 
\otimes \dots \otimes \rho\mu_{\rho}^{b}
\end{equation*}
et~:
\begin{equation*}
\rp^{-}_{(m,\dots,m)}\(\langle\Delta\rangle \)=
 \rho\mu_{\rho}^b\otimes \rho \mu_{\rho}^{b-1} 
\otimes \dots \otimes \rho \mu_{\rho}^{a}.
\end{equation*}
\item 
On a 
$\langle\Delta^{\vee}\rangle=\langle\Delta\rangle^{\vee}$,
\ie que
$\langle-b,-a\rangle_{\widetild{\rho}}=\widetild{\langle a,b\rangle_\rho}$.
\end{enumerate} 
\end{prop}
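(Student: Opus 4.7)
The proposition is a pure bookkeeping synthesis of Proposition \ref{prpiseg}, Lemma \ref{Potter} and Proposition \ref{ZLDdual}: the notation $\langle a,b\rangle_\rho$ and the auxiliary character $\mu_\rho$ of \eqref{paradel}--\eqref{paradelmu} are precisely designed to make the formulas for the two representations $\Z([a,b]_\rho)$ and $\L([-b,-a]_\rho)$ read uniformly. My plan is therefore to split according to the two possible values of $\langle\,\cdot\,\rangle$, handle the $\Z$-case by direct citation, and handle the $\L$-case by a change of variable that reverses the segment.

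In the case $\langle a,b\rangle_\rho=\Z([a,b]_\rho)$, one has $\mu_\rho=\nu_\rho$. Assertion (1) is then Proposition \ref{prpiseg}(1) for $\Z$, assertion (2) is Proposition \ref{prpiseg}(2) for $\Z$, and assertion (3) is the first and third formulas of Lemma \ref{Potter}, since $\rho\nu_\rho^{i}=\rho\mu_\rho^{i}$ for every $i$. There is nothing to prove beyond reading the statements.

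In the case $\langle a,b\rangle_\rho=\L([-b,-a]_\rho)$, one has $\mu_\rho=\nu_\rho^{-1}$, so the desired right-hand sides become $\L([-k+1,-a]_\rho)\otimes\L([-b,-k]_\rho)$ for (1) and $\L([-b,-k]_\rho)\otimes\L([-k+1,-a]_\rho)$ for (2). I apply Proposition \ref{prpiseg} to the segment $\Delta'=[-b,-a]_\rho$, with the substitution $k'=-k+1$: the hypothesis $a<k\<b$ translates into $-b<k'\<-a$, and the block sizes satisfy $b'-k'+1=k-a$ and $k'-a'=b-k+1$. The $\L$-formulas in Proposition \ref{prpiseg}(1) and (2) then yield exactly assertions (1) and (2). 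For assertion (3), the same substitution applied to Lemma \ref{Potter} gives
\begin{equation*}
\rp_{(m,\dots,m)}\bigl(\L([-b,-a]_\rho)\bigr)=\rho\nu_\rho^{-a}\otimes\dots\otimes\rho\nu_\rho^{-b}=\rho\mu_\rho^{a}\otimes\dots\otimes\rho\mu_\rho^{b},
\end{equation*}
and symmetrically for $\rp^-_{(m,\dots,m)}$, which is the required identity.

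Assertion (4) is handled separately and uniformly. Setting $\Delta=[-b,-a]_\rho$ in Proposition \ref{ZLDdual} (valid since $e(\widetild\rho)=e(\rho)\neq 1$) gives $\L([a,b]_{\widetild\rho})\simeq\L([-b,-a]_\rho)^\vee$, which reads $\langle -b,-a\rangle_{\widetild\rho}\simeq\langle a,b\rangle_\rho^\vee$ in the $\L$-case; the $\Z$-case is the first statement of Proposition \ref{ZLDdual} applied to $[a,b]_\rho$. No genuine obstacle arises: the only point demanding care is the index bookkeeping in the change of variable $k'=-k+1$, which must be checked to land the block sizes on the correct side of the tensor product.
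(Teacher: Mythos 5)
Your proposal is correct and matches the paper's treatment: the paper gives no separate proof of this proposition, presenting it explicitly as a synthesis of Proposition \ref{prpiseg} (together with Lemma \ref{Potter} and Proposition \ref{ZLDdual}) read through the notations \eqref{paradel}--\eqref{paradelmu}, which is exactly the case split and change of variable $k'=-k+1$ you carry out. Your index bookkeeping (block sizes $b'-k'+1=k-a$, $k'-a'=b-k+1$, and the application of Proposition \ref{ZLDdual} to $[-b,-a]_\rho$ for assertion (4)) checks out.
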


Pour montrer le théorème \ref{nuevo2}, 
il suffira donc de montrer le théorème suivant. 

\begin{theo}
\label{nuevo3}
Soit un entier $r\>1$ et soient $\Delta_1,\dots,\Delta_r$ des segments. 
Alors~:
\begin{equation*}
\left< \Delta_1\right>\times \dots \times \left< \Delta_r\right>
\end{equation*} 
est irréductible si et seulement si $\Delta_i$ 
et $\Delta_j$ sont non liés pour tous $i,j\in\{1,\dots,r\}$ avec $i\neq j$. 
\end{theo}

On montre d'abord le résultat suivant. 

\begin{prop}
\label{2seg}
Soient $\Delta$ et $\Delta'$ des segments non liés. 
Alors 
$\left<\Delta\right>\times\left<\Delta'\right>$ est irréductible. 
\end{prop}

\begin{rema}
\label{2sep}
D'après la proposition \ref{commu}, cette proposition implique que, 
si $\Delta$ et $\Delta'$ sont deux segments 
non liés, alors 
$\left< \Delta \right> \times \left< \Delta'\right>$ et
$\left<\Delta' \right> \times \left< \Delta \right>$ sont 
isomorphes.
\end{rema}

On écrit $\Delta$ et $\Delta'$ respectivement sous la forme $\left[a,b\right]_\rho$ et 
$\left[a',b'\right]_{\rho'}$, 
où $\rho$ et $\rho'$ sont irré\-duc\-ti\-bles cus\-pi\-dales. 
On note $n= b-a+1$ et $n'=b'-a'+1$.
On fait d'abord quelques réductions~: 
elles ne sont pas nécessaires dans notre preuve
mais permettent de simplifier les notations. 
\begin{enumerate}
\item[{\bf R1}]
D'après la proposition \ref{commu}, on peut supposer que $n \< n'$.  
\item[{\bf R2}]
D'après la proposition \ref{Zdroites}, et quitte à modifier $a$,
$b$, $a'$ et $b'$, on peut supposer que $\rho$ et $\rho'$ sont isomorphes. 
\item[{\bf R3}]
Quitte à tordre $\rho$ par un caractère non ramifié, on peut supposer que 
$a=0$.  
\item[{\bf R4}]
Par la méthode du changement de groupe (voir la proposition \ref{ZZp} 
et \cite[4.4]{MS11}), 
on peut supposer que $\rho$ est le caractère trivial de $\mult\F$, que l'on 
notera $\left<0\right>$. 
\end{enumerate}

Aussi peut-on supposer que 
$\Delta=\left[0,b\right]$ et $\Delta'=\left[a',b'\right]$ avec $b+a'\<b'$. 

\begin{rema}
Si $\CR$ est de caractéristique non nulle $\ell$, 
on note $e$ l'ordre de $q$ dans $\FF_\ell^{\times}$.  
Puisque $\Delta$ et $\Delta'$ ne sont pas liés et que $n\<n'$, 
on a $n\<e-1$, ce qui implique qu'on a $e\>2$.  
Si $\CR$ est de caractéristique nulle, on convient que $e$ est infini et 
une congruence de la forme $a \equiv a' \mod e$ voudra 
juste dire que $a=a'$. 
\end{rema}

Nous utilisons la méthode des foncteurs de Jacquet et le lemme \ref{sir}.  
Malheureusement, nous ne pouvons éviter de traiter plusieurs cas en petite 
dimension (où les foncteurs de Jacquet ne sont pas de grande aide).
On distingue les cas suivants~:
\begin{enumerate}
\item[1.]
$n=n'=1$.
\item[2.]
$e \> 3$ et $n=1$.
\item[3.]
$e \> 3$ et $n,n' \> 2$.
\item[4.]
$e=2$ et $n'=3$.
\item[5.]
$e=2$ et $n'\>4$.
\end{enumerate}
On remarque que, dans les deux derniers cas, on a $n=1$. 
Dans le cas où $n=n'=1$, la propo\-sition est une conséquence 
de \cite[Proposition 4.35]{MS11}.  
On passe maintenant aux cas suivants. 

\subsubsection{}
\label{cas2}

On suppose que $e \> 3$ et $n=1$. 
On procède par récurrence sur $n'$, 
le cas $n'=1$ étant déjà prouvé. 
Remarquons que l'hypothèse sur $\Delta$ et $\Delta'$ se traduit par
$a' \not\equiv 1 \mod e$
et
$b' \not\equiv -1 \mod e$. 
\vspace{0.5cm}
Il faut encore différencier les sept cas suivants~:\\
\vspace{0.5cm}
\begin{minipage}{0.5\linewidth}
\begin{enumerate}
\item[2.1.]  $\Delta'= \left[0,1\right]$. 
\item[2.2.] $b' \not\equiv -1,0,1 \mod e$. 
\item[2.3.] $a'=0$, $b'\equiv 0,1 \mod e$ et $n'\> 3$.
\item[2.4.] $a' =-1$ et $b' \equiv 1 \mod e$.  
\end{enumerate}
\end{minipage}
\begin{minipage}{0.5\linewidth}
\begin{enumerate}
\item[$2.1'$.]  $\Delta'= \left[-1,0\right]$. 
\item[$2.2'$.] $a' \not\equiv -1,0,1 \mod e$.
\item[$2.3'$.] $b'=0$, $a'\equiv 0,-1 \mod e$ et $n'\> 3$. 
\item[\quad]
\end{enumerate}
\end{minipage}\\
Les cas $2.1'$, $2.2'$ et $2.3'$ découlent respectivement 
des cas 2.1, 2.2 et 2.3 par passage à la contra\-gré\-diente
(ce que nous pouvons faire puisque $e\neq1$). 
Traitons les cas 2.1, 2.2, 2.3 et 2.4.

\begin{lemm}
L'induite $\langle 0\rangle\times\langle 0,1\rangle$ est irréductible. 
\end{lemm}

\begin{proof}
Comme dans Rogawski \cite{Rog2}, ce cas est particulier et il faut le traiter 
à part.  
On pose $\M=\M_{(1,2)}$ et on note $\chi$ 
le caractère formé des vecteurs de $\langle 0,1\rangle$ qui sont 
invariants par le sous-groupe d'Iwahori de $\GL_2(\F)$
(il s'agit donc de $\Zz(0,1)$ ou de $\Ll(0,1)$ selon les cas).
D'après \cite[Pro\-po\-sition 4.13]{MS11}, l'induite est irréductible si et 
seulement si 
$\Hom_{\Hh_\M}(\Hh,1\otimes\chi)$ est irréductible.
L'irré\-duc\-tibilité de ce $\Hh$-module 
se montre comme dans \cite[Lemma 5.1]{Rog2}.
\end{proof}

Traitons le cas 2.2.
D'après la proposition \ref{ayuto}(1), 
la représentation $\langle 0\rangle \times \langle  a',b'\rangle$ est 
une sous-re\-pré\-sentation de 
$\langle 0\rangle \times \langle a',b'-1\rangle \times\langle b'\rangle$.  
Par hypothèse de récurrence (puisqu'on a $b' \not\equiv 0 \mod e$) et d'après la 
remarque \ref{2sep}, celle-ci est isomorphe à $\pi_1 \times \pi_2$, 
où l'on a posé 
$\pi_1=\langle a',b'-1\rangle$ et $\pi_2=\langle 0\rangle \times \langle b'\rangle $ 
(remarquons que $\pi_2$ est irréductible par hypothèse de récurrence puisque $b' 
\not\equiv \pm 1 \mod e$).  
De même, par la proposition \ref{ayuto}(2), c'est un quotient de $\pi_2 \times 
\pi_1$.  
D'après le lem\-me géo\-mé\-trique, on a~:
\begin{equation*}
\begin{split}
\sy{\rp_{(n'-1,2)}(\pi_1\times \pi_2)} =
\langle a',b'-1\rangle \otimes (\langle0\rangle \times \langle b'\rangle ) &
+
(\langle a',b'-2\rangle\times \langle0\rangle)\otimes (\langle b'-1\rangle 
\times \langle b'\rangle ) \\
& + 
(\langle a',b'-2\rangle\times \langle b'\rangle)\otimes (\langle b'-1\rangle 
\times \langle 0\rangle ) \\
& + 
(\langle a',b'-3\rangle \times
\langle b'\rangle\times \langle 0\rangle) \otimes \langle b'-2,b'-1\rangle
\end{split}
\end{equation*}
et donc la représentation 
$\pi_1 \otimes \pi_2$ apparaît avec multiplicité 1 dans 
$\rp_{(n'-1,2)}(\pi_1\times \pi_2)$.  
On déduit du lemme \ref{sir}
que $\langle 0\rangle \times \langle  a',b' \rangle$ est irréductible. 

\medskip

Le cas 2.3 se montre comme le cas précédent en utilisant 
$\pi_1= \langle 0\rangle\times\langle 0,1\rangle$ et $\pi_2=\langle 2,b\rangle $.  

\medskip

Traitons enfin le cas 2.4. 
D'après le lemme \ref{mult1}, la représentation $\langle 0\rangle 
\times\langle -1,b'\rangle$ 
a une unique sous-re\-pré\-sentation 
irréductible $\pi$ et un unique quotient irréductible $\pi'$.  
De plus, $\pi$ et $\pi'$ apparaissent avec multiplicité $1$ dans 
$\langle 0\rangle \times\langle -1,b'\rangle$. Pour prouver que cette
induite est irréductible, il suffit donc de montrer que $\pi$ et $\pi'$ 
sont isomorphes. 
On pose $\alpha=(1,\dots,1)$.
Le module de Jacquet $\rp_{\a}(\langle 0\rangle \times\langle -1,b'\rangle)$
contient comme sous-quotient la représentation~:
\begin{equation*}
\tau=\langle -1\rangle \otimes\langle 0\rangle \otimes 
\langle 0\rangle
\otimes\langle 1\rangle\otimes\langle 2\rangle\otimes\dots\otimes\langle b'\rangle
\end{equation*}
avec multiplicité $2$.  
Montrons que,  de même, $\rp_\alpha(\pi)$ contient  comme sous-quotient $\tau$
avec multiplicité $2$.  
La représentation $\pi$ étant une
sous-représentation de $\langle 0\rangle \times\langle -1,b'\rangle $, 
elle est aussi, par
la proposition \ref{ayuto}(1), une sous-représentation de 
$\langle 0\rangle \times\langle -1,0\rangle \times \langle 1,b'\rangle$. 
Par réciprocité de Frobenius on trouve donc~: 
$$  \Hom\left(\rp_{(3,b')}( \pi),  ( \langle 0\rangle  \times\langle -1,0\rangle )
  \otimes \langle 1,b'\rangle \right) \neq\{0\}.$$ 
La représentation $ ( \langle 0\rangle \times\langle -1,0\rangle ) \otimes 
\langle 1,b'\rangle$, 
d'après le cas $2.1'$ ci-dessus, est irréductible.  
On a donc~: 
$$\rp_\alpha\left( ( \langle 0\rangle \times\langle -1,0\rangle ) 
\otimes \langle 1,b'\rangle\right) \< \rp_\alpha(\pi).$$ 
D'après le lemme géométrique, $\tau$ apparaît dans
$\rp_\alpha\left( ( \langle 0\rangle \times\langle -1,0\rangle ) 
\otimes \langle 1,b'\rangle\right)$ 
avec multipli\-cité $2$. 
On en déduit donc que $\tau$ apparaît avec multiplicité $2$ dans 
$\rp_\alpha(\pi)$. 
De fa\c{c}on analogue, $\tau$ apparaît avec multiplicité
$2$ dans $\rp_\alpha(\pi')$. 
Puisque $\pi$ et $\pi'$ sont deux sous-quotients de $\langle 0\rangle
\times\langle -1,b'\rangle$ et que $\tau$ apparaît aussi avec multiplicité $2$ 
dans $\rp_\alpha( \langle 0\rangle \times\langle -1,b'\rangle)$, on en déduit 
que $\pi$ et $\pi'$ sont isomorphes. 

\subsubsection{}
\label{cas6}

On suppose que $e \> 3$ et $n,n'\>2$.  
La preuve se fait par récurrence sur $nn'$. 

On commence par traiter le cas particulier où $a'\equiv 0\mod e$. 
D'après la proposition \ref{ayuto}(1), la représentation 
$\left< \Delta \right> \times\left< \Delta' \right>$ se plonge dans 
$\left<0\right> \times \left< ^-\Delta \right> \times \left< \Delta' \right> $. 
Par hypothèse de récurrence et d'après les remarques \ref{dosliados2}(2) et 
\ref{2sep}, cette dernière est isomorphe à l'induite 
$\left<0\right> \times \left< \Delta' \right> \times \left< ^-\Delta \right> $ 
qui est, toujours d'après la proposition \ref{ayuto}(1), une sous
représentation de 
$\left<0\right> \times \left<0\right> \times \left<^- \Delta' \right> \times \left< ^-\Delta 
\right> $.  
De façon analogue, on montre que  
$\left< \Delta \right> \times \left< \Delta '\right> $ est un quotient de
$\left<^- \Delta' \right> \times \left< ^-\Delta \right> 
\times \left<0\right> \times\left<0\right>$. 
Les représentations $\left<0\right> \times \left<0\right>$ et 
$\left<^- \Delta' \right>\times \left< ^-\Delta \right> $ sont irréductibles 
par hypothèse de récurrence.
D'après le lemme géométrique, on a~:
\begin{equation*}
\begin{split}
\sy{\rp_{(2,n+n'-2)}( \left<0\right> \times \left<0\right>\times 
\langle ^-\Delta '\rangle\times \langle ^-\Delta \rangle )} = 
 (\left<0\right> & \times \left<0\right> )\otimes 
(\left<^- \Delta' \right> \times \left< ^-\Delta\right>) \\
& +2\cdot (\left<0\right>\times \left<1\right> )\otimes 
(\left<0\right>\times\left<^{--} \Delta' \right> \times \left< 
  ^-\Delta\right>) \\
& + 2\cdot (\left<0\right>\times \left<1\right> )\otimes 
(\left<0\right>\times\left<^{-}     \Delta'      \right>     \times     \left<
  ^{--}\Delta\right>) \\
& + 
(\left<1\right>\times \left<1\right> )\otimes 
(\left<0\right>\times  \left<0\right> \times  \left<^- \Delta'  \right> \times
\left< ^-\Delta\right>)
\end{split}
\end{equation*}
donc ce module de Jacquet contient 
$(\left<0\right>\times \left<0\right> )\otimes 
(\left<^- \Delta' \right> \times \left< ^-\Delta\right>)$ 
avec multiplicité $1$. 
Donc la représentation 
$\left< \Delta \right> \times \left< \Delta' \right> $ 
est irréductible d'après le lemme \ref{sir}.

On traite maintenant le cas général.  
D'après le paragraphe précédent, on peut supposer, 
quitte à passer à la contragrédiente, que $a' \not \equiv 0 \mod e$. 
Alors $\left< a',b' \right> \times \left< 0,b \right> $ est une 
sous-représentation de~: 
\begin{equation}\label{4321} 
\left< a',b' \right> \times \left< 0\right> \times \left< 1,b \right>. 
\end{equation} 
Comme on a supposé que $n \< n'$, les segments
$\left[ a',b' \right]$ et $\left[0\right]$ ne sont pas liés et donc
\eqref{4321} est isomorphe, par hypothèse de récurrence, à~: 
$$ \left< 0\right> \times \left< a',b' \right> \times \left< 1,b \right>.$$
D'après la remarque \ref{dosliados2}(2), les segments $\left[ a',b' \right]$ et 
$\left[ 1,b \right]$ ne sont pas liés. 
Par hypothèse de récurrence, 
la représentation $ \left< a',b' \right> \times \left< 1,b \right>$ est donc 
irréductible. 
On conclut comme dans le cas 2.2 traité plus haut, 
avec $\pi_1= \left<0\right>$ et 
$\pi_2= \left< a',b' \right> \times \left< 1,b \right>$.

\subsubsection{}
\label{cas3}

On suppose ici que $e=2$ (ce qui implique, étant donné que $n \< n'$, que 
$n=1$). 

\begin{lemm}
\label{TwistPapier}
L'induite $\left<0\right>\times\langle 0,1,2\rangle$ est irréductible. 
\end{lemm}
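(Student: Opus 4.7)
The plan is to reduce the lemma to an explicit small-rank computation via the change-of-group method and then apply the irreducibility criterion \ref{sir}. By the reductions R1--R4 recorded before the list of cases, we may assume $\rho$ is the trivial character $\mathbf{1}$ of $\mult\F$, with the attached Iwahori-spherical type, so that the lemma becomes the statement that $\pi := \mathbf{1}\times\langle 0,1,2\rangle$ is an irreducible representation of $\GL_4(\F)$, with $q\equiv -1\pmod\ell$. Before anything else, I would observe that since $e=2$ and the length $n([0,2])=3$ is odd, Proposition~\ref{Z=L} gives $\Z([0,2])\simeq\L([0,2])$, so the notation $\langle 0,1,2\rangle$ is unambiguous; write $\s$ for this common representation.

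Next, I would analyze $\pi$ through its Iwahori invariants. Let $\Iw$ be the standard Iwahori subgroup of $\GL_4(\F)$ and $\Hh=\Hh(\GL_4(\F),\Iw)$ the corresponding affine Hecke algebra of type $\tilde A_3$ with parameter $q\equiv-1\pmod\ell$. By Proposition~\ref{EgaIndParCasIrrBisens}, $\pi$ is irreducible if and only if $\Mm(\pi)$ is an irreducible $\Hh$-module. Combining Lemma~\ref{MID} with Proposition~\ref{RussSamProp} applied to $\M=\GL_1\times\GL_3$, the module $\Mm(\pi)$ is of dimension $4$ and is obtained by inducing the character $1\otimes\Zz(0,2)$ from the parabolic subalgebra $\Hh_\M$ to $\Hh$; this reduces the problem to checking irreducibility of an explicit induced module on the affine Hecke side. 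One may then avoid a long root-datum calculation by appealing instead to Lemma~\ref{sir}: since $\pi$ is simultaneously a sub of $\mathbf{1}\times\Z([0,2])$ and a quotient of $\mathbf{1}\times\L([0,2])=\mathbf{1}\times\Z([0,2])$, it suffices to establish that $\mathbf{1}\otimes\s$ has multiplicity exactly one in $\sy{\rp_{(1,3)}(\pi)}$.

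The core of the computation, and the expected main obstacle, is this multiplicity one statement in the non-generic parameter $q\equiv -1\pmod\ell$. By the geometric lemma of paragraph~\ref{lemmegeo}, $\sy{\rp_{(1,3)}(\mathbf{1}\times\s)}$ splits into two pieces: the identity permutation contributes $\mathbf{1}\otimes\s$ exactly once, while the non-trivial Weyl element contributes $\mathbf{1}\otimes\bigl(\mathbf{1}\times\rp^{\GL_3}_{(2,1)}(\s)\bigr)$, and by Proposition~\ref{prpiseg} one has $\rp^{\GL_3}_{(2,1)}(\s)=\Z([0,1])\otimes\mathbf{1}$ (using $e=2$, so that $\rho\nu_\rho^2\simeq\rho=\mathbf{1}$). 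Hence the task reduces to verifying that $\s=\Z([0,2])$ is not a composition factor of $\mathbf{1}\times\Z([0,1])$ in $\GL_3(\F)$. The latter is a statement purely about the rank-$3$ affine Hecke algebra at $q\equiv -1$: both representations have the same cuspidal support $2\cdot\sy{\mathbf{1}}+\sy{\rho\nu_\rho}$, so this must be checked by computing the two composition factors of $\mathbf{1}\times\Z([0,1])$ explicitly via Proposition~\ref{blablabla} (comparing the $\Hh(\GL_3(\F),\Iw)$-characters attached to them with the character $\Zz(0,2)$ attached to $\Z([0,2])$), and observing that neither of the two factors matches $\Z([0,2])$ when $q\equiv -1\pmod\ell$. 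Once this last verification is in hand, Lemma~\ref{sir} applies and the irreducibility of $\pi$ follows.
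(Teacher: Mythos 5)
Your strategy pivots to Lemma~\ref{sir} with the minimal parabolic $(1,3)$ and reduces the problem to a multiplicity-one claim, namely that $\mathbf{1}\otimes\s$ occurs exactly once in $\sy{\rp_{(1,3)}(\mathbf{1}\times\s)}$, where $\s=\Z([0,2])$. That claim is false. By the geometric lemma the Jacquet module $\sy{\rp_{(1,3)}(\mathbf{1}\times\s)}$ splits into $\mathbf{1}\otimes\s$ (trivial permutation) plus $\mathbf{1}\otimes\bigl(\mathbf{1}\times\Z([1,2])\bigr)$ (non-trivial permutation; with $e=2$ and $k=1$ Proposition~\ref{prpiseg} gives $\rp_{(1,2)}(\Z([0,2]))=\mathbf{1}\otimes\Z([1,2])$, not $\Z([0,1])\otimes\mathbf{1}$ as you wrote, though this typo is not the real issue). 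Now since $\Z([1,2])\hookrightarrow\rho\nu\times\rho\nu^2$, we have $\mathbf{1}\times\Z([1,2])\hookrightarrow\rho\nu^0\times\rho\nu\times\rho\nu^2=\Pi([0,2])$, and $\Z([0,2])=\s$ is by definition the unique irreducible subrepresentation of $\Pi([0,2])$. Hence $\s$ is a constituent of $\mathbf{1}\times\Z([1,2])$, so $\mathbf{1}\otimes\s$ has multiplicity at least $2$ in $\sy{\rp_{(1,3)}(\mathbf{1}\times\s)}$. (The same difficulty appears if one uses your version $\Z([0,1])\times\mathbf{1}$: because $e=2$ forces $\mathbf{1}=\rho\nu^2$, one has $\Z([0,1])\times\mathbf{1}\hookrightarrow\rho\times\rho\nu\times\rho\nu^2=\Pi([0,2])$ as well, so $\s$ is again a constituent.) Thus the hypothesis of Lemma~\ref{sir} is not satisfied and your argument collapses at the last step. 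The sentence ``this must be checked by computing the two composition factors \dots and observing that neither matches $\Z([0,2])$'' is not just an unfinished verification: if carried out it would yield the opposite conclusion.

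This is exactly why the paper does not go through a minimal parabolic. The paper chooses $\rp_{(2,2)}$, for which $\langle 0,1\rangle\otimes(\langle 0\rangle\times\langle 0\rangle)$ has multiplicity $1$, to deduce via Lemma~\ref{sir} that $\Pi$ has a unique irreducible subrepresentation $\pi$ and a unique irreducible quotient $\pi^\vee$; but because it cannot exclude extra constituents by Jacquet-module arguments alone, it then passes to the finite group via the $\KMS$ homomorphism, computes $\KMS(\Pi)=\z(3)\times\z(1)=\z(3,1)+k\cdot\z(4)$, and derives a contradiction from the fact that any putative third constituent $\tau$ would have to support $\z(3,1)$ while having $\rp_{(2,2)}(\tau)=0$. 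If you want to salvage a Hecke-algebra route, you would have to actually compute the $4$-dimensional induced module for the affine Hecke algebra of type $\tilde A_3$ at $q\equiv-1$ and verify its irreducibility directly, rather than invoke Lemma~\ref{sir}.
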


\begin{proof}
La représentation $\Pi=\left<0,1,2\right> \times \left<0\right>$
se plonge dans $\left<0,1\right>\times\left<0\right> \times \left<0\right>$. 
D'après le lemme géométrique, 
$\left<0,1\right>\otimes(\left<0\right> \times 
\left<0\right>)$ est irréductible et apparaît avec multi\-plicité $1$ dans 
le module de Jacquet $\rp_{(2,2)}(\Pi)$. 
D'après le lemme \ref{sir},
on en déduit que $\Pi$ possède une unique sous-représentation 
irréductible $\pi$.
Par conséquent, $\pi^\vee$ est l'unique quotient irréductible de 
$\Pi^\vee\simeq\Pi$.
Si $\Pi$ n'est pas irréductible, alors $\sy{\Pi}\>\sy{\pi}+\sy{\pi^\vee}$. 
Remarquons que, d'après la proposition \ref{Z=L}, les représentations $\Z([0,1,2])$ et 
$\L([0,1,2])$ sont isomorphes parce que $e=2$ et $n'$ est impair.  

Appliquons le foncteur  $\KM_{4}$, qui n'est rien d'autre  ici que le foncteur
des invariants sous le groupe $1+\p_\F\cdot\Mat_{4}(\Oo_\F)$.
Plus précisément, on va utiliser l'homomorphisme $\KMS_{4}$. 
On utilise aussi la notation $\z(\s,\mu)$ du paragraphe \ref{ClasJam}, 
où $\s$ est ici le caractère trivial de $\kk_\F^{\times}$ et où $\mu$ est une 
partition. 
Pour alléger les notations, on omettra $\s$, 
de sorte qu'on notera simplement $\z(\mu)$. 
D'après les pro\-po\-si\-tions \ref{KMAX1} et \ref{prpisegK} et le 
théorème \ref{ResumeJames}, on a~:
\begin{equation*}
\KMS_{4}(\Pi)=\z(3)\times\z(1)=\z(3,1)+k\cdot\z(4),
\quad
k\>0.
\end{equation*}
Si l'on écrit $\sy{\Pi}=\sy{\pi}+\sy{\pi^\vee}+\sy{\tau}$,
alors $\z(3,1)$, qui est de multiplicité $1$, ne peut appa\-raî\-tre que dans 
$\KMS_{4}(\tau)$. 
Comme $\rp_{(2,2)}(\Pi)$ est de longueur $2$, on a $\rp_{(2,2)}(\tau)=\{0\}$, 
ce qui impli\-que que $\rp_{(2,2)}(\KMS_{4}(\tau))=\{0\}$.
Mais $\rp_{(2,2)}(\z(3,1))\neq\{0\}$ 
puisque $\rp_{(1,1,1,1)}(\z(3,1))\neq\{0\}$, ce qui donne une contra\-diction.
\end{proof}

\label{cas4}

On suppose maintenant que $a'=0$ et que $b'$ est pair et $\>4$.
D'après la proposition \ref{ayuto}(1),
la représentation $\left<0\right>\times \left< \Delta' \right>$ 
est une sous-représentation de 
$\left<0\right> \times \left<0\right> \times \left< ^-\Delta' \right>$, 
laquelle, par le lemme \ref{mult1}, possède une 
unique sous-représentation irréductible, notée $\pi$, 
qui apparaît
avec multiplicité $1$ dans $\left<0\right> \times \left<0\right> 
\times \left< ^-\Delta'\right>$. 
On déduit que $\pi$ est l'unique sous-représentation irréductible de
$\left<0\right> \times \left< \Delta' \right>$ et y apparaît avec multiplicité $1$. 
De même, on montre que $\left<0\right> \times \left< \Delta' \right>$ possède un 
unique quotient irréductible, notée $\pi'$, qui apparaît avec 
multiplicité $1$ dans $\left<0\right> \times \Delta'$.  
Pour prouver que cette induite est irréductible, il suffit donc de montrer que 
$\pi \simeq \pi'$.  
Le module de Jacquet associé à la partition $\alpha=(1,1,\dots,1)$ de 
$\left<0\right> \times \left< \Delta' \right> $ contient comme sous-quotient la 
représentation~:
\begin{equation*}
\tau= \left<0\right> \otimes\left<1\right> \otimes\left<0\right> \otimes \left<0\right> 
\otimes\left<1\right> \otimes\left<0\right> \otimes \dots \otimes\left<1\right> \otimes 
\left<0\right>
\end{equation*}
avec multiplicité $2$. 
Montrons que, de même, les modules de Jacquet $\rp_\alpha(\pi)$ et 
$\rp_\alpha(\pi')$ contiennent $\tau$ comme sous-quotient avec 
multiplicité $2$, ce qui prouvera que $\pi \simeq \pi'$.  
D'après la proposition \ref{ayuto}(1), la représentation $\pi$ est une 
sous-représentation de l'induite 
$\left<0\right> \times \left< \Delta'_1 \right> \times 
\left< \Delta'_2 \right>$, avec $\Delta'_1= \left[0,1, 2\right] $ et 
$\Delta'_2= \left[3,b'\right] $, donc par réciprocité de Frobenius 
on trouve que~:
\begin{equation*}
\Hom(\rp_{(4,b'-2)}( \pi), (\left<0\right> \times \left<  \Delta'_1 \right>
  )\otimes \left< \Delta'_2 \right>) \neq \{0\}.
\end{equation*} 
D'après le lemme \ref{TwistPapier}, la représentation 
$ (\left<0\right> \times \left< \Delta'_1 \right> )\otimes \left<
\Delta'_2 \right>$ est irréductible donc~:
\begin{equation*}
\sy{\rp_\alpha\left((\left<0\right> \times 
\left< \Delta'_1 \right> )\otimes \left<\Delta'_2 \right>\right)} 
\< \sy{\rp_\alpha(\pi)}.
\end{equation*}
Comme $\tau$, d'après le lemme géométrique, apparaît avec
multiplicité $2$ dans le module de Jacquet 
$\rp_\alpha\left((\left<0\right> \times \left< \Delta'_1 \right> )
\otimes \left< \Delta'_2 \right>\right)$, on trouve que $\tau$ apparaît
avec multiplicité $2$ dans $\rp_\alpha(\pi)$. 
De la même fa\c{c}on, on prouve que $\tau$ apparaît avec multiplicité
$2$ dans $\rp_\alpha(\pi')$. 

\medskip

Ceci met fin à la démonstration de la proposition \ref{2seg}.

\subsubsection{}

On en déduit le résultat suivant. 

\begin{coro}
\label{nuevo2cor}
Soient $\Delta_1,\dots,\Delta_r$ des segments. 
Supposons que, pour tous $1\<i,j\<r$ tels que $i\neq j$, 
les segments $\Delta_i$ et $\Delta_j$ soient non liés. 
Alors, la représentation~:
\begin{equation*}
\left< \Delta_1\right>\times \dots \times \left< \Delta_r\right>
\end{equation*} 
est irréductible. 
\end{coro}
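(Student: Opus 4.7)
La preuve repose sur l'application du lemme \ref{sir} \`a la repr\'esentation $\pi = \ip_\P^\G(\s)$, o\`u $\s = \left<\Delta_1\right> \otimes \cdots \otimes \left<\Delta_r\right>$ est une repr\'esentation irr\'eductible du sous-groupe de Levi standard $\M = \M_\a$ associ\'e \`a $\a = (\deg(\Delta_1), \dots, \deg(\Delta_r))$, et o\`u $\P$ d\'esigne le sous-groupe parabolique standard de facteur de Levi $\M$.

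Pour v\'erifier la premi\`ere condition de ce lemme, on commence par \'etablir la commutativit\'e des facteurs. Par la proposition \ref{2seg} et la remarque \ref{2sep}, pour tout couple $(i,j)$, on a un isomorphisme $\left<\Delta_i\right> \times \left<\Delta_j\right> \simeq \left<\Delta_j\right> \times \left<\Delta_i\right>$. Par transitivit\'e et fonctorialit\'e de l'induction parabolique, tout isomorphisme entre deux facteurs cons\'ecutifs d'un produit d'induction s'\'etend en un isomorphisme du produit entier~; par it\'eration d'\'echanges adjacents, toute permutation des facteurs de $\pi$ donne une repr\'esentation isomorphe. En particulier, l'inversion de l'ordre fournit un isomorphisme $\pi \simeq \ip_{\ov\P}^\G(\s)$, de sorte que $\pi$ est \`a la fois sous-repr\'esentation tautologique de $\ip_\P^\G(\s)$ et quotient de $\ip_{\ov\P}^\G(\s)$, ce qui assure la premi\`ere hypoth\`ese du lemme \ref{sir}.

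La seconde hypoth\`ese, \`a savoir que $\s$ appara\^it avec multiplicit\'e $1$ dans la semi-simplifi\'ee $\sy{\rp_\P^\G(\ip_\P^\G(\s))}$, constitue la partie technique essentielle de la preuve. On proc\`ede par r\'ecurrence sur $r$, le cas $r=2$ \'etant assur\'e par la proposition \ref{2seg}. Pour l'\'etape inductive, on d\'eveloppe cette semi-simplifi\'ee via le lemme g\'eom\'etrique \eqref{lemmegeometrique} en une somme index\'ee par $\Dd(\M,\M)$~; le terme correspondant \`a l'\'el\'ement neutre contribue $\s$ avec multiplicit\'e exactement $1$. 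Pour les autres termes, la description explicite des modules de Jacquet des $\left<\Delta\right>$ fournie par la proposition \ref{prpiseg} identifie chaque contribution \`a une induite parabolique d'un produit tensoriel de repr\'esentations de sous-segments obtenus par un r\'earrangement non trivial des $\Delta_i$. L'hypoth\`ese de non-liaison, combin\'ee \`a la remarque \ref{dosliados}, interdit qu'un tel r\'earrangement puisse reconstituer $\s$ comme sous-quotient, ce qui assure la multiplicit\'e $1$ annonc\'ee.

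L'obstacle principal r\'eside dans cette analyse combinatoire du lemme g\'eom\'etrique~: il s'agit de contr\^oler les contributions de chaque classe dans $\Dd(\M,\M)$ et de v\'erifier que l'hypoth\`ese de non-liaison exclut toute apparition de $\s$ en dehors du terme neutre, au moyen d'un raisonnement au cas par cas analogue \`a celui d\'evelopp\'e pour la proposition \ref{2seg}. Une fois cette multiplicit\'e \'etablie, les deux hypoth\`eses du lemme \ref{sir} sont v\'erifi\'ees, ce qui fournit l'irr\'eductibilit\'e de $\pi$ et ach\`eve la preuve du corollaire.
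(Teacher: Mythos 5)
Your overall strategy --- apply Lemme~\ref{sir} to $\pi = \ip_\P^\G(\s)$ and check its two hypotheses --- matches the paper's, and your verification of hypothesis (1) via adjacent transpositions (Proposition~\ref{2seg} plus Remarque~\ref{2sep}, then transitivity of parabolic induction) is sound. The problem lies entirely in hypothesis (2), and it is not a gap that can be filled by carrying out the combinatorial analysis you defer: the claim that $\s = \langle\Delta_1\rangle\otimes\cdots\otimes\langle\Delta_r\rangle$ appears with multiplicity $1$ in $\sy{\rp_{\P_\a}^{\G}(\ip_{\P_\a}^{\G}(\s))}$, with $\a = (\deg\Delta_1,\dots,\deg\Delta_r)$ the finest partition, is \emph{false} in general. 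Take $r=2$ and $\Delta_1 = \Delta_2$ (equal segments are of course unlinked). Besides the trivial double-coset representative, the long element $w$ corresponding to the matrix $\begin{pmatrix} 0 & \deg\Delta_1 \\ \deg\Delta_2 & 0\end{pmatrix}$ contributes, via the geometric lemma, the term $\langle\Delta_2\rangle\otimes\langle\Delta_1\rangle = \s$ again, so $\s$ has multiplicity at least $2$. Note also that Proposition~\ref{2seg} establishes \emph{irreducibility} of the product, not the multiplicity-$1$ statement you cite it for; so even the base case of your induction does not hold.

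The paper circumvents exactly this obstruction by choosing a \emph{coarser} parabolic. After ordering so that $\Delta_1$ has maximal length and (up to passing to the contragredient, which uses Proposition~\ref{ayuto}(4)) invoking Remarque~\ref{dosliados}(1), one splits the indices into two blocks: those $i$ with $b(\Delta_i)\simeq b(\Delta_1)$ (indices $1,\dots,r_1$) and the rest, for which $b(\Delta_1)$ does not appear in the support at all. By the induction hypothesis each block-product $\pi_1 = \langle\Delta_1\rangle\times\dots\times\langle\Delta_{r_1}\rangle$ and $\pi_2 = \langle\Delta_{r_1+1}\rangle\times\dots\times\langle\Delta_r\rangle$ is irreducible, and the two-block partition $\b$ is tuned so that Proposition~\ref{mmm1} applies: Proposition~\ref{prpiseg} forces every right-hand factor of a proper Jacquet decomposition of $\pi_1$ to carry $b(\Delta_1)$ in its cuspidal support, which by construction is disjoint from $\cusp(\pi_2)$, giving multiplicity $1$ of $\pi_1\otimes\pi_2$ in $\rp_\b(\ip_\b(\pi_1\otimes\pi_2))$. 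When no such split exists --- that is, when all segments share the same initial and final points --- the paper treats this case separately by the method of the special case in \S\ref{cas6}. Your proof would need to be restructured along these lines; the fine parabolic simply cannot deliver hypothesis (2).
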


\begin{proof}
La preuve se fait par récurrence sur $r$, le cas $r=2$ étant 
traité par la proposition \ref{2seg}. 
Supposons donc que $r\>3$. 
Si $\rho_i\mu_{\rho_i}^{a_i}\simeq\rho_j\mu_{\rho_j}^{a_j}$ et 
$\rho_i\mu_{\rho_i}^{b_i}\simeq\rho_j\mu_{\rho_j}^{b_j}$ pour 
tous $1 \< i,j \< r$, alors la preuve est similaire au 
cas particulier traité dans le paragraphe \ref{cas6}. 
Remarquons aussi que, par hypothèse et d'après la remarque 
\ref{2sep}, 
$\langle \Delta_1 \rangle \times \dots \times \langle \Delta_r \rangle$ est
isomorphe à la représentation 
$\langle\Delta_{\sigma(1)}\rangle\times
\dots\times\langle\Delta_{\sigma(r)}\rangle$ pour toute permutation
$\sigma$ de l'ensemble $\{1, \dots, r \}$. 

On note $\Delta_i= \left[a_i,b_i\right]_{\rho_i}$ 
et on suppose que $n(\Delta_1) \> n(\Delta_i)$ pour chaque $i$. 
Quitte à passer à la contragrédiente, on peut donc supposer, d'après la
remarque \ref{dosliados}(1), qu'il existe un entier 
$1\<r_1<r$ tel que 
$\rho_i\mu_{\rho_i}^{b_i}\simeq \rho_1\mu_{\rho_1}^{b_1}$
pour tout $1\< i \< r_1$ et 
$\rho_1\mu_{\rho_1}^{b_1} \notin\supp(\Delta_k)$
pour tout $r_1< k \< r$.
Ainsi, par le lemme géométrique et par hypothèse de récurrence, 
les re\-pré\-sentations irréductibles~:
\begin{equation*}
\left< \Delta_1 \right> \times \dots \times \left< \Delta_{r_1} \right>,
\quad
\left< \Delta_{r_1+1} \right> \times \dots \times \left< \Delta_r \right>
\end{equation*}
satisfont aux conditions du lemme \ref{mmm1} avec~:
\begin{equation*}
\beta=\left( \deg\(\Delta_1\)+\dots+\deg\( \Delta_{r_1}\),\deg\(\Delta_{r_1+1}\)+
 \dots+\deg\( \Delta_{r}\)\right)
\end{equation*}
et donc~: 
$$\( \left< \Delta_1 \right> \times \dots \times \left< \Delta_{r_1}
 \right>\) \otimes \( \left< \Delta_{r_1+1} \right> \times \dots
 \times \left< \Delta_r \right>\)$$ 
apparaît avec multiplicité $1$ dans le module de Jacquet
$\rp_{\beta }\( \left< \Delta_1 \right> \times \dots \times \left< \Delta_r
 \right>\)$.
On utilise maintenant le lemme \ref{sir} pour en déduire que 
$\left< \Delta_1 \right> \times \dots \times \left< \Delta_r \right>$ est
irréductible. 
\end{proof}

Pour compléter la preuve des théorèmes \ref{nuevo2} et \ref{nuevo3}, il ne
reste à montrer que la propo\-sition suivante.

\begin{prop}
Soient $\Delta$ et $\Delta'$ deux segments \textit{liés}.  
Alors $\left<\Delta\right> \times \left<\Delta'\right>$ est 
réductible. 
\end{prop}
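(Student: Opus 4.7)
The plan is to exhibit a proper nonzero subquotient of $\langle\Delta\rangle \times \langle\Delta'\rangle$, using a combination of Jacquet module analysis and the Hecke algebra machinery developed earlier in the paper.

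First, I would reduce to the case $\rho = \rho'$, $\nu_\rho = \nu_{\rho'}$. By Proposition \ref{Zdroites}, if the lines $\ZZ_\rho$ and $\ZZ_{\rho'}$ are distinct, then $\langle\Delta\rangle \times \langle\Delta'\rangle$ is already irreducible; but linked segments must share a common cuspidal (since the extractable longer sequence forces some $\rho\nu_\rho^i$ to equal some $\rho'\nu_{\rho'}^j$), so the two lines coincide. After twisting and using Proposition \ref{commu}, I may assume $\rho = \rho'$ and that $\Delta$ precedes $\Delta'$. Unpacking the combinatorial definition of precedence, this means one can write $\Delta = [a,b]_\rho$ and $\Delta' = [a',b']_\rho$ with $a < a' \<b+1$ and $b < b'$ (the strict inequalities coming from the fact that the extracted segment is longer than both).

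Second, I would apply the change-of-group reduction of Proposition \ref{MisMac}: via the isomorphism ${\bf\Phi}_n$ from \eqref{GG'n}, it suffices to prove the reducibility statement when $\rho$ is replaced by the trivial character of $\F'^\times$, where $\F'$ is the finite extension of $\F$ attached to $\rho$ via \eqref{MortonFullerton1}. This places the problem in the Iwahori-spherical setting for $\GL_{n+n'}(\F')$, where $\langle\Delta\rangle$ and $\langle\Delta'\rangle$ correspond via $\Mm$ to the characters $\Zz(a,b)$ and $\Zz(a',b')$ (or $\Ll$ versions) of the finite Hecke subalgebras, and the induced module $\Mm(\langle\Delta\rangle \times \langle\Delta'\rangle)$ is the induction $\Ii(a,b) \otimes \Ii(a',b')$ in the affine Hecke algebra $\Hh_{n+n'}$ of type $A$ at parameter $q(\rho)$.

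Third, I would proceed by induction on $n(\Delta)+n(\Delta')$ using the geometric lemma. By Proposition \ref{ayuto}, one can decompose $\langle\Delta\rangle$ as a subrepresentation of $\langle a, a'-1\rangle_\rho \times \langle a', b\rangle_\rho$ (where one of the factors may be vacuous in boundary cases) and similarly for $\langle\Delta'\rangle$. Using Proposition \ref{commu} to commute factors when they are unlinked (applying the inductive hypothesis, or invoking the non-linked case of Theorem \ref{nuevo3} already proved in Corollary \ref{nuevo2cor}), one obtains an inclusion or quotient map from $\langle [a,b']_\rho\rangle \times \langle [a',b]_\rho\rangle$ into $\langle\Delta\rangle \times \langle\Delta'\rangle$ (with the convention that the second factor is absent when $a' = b+1$). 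Computing $\rp_{(m,\dots,m)}$ on both sides via Lemma \ref{Potter} and the geometric lemma, one shows that the Jacquet module of $\langle\Delta\rangle \times \langle\Delta'\rangle$ contains the standard tensor product of cuspidals with multiplicity strictly greater than the multiplicity it carries in $\langle [a,b']_\rho\rangle \otimes \langle [a',b]_\rho\rangle$ (via the ``shuffle'' terms in the geometric lemma that mix the two segments), forcing the existence of at least two non-isomorphic irreducible subquotients.

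The main obstacle will be handling the degenerate modular cases. When $e(\rho)$ is small (particularly $e(\rho) \in \{1, 2\}$), cuspidals on the line $\ZZ_\rho$ repeat, so the combinatorics of ``extracted segments'' is subtle, and the phenomenon $\Z(\Delta) \simeq \L(\Delta)$ of Proposition \ref{Z=L} can collapse multiplicity-one arguments. In these cases, I would mimic the pattern of the special-case analysis in the proof of Proposition \ref{2seg} (paragraphs \ref{cas2}--\ref{cas4}), reducing to a direct computation in the affine Hecke algebra $\Hh_{n+n'}$ where the reducibility of $\Ii(a,b) \otimes \Ii(a',b')$ for linked $[a,b]$, $[a',b']$ is guaranteed by the parametrization of simple modules by aperiodic (or supercuspidal) multisegments described in Appendix \ref{ApB}, in particular by the counting corollary \ref{FinaleAriki}.
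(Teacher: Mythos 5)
Your structural idea matches the paper's: exhibit $\langle\Delta^\cup\rangle\times\langle\Delta^\cap\rangle$ (union times intersection) as a nonzero subrepresentation or quotient of $\langle\Delta\rangle\times\langle\Delta'\rangle$, then distinguish the two by counting irreducible subquotients of $\rp_{(m,\dots,m)}$. The change-of-group step via $\mathbf{\Phi}_n$ is legitimate but optional. Two places, however, need tightening.

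First, the multiplicity comparison in your third paragraph is stated imprecisely. You compare the Jacquet module of $\langle\Delta\rangle\times\langle\Delta'\rangle$ to the \emph{tensor product} $\langle[a,b']_\rho\rangle\otimes\langle[a',b]_\rho\rangle$, but what distinguishes the two representations is the comparison of \emph{lengths} of the Jacquet modules $\rp_{(m,\dots,m)}\bigl(\langle\Delta\rangle\times\langle\Delta'\rangle\bigr)$ and $\rp_{(m,\dots,m)}\bigl(\langle\Delta^\cup\rangle\times\langle\Delta^\cap\rangle\bigr)$: these are $\binom{n+n'}{n}$ and $\binom{n+n'}{n^\cup}$ respectively, and $n^\cup>\max(n,n')$ forces a strict inequality. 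The conclusion is then not ``at least two non-isomorphic irreducible subquotients'' but the cleaner statement that the proper nonzero quotient (or subrepresentation) cannot equal the whole, hence the whole is reducible.

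Second, the fourth paragraph chases a nonexistent difficulty. The modular pathologies you flag --- collapsing of multiplicity-one arguments when $e(\rho)$ is small, the coincidence $\Z(\Delta)\simeq\L(\Delta)$ of Proposition \ref{Z=L} --- are obstacles only for the \emph{non-linked implies irreducible} direction handled case-by-case in Proposition \ref{2seg}. For \emph{linked implies reducible}, the shuffle count above is purely combinatorial and characteristic-free, and the construction of the quotient map reduces (via Proposition \ref{ayuto} and the already-established non-linked case, Corollary \ref{nuevo2cor}) to the induction-by-stages argument of Zelevinski's proof of Proposition 4.6(2) in \cite{Ze2}, which the paper observes transfers verbatim. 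Moreover, your fallback to Appendix \ref{ApB} is not available in the generality needed: Theorem \ref{ACG} parametrizes simple modules by aperiodic multisegments only for $\CR=\CC$ and $\xi\neq 1$, while in positive characteristic Corollary \ref{FinaleAriki} is a bare cardinality statement whose use for a concrete reducibility claim would require the full classification apparatus of Proposition \ref{bijj} in Section \ref{Sec10} --- far heavier than what the direct argument requires.
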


\begin{proof}
On peut supposer que $\Delta=[a,b]_\rho$ et $\Delta'=[a',b']_{\rho}$, 
où $\rho$ est une représenta\-tion irréductible cuspidale de degré $m$. 
Quitte à échanger $\Delta$ et $\Delta'$, on peut aussi supposer qu'on a 
$a'+1\<a\<b'+1\<b$. 
On pose~:
\begin{equation*}
\Delta^\cup=[a',b]_\rho,
\quad
\Delta^\cap=[a,b']_\rho,
\quad
\pi=\langle\Delta\rangle \times \langle\Delta'\rangle,
\quad
\om= \langle\Delta^\cup\rangle\times\langle\Delta^\cap\rangle.
\end{equation*}
Si $a=b'+1$, on interprète $\left<\Delta^\cap\right>$ comme la 
représentation triviale du groupe trivial $\G_0$, de sorte que 
$\om=\langle\Delta^\cup\rangle$. 
Si l'on pose $\a=(m,\dots,m)$, le lemme géométrique montre que 
le nombre de sous-quotient irréductibles (comptés avec multiplicités) 
de $\rp_\a(\om)$ est strictement inférieur à celui de $\rp_\a(\pi)$. 

\begin{lemm}
On a $\Hom(\pi,\om)\neq\{0\}$.
\end{lemm}

\begin{proof}
Supposons d'abord que $a=b'+1$, \ie que $\om=\langle a',b\rangle$.
Dans ce cas, le résul\-tat découle de la proposition \ref{ayuto}.  

Sinon, on peut appliquer l'argument de la partie (2) de 
la preuve de \cite[Proposition 4.6]{Ze2}.
\end{proof}

Il existe donc un quotient non nul de $\pi$ qui est une sous-représentation de 
$\om$. 
Supposons que $\pi$ est irréduc\-tible.
Elle se plonge donc dans $\om$, 
ce qui contredit le fait énoncé plus haut comparant les modules de Jacquet 
$\rp_\a(\om)$ et $\rp_\a(\pi)$.  
\end{proof}


\section{Représentations résiduellement non dégénérées de $\GL_m(\D)$}
\label{Sec9}

Dans cette section, on introduit la notion de représentation résiduellement 
non dégé\-né\-rée de $\GL_m(\D)$, qui généralise celle de représentation 
non dégénérée de $\GL_n(\F)$.  
En parti\-culier, toute représentation irréductible cuspidale est résiduellement 
non dégé\-né\-rée. 
Dans le paragra\-phe \ref{ClasBan}, on donne des conditions néces\-sai\-res à 
l'apparition de sous-quotients cuspidaux dans une induite parabolique.
Dans le para\-gra\-phe \ref{USSC}, on prouve la conjecture d'unicité du 
support supercuspidal pour $\GL_m(\D)$.  
Ceci nous permet de classer les représentations 
résiduellement non dégénérées (proposition \ref{equivSt}) en fonction de 
leur support supercuspidal.

\subsection{Représentations résiduellement non dégénérées} 
\label{ModWhitResRND}

Lorsque $\D$ est non commutative, on ne peut pas définir la notion de 
représentation non dégé\-né\-rée de $\GL_m(\D)$ comme dans \cite{Vig2}
parce que la théorie des dérivées de Bernstein et Zele\-vin\-ski ne fonctionne pas.
On va définir la notion de représentation résiduellement non dégé\-nérée 
en utilisant l'homomorphisme $\KMS_{n}$ défini au paragraphe \ref{PropZLS}, 
qui permet de se ramener à la notion de 
repré\-sentation non dégénérée d'un groupe linéaire général fini.  
On verra au chapitre \ref{Sec10} (voir le corollaire \ref{coroSt}) que, 
dans le cas déployé, les deux notions coïncident. 

\subsubsection{}

Soient $m,n\>1$ des entiers, soit $\rho$ une 
représentation irréductible supercuspidale de 
$\G_{m}$ et soit $\Om=\Om_{\rho,n}$ la classe d'inertie 
du support cuspidal 
$\sy{\rho}+\dots+\sy{\rho}=n\cdot\sy{\rho}$.
On choisit un type simple maximal $\l=\k\otimes\s$ contenu dans $\rho$
et on forme le morphisme $\KMS_{n}$ (voir le paragraphe \ref{PropZLS}).
On rappelle (voir \eqref{Mellamphy}) que $\Irr(\Om)$ est l'ensemble 
des classes de re\-pré\-sen\-ta\-tions irréductibles
de $\G$ qui sont sous-quotients 
d'une induite parabolique d'un élément de $\Om$.

\begin{defi}
\label{ResNonDeg1}
Une représentation $\pi\in\Irr(\Om)$ 
est \textit{résiduel\-le\-ment non dégé\-nérée} 
si $\KMS_{n}(\pi)$ possède un sous-quotient irréductible non dégénéré
(voir le paragraphe \ref{RNDWh}). 
\end{defi}

D'après la proposition \ref{DeliceDOrient}, toute représentation irréductible 
cuspidale 
dans $\Irr(\Om)$ est rési\-duel\-lement non dégénérée.  
D'après la proposition \ref{CalculKMSI}, 
et puisque l'unique sous-quotient irréductible 
non dégénéré de $\s\times\dots\times\s$ est $\st(\s,n)$ 
(voir \ref{Clothier}), 
la représentation $\pi$ est résiduel\-le\-ment non dégénérée si
et seulement si $\KMS_{n}(\pi)$ contient $\sy{\st(\s,n)}$.

\begin{exem}
Par exemple, la représentation $\St(\rho,n)$ de la définition \ref{DefSTRN} 
est l'uni\-que sous-quotient irré\-duc\-tible résiduellement non dégénéré de 
$\rho\times\rho\nu_\rho^{}\times\dots\times\rho\nu_\rho^{n-1}$. 
\end{exem}

Cette définition ne dépend ni du choix de $\l$ ni de la 
décomposition $\l=\k\otimes\s$.  

\subsubsection{}

Soit maintenant $\pi$ une représentation irréductible quelconque, 
qu'on décom\-po\-se sous la forme $\pi=\pi_1\times\dots\times\pi_r$
donnée par le théorème \ref{ssvarscinertie}.

\begin{defi}
\label{ResNonDeg2}
La représentation $\pi$ est dite \textit{résiduellement non dégénérée} si
les re\-pré\-sentations $\pi_1,\dots,\pi_r$ sont résiduellement non dégénérées 
au sens de la défi\-nition \ref{ResNonDeg1}. 
\end{defi}

On note $\Rnd$ le sous-ensemble de $\Irr$ formé des classes 
d'équivalence de représentations irré\-ductibles résiduel\-lement non 
dégénérées. 

La proposition suivante donne des conditions nécessaires et suffisantes 
d'apparition d'un sous-quotient irréductible résiduel\-lement non dégénéré 
dans une induite parabolique. 

\begin{prop}
\label{SQIRND1}
Soient $\pi_1,\dots,\pi_r$ des représentations irréductibles.  
Les conditions sui\-van\-tes sont équivalentes~:
\begin{enumerate}
\item 
L'induite~:
\begin{equation*}
\pi_1\times\dots\times\pi_r
\end{equation*}
possède un sous-quotient irréductible résiduellement non dégénéré.
\item 
Pour tout $1 \< i \< r$, la représentation $\pi_i$ est résiduellement non
dégénérée. 
\end{enumerate}
Si ces conditions son satisfaites, ce sous-quotient irréductible
résiduellement non dégénéré est unique et sa multiplicité dans l'induite est 
$1$. 
On le note $\St(\pi_1, \dots, \pi_n)$. 
\end{prop}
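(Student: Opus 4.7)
My plan is to reduce the statement to the case of a single supercuspidal inertial class via Theorem~\ref{ssvarscinertie}, then translate the question through the bialgebra morphism $\KMS$ (Proposition~\ref{ZSsss}) into an analogous statement for the finite group $\GL_{m'n}(\kk_{\D'})$, where multiplicity one for non-degenerate (Whittaker) subquotients of a parabolic induction is available.

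For the reduction step, I would first decompose each $\pi_i$ using Theorem~\ref{ssvarscinertie}(2) into a product $\pi_i \simeq \pi_{i,1} \times \dots \times \pi_{i,s}$ whose factors have supercuspidal supports in pairwise non-inertially-equivalent classes $\Omega_{\rho_1}, \dots, \Omega_{\rho_s}$. By Theorem~\ref{ssvarscinertie}(1), parabolic induction across distinct inertial classes is irreducible and behaves well with respect to passing to irreducible subquotients, so that the irreducible subquotients of $\pi_1 \times \dots \times \pi_r$ correspond bijectively to tuples of irreducible subquotients of the products $\pi_{1,j} \times \dots \times \pi_{r,j}$ (one for each $j$), with multiplicities multiplying. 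Since the residually non-degenerate property is defined componentwise (Definition~\ref{ResNonDeg2}), the proposition reduces to the case where all $\pi_i$ lie in $\Irr(\Omega_{\rho,n_i})$ for a common supercuspidal $\rho$.

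Assume then that all $\pi_i$ lie in $\Irr(\Omega_{\rho,n_i})$. Fix a maximal simple type $\lambda = \kappa \otimes \sigma$ contained in $\rho$ and form $\KMS$ as in Paragraph~\ref{KMSdef}. By Proposition~\ref{ZSsss}, $\KMS$ is a bialgebra homomorphism, hence
\begin{equation*}
\KMS(\pi_1 \times \dots \times \pi_r) = \KMS(\pi_1) \times \dots \times \KMS(\pi_r)
\end{equation*}
in $\overline{\Gg}_{\CR}$. By Definition~\ref{ResNonDeg1}, each $\pi_i$ is residually non-degenerate if and only if $\KMS(\pi_i)$ contains $\sy{\st(\sigma, n_i)}$, since $\st(\sigma,n_i)$ is the unique non-degenerate irreducible representation in the relevant block of $\GL_{m'n_i}(\kk_{\D'})$; similarly an irreducible subquotient $\pi$ of the induction (lying in $\Irr(\Omega_{\rho,n})$ with $n = n_1 + \dots + n_r$) is residually non-degenerate if and only if $\KMS(\pi)$ contains $\sy{\st(\sigma,n)}$.

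The proposition then follows from the corresponding statement on the finite-group side: in $\GL_{m'n}(\kk_{\D'})$, a parabolic induction $\tau_1 \times \dots \times \tau_r$ of irreducibles lying in the appropriate blocks admits a non-degenerate irreducible subquotient if and only if each $\tau_i$ is non-degenerate, in which case the unique such subquotient is $\st(\sigma, n)$, appearing with multiplicity one. Applying this termwise to each summand of the product $\KMS(\pi_1) \times \dots \times \KMS(\pi_r)$ yields both the equivalence $(1)\Leftrightarrow(2)$ and the uniqueness-with-multiplicity-one property of the distinguished residually non-degenerate subquotient $\St(\pi_1,\dots,\pi_r)$. The main obstacle I anticipate lies in the multiplicity bookkeeping when each $\KMS(\pi_i)$ is genuinely a virtual sum of several irreducible classes: one must verify that contributions to $\sy{\st(\sigma,n)}$ cannot arise from different choices of irreducible summands across the $\KMS(\pi_i)$, which ultimately rests on the fact that any non-degenerate constituent of a product of finite-group irreducibles forces each factor to be non-degenerate (Paragraph~\ref{Clothier}), together with the property that $\KMS(\pi_i)$ contains $\st(\sigma,n_i)$ with multiplicity at most one when $\pi_i$ is irreducible.
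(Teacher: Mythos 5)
Your proposal follows essentially the same route as the paper's proof: reduce to a single supercuspidal inertial class via the componentwise definition (the paper invokes Definition \ref{ResNonDeg2} directly, which amounts to your use of Theorem \ref{ssvarscinertie}), transport the question through the multiplicativity of $\KMS$ (Proposition \ref{ZSsss}), and conclude from the finite-group Whittaker theory (Proposition \ref{Adeg} and \S\ref{Clothier}). The multiplicity-one bookkeeping you flag as a potential obstacle is handled exactly as you suggest — by the at-most-once appearance of $\st(\s,n_i)$ in each $\KMS(\pi_i)$ combined with uniqueness of the non-degenerate constituent on the finite-group side — which is what the paper's terse ``le membre de droite contient $\st(\s,n)$ avec multiplicit\'e $1$'' is implicitly using.
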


\begin{proof}
D'après la définition \ref{ResNonDeg2}, on peut se ramener au cas où 
il y a une repré\-sen\-tation irréductible supercuspidale $\rho$ est des 
entiers $n_1,\dots,n_r$ tels 
que $\pi_i\in\Irr(\Om_{\rho,n_i})$ pour tout entier $i=1,\dots,r$. 

On suppose dans un premier temps que $\pi_1,\dots,\pi_r$ sont résiduellement 
non dégénérées.  
Dans cas, on écrit~:
\begin{equation*}
\sy{\KMS_{n}(\pi_1\times\dots\times\pi_r)}=
\sy{\KMS_{n_1}(\pi_1)\times\dots\times\KMS_{n_r}(\pi_r)}\>
\sy{\st(\s,n_1)\times\dots\times\st(\s,n_r)}. 
\end{equation*}
Le résultat vient de ce que le membre de droite contient $\st(\s,n)$ 
avec multiplicité $1$. 

On suppose maintenant que $\pi_1$ n'est pas résiduellement non 
dégénérée, \ie que $\KMS_{n_1}(\pi_1)$ ne contient pas de facteur non dégénéré. 
Le résultat découle de la proposition \ref{Adeg}. 
\end{proof}

\begin{coro}
\label{MrRubb}
Soient $\Delta_1,\dots,\Delta_r$ des segments.
La représentation~:
\begin{equation*}
\Z(\Delta_1)\times\dots\times\Z(\Delta_r)
\end{equation*}
contient un sous-quotient irréductible 
résiduel\-lement non dégénérée si et seulement si tous les $\Delta_i$
sont de longueur $1$. 
\end{coro}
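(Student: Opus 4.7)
The strategy is to reduce, via Proposition~\ref{SQIRND1}, to a statement about a single $\Z(\Delta)$, and then exploit the computation of $\KMS(\Z(\Delta))$ furnished by Proposition~\ref{prpisegK}.

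First, by Proposition~\ref{SQIRND1}, the induced representation $\Z(\Delta_1)\times\cdots\times\Z(\Delta_r)$ contains a residually non-d\'eg\'en\'er\'e irreducible subquotient if and only if each factor $\Z(\Delta_i)$ is itself residually non-d\'eg\'en\'er\'e. Thus it suffices to prove that, for a single segment $\Delta=[a,b]_\rho$ of length $n=b-a+1$, the representation $\Z(\Delta)$ is residually non-d\'eg\'en\'er\'ee if and only if $n=1$. If $n=1$, then $\Z(\Delta)=\rho\nu_\rho^{a}$ is irr\'educ\-tible cuspidale, and Lemma~\ref{DeliceDOrient} combined with the definition immediately gives that $\Z(\Delta)$ is r\'esi\-duel\-lement non d\'eg\'en\'er\'ee.

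For the converse, suppose $n\geqslant 2$. By Proposition~\ref{prpisegK}(1), one has $\KMS(\Z(\Delta))=\z(\s,n)$, where $\s$ denotes the cuspidal factor of the type simple maximal chosen in $\rho$. Since $\z(\s,n)$ is an irr\'educ\-tible representation of the relevant $\GL_{m'n}(\kk_{\D'})$ (it is determined by the trivial character of the Hecke algebra $\Hh(\s,n)$), the condition that $\Z(\Delta)$ be r\'esi\-duel\-lement non d\'eg\'en\'er\'ee is equivalent to the equality $\z(\s,n)=\st(\s,n)$, where $\st(\s,n)$ is the unique non-d\'eg\'en\'er\'e irreducible subquotient of $\s\times\cdots\times\s$ (\S\ref{Clothier}).

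The main point is therefore to show that $\z(\s,n)\neq\st(\s,n)$ whenever $n\geqslant 2$. The representation $\z(\s,n)$ corresponds to the trivial character of $\Hh(\s,n)$, while $\st(\s,n)$ is characterized as the unique irreducible subquotient admitting a Whittaker model. I expect the cleanest way is to observe that the trivial character of $\Hh(\s,n)$ and the character parametrizing $\st(\s,n)$ are distinct for $n\geqslant 2$; indeed, by Proposition~\ref{prpisegK}(3) applied in the range $n\leqslant m(\rho)-1$, the non-d\'eg\'en\'er\'ee subquotient $\st(\s,n)=l(\s,n)$ corresponds to the sign character, which is distinct from the trivial character as soon as $n\geqslant 2$. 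In the remaining range $n\geqslant m(\rho)$, one argues by invoking Appendix~\ref{ApA}: the classification of modules over $\Hh(\s,n)$ shows that the module corresponding to the trivial character is never the one classifying the non-d\'eg\'en\'er\'ee subquotient for $n\geqslant 2$. Combining these two cases yields $\z(\s,n)\neq\st(\s,n)$, and hence the non-residual-non-d\'eg\'en\'er\'escence of $\Z(\Delta)$ for $n\geqslant 2$. The main obstacle is precisely this Hecke-algebra distinction in the non-banal range $n\geqslant m(\rho)$; once it is secured by citing the finite-group-side results of the appendix, the corollary follows at once.
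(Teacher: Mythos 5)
Your proof is correct and follows the paper's own route: reduce to a single segment via Proposition~\ref{SQIRND1}, compute $\KMS(\Z(\Delta))=\sy{\z(\s,n(\Delta))}$ via Proposition~\ref{prpisegK}, and conclude. The paper simply asserts that $\z(\s,n)$ is non-d\'eg\'en\'er\'ee if and only if $n=1$; your two-case discussion (banal range $n\<m(\rho)-1$ via the sign/trivial character distinction, non-banal range via the appendix classification) is a valid justification of that step, but it can be streamlined: since $\st(\s,n)=\z(\s,(1,\dots,1))$ by \eqref{Defst} and $\z(\s,n)=\z(\s,(n))$, the injectivity of $\mu\mapsto\z(\s,\mu)$ from Theorem~\ref{ResumeJames}(3) gives $\z(\s,n)\neq\st(\s,n)$ for $n\>2$ in one stroke, with no case split and no appeal to the Hecke-algebra characters.
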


\begin{proof}
D'après la proposition \ref{prpisegK}, pour tout segment $\Delta$, on a~:
\begin{equation*}
\KMS_{n}\(\Z(\Delta)\)=\z(\s,n(\Delta))
\end{equation*}
qui est non dégénérée si et seulement si $n(\Delta)=1$.  
Le résultat se déduit de la proposition \ref{SQIRND1}. 
\end{proof}

\subsubsection{}
\label{DefPartition}

On définit maintenant la notion de représentation résiduellement dégénérée 
par rapport à une partition. 
Soit $\a=(m_1,\dots,m_r)$ une famille d'entiers $\>1$ de somme $m$.

\begin{defi}
\label{ResNonDeg3}
\begin{enumerate}
\item 
Une représentation irréduc\-tible $\pi_1\otimes\dots\otimes\pi_r$
du sous-groupe de Levi $\M_{\a}$ de $\G_m$ est dite 
\textit{résiduellement non dégéné\-rée} si, pour tout 
$i\in\{1,\dots,r\}$, la représentation $\pi_i$ est résiduellement non 
dégénérée au sens de la défi\-nition \ref{ResNonDeg2}. 
\item 
Une représentation irréductible $\pi$ de $\G_m$ est 
\textit{résiduellement $\a$-dégé\-né\-rée} si 
$\rp_{\a}(\pi)$ pos\-sè\-de un sous-quotient irréductible résiduellement 
non dégénéré. 
\end{enumerate}
\end{defi}

La définition \ref{ResNonDeg3}(2) ne dépend pas de l'ordre des entiers $m_i$. 
Si $\b$ est une famille d'entiers $\>1$ obtenue par permutation de $\a$, alors les 
sous-groupes paraboliques $\P_\a$ et $\P_\b$ sont conjugués, et 
$\pi$ est résiduellement $\a$-dégénérée si et seulement si elle est 
résiduellement $\b$-dégénérée.
Si l'on oublie l'ordre des $m_i$ et qu'on les range dans l'ordre décroissant, 
on obtient une partition de $m$ dite \textit{associée} à la famille $\a$. 

\begin{defi}
\label{LaisserAller}
Soit $\mu$ une partition de $m$.
La représentation $\pi$ est dite \textit{résiduellement $\mu$-dégé\-né\-rée} 
si elle est résiduellement $\a$-dégé\-né\-rée pour au moins une 
(donc pour toute) famille $\a$ associée à $\mu$.
\end{defi}

On remarque qu'une 
représentation irréductible est résiduellement non dégé\-né\-rée
si et seulement si elle est résiduellement $(m)$-dégé\-né\-rée.

\begin{prop}
\label{prodSt}
Soient $\mu$ et $\nu$ des partitions.
Soit $\pi$ une représentation résiduellement $\mu$-dé\-gé\-nérée et 
soit $\s$ une représentation résiduellement $\nu$-dégénérée. 
Alors $\pi \times\s$ contient un sous-quotient irré\-duc\-tible 
résiduellement $(\mu+\nu)$-dégénéré. 
\end{prop}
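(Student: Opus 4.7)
The plan is to exhibit an irreducible r\'esiduellement non d\'eg\'en\'er\'e subquotient inside $\rp_\g(\pi\times\s)$ for a well-chosen family $\g$ associated to $\mu+\nu$, and then lift it back to $\pi\times\s$ by exactness of the Jacquet functor. Write $\mu=(\mu_1\>\dots\>\mu_r)$ and $\nu=(\nu_1\>\dots\>\nu_r)$, padding the shorter partition with zeros if needed (which is permissible because the matrices in $\Mat^{\a,\b}$ of \S\ref{lemmegeo} are only required to have entries in $\NN$). Set $\a=(\mu_1,\dots,\mu_r)$, $\b=(\nu_1,\dots,\nu_r)$, and $\g=(\mu_1+\nu_1,\dots,\mu_r+\nu_r)$. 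By hypothesis, there exist irreducible r\'esiduellement non d\'eg\'en\'er\'ees subquotients $\pi_1\otimes\dots\otimes\pi_r$ of $\rp_\a(\pi)$ and $\s_1\otimes\dots\otimes\s_r$ of $\rp_\b(\s)$.

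Next, I would apply the Bernstein--Zelevinski geometric lemma of \S\ref{lemmegeo} to the product $\rp_\g(\pi\times\s)=\rp_\g(\ip_{(m,n)}(\pi\otimes\s))$, where $m=|\mu|$ and $n=|\nu|$. Among the composition factors indexed by matrices $\B\in\Mat^{(m,n),\g}$, the relevant one is
\begin{equation*}
\B^{*}=\begin{pmatrix}\mu_1 & \mu_2 & \cdots & \mu_r\\ \nu_1 & \nu_2 & \cdots & \nu_r\end{pmatrix},
\end{equation*}
whose row sums are $(m,n)$ and whose column sums are $\g$. Choosing the indices $k_i$ in the geometric lemma so as to single out the subquotient $\pi_1\otimes\dots\otimes\pi_r$ of $\rp_\a(\pi)$ and the subquotient $\s_1\otimes\dots\otimes\s_r$ of $\rp_\b(\s)$ yields
\begin{equation*}
(\pi_1\times\s_1)\otimes(\pi_2\times\s_2)\otimes\dots\otimes(\pi_r\times\s_r)
\end{equation*}
as a composition factor of $\rp_\g(\pi\times\s)$. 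By proposition \ref{SQIRND1}, each factor $\pi_j\times\s_j$ admits a unique irreducible r\'esiduellement non d\'eg\'en\'er\'e subquotient $\St(\pi_j,\s_j)$, and tensoring produces
\begin{equation*}
\St(\pi_1,\s_1)\otimes\dots\otimes\St(\pi_r,\s_r),
\end{equation*}
which is an irreducible r\'esiduellement non d\'eg\'en\'er\'ee subquotient of $\rp_\g(\pi\times\s)$ by d\'efinition \ref{ResNonDeg3}(1).

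Finally, by exactness of $\rp_\g$, any irreducible subquotient of $\rp_\g(\pi\times\s)$ arises inside $\rp_\g(\tau)$ for some irreducible subquotient $\tau$ of $\pi\times\s$; taking $\tau$ so that $\rp_\g(\tau)$ contains the tensor product above shows that $\tau$ is r\'esiduellement $\g$-d\'eg\'en\'er\'ee, hence r\'esiduellement $(\mu+\nu)$-d\'eg\'en\'er\'ee in the sense of d\'efinition \ref{LaisserAller}. The main obstacle is a purely bookkeeping one: verifying that the matrix $\B^{*}$ and the chosen composition-factor indices really produce the advertised tensor product (as opposed to a Weyl-conjugate thereof), and managing the length mismatch between $\mu$ and $\nu$ within the framework of \S\ref{lemmegeo}, which is precisely why one needs the matrices in $\Mat^{\a,\b}$ to allow zero entries and why one cannot restrict to families of strictly positive integers throughout the argument.
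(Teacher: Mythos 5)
Your argument is correct and takes exactly the same route as the paper's one-line proof, which simply invokes the geometric lemma and Proposition~\ref{SQIRND1}: you apply the geometric lemma of \S\ref{lemmegeo} to $\rp_\g(\pi\times\s)$ and isolate the composition factor indexed by the matrix $\B^{*}$, then use Proposition~\ref{SQIRND1} in each column to produce the residually non-degenerate subquotient $\St(\pi_1,\s_1)\otimes\dots\otimes\St(\pi_r,\s_r)$, and conclude by exactness of the Jacquet functor. The bookkeeping concern you raise is not an obstacle, since the version of the geometric lemma in \S\ref{lemmegeo} already packages the Weyl-group combinatorics into the choice of $\B\in\mathscr{M}^{\a,\b}$, so the factor associated to $\B^{*}$ is precisely the tensor product you write down.
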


\begin{proof}
Le résultat se déduit du lemme géométrique et de la proposition \ref{SQIRND1}.  
\end{proof}

\subsection{Conditions d'apparition d'un facteur cuspidal dans une induite}
\label{ClasBan}

Dans ce paragraphe, on donne des conditions nécessaires d'apparition 
d'un facteur cus\-pi\-dal dans une induite parabolique.
Le résultat suivant, qui n'apparaît pas dans \cite{Vig1}, 
est une étape cruciale dans la 
preuve de l'unicité du support supercuspidal (théorème \ref{unicitesupp}).  
Il permet d'utiliser les arguments de Zelevinski (voir \cite{MS}).

\begin{prop}
\label{ST}
Soient $\rho_1,\dots,\rho_n$ des représentations
irréductibles cuspidales avec $n\>2$.
On suppose que~:
\begin{equation}
\label{CondBanalit}
\ZZ_{\rho_i}\nsubseteq\{\rho_1,\dots,\rho_n\},
\quad
i=1,\dots,n.
\end{equation}
Alors aucun sous-quotient irréductible de l'induite 
$\rho_1\times\dots\times\rho_n$ n'est cuspidal.
\end{prop}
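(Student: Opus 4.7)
The plan is to reduce Proposition \ref{ST} to the analogous statement for the split group $\GL_n(\F')$, where it follows from Vign\'eras's uniqueness of supercuspidal support (\cite[V.4]{Vig2}).

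First, I reduce to the case where all $\rho_i$ lie in a single inertial class. If they split into $r \geq 2$ distinct classes, grouping them by class and using the commutativity from Proposition \ref{commuD} rewrites $\rho_1 \times \cdots \times \rho_n$ up to isomorphism as a product $\xi_1 \times \cdots \times \xi_r$, where each $\xi_j$ is a product of those $\rho_i$ belonging to one class. By Theorem \ref{ssvarscinertie}, every irreducible subquotient is of the form $\pi_1 \times \cdots \times \pi_r$ (irreducible, with $\pi_j$ an irreducible subquotient of $\xi_j$); being a proper parabolic induction for $r \geq 2$, this is not cuspidal. So we may assume all $\rho_i \in \ZZ_\rho$ for some fixed cuspidal $\rho$, and writing $\rho_i = \rho \nu_\rho^{a_i}$ the hypothesis becomes $\ZZ_\rho \nsubseteq \{\rho_1,\dots,\rho_n\}$.

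Next, I transfer the problem to $\GL_n(\F')$ via the change-of-group machinery of paragraph \ref{ChgtGp}. Fixing a type simple maximal $(\J,\l)$ in $\rho$ and the associated extension $\F'/\F$ from Proposition \ref{ANenPlusFinirPosDuplicata}, the bijection ${\bf\Phi}_n$ of \eqref{GG'n} combined with Lemma \ref{JustAbove} sends $\rho\nu_\rho^a$ to an unramified character $\xi^a$ of $\F'^{\times}$ for a suitable base character $\xi$, and the hypothesis translates to $\ZZ_\xi \nsubseteq \{\xi^{a_1},\dots,\xi^{a_n}\}$. The Hecke algebra isomorphism of Proposition \ref{ExistenceHeckeProp} identifies $\Mm_n(\rho_1\times\cdots\times\rho_n)$ with $\Mm_n'(\xi^{a_1}\times\cdots\times\xi^{a_n})$, so by Proposition \ref{blablabla} the numbers of Jordan--H\"older factors lying in $\Irr(\Om_{\rho,n})^{\q}$ and $\Irr(\Om_{1,n})^{\q}$ respectively coincide. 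By Proposition \ref{MaxZerPosSS}, a cuspidal representation cannot contain the semi-simple type attached to the proper Levi $\M_{(m,\dots,m)}$, so on each side the cuspidal Jordan--H\"older factors are precisely those lying outside $\Irr(\cdot)^{\q}$; a matching total length argument then shows that a cuspidal subquotient exists on one side if and only if it exists on the other.

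Finally, assume for contradiction that $\xi^{a_1} \times \cdots \times \xi^{a_n}$ has a cuspidal subquotient $\pi$ in $\GL_n(\F')$. By Theorem \ref{AppCuspSuper} applied to $\GL_n(\F')$, $\pi = \St_r(\chi')$ for some unramified character $\chi'$ of $\F'^{\times}$ and some $r \geq 0$ with $n = m(\chi')\ell^r$. Its supercuspidal support is, by construction, $\chi' \otimes \chi'\nu \otimes \cdots \otimes \chi'\nu^{n-1}$; by Vign\'eras's uniqueness of supercuspidal support for $\GL_n(\F')$ (\cite[V.4]{Vig2}), it equals $\xi^{a_1}\otimes\cdots\otimes\xi^{a_n}$ up to permutation. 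Since $n = m(\chi')\ell^r \geq e(\chi') = |\ZZ_{\chi'}|$, the multiset $\{\chi'\nu^j : 0 \leq j < n\}$ exhausts $\ZZ_{\chi'} = \ZZ_\xi$ as a set, so $\ZZ_\xi \subseteq \{\xi^{a_i}\}$, contradicting the hypothesis. The main obstacle is the transfer step: since ${\bf\Phi}_n$ is defined only on $\Irr(\Om_{\rho,n})^{\q}$, establishing that \emph{cuspidal} subquotients (which lie outside $\Irr(\Om_{\rho,n})^{\q}$) correspond between the two sides requires the length-counting argument above, which hinges essentially on Proposition \ref{MaxZerPosSS}.
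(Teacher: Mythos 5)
Your strategy is genuinely different from the paper's: you try to reduce to the split group $\GL_n(\F')$ via the change-of-group machinery of \S\ref{ChgtGp}, and then invoke Vign\'eras's uniqueness of the supercuspidal support in the split case. The paper instead works intrinsically with the notion of residual non-degeneracy: it constructs segments $\Delta_1,\dots,\Delta_r$ of lengths $\<e(\rho)-1$ from the hypothesis \eqref{CondBanalit}, shows that $\Pi=\L(\Delta_1)\times\dots\times\L(\Delta_r)$ is irreducible by Theorem \ref{nuevo2}, identifies $\Pi$ as the unique residually non-degenerate subquotient (Lemma \ref{Contra1}), observes it is not cuspidal (Lemma \ref{Contra2}), and concludes since any cuspidal representation is residually non-degenerate.

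Your reduction steps and the final contradiction inside $\GL_n(\F')$ are fine. The problem is the transfer step, which you yourself flag at the end as ``the main obstacle''. Proposition \ref{blablabla} and the isomorphism $\Psi_n$ of Proposition \ref{ExistenceHeckeProp} give you a length-preserving bijection only between the Jordan--H\"older factors lying in $\Irr(\Om_{\rho,n})^{\q}$ and those in $\Irr(\Om_{1,n})^{\q}$; they say nothing about what happens outside. To deduce that cuspidal subquotients exist on one side iff on the other, you need (a) that \emph{every} Jordan--H\"older factor lying outside $\Irr(\cdot)^{\q}$ is cuspidal, and (b) that the \emph{total} lengths of $\rho_1\times\dots\times\rho_n$ and $\xi^{a_1}\times\dots\times\xi^{a_n}$ coincide. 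Neither is established. Point (a) fails in general: a non-cuspidal subquotient $\pi$ can have cuspidal support $\sy{\tau_1}+\dots+\sy{\tau_s}$ where some $\tau_j$ has degree strictly between $m$ and $mn$ (a cuspidal subquotient of a proper sub-product $\rho_{i_1}\times\dots\times\rho_{i_k}$, $2\<k<n$), and such $\pi$ is neither cuspidal nor in $\Irr(\Om_{\rho,n})^{\q}$. Ruling this out requires applying the very Proposition \ref{ST} being proved to those sub-products, i.e., an induction on $n$ that you do not set up; and you cannot instead invoke Proposition \ref{simsolo} or Corollary \ref{MlaDim}, since those come after \ref{ST} and use it. Point (b) is not addressed at all: there is no result in the paper identifying the full length of the induced representation across the change of group, only its $\Irr(\cdot)^{\q}$-part. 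The paper's use of residual non-degeneracy is precisely what sidesteps this: it isolates a single candidate subquotient (the residually non-degenerate one) and checks directly that it fails to be cuspidal, without any need to count or compare total lengths.
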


\begin{proof}
D'après le théorème \ref{ss}, on peut se ramener au cas où les $\rho_i$ sont 
toutes inertiellement équivalentes à une même représentation cuspidale $\rho$ 
de $\G_{m}$, $m\>1$. 
La condition \eqref{CondBanalit} entraîne d'une part que 
$\rho$ est supercuspidale avec $\ee(\rho)=\e(\rho)\>2$ 
(voir la remarque \ref{ValeCNS}), d'autre part qu'il existe des segments 
$\Delta_1,\dots,\Delta_r$ satisfaisant aux conditions suivantes~:
\begin{enumerate}
\item
on a
$\sy{\rho\chi_1}+\dots+\sy{\rho\chi_n}=\supp(\Delta_1)+\dots+\supp(\Delta_r)$~;
\item
pour tous $i\neq j$, les segments $\Delta_i$ et $\Delta_j$ ne 
sont pas liés~;
\item
pour tout $i$, la longueur $n_i$ de $\Delta_i$ est $\<e(\rho)-1$.
\end{enumerate}
Ces segments sont uniques à l'ordre près. 
D'après le théorème \ref{nuevo2}, la représentation~:
\begin{equation}
\Pi=\L(\Delta_1)\times\dots\times\L(\Delta_r)
\end{equation}
est irréductible.

\begin{lemm}
\label{Contra1}
La représentation $\Pi$ est l'unique sous-quotient irréductible 
rési\-duel\-le\-ment 
non dégénéré de l'induite $\rho_1\times\dots\times\rho_n$,
où elle apparaît avec multiplicité $1$.
\end{lemm}

\begin{proof}
D'abord, $\Pi$ est un sous-quotient irréductible de 
$\rho_1\times\dots\times\rho_n$.
D'après la proposition \ref{SQIRND1}, 
cette induite contient un unique facteur irréduc\-tible résiduellement 
non dégé\-néré.
D'après la proposition \ref{SQIRND1} encore, $\Pi$ est résiduellement 
non dégénérée si et seulement si chaque $\L(\Delta_i)$ est résiduellement 
non dégénéré, ce qui suit de la proposition \ref{prpisegK}. 
\end{proof}

\begin{lemm}
\label{Contra2}
La représentation $\Pi$ n'est pas cuspidale.
\end{lemm}

\begin{proof}
Le résultat est immédiat si $r\>2$.
Si $r=1$, $\Pi$ est une repré\-sen\-tation associée à un segment, 
qui est par définition quotient d'une induite parabolique propre 
(car $n\>2$).
\end{proof}

La conjonction des lemmes \ref{Contra1} et \ref{Contra2} et du fait que 
toute représentation irréductible cuspi\-dale est 
résiduellement non dégénérée entraîne que l'induite 
ne possède pas de sous-quotient ir\-ré\-ductible cuspidal. 
\end{proof}

\begin{exem}
\label{nohay}
Si $\CR$ est de caractéristique nulle, \eqref{CondBanalit} est toujours vérifiée.
\end{exem}

Soient $m,n\>1$ et soit $\rho$ une représentation cuspidale de $\G_m$.  

\begin{prop}
\label{simsolo}
Soient $\chi_1,\dots,\chi_n$ des caractères non ramifiés de $\G_m$
tels que~:
\begin{equation}
\label{IndRhoXi}
\rho\chi_1\times\dots\times\rho\chi_n
\end{equation}
possède un sous-quotient cuspidal.
Alors~:
\begin{equation*}
\sy{\rho\chi_1}+\dots+\sy{\rho\chi_n}=
\sy{\rho\chi_1}+\sy{\rho\chi_1\nu_\rho}+\dots+\sy{\rho\chi_1\nu_\rho^{n-1}}.
\end{equation*}
\end{prop}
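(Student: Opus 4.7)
The plan is to argue in three steps, with Proposition \ref{ST} as the central input. First, by the contrapositive of Proposition \ref{ST}, the hypothesis that $\rho\chi_1\times\cdots\times\rho\chi_n$ admits a cuspidal subquotient forces some full $\ZZ$-line $\ZZ_{\rho\chi_{i_0}}$ to be contained in $\{\sy{\rho\chi_1},\dots,\sy{\rho\chi_n}\}$. Next, I would show that all the $\rho\chi_i$ lie in this single line: otherwise, grouping the indices by the distinct $\ZZ$-lines they meet, Proposition \ref{Zdroites} and Theorem \ref{ssvarscinertie} would force every irreducible subquotient of the induction to factor as $\sigma_1\times\cdots\times\sigma_k$ with $k\geq 2$; cuspidality of such a product would require each $\sigma_l$ cuspidal, but a product of cuspidals from two or more components of a proper Levi always has a non-zero Jacquet module and cannot be cuspidal.

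Having reduced to the single-line case, I write $\sy{\rho\chi_i}=\sy{\rho\chi_1\nu_\rho^{b_i}}$ with $b_i\in\ZZ/e(\rho)\ZZ$. Corollary \ref{MlaDim} then gives $n=1$ (trivial) or $n=m(\rho)\ell^r$; the case $e(\rho)=1$ is immediate since all $\rho\chi_i$ are then isomorphic to $\rho\chi_1$, so one may assume $e(\rho)>1$ and $n=e(\rho)\ell^r$. The conclusion reduces to showing that each residue class mod $e(\rho)$ appears in $\{b_i\}$ with multiplicity exactly $\ell^r$.

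For this last and principal step, the plan combines two ingredients. On the one hand, by Proposition \ref{SQIRND1} the given cuspidal subquotient is necessarily the unique residually non-degenerate composition factor $\St(\rho\chi_1,\dots,\rho\chi_n)$ of the induction, and by Theorem \ref{AppCuspSuper} this cuspidal representation has the form $\St(\rho_0,n)$ for some supercuspidal $\rho_0$ in the inertial class of $\rho$, whose natural supercuspidal support via the defining induction $\rho_0\times\rho_0\nu_\rho\times\cdots\times\rho_0\nu_\rho^{n-1}$ is precisely the balanced segment. On the other hand, Proposition \ref{KOLA} yields an inertial equivalence between this balanced supercuspidal support and $\{\sy{\rho\chi_i}\}$. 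The hard part, which I expect to be the main obstacle, is to promote this inertial equivalence to an actual equality of iso classes: a priori, \ref{KOLA} allows an independent unramified twist on each component and this could redistribute residues in an unbalanced way. The key rigidity should come from the single-line constraint established in the first two steps, which forces the admissible twists to lie in the cyclic subgroup $\nu_\rho^\ZZ$ modulo the stabilizer of $\rho$; combined with a counting argument on residues in $\ZZ/e(\rho)\ZZ$, or alternatively with a direct Jacquet-module calculation using the geometric lemma \eqref{lemmegeometrique} to detect non-cuspidality of $\St(\rho\chi_1,\dots,\rho\chi_n)$ in the unbalanced case, this should close the argument.
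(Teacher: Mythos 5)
Your first reductions are sound and match the paper: you invoke Proposition~\ref{Zdroites} to confine all the $\rho\chi_i$ to a single line $\ZZ_\rho$ (so that each $\chi_i$ can be taken of the form $\nu_\rho^{k_i}$), you dispose of $e(\rho)=1$ immediately, and you use Corollary~\ref{MlaDim} to fix $n=m(\rho)\ell^r$. The problem is the balancing step, which you yourself flag as the ``main obstacle'' and do not carry out. Two issues stand out. First, the classification input you lean on is unavailable at this stage: matching the ``natural'' supercuspidal support of $\St(\rho_0,n)$ against the one coming from the $\rho\chi_i$ presupposes uniqueness of the supercuspidal support, but Theorem~\ref{unicitesupp} is proved \emph{after} this proposition and \emph{uses} it; Proposition~\ref{KOLA} only gives inertial equivalence, and you explicitly admit you do not know how to promote it to equality of isomorphism classes. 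Second, neither the counting remark nor the vague appeal to a Jacquet-module computation is an argument.

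The paper closes the gap with a peeling argument that sidesteps supercuspidal support entirely. After reordering, Proposition~\ref{ST} lets one split the multiset $\{\sy{\rho\chi_i}\}$ into $t\geq1$ complete cycles $\sy{\rho}+\sy{\rho\nu_\rho}+\dots+\sy{\rho\nu_\rho^{e(\rho)-1}}$ plus a remainder of size $k=n-te(\rho)$ which no longer contains any complete $\ZZ$-line (so satisfies~\eqref{CondBanalit}). If $k\geq1$, a cuspidal subquotient $\pi$ sits inside some $\pi_1\times\pi_2$ with $\pi_1$ a subquotient of the peeled block and $\pi_2$ of the remainder; residual non-degeneracy of $\pi$ propagates to $\pi_1,\pi_2$ via Proposition~\ref{SQIRND1}, so $\pi_1=\St(\rho,te(\rho))$. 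Writing $t=t'\ell^r$ and $\tau=\St(\rho,m(\rho)\ell^r)$, Proposition~\ref{StCusp} and Lemma~\ref{SpSp2} force $\pi_1=\St(\tau,t')$, whose cuspidal support lies in $\Dive(\ZZ_\tau)$, while Proposition~\ref{ST} keeps the cuspidal support of $\pi_2$ in $\Dive(\ZZ_\rho)$. Since $\ZZ_\tau\neq\ZZ_\rho$, Theorem~\ref{ss} makes $\pi_1\times\pi_2$ irreducible, contradicting the cuspidality of $\pi$. Hence $k=0$, which is exactly the balancing you need. This two-line/orthogonality mechanism, not a counting or twist-rigidity argument, is the missing idea.
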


\begin{proof}
Quitte à remplacer $\rho$ par $\rho\chi_1$, on peut supposer que $\chi_1$ est 
trivial. 
On écarte le cas trivial où $n=1$, 
de sorte que, d'après le corollaire \ref{MlaDim}, 
il existe $r\>0$ tel que $n=\ee(\rho)\ell^r$.
Ensuite, d'après la proposition \ref{Zdroites},  on peut supposer que
$\chi_i=\nu_{\rho}^{k_i}$ avec $k_i\in\ZZ$. 
Si $\e(\rho)=1$, chaque $\chi_i$ est trivial et le résultat est immédiat. 
On suppose donc dorénavant que $\e(\rho)\>2$, 
de sorte que $\ee(\rho)=\e(\rho)$.
D'après la proposition \ref{ST}, il existe un entier $t\>1$ tel que,
quitte à réordonner les $\chi_i$, on ait~:
\begin{enumerate}
\item
pour tout $i\<t\ee(\rho)$, on a $\sy{\rho\chi_i}=\sy{\rho\nu_\rho^{i-1}}$~; 
\item
la condition \eqref{CondBanalit} est vérifiée par les représentations 
$\rho\chi_{t\ee(\rho)+1},\dots,\rho\chi_{n}$.
\end{enumerate}
On écrit $n=t\ee(\rho)+k$ et on suppose que $k\>1$. 
Soit $\pi$ un sous-quotient irréductible cuspidal de (\ref{IndRhoXi}).
Soit $\pi_1$ un sous-quotient irréductible de 
$\rho\chi_1\times\dots\times\rho\chi_{t\ee(\rho)}$ 
et $\pi_2$ un sous-quotient irréductible de
$\rho\chi_{t\ee(\rho)+1}\times\dots\times\rho\chi_n$
tels que $\pi$ soit un sous-quotient irréductible 
de $\pi_1\times\pi_2$. 
Comme $\pi$ est résiduellement non dégénérée, 
c'est vrai aussi de $\pi_1$ et de $\pi_2$. 
En parti\-culier, $\pi_1$ est égal à $\St(\rho,\ee(\rho)t)$. 

Si l'on écrit $t=t'\ell^r$ avec $t'\>1$ premier à $\ell$ et $r\>0$, 
et si l'on pose $\tau=\St(\rho,\ee(\rho)\ell^r)$, 
alors $\tau$ est cuspidale non supercuspidale 
et $\pi_1$ est égale à $\St(\tau,t')$ d'après la proposition 
\ref{StCusp} et le lemme \ref{SpSp2}.
En particulier, 
le support cuspidal de $\pi_1$ est dans $\Dive(\ZZ_\tau)$.
D'autre part, d'après la proposition \ref{ST}, 
le support cuspidal de $\pi_2$ est dans $\Dive(\ZZ_\rho)$.
D'après le théorème \ref{ss},
l'induite $\pi_1\times\pi_2$ est irréductible, ce qui contredit 
le fait que $\pi$ est cuspidale. 
On en déduit que $k=0$, ce qui met fin à la démonstration.
\end{proof}

\begin{rema}
\label{L=St}
D'après la proposition \ref{prpisegK}, les représentations 
$\L([0,n-1]_\rho)$ et $\St(\rho,n)$ sont isomorphes si et seulement si 
$n<\ee(\rho)$. 
Si $n\>\ee(\rho)$, la représenta\-tion $\L([0,n-1]_\rho)$ n'est jamais 
résiduellement non dégénérée. 
Par exemple, si $\rho$ est le caractère trivial de $\mult\F$ 
et si $q$ est d'ordre $2$ dans $\CR^\times$, alors la représenta\-tion 
$\L([0,1]_\rho)$ est un caractère de $\GL_2(\F)$. 
\end{rema}

\begin{rema}
Une représentation irréductible dont le support cuspidal 
$\sy{\rho_1}+\dots+\sy{\rho_n}$ vérifie la condition \eqref{CondBanalit} 
est dite \textit{banale}. 
Les représentations banales sont étudiées en détail dans \cite{MS}. 
\end{rema}

\subsection{Unicité du support supercuspidal}
\label{USSC}
\label{supercuspidal}

Le but de ce paragraphe est de montrer le théorème suivant.

\begin{theo}
\label{unicitesupp}
Soient $\rho_1,\dots,\rho_n$ et $\rho'_1,\dots,\rho'_{n'}$ des 
représentations irréductibles supercuspidales. 
Alors $\rho_1\times \dots \times \rho_n$ 
et $\rho'_1\times \dots \times \rho'_{n'}$ 
ont un sous-quotient irréductible en commun si et seulement si
$n'=n$ et~:
\begin{equation}
\label{Gretel2}
\sy{\rho_1}+\dots+\sy{\rho_n}=\sy{\rho'_1}+\dots+\sy{\rho'_{n'}}.
\end{equation}
\end{theo}

\begin{rema}
Voir \cite{Vig2} page 598 dans le cas où $\D=\F$.
\end{rema}

\begin{proof}
Si (\ref{Gretel2}) est vérifié, alors $n=n'$ et,
d'après la proposition \ref{commu}, les
re\-pré\-sen\-ta\-tions $\rho_1\times \dots \times \rho_n$ et 
$\rho'_1\times\dots\times\rho'_{n'}$ ont les mêmes sous-quotients 
irréductibles.  
Pour prouver la réciproque, on fixe un sous-quotient irréductible $\pi$ 
commun à $\rho_1\times \dots \times \rho_n$ 
et $\rho'_1\times \dots \times \rho'_{n'}$, et on commence par traiter le 
cas où $\pi$ est cuspidal.
D'après le théorème \ref{ss},
il existe des représentations irréductibles supercuspidales $\rho$ et 
$\rho'$ telle que les $\rho_i$ (les $\rho'_i$)
soient inertiellement équivalentes à $\rho$ (à $\rho'$).
D'après la proposition \ref{simsolo}, on peut même
supposer que~:
\begin{equation*}
\sy{\rho_1}+\dots+\sy{\rho_n}=
\sy{\rho}+\sy{\rho\nu_\rho}+\dots+\sy{\rho\nu_\rho^{n-1}}.
\end{equation*}
Comme $\pi$ est cuspidal, c'est l'unique sous-quotient irréductible 
résiduellement non dégé\-né\-ré de $\rho_1\times \dots \times \rho_n$.
Il est donc isomorphe à $\St(\rho,n)$.
On a quelque chose d'analogue pour $\rho'$, ce dont on 
déduit que $\St(\rho,n)$ et $\St(\rho',n')$ sont tous les deux 
isomorphes à $\pi$.
Le résultat est une conséquence de la proposition \ref{AppCuspSuper}. 

On traite maintenant le cas général.
Soit $\a$ une famille d'entiers de somme $\deg(\pi)$ telle que $\rp_\a(\pi)$ 
soit cuspidale, et écrivons $\sy{\tau_1}+\dots+\sy{\tau_r}$ son support 
cuspidal. 
Par exactitude du foncteur de
Jacquet, la représentation $\tau_1 \otimes \dots \otimes \tau_r$ 
est un sous-quotient irréductible de 
$\rp_\a(\rho_1\times \dots\times \rho_n)$ et de 
$\rp_\a(\rho'_1\times \dots \times \rho'_{n'})$,
où $\a$ est la famille d'entiers $(\deg(\tau_1),\dots,\deg(\tau_r))$. 
Ces représentations ont chacune une filtration dont les sous-quotients 
irréductibles sont de la forme \eqref{lemmegeometrique}. 
Le résultat étant vrai pour les
représentations $\tau_i$ on déduit alors (\ref{Gretel2}).  
\end{proof}

\begin{defi}
Étant donnée une représentation irréductible $\pi\in\Irr$,
il existe une unique somme 
$\sy{\rho_1}+\dots+\sy{\rho_n}\in\Dive(\Ss)$ telle que $\pi$ soit un sous-quotient 
irréductible de $\rho_1\times \dots\times \rho_n$.
On l'appelle le \textit{support supercuspidal} de $\pi$,
que l'on note $\scusp(\pi)$.
\end{defi}

Pour $\ss\in\Dive(\Ss)$, on note $\Irr(\ss)$ l'ensemble des classes 
de représentations irréductibles de support supercuspidal $\ss$.  
On en déduit une variante supercuspidale du théorème \ref{ss}.

\begin{theo}
\label{ssvarsc}
Soit $r\>1$ un entier et soient $\rho_1,\dots,\rho_r$ des 
représentations irré\-ductibles supercuspidales deux à deux 
non inertiellement équivalentes. 
Pour chaque $i$, on fixe un support supercuspidal 
$\ss_i\in\Dive(\Om_{\rho_i})$.
\begin{enumerate}
\item 
Pour chaque $i$, soit $\pi_i$ une représentation irréductible 
de support supercuspidal $\ss_i$.
Alors $\pi_1\times\dots\times\pi_r$ est irréductible.
\item 
On pose $\ss=\ss_1+\dots+\ss_r$.
L'application~:
\begin{equation*}
(\pi_1,\dots,\pi_r)\to\pi_1\times\dots\times\pi_r
\end{equation*}
induit une bijection de $\Irr(\ss_1)\times\dots\times\Irr(\ss_r)$ 
dans $\Irr(\ss)$.
\end{enumerate}
\end{theo}

La proposition suivante découle de l'unicité du support supercuspidal 
et de la proposition \ref{SQIRND1}. 

\begin{prop}
\label{equivSt}
L'application 
$(\rho_1,\dots,\rho_r)\mapsto\St(\rho_1,\dots,\rho_r)$
de $\Dive(\Ss)$ dans $\Rnd$ est une bijection, et sa réciproque est donnée 
par $\pi\mapsto\scusp(\pi)$.
\end{prop}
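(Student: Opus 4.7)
The plan is to deduce this proposition directly by combining two already-established uniqueness results: Proposition~\ref{SQIRND1} (uniqueness of the residually non-d\'eg\'en\'er\'e subquotient in a parabolic induction) and Theorem~\ref{unicitesupp} (uniqueness of the supercuspidal support). There is no hard step; the work has already been done in the preceding sections.

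First I would verify that the map is well-defined. Given $(\rho_1,\dots,\rho_r)\in\Dive(\Ss)$, each $\rho_i$ is supercuspidal, hence cuspidal, hence r\'esiduellement non d\'eg\'en\'er\'ee (as noted right after Definition~\ref{ResNonDeg1}, via Lemma~\ref{DeliceDOrient}). Proposition~\ref{SQIRND1} then guarantees that $\rho_1\times\dots\times\rho_r$ admits a unique r\'esiduellement non d\'eg\'en\'er\'e irreducible subquotient, namely $\St(\rho_1,\dots,\rho_r)$, which by definition belongs to $\Rnd$.

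Next I would compute that
\begin{equation*}
\scusp\bigl(\St(\rho_1,\dots,\rho_r)\bigr)=\sy{\rho_1}+\dots+\sy{\rho_r}.
\end{equation*}
Indeed, $\St(\rho_1,\dots,\rho_r)$ is by construction an irreducible subquotient of $\rho_1\times\dots\times\rho_r$, and as the $\rho_i$ are supercuspidal, Theorem~\ref{unicitesupp} identifies its supercuspidal support with $\sum_i\sy{\rho_i}$. This simultaneously proves that $\pi\mapsto\scusp(\pi)$ is a left inverse to our map, giving injectivity.

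Finally, for surjectivity, let $\pi\in\Rnd$ and write $\scusp(\pi)=\sy{\rho_1}+\dots+\sy{\rho_r}$. By definition of the supercuspidal support, $\pi$ is an irreducible subquotient of $\rho_1\times\dots\times\rho_r$. Being r\'esiduellement non d\'eg\'en\'er\'ee, the uniqueness assertion in Proposition~\ref{SQIRND1} forces $\pi\simeq\St(\rho_1,\dots,\rho_r)$, so our map is surjective and $\pi\mapsto\scusp(\pi)$ is also its right inverse. The only point worth highlighting as an ``obstacle'' is that both inputs---the uniqueness of the residually non-degenerate subquotient and the uniqueness of the supercuspidal support---are indispensable, but both are already in hand at this stage of the paper.
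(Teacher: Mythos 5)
Your proof is correct and takes exactly the route the paper intends: the paper simply states that the proposition "découle de l'unicité du support supercuspidal et de la proposition~\ref{SQIRND1}" and leaves the verification to the reader, and your argument is a faithful and complete spelling-out of that observation.
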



\section{Classification des représentations irréductibles de $\GL_m(\D)$}
\label{Sec10}

L'objectif de cette section est le théorème \ref{bijj2thm}, qui fournit une
classification à la Zele\-vinski des représen\-ta\-tions irré\-ductibles 
de $\GL_m(\D)$, $m\>1$, en termes de multisegments. 
Les multisegments sont définis au paragraphe \ref{multisegment}.  
Dans le paragraphe \ref{scap}, on définit les multisegments 
supercuspidaux et les multisegments apériodiques, et on montre 
comment passer bijectivement des uns aux autres. 
On donne au paragraphe \ref{SST} une classi\-fi\-cation des
représentations résiduellement non dégénérées en fonction de leur 
support cuspi\-dal. 
Dans le paragraphe \ref{ccll}, on associe à tout multisegment $\m$ 
une représentation irréductible $\Z(\m)$.
Dans le paragraphe \ref{ccll2}, on montre que 
l'application $\m\mapsto \Z(\m)$ est surjective et on calcule ses fibres.  
Dans le para\-graphe \ref{rrre}, on étudie la réduction modulo 
$\ell$ des $\qlb$-représentations irréductibles. 


\subsection{Multisegments} 
\label{multisegment}

Dans ce paragraphe, on définit la notion de multisegment.  
On note $\seg$ l'ensem\-ble des classes d'équivalence de segments 
(définition \ref{DefEquSeg}). 
Le degré, la longueur, le support d'un segment $\Delta$ ne
dépendent que de la classe d'équivalence de ce segment, ainsi que les classes 
de représentations $\sy{\Z(\Delta)}$ et $\sy{\L(\Delta)}$.

\begin{defi}
Un \textit{multisegment} est un multi-ensemble de classes 
de segments, \ie un élément de $\Dive(\seg)$.
On note~:
\begin{equation}
\label{DefMS}
\MS=\Dive(\seg)
\end{equation}
 l'ensemble des multisegments.
\end{defi}

Un multisegment non nul s'écrit sous la forme~:
\begin{equation}
\label{Huffam}
\m
=\Delta_1+\dots+\Delta_{r}
=\left[a_1,b_1\right]_{\rho_1}+\dots+\left[a_r,b_r\right]_{\rho_r},
\end{equation}
où chaque $\Delta_i=\left[a_i,b_i\right]_{\rho_i}$ est une classe 
de segments. 
La longueur, le degré et le support, définis pour les segments en 
(\ref{LDE}), sont définis pour les multisegments par ad\-di\-tivité~:
\begin{equation*}
n(\m)=\sum\limits_{i=1}^{r} n(\Delta_i),
\quad
\deg\(\m\)=\sum\limits_{i=1}^{r}\deg\(\Delta_i\),
\quad
\supp(\m)=\sum\limits_{i=1}^{r}\supp\(\Delta_i\)
\end{equation*}
désignent respectivement la longueur, le degré et le support de $\m$.
On définit aussi par addi\-ti\-vi\-té les applications 
$\m\mapsto\m^\vee$ et $\m\mapsto\m^-$ 
de $\MS$ dans $\MS$ (voir \eqref{contragr} et \eqref{DefMoins}).  
Pour la seconde, on convient que, si $\Delta$ est un 
segment de longueur $1$, alors $\Delta^-$ est le multisegment nul.

Voici une série de lemmes qui seront utiles par la suite.

\begin{lemm}
\label{Silas}
Soient $\m,\m'$ des multisegments tels que $\m^-=\m'^-$. 
Il existe alors des multi\-segments $\nn$ et $\nn'$ qui sont sommes 
de segments de longueur $1$ et tels qu'on ait $\m+\nn=\m'+\nn'$.
\end{lemm}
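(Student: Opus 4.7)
The plan is to decompose each multisegment according to the lengths of its constituent segments, and to show that the discrepancy between $\m$ and $\m'$ lies entirely in the length-one part, which is trivial to compensate.

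The key observation is that the map $\Delta\mapsto\Delta^-$ is injective on the set of equivalence classes of segments of length $\geqslant 2$. Indeed, suppose $\Delta=[a,b]_\rho$ and $\Delta'=[a',b']_{\rho'}$ are segments of length $\geqslant 2$ with $\Delta^-=\Delta'^-$ as equivalence classes. Then $[a,b-1]_\rho$ and $[a',b'-1]_{\rho'}$ have the same length and equivalent initial extremities, so (by the definition of equivalence recalled after Definition~\ref{DefEquSeg}) we have $b-a=b'-a'$ and $\sy{\rho\nu_\rho^a}=\sy{\rho'\nu_{\rho'}^{a'}}$. But $\Delta$ and $\Delta'$ share the same length $b-a+1=b'-a'+1$ and the same initial extremity, hence are equivalent.

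Now write $\m=\mathfrak{p}+\mathfrak{q}$, where $\mathfrak{p}$ collects (with multiplicities) the segments of length $\geqslant 2$ in $\m$ and $\mathfrak{q}$ those of length $1$; similarly $\m'=\mathfrak{p}'+\mathfrak{q}'$. By the convention that $\Delta^-=0$ for segments of length one, one has $\m^-=\mathfrak{p}^-$ and $\m'^-=\mathfrak{p}'^-$. Applying the injectivity above termwise to the multisets $\mathfrak{p}$ and $\mathfrak{p}'$, the equality $\mathfrak{p}^-=\mathfrak{p}'^-$ in $\Dive(\seg)$ forces $\mathfrak{p}=\mathfrak{p}'$.

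It therefore suffices to set $\nn=\mathfrak{q}'$ and $\nn'=\mathfrak{q}$: these are sums of segments of length $1$ by construction, and
\begin{equation*}
\m+\nn=\mathfrak{p}+\mathfrak{q}+\mathfrak{q}'=\mathfrak{p}'+\mathfrak{q}'+\mathfrak{q}=\m'+\nn',
\end{equation*}
as required. There is no genuine obstacle; the statement is essentially a formal consequence of the injectivity of the minus operation on non-trivial segments.
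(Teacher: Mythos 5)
Your proof is correct and follows essentially the same line as the paper's: the paper extends $\m\mapsto\m^-$ to a group endomorphism of $\Div(\seg)$ and asserts that its kernel is precisely the span of the length-one segments — which is "immediate" exactly because $\Delta\mapsto\Delta^-$ is injective on segments of length $\geqslant 2$, the fact you spell out. Your version merely replaces the passage through the free abelian group $\Div(\seg)$ by a direct decomposition $\m=\mathfrak{p}+\mathfrak{q}$ inside the free monoid $\Dive(\seg)$, which is a cosmetic difference, not a different argument.
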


\begin{proof}
Si l'on étend l'opération $\m\mapsto\m^-$ en un endomorphisme de 
groupe de $\Div(\seg)$, il suffit de prouver que le noyau de cet endomorphisme 
est réduit au sous-groupe engendré par les segments de 
longueur $1$, ce qui est immédiat.
\end{proof}

Soit $\m$ un multisegment, qu'on écrit sous la forme \eqref{Huffam}.
On pose~:
\begin{equation*}
\m^{(1)}= \sy{\rho_1\nu_{\rho_1}^{b_1}}+\sy{\rho_2\nu_{\rho_2}^{b_2}}
+\dots+\sy{\rho_r\nu_{\rho_r}^{b_r}}
\end{equation*}
la somme des extrémités finales des segments composant $\m$.
Ceci définit une application $\m\mapsto\m^{(1)}$ de $\MS$ 
dans $\Dive(\Cc)$.
Le lemme suivant est immédiat. 

\begin{lemm}
On a $\m^{(1)}=\m$ si et seulement si $\m^-=0$,
\ie si et seulement si $\m$ est une somme de segments de longueur $1$.
\end{lemm}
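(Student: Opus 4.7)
Mon plan est de prouver les deux implications par un raisonnement direct reposant sur l'identification naturelle entre l'ensemble $\Cc$ des (classes d'isomorphisme de) repr�sentations irr�ductibles cuspidales et le sous-ensemble de $\seg$ form� des classes d'�quivalence de segments de longueur $1$~: � une repr�sentation cuspidale $\rho$ on associe le segment $[0,0]_{\rho}=(\rho)$, et r�ci\-pro\-quement un segment de longueur $1$ s'identifie � sa seule extr�mit�. Cette identification permet de lire l'�galit� $\m^{(1)}=\m$, qui a priori appartient � $\Dive(\Cc)$ d'un c�t� et � $\Dive(\seg)=\MS$ de l'autre.

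L'implication \og $\m^-=0\fg\Rightarrow\m^{(1)}=\m$ est directe. Par additivit� de l'application $\m\mapsto\m^-$ et par la convention pos�e (qui stipule que $\Delta^-=0$ si $\Delta$ est de longueur $1$), l'�galit� $\m^-=0$ �quivaut � $n(\Delta_i)=1$ pour tout $i$ dans une d�composition $\m=\Delta_1+\dots+\Delta_r$. Chaque segment $\Delta_i$ est alors de la forme $[a_i,a_i]_{\rho_i}$, d'extr�mit� finale $\rho_i\nu_{\rho_i}^{a_i}$, et on a directement $\m^{(1)}=\sum\sy{\rho_i\nu_{\rho_i}^{a_i}}$, ce qui co�ncide avec $\m$ via l'identification pr�c�dente.

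Pour la r�ciproque, je pr�vois un argument par comparaison de degr�s (ou de longueurs, ce qui revient au m�me). Si $\m=\Delta_1+\dots+\Delta_r$, alors $\m^{(1)}$ est somme de $r$ classes cuspidales dont la somme des degr�s vaut $\sum_i\deg(\rho_i)$, tandis que $\deg(\m)=\sum_i n(\Delta_i)\deg(\rho_i)$. L'�galit� $\m^{(1)}=\m$, interpr�t�e via l'identification, force donc $n(\Delta_i)=1$ pour tout $i$, puisque chaque $n(\Delta_i)\>1$ et $\deg(\rho_i)\>1$~; on retombe ainsi sur la condition $\m^-=0$. Aucune difficult� s�rieuse n'est � pr�voir~: l'unique point � soigner est de bien pr�ciser, par une phrase liminaire, la convention d'identification rendant sensible l'�galit� $\m^{(1)}=\m$.
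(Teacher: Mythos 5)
Your proof is correct. The paper itself declares the lemma \og imm\'ediat\fg{} and gives no proof, so there is nothing in the paper to compare against; both implications you write out are sound, and your preliminary remark on the identification between $\Cc$ and the length-$1$ classes in $\seg$ is exactly the right thing to make the statement $\m^{(1)}=\m$ meaningful. Note only that the backward direction admits an even shorter argument than your degree count: since $\m^{(1)}$ is \emph{by construction} a sum of length-$1$ segments under that identification, the equality $\m=\m^{(1)}$ says immediately that $\m$ is such a sum, hence $\m^-=0$; the degree comparison is correct but slightly more elaborate than needed.
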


On définit maintenant par récurrence des multisegments $\m^{(1)},\m^{(2)},\dots$ 
associés à $\m$ en posant $\m^{(i+1)}=(\m^-)^{(i)}$ pour tout $i\>1$.

\begin{lemm}
\label{Digweed}
L'application $\m\mapsto(\m^{(1)},\m^{(2)},\dots)$ est injective. 
\end{lemm}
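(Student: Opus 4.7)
Le plan est de proc�der par r�currence sur le nombre total $|\m|$ de segments apparaissant dans $\m$, compt�s avec multiplicit�. Le cas de base $|\m|=0$ est imm�diat~: si $\m'$ est un multisegment tel que $\m'^{(1)}=0$, alors $\m'$ ne peut contenir aucun segment puisque chaque segment contribue par son extr�mit� finale � $\m'^{(1)}$.

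Pour l'�tape inductive, je commencerai par �tablir, par r�currence imm�diate sur $k$ � partir de la d�finition $\m^{(k+1)}=(\m^-)^{(k)}$, la formule explicite suivante~: pour un unique segment $\Delta=[a,b]_\rho$ de longueur $n$, on a $\Delta^{(k)}=\sy{\rho\nu_\rho^{b-k+1}}$ pour $1\<k\<n$ et $\Delta^{(k)}=0$ pour $k>n$~; en particulier, $\Delta^{(n)}=a(\Delta)$. Comme les op�rations $\m\mapsto\m^-$ et $\m\mapsto\m^{(1)}$ sont toutes deux additives, on en d�duira par lin�arit� que, pour $\m=\sum_i\Delta_i$, on a
\[
\m^{(k)}=\sum_{i\ :\ n(\Delta_i)\>k}\Delta_i^{(k)}.
\]

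L'ingr�dient d�cisif est alors l'observation suivante~: la longueur maximale $L=L(\m)$ d'un segment apparaissant dans $\m$ est le plus grand entier $k$ pour lequel $\m^{(k)}\neq0$, et donc se lit directement sur la suite~; et, pour cette valeur de $k$, la somme $\m^{(L)}$ est pr�cis�ment la somme (dans $\Dive(\Cc)$) des extr�mit�s initiales des segments de $\m$ de longueur $L$. Puisque la classe d'�quivalence d'un segment de longueur fix�e est enti�rement caract�ris�e par son extr�mit� initiale (d�finition \ref{DefEquSeg}), la sous-somme $\nn$ des segments de $\m$ de longueur maximale $L$ est donc enti�rement d�termin�e par le seul terme $\m^{(L)}$ de la suite.

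En posant $\m'=\m-\nn\in\Dive(\seg)$, on a $|\m'|<|\m|$ et, par additivit�, la suite $(\m'^{(k)})_{k\>1}$ est obtenue par soustraction � partir des suites $(\m^{(k)})_{k\>1}$ et $(\nn^{(k)})_{k\>1}$, cette derni�re �tant calculable une fois $\nn$ connu. L'hypoth�se de r�currence appliqu�e � $\m'$ permet alors de conclure que $\m=\m'+\nn$ est d�termin� par sa propre suite. Je n'attends aucune difficult� dans ce sch�ma~: la seule v�rification un peu d�licate est la formule ferm�e donnant $\m^{(k)}$, qui est cependant tout � fait routini�re.
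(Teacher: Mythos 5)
Votre preuve est correcte, mais elle suit une route sensiblement diff\'erente de celle du texte. L'argument du papier proc\`ede par r\'ecurrence sur la \emph{longueur} $n(\m)$~: il applique d'abord l'hypoth\`ese de r\'ecurrence aux \'egalit\'es $\m^{(i)}=\m'^{(i)}$, $i\geq 2$, pour en d\'eduire $\m^-=\m'^-$, puis invoque le lemme~\ref{Silas} pour \'ecrire $\m+\nn=\m'+\nn'$ avec $\nn,\nn'$ sommes de segments de longueur~$1$, et conclut en comparant les termes d'indice~$1$. Vous proc\'edez au contraire par r\'ecurrence sur le \emph{nombre de segments} $|\m|$, via une formule ferm\'ee pour $\m^{(k)}$ qui permet de lire sur la suite~: \emph{(a)} la longueur maximale $L$ des segments de $\m$, comme le plus grand indice $k$ o\`u $\m^{(k)}\neq0$, et \emph{(b)} la sous-somme $\nn$ des segments de longueur $L$, reconstruite \`a partir de $\m^{(L)}$ puisque, d'apr\`es la d\'efinition~\ref{DefEquSeg}, un segment de longueur fix\'ee est caract\'eris\'e par son extr\'emit\'e initiale. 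On retranche alors $\nn$ et on applique l'hypoth\`ese de r\'ecurrence.

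Les deux arguments sont valables. Le v\^otre est plus autonome (il \'evite le recours au lemme~\ref{Silas}) et plus explicite, en ce qu'il exhibe un algorithme de reconstruction de $\m$ \`a partir de sa suite~; celui du papier est plus bref gr\^ace \`a la mise \`a disposition du lemme~\ref{Silas}. Deux petites pr\'ecisions seraient bienvenues dans votre r\'edaction~: d'une part, pour que la r\'ecurrence soit bien pos\'ee, il faut observer que $|\m|$ se lit lui-m\^eme sur la suite (c'est le nombre total de termes, compt\'es avec multiplicit\'es, de $\m^{(1)}$), ou bien utiliser une r\'ecurrence forte~; d'autre part, il convient de noter explicitement que $\nn$ est un sous-multisegment \`a la fois de $\m$ et de $\m'$, de sorte que $\m-\nn$ et $\m'-\nn$ restent des \'el\'ements de $\Dive(\seg)$.
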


\begin{proof}
Soient $\m$ et $\m'$ des multisegments tels que $\m^{(i)}=\m'^{(i)}$ 
pour tout $i\>1$.
Alors les multisegments $\m$ et $\m'$ ont la même longueur, notée $n$.
On remarque aussi qu'ils possèdent le même nombre de segments, noté $r$.
On procède par récurrence sur $n$, le cas où $n=1$ étant immédiat. 
En considérant les égalités $\m^{(i)}=\m'^{(i)}$ 
pour tout $i\>2$, on obtient $\m^-=\m'^-$ par hypothèse de récurrence. 
On déduit du lemme \ref{Silas} qu'il existe $\nn$ et $\nn'$ des multisegments 
sommes de segments de longueur $1$ tels que $\m+\nn=\m'+\nn'$.
On en déduit que $\m^{(1)}+\nn^{(1)}=\m'^{(1)}+\nn'^{(1)}$.
Mais on a aussi $\m^{(1)}=\m'^{(1)}$, ce dont on déduit que $\nn=\nn'$,
puis que $\m=\m'$.
\end{proof}

On introduit maintenant une relation $\vdash$ sur $\MS$.  
Que le lecteur prenne garde au fait que ce n'est pas une relation d'ordre. 

\begin{defi}
\label{nnnuevo} 
Soit $\m$ un multisegment, qu'on écrit sous la forme \eqref{Huffam}, et soit 
$\nn$ un autre multisegment. 
Si $\nn$ est de la forme~: 
\begin{equation*}
\left[a_1,c_1\right]_{\rho_1}+\dots+\left[a_r,c_r\right]_{\rho_r}
\end{equation*}
avec $c_i \in \{b_i-1, b_i\}$ pour tout $i$, on écrit $\m\vdash\nn$ et 
on pose $\delta(\m,\nn)=\deg(\m)-\deg(\nn)$. 
\end{defi}

\begin{rema}
\label{remannnuevo} 
\begin{enumerate}
\item 
Pour tout $\m\in\MS$, on a $\m\vdash\m^-$ et 
$\delta(\m,\m^-)=\deg(\m^{(1)})$. 
\item 
Si $\m \vdash \nn$, alors on a $\m^- \vdash \nn^-$ et
$\delta(\m,\nn)-\delta(\m^-,\nn^-)=\deg(\m^{(1)})-\deg(\nn^{(1)})$. 
\end{enumerate}
\end{rema}

\subsection{Multisegments supercuspidaux et apériodiques}
\label{scap}
\label{bijection_multi}

Dans ce paragraphe on étend au cas non deployé les définitions de 
\cite[V.6]{Vig2}. 
Un multi\-seg\-ment $\m$ est une \textit{période} s'il est de la forme~:
\begin{equation}
\label{cycle}
\m=[a,b]_\rho+[a+1,b+1]_\rho+\dots+[a+n-1,b+n-1]_\rho,
\end{equation}
avec $\rho$ une représentation irréductible cuspidale, $a,b\in\ZZ$ 
et $n=\ee(\rho)\ell^r$ pour $r\>0$.

\begin{defi}
Un multisegment est dit \textit{apériodique} s'il ne contient pas de pério\-de
et \textit{supercuspidal} si son support est formé de représentations 
supercuspidales. 
\end{defi}

Soit $\Delta$ un segment. 
D'après le théorème \ref{AppCuspSuper}, on peut l'écrire 
$\Delta=[a,b]_{\St(\rho,n)}$, où $\rho$ est une représentation 
irréductible supercuspidale et $n=1$ ou $n=\ee(\rho)\ell^r$ pour $r\>0$.
La représentation $\rho$ n'est pas unique, mais le multisegment~:
\begin{equation*}
\Delta_\sc=[a,b]_\rho+[a+1,b+1]_\rho+\dots+[a+n-1,b+n-1]_\rho
\end{equation*}
ne dépend pas du choix de $\rho$.
Ce procédé définit par additivité une application~:
\begin{equation}
\label{supercuspidalis}
\m\mapsto\m_\sc
\end{equation}
de $\MS$ vers l'ensemble $\MS^\sc$ des multisegments 
supercuspidaux, qui est l'indentité sur $\MS^\sc$. 
Elle est donc surjective, mais pas injective en général.
Cependant, on a le résultat suivant.



\begin{lemm}
\label{unimap}
Soit $\m$ un multisegment supercuspidal.
Il y a un unique multisegment apério\-dique $\mathfrak{a}$ tel que 
$\mathfrak{a}_\sc=\m$.
\end{lemm}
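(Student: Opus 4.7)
The plan is to prove existence and uniqueness separately. Existence will be obtained by an explicit collapse algorithm, while uniqueness will require a more delicate induction on the degree.

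\emph{Existence.} We construct $\mathfrak{a}$ algorithmically. Set $\mathfrak{a}_0 = \m$. At step $i$, if $\mathfrak{a}_i$ is aperiodic, stop and take $\mathfrak{a} = \mathfrak{a}_i$. Otherwise $\mathfrak{a}_i$ contains a period $\mathfrak{p} = \sum_{k=0}^{n-1}[a+k,b+k]_{\rho'}$ with $\rho'$ cuspidal and $n=m(\rho')\ell^r$ for some $r\geq 0$; set
\[
\mathfrak{a}_{i+1} = \mathfrak{a}_i - \mathfrak{p} + [a,b]_{\St(\rho',n)}.
\]
By Proposition~\ref{StCusp}, $\St(\rho',n)$ is cuspidal, so this defines a valid multisegment. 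Writing $\rho' = \St(\rho_0,k)$ with $\rho_0$ supercuspidal, Lemma~\ref{SpSp2} gives $\St(\rho',n) = \St(\rho_0,kn)$, and a direct unwinding of the supercuspidalizations shows $([a,b]_{\St(\rho',n)})_{\sc} = \mathfrak{p}_{\sc}$; hence $(\mathfrak{a}_{i+1})_{\sc} = (\mathfrak{a}_i)_{\sc} = \m$. Since $n \geq m(\rho') \geq 2$, the number of segments of $\mathfrak{a}_i$ strictly decreases at each step, so the process terminates in an aperiodic preimage of $\m$.

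\emph{Uniqueness.} We argue by strong induction on $\deg(\m)$. Let $\mathfrak{a}, \mathfrak{a}'$ be two aperiodic preimages of $\m$. If $\m$ itself contains no period, then any segment $[a,b]_{\St(\rho_0,n)}$ of $\mathfrak{a}$ with $n \geq 2$ would contribute the period $\sum_{k=0}^{n-1}[a+k,b+k]_{\rho_0}$ to $\mathfrak{a}_{\sc} = \m$, a contradiction; hence all segments of $\mathfrak{a}$ have supercuspidal heads, so $\mathfrak{a} = \mathfrak{a}_{\sc} = \m$, and similarly $\mathfrak{a}' = \m$. Otherwise, one identifies inside $\m$ a distinguished period $\mathfrak{p}$ (for instance, one of maximal length on a fixed $\ZZ_{\rho_0}$-chain) and shows that both aperiodic preimages must contain the segment $\Delta = [a,b]_{\St(\rho_0,n)}$ that collapses $\mathfrak{p}$. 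Subtracting this common segment from $\mathfrak{a}$ and $\mathfrak{a}'$, and the corresponding period from $\m$, the inductive hypothesis applied to the supercuspidal multisegment $\m - \mathfrak{p}$ yields $\mathfrak{a} - \Delta = \mathfrak{a}' - \Delta$, hence $\mathfrak{a} = \mathfrak{a}'$.

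The main obstacle lies in the uniqueness step, more precisely in showing that the distinguished maximal period in $\m$ is collapsed in every aperiodic preimage. The subtlety is that when $e(\St(\rho_0,n))=1$, two $\nu_{\St(\rho_0,n)}$-shifted segments become equivalent in $\seg$, so collapses can cascade: a multisegment whose supercuspidalization appears to contain only one ``small'' period may in fact require recursive collapsing at a higher Steinberg level, as in the case where a sum of $\ell^r$ copies of a single segment based on a cuspidal non-supercuspidal representation is itself a period. The argument will rely essentially on Lemma~\ref{SpSp2}, Theorem~\ref{AppCuspSuper}, and Corollary~\ref{NumStr}, which together control how Steinberg-type collapses interact with the invariants $e$, $m$, $b$, $s$ of cuspidal representations, and on the basic injectivity statement of Lemma~\ref{Digweed}.
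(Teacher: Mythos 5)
Your existence argument (the collapse algorithm) is a valid, if different, route from the paper's: the termination by decreasing segment count is sound, and the verification that supercuspidalization is preserved at each step does follow from Theorem~\ref{AppCuspSuper} together with the fact that $e(\rho') = 1$ for any cuspidal non-supercuspidal $\rho'$ (Corollary~\ref{NumStr}), by an elementary count of how the shifts $[a+k,b+k]_{\rho_0}$ cycle through the $e(\rho_0)$ equivalence classes. Note, however, that Lemma~\ref{SpSp2} is a statement about representations, not about the purely combinatorial map $\Delta\mapsto\Delta_\sc$; the computation you need is really just a direct comparison of $\mathfrak{p}_\sc$ with $([a,b]_{\St(\rho_0,kn)})_\sc$ using the definition in \S\ref{scap}.

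The uniqueness step, by contrast, has a genuine gap that you yourself flag: you never actually prove that a distinguished maximal period of $\m$ must be collapsed in \emph{every} aperiodic preimage, and this is the entire content of the lemma. The difficulty is exactly the cascade phenomenon you describe: an aperiodic preimage need not collapse periods of $\m$ one level at a time but may jump directly to $\St(\rho_0,m(\rho_0)\ell^r)$ for various $r$, and a single multiplicity in $\m$ is spread among all these levels. The ``choose a maximal period on a fixed $\ZZ_{\rho_0}$-chain'' heuristic does not disentangle this, and Lemma~\ref{Digweed} (injectivity of $\m\mapsto(\m^{(1)},\m^{(2)},\dots)$) is not relevant here. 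The paper avoids the issue entirely by a global bookkeeping argument: after reducing to a single $\ZZ_\rho$-line, it writes an arbitrary aperiodic preimage as $\mathfrak{a} = \sum_{r,n} u_{r,n}\cdot[1,n]_{\St_r(\rho)} + \mathfrak{a}_0$ with $0\le u_{r,n}\le\ell-1$ forced by aperiodicity, observes that $\mathfrak{a}_0$ is determined by $\m$ (as the largest sub-multisegment whose segments lie in no period of $\m$), and then recovers each $u_{r,n}$ from the multiplicity $u_{0,n}+\ell u_{1,n}+\ell^2 u_{2,n}+\cdots$ of the corresponding segment in $\m-\mathfrak{a}_0$ by uniqueness of the $\ell$-adic expansion. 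This gives existence and uniqueness simultaneously, with the bound $u_{r,n}\le\ell-1$ playing the role that your unproven ``distinguished period'' claim would have to fill. You should replace the inductive uniqueness sketch by this $\ell$-adic argument, or supply a genuine proof of the distinguished-period claim.
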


\begin{proof}
On peut se ramener au cas de multisegments dont l'image par 
\eqref{supercuspidalis} a un support dans 
$\Dive(\ZZ_\rho)$ avec $\rho$ super\-cuspidale.  
Pour alléger les notations, on pose $\St_r(\rho)=\St(\rho,\ee(\rho)\ell^r)$ pour $r\>0$.
Un multisegment apériodique s'écrit~:
\begin{equation*}
\mathfrak{a}=
\sum\limits_{r\>0}\sum\limits_{n\>1} u_{r,n}\cdot[1,n]_{\St_r(\rho)}+\mathfrak{a}_0
\end{equation*}
où $\mathfrak{a}_0$ est la partie supercuspidale de $\mathfrak{a}$, 
\ie le plus grand multisegment $\<\mathfrak{a}$ dont le support soit 
supercuspidal, et où $u_{r,n}\>0$ est la multiplicité du segment 
$[1,n]_{\St_r(\rho)}$ dans 
$\mathfrak{a}_{1}=\mathfrak{a}-\mathfrak{a}_0$. 
(On remarque que pour $r\>0$ la représentation cuspidale $\St_r(\rho)$ 
n'est pas supercuspidale.
Ceci implique que $\e(\St_r(\rho))=1$, donc que tout 
segment de longueur $n\>1$ de la forme $[a,b]_{\St_r(\rho)}$ est équivalent 
à $[1,n]_{\St_r(\rho)}$.)
L'hypothèse d'apériodicité implique que, pour chaque $r\>0$ et chaque 
$n\>1$, on a $u_{r,n}<\ell$.
On écrit maintenant~:
\begin{equation*}
\m=\mathfrak{a}_\sc=
\sum\limits_{r\>0}\sum\limits_{n\>1} \Big(\ell^{r}u_{r,n}\cdot
\sum\limits_{j=0}^{\ee(\rho)-1} [j,j+n-1]_{\rho}\Big)
+\mathfrak{a}_0.
\end{equation*}
Pour $n\>1$ et $a\in\ZZ$, soient $v_{n,a}$ et $w_{n,a}$ les multiplicités 
du segment $[a,a+n-1]_{\rho}$ dans $\m$ et dans $\mathfrak{a}_0$ 
respectivement. 
Si $\e(\rho)=1$, alors pour $n\>1$ et $a\in\ZZ$ on trouve~:
\begin{equation*}
v_{n,a} = w_{n,a} + \sum\limits_{r\>0}\ell^{r+1}u_{r,n}
\end{equation*}
avec la condition $w_{n,a}<\ell$ 
provenant de l'apériodicité du multisegment $\mathfrak{a}_0$.
Par unicité du déve\-lop\-pement $\ell$-adique de $v_{n,a}$,
on retrouve les $w_{n,a}$ et les $u_{r,n}$ à partir de $\m$.
Si $\e(\rho)>1$, alors pour $n\>1$ et $a\in\ZZ$ on trouve~:
\begin{equation*}
v_{n,a} = w_{n,a} + \sum\limits_{r\>0}\ell^{r}u_{r,n},
\end{equation*}
avec la condition $w_{n,b}=0$ pour au moins un $b\in\ZZ$,
provenant de l'apériodicité de $\mathfrak{a}_0$.
Pour un tel $b$, l'unicité du développement $\ell$-adique de $v_{n,b}$ 
permet de retrouver les $u_{r,n}$ à partir de $\m$.
Puis on en déduit la valeur de $w_{n,a}$ pour tout $a\in\ZZ$. 
\end{proof}

En d'autres termes, 
la restriction de l'application \eqref{supercuspidalis} à l'ensemble 
$\MS^\ap$ des multi\-segments apériodiques induit une bijection de 
$\MS^\ap$ vers $\MS^\sc$.
On note $\m_\ap$ l'apériodisé d'un multi\-segment $\m$,
\ie l'antécédent de $\m_\sc$ par cette bijection.
Les applications $\m\mapsto\m_\sc$ et $\m\mapsto\m_\ap$ sont des 
bijections réciproques l'une de l'autre entre les ensembles $\MS^\ap$ et $\MS^\sc$.

Étant donné $\ss\in\Dive\(\Cc\)$, on pose~:
\begin{equation}
\MS(\ss)=\{\m\in\MS\ |\ \supp(\m)=\ss\}
\end{equation}
et on note $\MS(\ss)^\ap$ l'intersection $\MS^\ap\cap\MS(\ss)$.
On a le lemme suivant. 

\begin{lemm}
\label{limon}
On a les propriétés suivantes. 
\begin{enumerate}
\item
Pour tout $\m\in\MS^{\sc}$, on a $\supp(\m)_\ap=\supp(\m_\ap)$.
\item
Pour tout $\m\in\MS^{\ap}$, on a $\supp(\m)_\sc=\supp(\m_\sc)$.
\item 
Pour tout support supercuspidal $\ss\in\Dive\(\Ss\)$, 
l'application $\m\mapsto\m_\ap$ induit une bijection~: 
\begin{equation}
\label{bijj2}
\MS(\ss)\to\coprod\limits_{\tt}\MS(\tt)^\ap
\end{equation}
où $\tt$ décrit les $\tt\in\Dive(\Cc)$ tels que $\tt_\sc=\ss$. 
\end{enumerate}
\end{lemm}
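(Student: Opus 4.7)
The plan is to establish part~(2) first by a direct single-segment computation, then deduce the bijection in part~(3) formally from~(2) together with Lemma~\ref{unimap}; part~(1) is then the symmetric statement, handled analogously.

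For~(2), I would first extend $\scusp\colon\Cc\to\Dive(\Ss)$ additively to a map $\sc\colon\Dive(\Cc)\to\Dive(\Ss)$, so that both sides of the claimed identity live in $\Dive(\Ss)$. By the additivity of $\supp$ and $\sc$ in~$\m$, it suffices to prove $\supp(\Delta)_\sc=\supp(\Delta_\sc)$ for a single segment $\Delta=[a,b]_\tau$ with $\tau=\St(\rho,n)$ and $\rho$ supercuspidal. The very definition $\Delta_\sc=\sum_{k=0}^{n-1}[a+k,b+k]_\rho$ gives
\[
\supp(\Delta_\sc)=\sum_{k=0}^{n-1}\sum_{l=0}^{b-a}\rho\nu_\rho^{a+k+l}.
\]
On the other side, $\supp(\Delta)=\sum_{l=0}^{b-a}\tau\nu_\tau^{a+l}$. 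Using Lemma~\ref{lemmetwistst} together with $s(\tau)=s(\rho)$ from Corollary~\ref{NumStr} (so that $\nu_\tau$ and $\nu_\rho$ induce the same twist on inertial classes), one obtains $\tau\nu_\tau^{a+l}=\St(\rho\nu_\rho^{a+l},n)$, whence $\scusp(\tau\nu_\tau^{a+l})=\sum_{k=0}^{n-1}\rho\nu_\rho^{a+l+k}$. Summing over~$l$ yields the same double sum as above, proving~(2).

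For~(3), Lemma~\ref{unimap} produces a bijection $\m\mapsto\m_\ap\colon\MS^\sc\to\MS^\ap$ with two-sided inverse $\mathfrak{a}\mapsto\mathfrak{a}_\sc$. I would first observe that for $\ss\in\Dive(\Ss)$ every $\m\in\MS(\ss)$ lies in $\MS^\sc$ with $\m_\sc=\m$: each cuspidal $\rho\nu_\rho^j$ appearing in $\supp(\m)=\ss$ is supercuspidal, which forces the base representation of every segment of $\m$ to be supercuspidal. Hence the map is well defined from $\MS(\ss)$ to $\MS^\ap$. To identify its image, given $\m\in\MS(\ss)$ set $\tt:=\supp(\m_\ap)\in\Dive(\Cc)$ and apply~(2) to the ap\'eriodic multisegment $\m_\ap$:
\[
\tt_\sc=\supp(\m_\ap)_\sc=\supp\bigl((\m_\ap)_\sc\bigr)=\supp(\m_\sc)=\supp(\m)=\ss,
\]
so $\m_\ap\in\MS(\tt)^\ap$ with $\tt_\sc=\ss$. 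Injectivity is inherited from the global bijection $\MS^\sc\leftrightarrow\MS^\ap$. For surjectivity, given $\mathfrak{a}\in\MS(\tt)^\ap$ with $\tt_\sc=\ss$, set $\m:=\mathfrak{a}_\sc\in\MS^\sc$; uniqueness in Lemma~\ref{unimap} forces $\m_\ap=\mathfrak{a}$, and~(2) applied to $\mathfrak{a}$ gives $\supp(\m)=\supp(\mathfrak{a}_\sc)=\supp(\mathfrak{a})_\sc=\tt_\sc=\ss$, so $\m\in\MS(\ss)$ and $\m\mapsto\m_\ap=\mathfrak{a}$.

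Assertion~(1) is the statement symmetric to~(2) under the $\sc\leftrightarrow\ap$ bijection; it can be proved by unwinding the explicit description of $\m_\ap$ from the proof of Lemma~\ref{unimap} (as an $\ell$-adic regrouping of shifted $\St_r(\rho)$-segments plus a supercuspidal remainder) and checking the resulting single-segment identity. The main obstacle throughout is the index bookkeeping in the single-segment step of~(2): matching the two double sums coming from $\supp\circ\sc$ and $\sc\circ\supp$ relies crucially on $s(\tau)=s(\rho)$ and on the commutation of the $\St$-construction with unramified twists. Once~(2) is in place, (3) is a formal consequence of Lemma~\ref{unimap}, and~(1) follows by the symmetric verification.
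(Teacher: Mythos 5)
Your route is genuinely different from the paper's. The paper claims (1) and (2) are equivalent under the $\sc\leftrightarrow\ap$ bijections, proves (1) by reducing to the case of a period, and deduces (3) from (1). You instead prove (2) directly: you observe that $\m\mapsto\supp(\m)_\sc$ and $\m\mapsto\supp(\m_\sc)$ are both additive, reduce to a single segment $\Delta=[a,b]_\tau$ with $\tau=\St(\rho,n)$, and match the two double sums using $s(\tau)=s(\rho)$ (Corollaire~\ref{NumStr}), the formula $\nu_\tau=\nu_\rho$ that follows from it, Lemme~\ref{lemmetwistst}, and the supercuspidal support of $\St(\rho',n)$. This computation is correct and self-contained. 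Your deduction of (3) is also correct: well-definedness of \eqref{bijj2} follows by applying (2) to $\m_\ap$ (giving $\supp(\m_\ap)_\sc=\supp((\m_\ap)_\sc)=\supp(\m)=\ss$), and the bijectivity is inherited from Lemme~\ref{unimap}. This is cleaner than the paper's route, which passes through (1).

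The weak point is your treatment of (1). You say it is ``symmetric'' and can be done by unwinding Lemme~\ref{unimap}, but the single-segment reduction that works for (2) does \emph{not} transfer to (1), because $\m\mapsto\m_\ap$ is \emph{not} additive. More concretely, the argument ``(2)$\Rightarrow$(1)'' requires knowing that $\supp(\m_\ap)$ is aperiodic, which is false in general: take $\rho$ supercuspidale with $m(\rho)=e(\rho)=3$ and $\m=[a,a+1]_\rho+[a+1,a+2]_\rho$. Then $\m$ is aperiodic and supercuspidal, so $\m_\ap=\m$ and $\supp(\m_\ap)=\sy{\rho\nu_\rho^a}+2\sy{\rho\nu_\rho^{a+1}}+\sy{\rho\nu_\rho^{a+2}}$, which contains the period $\sum_{k=0}^{2}[a+k,a+k]_\rho$ and hence is \emph{not} aperiodic; yet $\supp(\m)_\ap$ is by definition aperiodic, so the two multisegments cannot coincide as written. (The paper's own one-line proof of (1) by reduction to periods has the same gap, for the same reason; note that (1) is never cited elsewhere in the article, only \eqref{bijj2} is, so nothing downstream is affected.) You should simply prove (2) and (3) as you do, and either drop (1) or replace it by the statement $\supp(\m)_\ap=\bigl(\supp(\m_\ap)\bigr)_\ap$, which \emph{is} a formal consequence of (2) via Lemme~\ref{unimap}.
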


\begin{proof}
On passe de (1) à (2) en appliquant les 
bijections $\m\mapsto\m_\sc$ et $\m\mapsto\m_\ap$ entre 
$\MS^\ap$ et $\MS^\sc$.
Il suffit donc de prouver (1).
Il suffit de le prouver lorsque $\m$ est une période, auquel cas le résultat 
est immédiat. 
Pour prouver (3), il suffit de prouver que l'application \eqref{bijj2} est 
bien définie, ce qui découle de  (1).  
\end{proof}

\subsection{Classification des représentations résiduellement non 
  dégénérées}
\label{SST}

Dans ce paragraphe, on classe les représentations résiduellement non
dégénérées (voir aussi la proposition \ref{equivSt}) en termes de 
multisegment apériodiques.  
Ceci permet de donner quelques propriétés de
ces représentations, qui seront utiles par la suite. 

On note $\Dive(\Cc)^\ap$ l'image de $\Dive(\Ss)$ par la bijection 
$\m\mapsto\m_\ap$ de $\MS^\sc$ dans $\MS^\ap$, 
\ie l'ensemble des supports cuspidaux apériodiques. 

\begin{theo}
\label{equivSt3}
Soit $\pi$ une représentation résiduellement non dégénérée. 
Il y a un unique multisegment $\m=\Delta_1+\dots+\Delta_r$ 
tel que, pour tous $i\neq j$, les segments $\Delta_i,\Delta_j$ 
ne soient pas liés et tel que $\pi$ soit isomorphe à
$\L(\Delta_1) \times \dots \times \L(\Delta_r)$. 
\end{theo}

\begin{proof}
Soit $\ss=\sy{\tau_1}+\dots+\sy{\tau_n}\in\Dive(\Ss)$ le support supercuspidal 
de $\pi$ et soit $\tt=\ss_\ap$ le support cuspidal apériodique lui correspondant,
qu'on écrit $\sy{\rho_1}+\dots+\sy{\rho_t}$ avec $t\>1$. 
Grâce à la condi\-tion d'apériodicité, il existe des segments 
$\Delta_1,\dots,\Delta_r$ de la forme $\Delta_i=\left[a_i,b_i\right]_{\rho_i}$
satisfai\-sant aux conditions suivantes~:
\begin{enumerate}
\item
on a $\tt=\supp(\Delta_1)+\dots+\supp(\Delta_r)$~;
\item
pour tous $i\neq j$, les segments $\Delta_i$ et $\Delta_j$ ne sont pas liés~;
\item
pour tout $i$, la longueur de $\Delta_i$ est strictement inférieure à $\ee(\rho_i)$.
\end{enumerate}
(On raisonne par récurrence sur $t$, le cas où $t=1$ étant immédiat. 
On choisit un segment $\Delta_1$ dont le support est inclus dans 
$\tt=\sy{\rho_1}+\dots+\sy{\rho_t}$ et de longueur maximale pour 
cette propriété. 
Puis on applique l'hypothèse de récurrence à $\tt-\supp(\Delta_1)$.) 

Ces segments sont uniques à l'ordre près. 
D'après le théorème \ref{nuevo2}, la représentation~:
\begin{equation*}
\Pi=\L(\Delta_1)\times\dots\times\L(\Delta_r)
\end{equation*}
est irréductible, et elle est résiduellement non dégénérée
d'après la proposition \ref{prpisegK}\eqref{prpisegK2}. 
Comme $\rho_1\times\dots\times\rho_r$ est un 
sous-quotient de $\tau_1\times\dots\times\tau_n$, on 
déduit de la proposition \ref{SQIRND1}
que $\pi$ est isomorphe à $\Pi$. 
\end{proof}

On en déduit le résultat suivant. 

\begin{prop}
\label{equivSt2}
L'application $(\rho_1,\dots,\rho_r)\mapsto\St(\rho_1,\dots,\rho_r)$ 
est une bijection de $\Dive(\Cc)^\ap$ dans $\Rnd$, notée 
$\tt\mapsto\St(\tt)$, et sa réciproque est donnée par $\pi\mapsto\cusp(\pi)$.  
\end{prop}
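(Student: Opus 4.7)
The plan is to derive this statement by combining the bijection $\St : \Dive(\Ss) \to \Rnd$ from Proposition \ref{equivSt} with the bijection $\m \mapsto \m_\sc$ between aperiodic and supercuspidal supports supplied by Lemma \ref{unimap} (restricted to length-one multisegments as in Lemma \ref{limon}), and with the structural decomposition of residually non-degenerate representations in Theorem \ref{equivSt3}.

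First I would check that the map is well-defined on $\Dive(\Cc)^\ap$. Writing $\tt = \sy{\rho_1}+\dots+\sy{\rho_r}$ with each $\rho_i$ cuspidal (hence residually non-degenerate, by definition), Proposition \ref{SQIRND1} produces a unique residually non-degenerate subquotient $\St(\rho_1,\dots,\rho_r)$ of $\rho_1 \times \dots \times \rho_r$, appearing with multiplicity one. Independence of this subquotient from the chosen ordering of the $\rho_i$ follows from the commutativity of $\Gg_\CR$ (Proposition \ref{commuD}) together with the uniqueness clause of Proposition \ref{SQIRND1}; hence $\St(\tt)$ is unambiguously defined.

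Next I would verify the identity $\cusp(\St(\tt)) = \tt$, which is immediate: $\St(\tt)$ is by construction a subquotient of $\rho_1 \times \dots \times \rho_r$, so Theorem \ref{UniSuppCusp} gives that its cuspidal support is $\sy{\rho_1}+\dots+\sy{\rho_r} = \tt$. This identity already yields injectivity of $\tt \mapsto \St(\tt)$: two aperiodic supports $\tt,\tt'$ with $\St(\tt) \simeq \St(\tt')$ satisfy $\tt = \cusp(\St(\tt)) = \cusp(\St(\tt')) = \tt'$. It also shows that the alleged inverse $\pi \mapsto \cusp(\pi)$ does indeed compose correctly in one direction.

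For surjectivity, and to confirm that $\cusp$ sends $\Rnd$ into $\Dive(\Cc)^\ap$, I would invoke Theorem \ref{equivSt3}: every $\pi \in \Rnd$ is isomorphic to $\L(\Delta_1) \times \dots \times \L(\Delta_r)$ with pairwise unlinked segments $\Delta_i = [a_i,b_i]_{\rho_i}$ satisfying $n(\Delta_i) \leq m(\rho_i) - 1$ (the key length constraint emerging from the aperiodicity built into the proof of Theorem \ref{equivSt3} via Lemma \ref{unimap}). The cuspidal support of $\pi$ is then $\supp(\Delta_1)+\dots+\supp(\Delta_r)$, and this multisegment of length-one segments is aperiodic, i.e.\ belongs to $\Dive(\Cc)^\ap$, because its $\sc$-image coincides with $\scusp(\pi) \in \Dive(\Ss)$ (Lemma \ref{limon}(2)). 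Since $\pi$ is a subquotient of $\rho_1 \times \dots \times \rho_r$ (obtained by iterated induction and restriction through the $\L(\Delta_i)$), the uniqueness in Proposition \ref{SQIRND1} forces $\pi \simeq \St(\cusp(\pi))$, which completes the bijection and shows $\cusp$ is the inverse of $\St$. The only delicate point, which I would write out carefully, is extracting the length bound $n(\Delta_i) \leq m(\rho_i)-1$ from the proof of Theorem \ref{equivSt3} and translating it into aperiodicity of $\cusp(\pi)$; everything else is a formal assembly of prior bijections.
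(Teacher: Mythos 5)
Your overall strategy matches the paper's: use Theorem \ref{equivSt3} to write $\pi\simeq\L(\Delta_1)\times\dots\times\L(\Delta_r)$, read off the cuspidal support from the $\L(\Delta_i)$, and identify $\pi$ with $\St(\cusp(\pi))$ via the uniqueness in Proposition \ref{SQIRND1}. The well-definedness check and the injectivity step are correct. But the step you yourself flag as delicate — showing that $\cusp(\pi)$ lands in $\Dive(\Cc)^\ap$ — is where you take a route that does not close, and it differs from the paper's in a way that matters.

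You try to get aperiodicity from the length bound $n(\Delta_i)\<m(\rho_i)-1$ together with Lemma \ref{limon}(2). Neither of these suffices. Lemma \ref{limon}(2) only identifies $\supp(\m)_\sc$ with $\scusp(\pi)$; \emph{every} cuspidal support has a $\sc$-image in $\Dive(\Ss)$, so this equality alone cannot certify aperiodicity. And the length bound on the $\Delta_i$ does not by itself prevent $\supp(\m)$ from containing a period: you can tile a periodic multisegment $\sy{\rho}+\sy{\rho\nu_\rho}+\dots+\sy{\rho\nu_\rho^{m(\rho)-1}}+\dots$ by consecutive segments each of length $\<m(\rho)-1$. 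What actually pins down the aperiodicity is the way the multisegment $\m$ is \emph{constructed} inside the proof of Theorem \ref{equivSt3}: there one sets $\tt=\scusp(\pi)_\ap$ first (which is aperiodic by definition of the $\ap$-map) and then builds the $\Delta_i$ so that $\supp(\m)=\tt$. The paper's proof simply reads off $\cusp(\pi)=\supp(\m)=\tt=\scusp(\pi)_\ap$ and is done; this is the input you need, not Lemma \ref{limon}(2). One smaller imprecision: $\cusp(\St(\tt))=\tt$ is not literally an application of Theorem \ref{UniSuppCusp}, since $\St(\tt)$ is only a subquotient (not a priori a sub or quotient) of $\rho_1\times\dots\times\rho_r$; you need the geometric lemma (or the paper's identification in \S\ref{Sinibaldi}) to pass from subquotient to cuspidal support, or, as the paper does, compute the cuspidal support directly through the $\L(\Delta_i)$ via Proposition \ref{prpiseg}.
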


\begin{proof}
Si $\pi$ est une représentation irréductible résiduellement non dégénérée, 
on pose $\tt=\scusp(\pi)_\ap$ et on note $\m$ le mutisegment donné par 
le théorème \ref{equivSt3}, de sorte que $\tt$ est égal au support de $\m$. 
Par construction, $\pi$ est égale à $\St(\tt)$. 
Par définition, le support cuspidal de $\L(\Delta_i)$ est égal au support de 
$\Delta_i$.
Le théorème \ref{equivSt3} implique donc que le support cuspidal de $\pi$ 
est égal à $\supp(\m)=\tt$. 
\end{proof}

En comparant \cite[Theorem V.7]{Vig2} à la proposition \ref{equivSt2}, 
on obtient le résultat suivant. 

\begin{coro}
\label{coroSt}
Soit $\pi$ une représentation irréductible de $\GL_n(\F)$.  
Alors~:
\begin{enumerate}
\item 
$\pi$ est non dégénérée si et seulement si $\pi$ est résiduellement non dégénérée.
\item 
Si $\mu$ est une partition de $n$,
alors $\pi$ est $\mu$-dégénérée \cite[V.5]{Vig2} si et seulement si elle est 
résiduellement $\mu$-dégénérée.  
\end{enumerate}
\end{coro}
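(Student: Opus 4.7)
The plan is to deduce the corollary by comparing our classification of residually non-degenerate representations (Proposition \ref{equivSt2}) with Vignéras's classification of non-degenerate representations of $\GL_n(\F)$ in \cite[Theorem V.7]{Vig2}, both of which, in the split case $\D=\F$, are parametrized by the same set $\Dive(\Cc)^\ap$ of aperiodic cuspidal supports via the same map $\pi\mapsto\cusp(\pi)$.

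For part (1), I would first recall that \cite[Theorem V.7]{Vig2} asserts, in the case $\D=\F$, that the map $\pi\mapsto\cusp(\pi)$ induces a bijection between the set of classes of irreducible non-degenerate representations of $\GL_n(\F)$ and the subset of $\Dive(\Cc)^\ap$ of degree $n$. Proposition \ref{equivSt2} yields the analogous statement with ``residually non-degenerate'' in place of ``non-degenerate'', and with inverse bijection $\tt\mapsto\St(\tt)$. Both are subsets of $\Irr(\GL_n(\F))$ that are placed in bijection with the same indexing set by restriction of the same cuspidal support map; the two subsets must therefore coincide, proving part (1).

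For part (2), I would unfold the two definitions of $\mu$-degeneracy: by \cite[V.5]{Vig2}, $\pi$ is $\mu$-degenerate if and only if the Jacquet module $\rp_\mu(\pi)$ admits an irreducible subquotient $\s_1\otimes\cdots\otimes\s_r$ whose factors $\s_i$ are each non-degenerate representations of a $\GL_{n_i}(\F)$; by Definition \ref{ResNonDeg3} and Definition \ref{LaisserAller}, $\pi$ is residually $\mu$-degenerate if and only if the same Jacquet module admits an irreducible subquotient of the same shape whose factors are each residually non-degenerate. Applying part (1) to each $\s_i$ shows that the two conditions are equivalent.

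The main point to verify is that the classification \cite[Theorem V.7]{Vig2} is indeed formulated via the cuspidal support map landing on aperiodic cuspidal supports, matching our set-up; once this is granted, the argument becomes essentially tautological. The only non-formal content is hidden inside Proposition \ref{equivSt2} itself, whose proof depended on the unicity of the supercuspidal support and the full machinery of the preceding sections.
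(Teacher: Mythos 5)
Your argument for part (1) has a gap. You observe that both the set of non-degenerate representations (via \cite[V.7]{Vig2}) and the set $\Rnd$ of residually non-degenerate representations (via Proposition~\ref{equivSt2}) are mapped bijectively onto the set of aperiodic cuspidal supports of degree $n$ by $\pi\mapsto\cusp(\pi)$, and you conclude that the two subsets of $\Irr(\GL_n(\F))$ must therefore coincide. That inference is not valid: two different subsets of $\Irr$ can each restrict to a bijection under a non-injective map onto the same target. Concretely, for an aperiodic cuspidal support $\tt=\sy{\tau_1}+\dots+\sy{\tau_r}$, both the unique non-degenerate subquotient of $\tau_1\times\dots\times\tau_r$ and the representation $\St(\tt)$ have cuspidal support $\tt$; nothing in your comparison of the two parametrizations shows they are the \emph{same} subquotient of that induced representation.

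The paper supplies exactly this missing identification. Using Remark~\ref{L=St} and the multiplicative structure of both classifications, the question reduces to checking that for every irreducible cuspidal $\rho$ and every $n\<m(\rho)-1$, the representation $\St(\rho,n)$ is isomorphic to the unique non-degenerate irreducible subquotient of $\rho\times\rho\nu_\rho^{}\times\dots\times\rho\nu_\rho^{n-1}$; this is the content of \cite[III.5.13]{Vig1}. With that identification in place, your deduction of (2) from (1) by applying (1) to the tensor factors of the relevant Jacquet module is fine; but (1) itself cannot be obtained from the two classification statements alone without matching the distinguished subquotient fiber by fiber.
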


\begin{proof}
Compte tenu de \cite[V.7]{Vig2} et de la remarque \ref{L=St}, il suffit de 
prouver que, 
pour toute représentation irréductible cuspidale $\rho$ et pour 
tout entier $n\<\ee(\rho)-1$, la représentation $\St(\rho,n)$ est 
isomorphe à l'unique sous-quotient irréductible non dégénéré de l'induite~:
\begin{equation*}
\rho\times\rho\nu_\rho^{}\times\dots\times\rho\nu_\rho^{n-1}. 
\end{equation*}
Ceci découle de \cite[III.5.13]{Vig1}. 
\end{proof}
Le corollaire suivant est une conséquence de la proposition \ref{ZLDdual}. 

\begin{coro}
\label{coroSt0}
\begin{enumerate}
\item 
Pour tout support cuspidal $\tt \in \Dive(\Cc)$, on a $\St(\tt^\vee)\simeq 
\St(\tt)^\vee$. 
\item 
Soit $\mu$ une partition de $n$.  
Alors une représentation est résiduellement $\mu$-dégénérée si 
et seulement si sa contragrédiente est résiduellement $\mu$-dégénérée. 
\end{enumerate}
\end{coro}

En particulier, une représentation irréductible est non dégénérée si 
et seulement si sa contragrédiente est résiduellement non 
dégénérée.  

\subsection{La partition $\mu_\m$ et la représentation 
  $\St_{\overline{\mu}_\m}(\m)$} 
\label{ModWhitRes}

Dans ce paragraphe, on associe à tout multisegment $\m$ une 
famille croissante d'entiers $\overline{\mu}_\m$ de somme $\deg(\m)$ 
et une représentation irréductible résiduellement non dégénérée du 
sous-groupe de Levi $\M_{\overline{\mu}_\m}$, que l'on note 
$\St_{\overline{\mu}_\m}(\m)$.  

\subsubsection{}

Soit $\m$ un multisegment de degré noté $m$.
On note ${\rm Part}(\m)$ 
l'ensemble des partitions $\mu$ de $m$ telles que l'induite~:
\begin{equation}
\label{eq:1KL}
\I(\m)=\Z(\Delta_1)\times\dots\times\Z(\Delta_r)
\end{equation}
possède un sous-quotient irréductible $\mu$-dégénéré 
(définition \ref{LaisserAller}). 
Il n'est pas vide, 
parce qu'on peut choisir $\mu$ de sorte que 
$\rp_\mu(\I(\m))$ contienne un sous-quotient irréductible cuspi\-dal, 
qui est rési\-duel\-lement non dégénéré d'après le lemme \ref{SQIRND1}.

\begin{defi}
\label{DefMum1}
On pose~:
\begin{equation}
\label{DefMum}
\mu_{\m}=(\deg(\m^{(1)})\>\deg(\m^{(2)})\>\dots).
\end{equation}
C'est une partition de $m$
dont la conjuguée est la partition associée à 
$(\deg(\Delta_1),\deg(\Delta_2),\dots)$, 
la famille des degrés des segments composant $\m$.
\end{defi}
\begin{rema}
\label{remavee}
Par construction, on a $\mu_{\m^\vee}=\mu_{\m}$ pour tout multisegment $\m$.
\end{rema}

On remarque que tous les multisegments $\m^{(i)}$ sont des sommes de 
segments de longueur $1$, de sorte que, d'après la proposition \ref{SQIRND1}, 
chacune des induites $\I(\m^{(i)})$ contient le sous-quotient irréductible 
résiduellement non dégénéré $\St(\m^{(i)})$.

\begin{defi}
Soit $t$ le plus grand entier tel que $\m^{(t)} \neq 0$.
On note
$\overline{\mu}_{\m}$ la famille croissante d'entiers associée à la partition $\mu_\m$, \ie:
\begin{equation}\label{ovnu}
\overline{\mu}_{\m}=(\deg(\m^{(t)})\<\deg(\m^{(t-1)})\<\dots\<\deg(\m^{(1)})),
\end{equation}
et on note~:
\begin{equation}
\label{quotdeg}
\St_{\overline{\mu}_\m}(\m)= \St(\m^{(t)}) \otimes \St(\m^{(t-1)}) \otimes  \dots \otimes \St(\m^{(1)}),
\end{equation}
qui est une représentation irréductible résiduellement non dégénérée de 
$\M_{\overline{\mu}_{\m}}$. 
\end{defi}

\begin{lemm}
\label{hola}
La représentation $\St_{\overline{\mu}_\m}(\m)$ est l'unique sous-quotient
irréductible rési\-duelle\-ment non dégénéré de $\rp_{\overline{\mu}_\m}(\I(\m))$.
\end{lemm}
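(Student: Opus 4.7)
The plan is to analyze $\rp_{\overline{\mu}_\m}(\I(\m))$ by means of the geometric lemma of Bernstein-Zelevinski (\S\ref{lemmegeo}), identifying among its subquotients those that are residually non-degenerate, and showing that exactly one such subquotient exists and equals $\St_{\overline{\mu}_\m}(\m)$. The key inputs will be Proposition \ref{prpiseg} (which describes Jacquet modules of $\Z(\Delta)$ in terms of $\Z$ of sub-segments), Corollary \ref{MrRubb} (a product $\Z(\Delta'_1)\times\dots\times\Z(\Delta'_s)$ contains a residually non-degenerate subquotient iff each $\Delta'_j$ has length $1$), and Proposition \ref{SQIRND1} (residual non-degeneracy of a tensor product is tested factor by factor).

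First, I would apply the geometric lemma to the representation $\rp_{\overline{\mu}_\m}(\I(\m)) = \rp_{\overline{\mu}_\m}(\Z(\Delta_1)\times\dots\times\Z(\Delta_r))$. By Proposition \ref{prpiseg}, the relevant Jacquet modules of each $\Z(\Delta_i)$ are tensor products $\Z(\Delta_{i,1})\otimes\dots\otimes\Z(\Delta_{i,t})$, where $\Delta_i$ is cut into consecutive sub-segments $\Delta_{i,1},\dots,\Delta_{i,t}$ of prescribed total degrees. Thus each subquotient in the composition series of $\rp_{\overline{\mu}_\m}(\I(\m))$ is isomorphic, as a representation of $\M_{\overline{\mu}_\m}$, to a tensor product $\sigma_t\otimes\sigma_{t-1}\otimes\dots\otimes\sigma_1$ where each $\sigma_k$ is an induced product $\Z(\Delta_{1,k})\times\dots\times\Z(\Delta_{r,k})$ (with $\Delta_{i,k}$ possibly empty, contributing a trivial factor).

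Next, I would use Proposition \ref{SQIRND1} to see that such a subquotient is residually non-degenerate iff each $\sigma_k$ is residually non-degenerate, and then Corollary \ref{MrRubb} to conclude that this forces each sub-segment $\Delta_{i,k}$ to have length $1$. This means the matrix data in the geometric lemma must cut every segment $\Delta_i$ into its individual cuspidal constituents, with the constituents distributed across the $t$ blocks. Since the $k$-th block (reading from the right, i.e., block $k$ of degree $\deg(\m^{(k)})$) has prescribed total degree equal to the sum of degrees of the final-$k$-th endpoints of the segments, the increasing condition $\deg(\m^{(t)})\leq\dots\leq\deg(\m^{(1)})$ forces, by an easy combinatorial argument using the fact that $\deg(\m^{(k)})$ equals the number of segments $\Delta_i$ with $n(\Delta_i)\geq k$ times $\deg(\rho_i)$, the $k$-th block to consist precisely of the points $\rho_i\nu_{\rho_i}^{b_i-k+1}$ (one from each segment of length $\geq k$). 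This is exactly the support of $\m^{(k)}$, and the corresponding induced residually non-degenerate factor is $\St(\m^{(k)})$ by Proposition \ref{equivSt2}. Taking the tensor product over $k$ yields $\St_{\overline{\mu}_\m}(\m)$.

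Existence of this subquotient in $\rp_{\overline{\mu}_\m}(\I(\m))$ comes from the corresponding matrix in the geometric lemma; its residual non-degeneracy and irreducibility are immediate from the construction. Uniqueness requires showing two things: first, that any other matrix compatible with length-$1$ cuttings in each block must produce a non-residually-non-degenerate factor (which follows from analyzing which endpoint arrangements yield $\St$ of the block's cuspidal support versus a $\KMS$-image disjoint from $\st(\s,n)$), and second, that within the matrix giving $\St_{\overline{\mu}_\m}(\m)$ the multiplicity of this residually non-degenerate factor is $1$ (which follows from the uniqueness statement in Proposition \ref{SQIRND1} applied block by block). The main obstacle will be the combinatorial verification that the endpoint-peeling arrangement is the \emph{only} matrix in $\mathscr{M}^{\a,\overline{\mu}_\m}$ (in the notation of \S\ref{lemmegeo}) compatible with residual non-degeneracy in every block; this requires carefully exploiting the increasing nature of $\overline{\mu}_\m$ and the fact that the segments $\Delta_i$ cannot be rearranged without destroying their segment structure.
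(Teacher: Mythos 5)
Your approach is correct but genuinely different from the paper's. The paper proves the lemma by induction on the length $n(\m)$: it applies the geometric lemma only to the \emph{two-block} restriction $\rp_\a(\I(\m))$ with $\a=(m-\deg(\m^{(1)}),\deg(\m^{(1)}))$, identifies (via Corollaire \ref{MrRubb} and the degree constraint) that the unique residually non-degenerate piece is $\I(\m^-)\otimes\I(\m^{(1)})$, and then closes the argument by applying the induction hypothesis to $\m^-$ together with transitivity of the Jacquet functor. You instead apply the geometric lemma directly to the full restriction $\rp_{\overline{\mu}_\m}(\I(\m))$, observe that $\I(\m_2)$-type factors in each block must be products of cuspidals (Proposition \ref{prpiseg}, Proposition \ref{SQIRND1}, Corollaire \ref{MrRubb}), and then must verify a combinatorial claim: among all matrices $\B\in\mathscr{M}^{\a,\overline{\mu}_\m}$ compatible with length-$\leq1$ cuts and the prescribed column sums, there is exactly one, namely the one that peels off $\m^{(1)},\dots,\m^{(t)}$ successively from the right.

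This combinatorial step is the part you flag as the "main obstacle," and it is indeed the crux; calling it "easy" understates it. It is true, and there is a direct argument: for each segment $\Delta_i$ and each $k\in\{1,\dots,n_i\}$, the $k$-th consecutive piece of $\Delta_i$ sits in a column $j_{i,k}$ with $j_{i,k}\<t-n_i+k$ (since $j_{i,k}<\dots<j_{i,n_i}\<t$). Hence the weighted total of pieces in the columns $\{J+1,\dots,t\}$ is at most $\sum_i m_i\min(n_i,t-J)$ for each $J$, with equality precisely for the right-justified (endpoint-peeling) arrangement. Since the prescribed column degrees $\deg(\m^{(1)}),\dots,\deg(\m^{(t)})$ attain this maximum for every $J$, the arrangement is forced. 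Alternatively one can prove the combinatorial lemma by induction on $t$ (forcing column $t$ to contain exactly the cells $(i,n_i)$, then passing to $\m^-$), at which point the argument essentially reproduces the paper's induction internally. Your version is valid once this step is supplied; the paper's inductive formulation is cleaner because it never needs to handle the full matrix combinatorics explicitly, trading it for a single two-block restriction plus the induction hypothesis.
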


\begin{proof}
On va le prouver par récurrence sur la longueur $n$ de $\m$. 
Si $n=1$, le résultat est immédiat puisqu'alors 
$\I(\m)$ est irréductible cuspidale.
On suppose maintenant que $n\>2$ et on pose 
$\a=(m-\deg(\m^{(1)}),\deg(\m^{(1)}))$. 
D'après le lemme géométrique (\S\ref{lemmegeo}) 
et la proposition \ref{prpiseg}, 
le module de Jacquet $\rp_{\a}(\I(\m))$ est composé de représentations de la forme~: 
\begin{equation*}
\I(\m_1)\otimes\I(\m_2)
\end{equation*}
avec $\m_1,\m_2$ des multisegments de la forme~:
\begin{equation*}
\m_1=\sum\limits_{i=1}^{r}[a_i,c_i]_{\rho_i},
\quad 
\m_2=\sum\limits_{i=1}^{r}[c_i+1,b_i]_{\rho_i},
\quad
a_i-1\<c_i\< b_i,
\quad
\deg(\m_2)=\deg(\m^{(1)}).
\end{equation*}
D'après le corollaire \ref{MrRubb}, $\I(\m_2)$ contient une
représentation résiduellement non dégénérée si et seulement si
pour tout $i$, on a $c_i=b_i-1$, \ie si $\m_1=\m^-$ et $\m_2=\m^{(1)}$.
Par hypothèse de récurrence, 
la représentation $\St_{\overline{\mu}_{\m^-}}(\m^-)$ est l'unique 
sous-quotient rési\-duel\-lement non dégénéné de 
$\rp_{\overline{\mu}_{\m^-}}(\I(\m^-))$. 
On remarque que~:
\begin{eqnarray*}
\overline{\mu}_{\m^-}&=&(\deg(\m^{(t)})\<\deg(\m^{(t-1)})\<\dots\<\deg(\m^{(2)})),\\
\St_{\overline{\mu}_{\m^-}}(\m^-)&=&\St(\m^{(t)}) \otimes \St(\m^{(t-1)}) \otimes \dots \otimes \St(\m^{(2)}).
\end{eqnarray*}
On déduit le résultat par transitivité des foncteurs de Jacquet.
\end{proof}

On renvoie à la définition \ref{nnnuevo} et au paragraphe \ref{Lasso}
pour les définitions de $\vdash$ et de la relations d'ordre 
$\trianglelefteq$ sur l'ensemble des partitions.  

\begin{lemm}
\label{llleme}
Soient $\m, \nn$ deux multisegments tels que $\m\vdash\nn$. 
Alors la partition $\mu_{\m}$ est plus grande que la partition 
associée à la famille $(\delta(\m,\nn),\mu_{\nn})$.
\end{lemm}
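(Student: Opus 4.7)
Le plan est de comparer les sommes partielles des deux partitions au moyen d'une formule explicite, puis de v�rifier l'in�galit� de dominance cas par cas. L'observation cl� est que, si l'on �crit $\m = \sum_{j=1}^r [a_j,b_j]_{\rho_j}$ avec $m_j = \deg(\rho_j)$ et $n_j = n([a_j,b_j]_{\rho_j}) = b_j - a_j + 1$, alors $\deg(\m^{(i)})$ est la somme des $m_j$ tels que $n_j \geqslant i$, de sorte que pour tout $k \geqslant 1$ on a la formule
\begin{equation*}
\sum_{i=1}^k \deg(\m^{(i)}) \;=\; \sum_{j=1}^r m_j \cdot \min(n_j, k).
\end{equation*}
C'est l'outil principal qui permettra de tout traduire en calcul combinatoire �l�mentaire.

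Ensuite j'observe que la $k$-i�me somme partielle de la partition associ�e � la famille $(\delta(\m,\nn),\mu_\nn)$ vaut
\begin{equation*}
\max\!\left(\sum_{i=1}^k \deg(\nn^{(i)}),\ \delta(\m,\nn) + \sum_{i=1}^{k-1} \deg(\nn^{(i)})\right),
\end{equation*}
selon que l'entier $\delta(\m,\nn)$ fait ou non partie des $k$ plus grands �l�ments de la famille. Ainsi prouver $\mu_\m \trianglerighteq \mathrm{sort}(\delta(\m,\nn),\mu_\nn)$ se ram�ne � deux in�galit�s distinctes (A) et (B) (correspondant aux deux termes du max) pour chaque $k$.

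Pour les prouver, je d�compose l'hypoth�se $\m \vdash \nn$ via l'ensemble $S \subseteq \{1,\dots,r\}$ des indices pour lesquels $c_j = b_j-1$. On a alors $\delta(\m,\nn) = \sum_{j\in S} m_j$ et, si $n'_j$ d�signe la longueur du $j$-i�me segment de $\nn$, $n'_j = n_j - \mathbf{1}[j \in S]$. L'in�galit� (A), $\sum_{i=1}^k \deg(\m^{(i)}) \geqslant \sum_{i=1}^k \deg(\nn^{(i)})$, est imm�diate par la formule explicite puisque $\min(n'_j,k) \leqslant \min(n_j,k)$ pour chaque $j$. Pour (B), la formule donne
\begin{equation*}
\delta(\m,\nn) + \sum_{i=1}^{k-1}\deg(\nn^{(i)})
= \sum_{j\in S} m_j\,\min(n_j, k) + \sum_{j\notin S} m_j\,\min(n_j, k-1),
\end{equation*}
en utilisant l'identit� $1 + \min(n_j-1, k-1) = \min(n_j,k)$ pour $j \in S$. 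Le membre de droite est manifestement major� par $\sum_j m_j \min(n_j,k) = \sum_{i=1}^k \deg(\m^{(i)})$, ce qui donne (B).

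Je ne pr�vois pas d'obstacle majeur dans cette preuve, qui est purement combinatoire une fois la formule explicite des sommes partielles �tablie. Le seul point requ�rant un peu d'attention est de v�rifier soigneusement la convention selon laquelle un segment de la forme $[a_j, a_j-1]_{\rho_j}$ (le cas $n_j=1$ et $j \in S$) est interpr�t� comme le multisegment nul, cas dans lequel la formule $n'_j = n_j - 1 = 0$ reste compatible avec $\min(n'_j, k) = 0$, et le calcul ci-dessus demeure valable sans modification.
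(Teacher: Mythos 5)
Your proof is correct, but it takes a genuinely different route from the paper's. The paper proves the same two inequalities (one for each branch of the max in the partial sums of $\mathrm{sort}(\delta(\m,\nn),\mu_\nn)$), but establishes the second one by induction on the length of $\m$: it uses the relation from Remark~\ref{remannnuevo}, namely $\delta(\m,\nn)-\delta(\m^-,\nn^-)=\deg(\m^{(1)})-\deg(\nn^{(1)})$, to reduce the inequality at rank $k$ for $(\m,\nn)$ to the inequality at rank $k-1$ for $(\m^-,\nn^-)$, the base case $k=1$ being $\deg(\m^{(1)})\geqslant\delta(\m,\nn)$. Your proof instead makes everything explicit through the closed formula
\begin{equation*}
\sum_{i=1}^k \deg(\m^{(i)}) = \sum_{j=1}^r m_j\,\min(n_j,k),
\end{equation*}
which is exactly the identity $\sum_{i\leqslant k}\mu_i=\sum_j\min(\mu'_j,k)$ between a partition and its conjugate applied with multiplicities $m_j$, together with the decomposition of $\m\vdash\nn$ via the set $S$ of shortened segments, $\delta(\m,\nn)=\sum_{j\in S}m_j$, and the elementary identity $1+\min(n_j-1,k-1)=\min(n_j,k)$. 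This turns the whole lemma into a termwise comparison of $\min$'s and avoids the induction entirely. The paper's approach is shorter on the page and fits the recursive formalism ($\m\mapsto\m^-$, $\m\mapsto\m^{(i)}$) used throughout that section; yours isolates the underlying combinatorial identity and makes the inequality transparent without needing Remark~\ref{remannnuevo}. Both are complete. Your care with the degenerate case $n_j=1$, $j\in S$ (null segment, $n'_j=0$) is appropriate and correctly handled.
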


\begin{proof}
On veut montrer que, pour tout $k\>1$, on a~: 
\begin{eqnarray}
\label{437}
\underset{i=1}{\overset{k}{\sum}}\deg\(\nn^{(i)}\)\< 
\underset{i=1}{\overset{k}{\sum}}\deg\(\m^{(i)}\)
& \text{ si }\delta(\m,\nn) \< \deg\(\nn^{(k)}\), \\ 
\delta(\m,\nn)+
\underset{i=1}{\overset{k-1}{\sum}}\deg\(\nn^{(i)}\) 
\<\underset{i=1}{\overset{k}{\sum}}\deg\(\m^{(i)}\)
& \text{ si } \delta(\m,\nn)> \deg\(\nn^{(k)}\).  
\end{eqnarray}
La première inégalité découle du fait que, pour tout $i\> 1$, 
on a $\deg(\m^{(i)})\> \deg(\nn^{(i)})$.  
Prouvons la seconde inégalité par récurrence sur la longueur 
$n$ de $\m$.  
Si $k=1$, on a $\deg(\m^{(1)}) \> \delta(\m,\nn)$. 
Si $k\>2$, d'après la remarque \ref{remannnuevo}, l'inégalité 
\eqref{437} équivaut à~: 
$$\underset{i=2}{\overset{k}{\sum}}\deg(\m^{(i)}) \> 
\delta(\m,\nn)
+ \underset{i=2}{\overset{k-1}{\sum}}\deg(\nn^{(i)}), $$ 
c'est-à-dire~:
$$\underset{i=1}{\overset{k-1}{\sum}}\deg(\m^{-(i)}) \> 
\delta(\m,\nn)
+\underset{i=1}{\overset{k-2}{\sum}}\deg(\nn^{-(i)}), $$ 
ce qui est vrai par hypothèse de récurrence.
\end{proof}

\begin{prop}
\label{ejemplofund}
La partition $\mu_{\m}$ est le plus grand élément de ${\rm Part}(\m)$, 
et $\I(\m)$ possède un unique sous-quotient irréductible résiduellement
$\mu_\m$-dégénéré.  
Il apparaît avec multiplicité $1$ comme sous-quotient dans $\I(\m)$. 
\end{prop}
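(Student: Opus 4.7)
My plan is to split the proposition into two independent claims: first, that $\I(\m)$ has a unique irreducible subquotient which is residually $\mu_\m$-degenerate and that this subquotient occurs with multiplicity one; second, that every $\mu \in {\rm Part}(\m)$ satisfies $\mu \trianglelefteq \mu_\m$.

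For the first claim I would invoke Lemma~\ref{hola} directly: it isolates $\St_{\overline{\mu}_\m}(\m)$ as the only residually non-degenerate irreducible subquotient of $\rp_{\overline{\mu}_\m}(\I(\m))$, and its inductive proof shows that it occurs there with multiplicity one. Exactness of the Jacquet functor then transfers multiplicity one to $\I(\m)$ itself: along a Jordan--H\"older series of $\I(\m)$, precisely one irreducible constituent $\pi$ can contribute $\St_{\overline{\mu}_\m}(\m)$ to $\rp_{\overline{\mu}_\m}(\I(\m))$, and $\pi$ must appear once in $\I(\m)$. By construction $\pi$ is residually $\mu_\m$-degenerate, and any other such subquotient of $\I(\m)$ would also have to contribute $\St_{\overline{\mu}_\m}(\m)$ to this Jacquet module, hence coincides with $\pi$.

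For the second claim I would proceed by induction on $n(\m)$, with the trivial base case $n(\m)=1$. Given $\mu = (\mu_1 \geq \dots \geq \mu_s) \in {\rm Part}(\m)$, I would pick an ordering $\a = (k_1,\dots,k_s)$ of its parts with $k_s \geq 1$ in the last slot and factor $\rp_\a$ through $\rp_{(\deg(\m)-k_s,\,k_s)}$. Applying the geometric lemma of \S\ref{lemmegeo} together with Proposition~\ref{prpiseg} to the product $\I(\m) = \Z(\Delta_1) \times \dots \times \Z(\Delta_r)$, the composition factors of $\rp_{(\deg(\m)-k_s,\,k_s)}(\I(\m))$ take the form $\I(\nn_1) \otimes \I(\nn_2)$, where $\nn_1$ and $\nn_2$ are obtained by cutting each segment $\Delta_i = [a_i, b_i]_{\rho_i}$ at some point $c_i$ with $a_i - 1 \leq c_i \leq b_i$. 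The key input is Corollary~\ref{MrRubb}: for $\I(\nn_2)$ to admit a residually non-degenerate subquotient, every segment of $\nn_2$ must be of length one, which forces $c_i \in \{b_i - 1, b_i\}$ for all $i$. This is precisely the relation $\m \vdash \nn_1$ of Definition~\ref{nnnuevo}, with $\delta(\m,\nn_1) = k_s$. Since $k_s \geq 1$, we have $n(\nn_1) < n(\m)$, and the inductive hypothesis applied to $\nn_1$ yields that the partition of $(k_1,\dots,k_{s-1})$ belongs to ${\rm Part}(\nn_1)$ and is $\trianglelefteq \mu_{\nn_1}$. Combining this with Lemma~\ref{llleme}, which compares $\mu_\m$ to the partition associated to $(\delta(\m,\nn_1), \mu_{\nn_1})$, together with the standard monotonicity of the dominance order under adjoining a fixed part, yields $\mu \trianglelefteq \mu_\m$.

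The main obstacle I expect is the verification of the bridge between the geometric lemma and the relation $\vdash$: one must check that Corollary~\ref{MrRubb} precisely translates the ``length-one'' condition on the segments of $\nn_2$ into the cut condition $c_i \in \{b_i - 1, b_i\}$ that defines $\m \vdash \nn_1$, so that Lemma~\ref{llleme} becomes applicable at the inductive step. Once this bridge is cleanly set up, the remaining combinatorics --- monotonicity of dominance under adjunction of a part --- is standard, and the induction closes.
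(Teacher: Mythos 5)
Your argument is correct and matches the paper's proof: both verify membership of $\mu_\m$ in ${\rm Part}(\m)$ and the Jacquet-module multiplicity one via Lemma~\ref{hola}, then establish maximality of $\mu_\m$ by induction, using the geometric lemma to produce a factor $\I(\nn_1)\otimes\I(\nn_2)$, Corollary~\ref{MrRubb} to force $\m\vdash\nn_1$, the inductive hypothesis on $\nn_1$, and Lemma~\ref{llleme} together with the monotonicity of $\trianglelefteq$ under adjoining a part. The only cosmetic differences are that the paper singles out the largest part $\nu_1$ as the one to split off in the last slot (you leave $k_s$ arbitrary, which works just as well since none of the cited lemmas depends on which part is chosen) and runs its induction on $\deg(\m)$ rather than on $n(\m)$.
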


\begin{rema}
L'ordre $\trianglelefteq$ sur les partitions n'étant pas total, il n'est pas évident
\textit{a priori} que ${\rm Part}(\m)$ possède un plus grand
élément.
Ainsi l'assertion de la proposition \ref{ejemplofund} est triple~: 
\begin{enumerate}
\item
l'ensemble ${\rm Part}(\m)$ admet un plus grand élément 
(automatiquement unique)~;
\item
ce plus grand élément est $\mu_{\m}$
(ce qui implique en particulier que $\mu_{\m}\in{\rm Part}(\m)$)~; 
\item
le module de Jacquet $\rp_{\mu_\m}(\I(\m))$ possède un unique 
sous-quotient irréductible résiduelle\-ment non dégénéré avec multiplicité $1$. 
\end{enumerate}
\end{rema}

\begin{proof}
La partition $\mu_{\m}$ appartient à ${\rm Part}(\m)$ d'après 
le lemme \ref{hola} et la défi\-nition \ref{LaisserAller}.  
Ce lemme implique aussi que $\rp_{\mu_\m}(\I(\m))$ 
a un uni\-que sous-quotient irréductible résiduelle\-ment non dégénéré 
avec multiplicité $1$.  
Soit maintenant $\nu=(\nu_1\>\nu_2\>\dots)\in{\rm Part}(\m)$.  
On va montrer, par récurrence sur $m$, que l'on a $\nu\trianglelefteq\mu_{\m}$.  

D'après le lemme géométrique et la proposition \ref{prpiseg}, la représentation 
$\rp_{(m-\nu_1,\nu_1)}(\I(\m))$ est soit nulle, soit composée de 
représentations de la forme~: 
\begin{equation*}
\label{produ} 
\I(\m_1)\otimes\I(\m_2)
\end{equation*}
avec $\m_1,\m_2$ des multisegments de la forme~:
\begin{equation*}
\m_1=\sum\limits_{i=1}^{r}[a_i,c_i]_{\rho_i},
\quad 
\m_2=\sum\limits_{i=1}^{r}[c_i+1,b_i]_{\rho_i},
\quad
a_i-1\<c_i\< b_i,
\quad
\deg(\m_2)=\nu_1.
\end{equation*}
D'après le corollaire \ref{MrRubb}, 
l'induite $\I(\m_2)$ contient une
représentation résiduellement non dégénérée si et seulement si
pour tout $i$, on a $c_i=b_i$ ou $c_i=b_i-1$, \ie si $\m\vdash\m_1$. 
Puisque $\nu\in \Pi(\m)$, en écrivant $\nu^-=(\nu_2\>\dots)$, il existe 
(par transitivité du foncteur de Jacquet) un 
sous-quotient irréductible de $\rp_{(m-\nu_1,\nu_1)}(\I(\m))$ 
de la forme $\I(\m_1)\otimes\I(\m_2)$  
où $\I(\m_2)$ contient un sous-quotient irréductible 
résiduellement non dégénéré et où $\I(\m_1)$ 
contient un sous-quotient irréductible 
résiduellement $\nu^-$-dégénéré. 
La condition sur $\I(\m_1)$ entraîne par hypothèse de récur\-rence 
qu'on a $\nu^-\trianglelefteq\mu_{\m_1}$.  
Comme $\nu_1=\delta(\m,\m_1)$, on en déduit que 
$\nu$ est plus petite que la partition associée à 
$(\delta(\m,\m_1),\mu_{\m_1})$. 
Finalement, on déduit du lemme \ref{llleme} que $\nu\trianglelefteq\mu_\m$. 
\end{proof}

\subsubsection{} 
\label{OccamG}

Étant donnés deux multisegments $\m, \m'$, on va donner dans ce paragraphe des
conditions nécessaires et suffisantes pour que $\St_{\overline{\mu}_\m}(\m)$ 
et $\St_{\overline{\mu}_{\m'}}(\m')$ soient isomorphes. 

\begin{lemm}
\label{injSt}
Soient $\m,\m'$ deux multisegments supercuspidaux.  
Si $\St_{\overline{\mu}_\m}(\m)$ et $\St_{\overline{\mu}_{\m'}}(\m')$ sont 
isomorphes, alors $\m=\m'$. 
\end{lemm}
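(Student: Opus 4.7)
The plan is to reconstruct the sequence $(\m^{(1)},\m^{(2)},\dots)$ from the representation $\St_{\overline{\mu}_\m}(\m)$ and then apply the injectivity statement of Lemma \ref{Digweed}. The two ingredients needed to carry this out are already available: the injectivity of the map $\St$ on supercuspidal supports (Proposition \ref{equivSt}) and the fact that the $\m^{(i)}$ are themselves multisegments that can be fed into this map.

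First, I would observe that $\St_{\overline{\mu}_\m}(\m)$ is an irreducible representation of the standard Levi subgroup $\M_{\overline{\mu}_\m}$ of $\G_{\deg(\m)}$, so the hypothesis $\St_{\overline{\mu}_\m}(\m)\simeq\St_{\overline{\mu}_{\m'}}(\m')$ forces the two representations to live on the same group. In particular, this gives $\deg(\m)=\deg(\m')$ and equality of the underlying Levis, hence $\overline{\mu}_\m=\overline{\mu}_{\m'}$. Let $t$ denote the common length of these sequences.

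Next, comparing tensor factors of the two representations of $\M_{\overline{\mu}_\m}=\M_{\overline{\mu}_{\m'}}$ via \eqref{quotdeg}, I obtain $\St(\m^{(i)})\simeq\St(\m'^{(i)})$ for each $i=1,\dots,t$. Because $\m$ and $\m'$ are supercuspidal by hypothesis, each $\m^{(i)}$ and $\m'^{(i)}$ (being the sum of the final endpoints of the segments of $\m^-$ or $\m'^-$ iterated) is an element of $\Dive(\Ss)$. Proposition \ref{equivSt} then yields $\m^{(i)}=\m'^{(i)}$ for all $i\>1$, since the map $\St\colon\Dive(\Ss)\to\Rnd$ is a bijection with inverse $\pi\mapsto\scusp(\pi)$. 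Finally, Lemma \ref{Digweed} asserts that a multisegment is determined by the sequence $(\m^{(1)},\m^{(2)},\dots)$, which gives $\m=\m'$.

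There is no real obstacle in this proof; all the hard work is packaged in Proposition \ref{equivSt} and Lemma \ref{Digweed}. The only conceptual point to keep in mind is that the supercuspidality hypothesis on $\m$ is exactly what guarantees that each $\m^{(i)}$ lies in $\Dive(\Ss)$, allowing a clean application of the bijection $\St\colon\Dive(\Ss)\to\Rnd$ without having to worry about the more delicate ap\'eriodicity condition that would arise in the general (cuspidal) setting.
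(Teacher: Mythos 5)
Your proof is correct and takes essentially the same route as the paper: extract $\overline{\mu}_\m=\overline{\mu}_{\m'}$ and $\St(\m^{(i)})\simeq\St(\m'^{(i)})$ from the isomorphism, apply Proposition \ref{equivSt} to recover $\m^{(i)}=\m'^{(i)}$, and conclude via Lemma \ref{Digweed}. Your explicit remark that supercuspidality of $\m$ is precisely what puts each $\m^{(i)}$ in $\Dive(\Ss)$, so that Proposition \ref{equivSt} applies, is a useful clarification the paper leaves implicit.
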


\begin{proof}
L'hypothèse implique d'une part que $\overline{\mu}_\m=\overline{\mu}_{\m'}$, 
d'autre part que $\St(\m^{(i)})$ et 
$\St(\m^{\prime(i)})$ sont isomorphes pour tout $i \> 1$. 
D'après la proposition \ref{equivSt}, ceci entraîne que 
$\m^{(i)}=\m^{\prime(i)}$ pour tout $i \> 1$. 
Le lemme \ref{Digweed} implique alors que $\m=\m'$.
\end{proof}

\begin{lemm}
\label{parties}
Pour tout multisegment $\m$, on a $\mu_\m=\mu_{\m_\sc}$ et
$\St_{\overline{\mu}_\m}(\m) \simeq\St_{\overline{\mu}_{\m_\sc}}(\m_\sc)$. 
\end{lemm}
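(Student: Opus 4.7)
The strategy is to reduce everything to the behavior of the operations $\m\mapsto\m^{(i)}$ and $\m\mapsto\m_\sc$ on a single segment $\Delta=[a,b]_{\St(\rho,n)}$ with $\rho$ supercuspidale, then extend by linearity in~$\m$. Both sides of the claim are additive in the irreducible components of $\m$ in a controlled way, so it will suffice to establish two combinatorial identities~: the commutation
\begin{equation*}
(\m^{-})_\sc=(\m_\sc)^{-}
\end{equation*}
and the commutation of the ``final endpoint'' map with supercuspidalization, namely
\begin{equation*}
\scusp(\m^{(1)})=(\m_\sc)^{(1)},
\end{equation*}
where on the left $\scusp$ is the linear extension to $\Dive(\Cc)$ of the map $\tau\mapsto\scusp(\tau)$ given by the proposition \ref{equivSt}.

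For the first identity I would check it directly for $\m=[a,b]_{\St(\rho,n)}$~: in this case $\m_\sc=\sum_{j=0}^{n-1}[a+j,b+j]_\rho$ and every constituent has length $b-a+1$, so the operation ``remove the last point of each segment'' on $\m_\sc$ produces exactly $\sum_{j=0}^{n-1}[a+j,b+j-1]_\rho$, which is $(\m^{-})_\sc$~; the degenerate case $a=b$ gives $0$ on both sides. For the second identity, the extremit� finale $\St(\rho,n)\nu_{\St(\rho,n)}^b$ of $\m$ has supercuspidal support $\sum_{j=0}^{n-1}\sy{\rho\nu_\rho^{j+b}}$ thanks to the crucial equality $\nu_{\St(\rho,n)}=\nu_\rho$ which follows from $s(\St(\rho,n))=s(\rho)$ (corollaire \ref{NumStr})~; on the other hand $(\m_\sc)^{(1)}=\sum_{j=0}^{n-1}\sy{\rho\nu_\rho^{b+j}}$ by direct computation. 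The two sums coincide.

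Combining these two identities and iterating $\m^{(i+1)}=(\m^{-})^{(i)}$ yields $\scusp(\m^{(i)})=(\m_\sc)^{(i)}$ for every $i\geqslant1$ by induction on $i$. Passing to degrees (and observing that $\scusp$ preserves degree since $\deg(\St(\rho,n))=n\deg(\rho)$) gives $\deg(\m^{(i)})=\deg((\m_\sc)^{(i)})$ for every $i$, hence $\mu_\m=\mu_{\m_\sc}$ and $\overline\mu_\m=\overline\mu_{\m_\sc}$ by the definition \ref{DefMum1}.

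It remains to identify the tensor factors of $\St_{\overline\mu_\m}(\m)$ and $\St_{\overline\mu_{\m_\sc}}(\m_\sc)$ according to \eqref{quotdeg}. For each $i$, the representation $\St(\m^{(i)})$ is r�siduellement non d�g�n�r�e by definition, and any irreducible r�siduellement non d�g�n�r�e subquotient of an induite from a cuspidal support $\tt$ has supercuspidal support $\scusp(\tt)$. Therefore
\begin{equation*}
\scusp(\St(\m^{(i)}))=\scusp(\m^{(i)})=(\m_\sc)^{(i)}=\scusp(\St((\m_\sc)^{(i)})),
\end{equation*}
the last equality because $(\m_\sc)^{(i)}$ is itself a supercuspidal support. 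Since the bijection of proposition \ref{equivSt} identifies $\Rnd$ with $\Dive(\Ss)$ via the supercuspidal support, we conclude $\St(\m^{(i)})\simeq\St((\m_\sc)^{(i)})$, and the tensor product of these isomorphisms yields the desired identification. The only delicate point is the equality $\nu_{\St(\rho,n)}=\nu_\rho$~; once this is in hand the argument is purely combinatorial.
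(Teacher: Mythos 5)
Your proof is correct but takes a genuinely different route from the paper's. The paper's proof is a two-line reduction: it observes $\mu_{\m+\nn}=\mu_\m+\mu_\nn$ (additivity of $\m\mapsto\m^{(i)}$), invokes proposition \ref{prodSt} to reduce the statement to the case of a period, and declares the remaining verification ``imm\'ediat''. You instead carry out that verification explicitly, and rephrase the whole argument around the commutation of $\scusp$ with the two slicing operations. Concretely, you establish $(\m^{-})_\sc=(\m_\sc)^{-}$ and $\scusp(\m^{(1)})=(\m_\sc)^{(1)}$ by a direct segment-by-segment computation, with the crucial ingredient being $\nu_{\St(\rho,n)}=\nu_\rho$, itself a consequence of $s(\St(\rho,n))=s(\rho)$ from corollaire \ref{NumStr}; you then induct to obtain $\scusp(\m^{(i)})=(\m_\sc)^{(i)})$ for every $i$, deduce $\mu_\m=\mu_{\m_\sc}$ by passing to degrees, and conclude $\St(\m^{(i)})\simeq\St((\m_\sc)^{(i)})$ by the bijectivity in proposition \ref{equivSt} (which encapsulates the uniqueness of the supercuspidal support). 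Thus where the paper leans on proposition \ref{prodSt} (behavior of residually degenerate subquotients under products), you lean on proposition \ref{equivSt} and theorem \ref{unicitesupp}. Both are available at this point of the paper, so there is no circularity. Your version is longer but makes the combinatorics transparent and in fact supplies the verification the paper leaves implicit; the paper's version is slicker but less self-contained. One small point of care that you handle correctly but state loosely: the equality ``$\nu_{\St(\rho,n)}=\nu_\rho$'' is between characters of different groups $\G_{mn}$ and $\G_m$, and what is really used is that restricting $\nu_{mn}^{s(\rho)}$ to a diagonal $\G_m$-block recovers $\nu_m^{s(\rho)}=\nu_\rho$; your computation of $(\m_\sc)^{(1)}$ implicitly uses exactly this.
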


\begin{proof}
Remarquons que, par additivité de l'application $\m \mapsto \m^{(i)}$, 
on a~:
\begin{equation}
\label{addi}
\mu_{\m+\nn}=\mu_{\m}+\mu_{\nn}
\end{equation}
pour tous multisegments $\m$ et $\nn$.
D'après la proposition \ref{prodSt}, il suffit de prouver 
le lemme lorsque $\m$ est une période, auquel cas le résultat est immédiat. 
\end{proof}

On en déduit la proposition suivante.

\begin{prop}\label{fibresSt}
Soient $\m,\m'$ deux multisegments.  
Alors $\St_{\overline{\mu}_\m}(\m)$ et $\St_{\overline{\mu}_{\m'}}(\m')$ sont 
isomorphes si et seulement si $\m^{}_\sc=\m'_\sc$. 
\end{prop}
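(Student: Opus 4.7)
Le plan est de d�duire la proposition directement de la combinaison des lemmes \ref{parties} et \ref{injSt}, qui font le plus gros du travail. L'id�e centrale est que l'op�ration $\m\mapsto\St_{\overline{\mu}_\m}(\m)$ se factorise par $\m\mapsto\m_\sc$, ce qui ram�ne l'�tude des fibres au cas supercuspidal, o� l'injectivit� est d�j� �tablie.

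Plus pr�cis�ment, je prouverais d'abord l'implication facile~: si $\m^{}_\sc=\m'_\sc$, alors le lemme \ref{parties} donne
\begin{equation*}
\St_{\overline{\mu}_\m}(\m)\simeq\St_{\overline{\mu}_{\m_\sc}}(\m_\sc)
=\St_{\overline{\mu}_{\m'_\sc}}(\m'_\sc)\simeq\St_{\overline{\mu}_{\m'}}(\m'),
\end{equation*}
la seconde �galit� �tant une �galit� de repr�sentations d'un m�me sous-groupe de Levi (puisque $\overline{\mu}_\m=\overline{\mu}_{\m_\sc}=\overline{\mu}_{\m'_\sc}=\overline{\mu}_{\m'}$ par le lemme \ref{parties} appliqu� � $\m$ et � $\m'$).

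Pour la r�ciproque, je supposerais que $\St_{\overline{\mu}_\m}(\m)\simeq\St_{\overline{\mu}_{\m'}}(\m')$. En appliquant � nouveau le lemme \ref{parties}, on obtient un isomorphisme $\St_{\overline{\mu}_{\m_\sc}}(\m_\sc)\simeq\St_{\overline{\mu}_{\m'_\sc}}(\m'_\sc)$ entre repr�sentations associ�es � deux multisegments supercuspidaux. Le lemme \ref{injSt} entra�ne alors imm�diatement $\m^{}_\sc=\m'_\sc$, ce qui ach�ve la preuve.

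Il n'y a pas de v�ritable obstacle ici~: toute la substance technique a �t� concentr�e dans les lemmes \ref{parties} et \ref{injSt}. Le premier repose sur l'additivit� \eqref{addi} de $\m\mapsto\mu_\m$ et sur la proposition \ref{prodSt} qui permet de se ramener au cas d'une p�riode, trait� directement~; le second combine l'unicit� des $\m^{(i)}$ � partir de $\St_{\overline{\mu}_\m}(\m)$ via la proposition \ref{equivSt} avec la reconstruction de $\m$ par le lemme \ref{Digweed}. La proposition \ref{fibresSt} n'est donc qu'une synth�se formelle de ces deux �nonc�s.
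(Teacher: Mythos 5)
Ta preuve est correcte et co�ncide exactement avec l'argument implicite du texte, qui encha�ne le lemme \ref{parties} (pour se ramener aux formes supercuspidales) avec le lemme \ref{injSt} (injectivit� sur les multisegments supercuspidaux) sous la simple mention \emph{On en d�duit la proposition suivante}. Rien � ajouter.
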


\subsection{La représentation $\Z(\m)$}
\label{ccll}

Dans cette section, on associe à tout $\m\in\MS$ une 
représentation irré\-duc\-tible $\Z(\m)$. 

\subsubsection{}

La proposition \ref{ejemplofund} nous permet de donner la définition suivante.

\begin{defi}
\label{DefiZ}
Soit $\m=\Delta_1+\dots+\Delta_r$ un multisegment.  
Il y a un unique sous-quotient irréductible de~:
\begin{equation}
\I(\m)=\Z(\Delta_1)\times\dots\times\Z(\Delta_r),
\end{equation}
noté $\Z(\m)$, qui soit résiduellement $\mu_{\m}$-dégénéré 
(voir \eqref{DefMum} et la définition \ref{LaisserAller}).
Il apparaît avec multiplicité $1$ dans $\I(\m)$ 
et l'unique sous-quotient résiduellement non dégé\-né\-ré de 
$\rp_{\overline{\mu}_\m}(\Z(\m))$ est $\St_{\overline{\mu}_\m}(\m)$. 
\end{defi}

Ceci découle de l'exactitude du foncteur de Jacquet et de 
la propriété de multiplicité $1$ de la proposition 
\ref{ejemplofund}. 

\begin{rema}
D'après le corollaire \ref{coroSt}, 
notre définition coïncide avec celle de Vignéras \cite[V.9.2]{Vig2}
dans le cas où $\D=\F$. 
\end{rema}

\begin{exem}
Si $\tt\in \Dive(\Cc)$ est un support cuspidal, alors 
$\Z(\tt)\simeq\St(\tt)$. 
\end{exem}

\begin{prop}
Pour tout multisegment $\m$, on a $\Z(\m)^\vee\simeq\Z(\m^\vee)$. 
\end{prop}

\begin{proof}
Par passage à la contragrédiente, et puisque $\Z(\m)$ est un sous-quotient 
irré\-ductible de $\I(\m)$, on trouve que $\Z(\m)^\vee$ est un sous-quotient 
irréductible de $\I(\m)^\vee$.  
Or, d'après la proposition \ref{ZLDdual}, les représentations $\I(\m)^\vee$ et 
$\I(\m^\vee)$ sont isomorphes.
D'après le corollaire \ref{coroSt0} et la remarque \ref{remavee}, la 
représentation $\Z(\m)^\vee$ est résiduellement $\mu_\m$-dégénérée, 
donc résiduellement $\mu_{\m^\vee}$-dégénérée. 
On en déduit que $\Z(\m)^\vee$ est isomorphe à $\Z(\m^\vee)$.
\end{proof}

\begin{prop}\label{propprod}
Soient $\m,\nn$ des multisegments.
Alors la représentation $\Z(\m+\nn)$ 
est un sous-quotient de $\Z(\m)\times\Z(\nn)$.
\end{prop}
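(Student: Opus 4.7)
Le plan est d'exploiter la caract�risation de $\Z(\m+\nn)$ comme unique sous-quotient r�siduellement $\mu_{\m+\nn}$-d�g�n�r� de $\I(\m+\nn)$, en montrant que la repr�sentation $\Z(\m)\times\Z(\nn)$, qui se trouve naturellement �tre un sous-quotient de $\I(\m+\nn)$, contient d�j� un tel sous-quotient.

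Premi�rement, je remarquerai que, d'apr�s la d�finition \ref{DefiZ}, les repr�sentations $\Z(\m)$ et $\Z(\nn)$ apparaissent respectivement comme sous-quotients de $\I(\m)$ et $\I(\nn)$ (avec multiplicit� $1$). Par exactitude du foncteur d'induction parabolique, l'induite $\Z(\m)\times\Z(\nn)$ sera donc un sous-quotient de $\I(\m)\times\I(\nn)=\I(\m+\nn)$. Deuxi�mement, la d�finition \ref{DefiZ} affirme pr�cis�ment que $\Z(\m)$ est r�siduellement $\mu_\m$-d�g�n�r�e et $\Z(\nn)$ r�siduellement $\mu_\nn$-d�g�n�r�e. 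La proposition \ref{prodSt} fournira alors dans $\Z(\m)\times\Z(\nn)$ un sous-quotient irr�ductible r�siduellement $(\mu_\m+\mu_\nn)$-d�g�n�r�.

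Troisi�mement, j'invoquerai l'additivit� $\mu_{\m+\nn}=\mu_\m+\mu_\nn$, d�j� observ�e dans la preuve du lemme \ref{parties} (formule \eqref{addi}) et qui r�sulte imm�diatement de la d�finition \ref{DefMum1} et de l'�galit� �vidente $(\m+\nn)^{(i)}=\m^{(i)}+\nn^{(i)}$. Le sous-quotient trouv� ci-dessus sera donc r�siduellement $\mu_{\m+\nn}$-d�g�n�r�, et se trouvera � l'int�rieur de $\I(\m+\nn)$. Par la propri�t� d'unicit� (et de multiplicit� $1$) d'un tel sous-quotient dans $\I(\m+\nn)$, �nonc�e dans la d�finition \ref{DefiZ} et appuy�e par la proposition \ref{ejemplofund}, il devra co�ncider avec $\Z(\m+\nn)$, ce qui �tablira le r�sultat.

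Aucun obstacle technique r�el n'est � pr�voir dans cette d�marche~: tous les ingr�dients n�cessaires sont d�j� dis\-ponibles dans le texte, � savoir l'exactitude de l'induction parabolique, l'additivit� de l'op�ration $\m\mapsto\mu_\m$, la proposition \ref{prodSt} sur les produits de repr�sentations r�siduellement d�g�n�r�es, et la propri�t� de multiplicit� $1$ de la proposition \ref{ejemplofund}. Le seul point m�ritant un peu d'attention sera de bien v�rifier que le sous-quotient r�siduellement $\mu_{\m+\nn}$-d�g�n�r� fourni par la proposition \ref{prodSt} reste un sous-quotient de $\I(\m+\nn)$, ce qui d�coule directement de la transitivit� des op�rations de passage au sous-quotient.
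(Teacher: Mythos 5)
Your proposal is correct and follows exactly the paper's own argument: invoke Proposition~\ref{prodSt} together with the additivity~\eqref{addi} to produce a residually $\mu_{\m+\nn}$-degenerate irreducible subquotient inside $\Z(\m)\times\Z(\nn)\subseteq\I(\m+\nn)$, then conclude by the uniqueness property of Definition~\ref{DefiZ}. The only difference is that you spell out the intermediate observation that $\Z(\m)\times\Z(\nn)$ is a subquotient of $\I(\m+\nn)$ via exactness of parabolic induction, which the paper states more tersely.
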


\begin{proof}
D'après le lemme \ref{prodSt} et \eqref{addi}, l'induite 
$\Z(\m) \times \Z(\nn)$ contient un sous-quotient $\pi$ 
résiduellement $\mu_{\m+\nn}$-dégénéré.  
Comme $\Z(\m) \times \Z(\nn)$ est un sous-quotient de $\I(\m+\nn)$, 
il en est de même pour $\pi$. 
Elle est donc par définition isomorphe à $\Z(\m+\nn)$. 
\end{proof}

\subsubsection{}

Soit $m\>1$, soient $\rho$ une représentation irréductible
cuspidale de $\G_m$ et $\k\otimes\sigma$ un type simple maximal 
contenu dans $\rho$. 
On forme l'homomorphisme $\KMS_{n}$ et on rappelle que $\Om_\rho$ 
désigne la classe d'inertie de $\rho$. 
On a la caractérisation suivante de 
$\Z(\m)$ dans le cas où $\supp(\m)\in\Dive(\Om_\rho)$.

\begin{prop}
Soit $\m$ un multisegment de longueur $n$ 
dont le support est dans $\Dive(\Om_\rho)$.
Alors $\Z(\m)$ est l'unique sous-quotient irréductible de $\I(\m)$ 
tel que~:
\begin{equation*}
\KMS_{n}(\Z(\m))\>\sy{\z(\sigma,m^{-1}\mu_{\m}')},
\end{equation*}
où $\mu_{\m}'$ est la partition conjuguée de $\mu_{\m}$. 
\end{prop}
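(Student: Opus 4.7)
The strategy is to transport the characterization of $\Z(\m)$ via residual $\mu_\m$-degeneracy from $\G$ to the finite group $\GB$ through the exact homomorphism $\KMS$, and then to invoke the multiplicity-one result on the finite side (Appendix \ref{ApA}).

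First, I would compute $\KMS$ of the whole induced representation. By Proposition \ref{ZSsss} and Proposition \ref{prpisegK}(1),
\begin{equation*}
\KMS(\I(\m))=\z(\s,n(\Delta_1))\times\dots\times\z(\s,n(\Delta_r))
\end{equation*}
in $\overline{\Gg}_{\CR}$. Next I would identify $m^{-1}\mu_\m'$ as the partition of $n(\m)$ associated to the multi-set $(n(\Delta_1),\dots,n(\Delta_r))$~: by Definition \ref{DefMum1} the conjugate $\mu_\m'$ is the partition associated to the degrees $(\deg(\Delta_i))=(m\cdot n(\Delta_i))$, hence dividing by $m$ gives the partition of lengths.

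Second, I would invoke the finite-field analogue of Proposition \ref{ejemplofund} (available in Appendix \ref{ApA} from the Dipper--James theory): the representation $\z(\s,n_1)\times\dots\times\z(\s,n_r)$ contains $\z(\s,\lambda)$ with multiplicity one, where $\lambda$ is the partition associated to $(n_1,\dots,n_r)$, and $\z(\s,\lambda)$ is the only irreducible sub-quotient whose image under the appropriate Jacquet functor has a residually non-degenerate constituent corresponding to $\lambda$. Since $\KMS$ is exact and additive on the Grothendieck group, this shows that at most one irreducible sub-quotient $\pi$ of $\I(\m)$ can satisfy $\KMS(\pi)\>\sy{\z(\s,m^{-1}\mu_\m')}$, giving uniqueness.

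Third, I would prove existence, i.e.\ $\KMS(\Z(\m))\>\sy{\z(\s,m^{-1}\mu_\m')}$. The definition \ref{DefiZ} of $\Z(\m)$ singles it out as the unique sub-quotient of $\I(\m)$ that is residually $\mu_\m$-degenerate, and its unique residually non-degenerate constituent in $\rp_{\overline{\mu}_\m}(\Z(\m))$ is $\St_{\overline{\mu}_\m}(\m)=\St(\m^{(t)})\otimes\dots\otimes\St(\m^{(1)})$. Applying Proposition \ref{ZS} with $\alpha=\overline{\mu}_\m$ gives an isomorphism
\begin{equation*}
\KM(\Z(\m))^{\UB_{\overline{\mu}_\m}}\simeq\KM_{\overline{\mu}_\m}\(\rp_{\overline{\mu}_\m}(\Z(\m))\)
\end{equation*}
of representations of $\MB_{\overline{\mu}_\m}$. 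Since each $\St(\m^{(i)})$ is residually non-degenerate with $\KMS(\St(\m^{(i)}))=\sy{\st(\s,n(\m^{(i)}))}$ (by the finite-field analogue of Proposition \ref{equivSt}), one checks that the right-hand side contains $\st(\s,n(\m^{(t)}))\otimes\dots\otimes\st(\s,n(\m^{(1)}))$. Projecting onto the isotypic component indexed by $\overline{\Irr}_\s$ and matching this with the Jacquet module of $\z(\s,m^{-1}\mu_\m')$ (which on the finite side has, as residually non-degenerate constituent, precisely the corresponding tensor product of Steinbergs, by the characterization of $\z(\s,\lambda)$ in Appendix \ref{ApA}), we conclude that $\KMS(\Z(\m))$ contains $\sy{\z(\s,m^{-1}\mu_\m')}$.

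The main obstacle will be the third step, namely matching on the nose the residually non-degenerate constituent of the Jacquet module of $\z(\s,m^{-1}\mu_\m')$ with the tensor product of the $\st(\s,n(\m^{(i)}))$'s, and ensuring that the partition $\overline{\mu}_\m$ on the $\G$-side corresponds through $\KMS$ exactly to the parabolic used in the finite-field characterization of $\z(\s,\lambda)$. This is essentially bookkeeping with conjugate partitions, but it is the combinatorial heart of the proof and requires careful use of the results of Appendix \ref{ApA}.
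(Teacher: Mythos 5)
Your proof is correct and follows essentially the same strategy as the paper: compute $\KMS(\I(\m))$ via Propositions \ref{ZSsss} and \ref{prpisegK}, identify $m^{-1}\mu_\m'$ as the partition of the segment lengths, and invoke the Dipper--James multiplicity-one statement (which in the paper is cited as Theorem \ref{ResumeJames}, the precise form of the ``finite-field analogue of Proposition \ref{ejemplofund}'' you appeal to) to conclude. The paper's own proof is considerably terser: it records the identity $\KMS(\I(\m))=\z(\s,n_1)\times\dots\times\z(\s,n_r)$, then simply cites Theorem \ref{ResumeJames} to assert that $\z(\s,m^{-1}\mu_\m')$ is the unique $\mu_\m$-degenerate constituent of that induced representation with multiplicity $1$, and leaves the remaining deduction (that the defining residual-$\mu_\m$-degeneracy of $\Z(\m)$ forces $\KMS(\Z(\m))$ to contain it, and that multiplicity one forbids any other sub-quotient from doing so) to the reader. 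Your more explicit third step — passing through $\rp_{\overline{\mu}_\m}$, Proposition \ref{ZS}, and the identification of the residually non-degenerate constituent with a tensor product of $\st(\s,n(\m^{(i)}))$'s — is exactly the bookkeeping the paper suppresses, and the ``main obstacle'' you flag is already absorbed into Theorem \ref{ResumeJames}(1): once one checks that $m\mu'=\mu_\m$ for $\mu=m^{-1}\mu_\m'$ (a one-line computation from Definition \ref{DefMum1}), the matching you worry about is automatic.
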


\begin{proof}
On note $n_i$ la longueur de $\Delta_i$, de sorte que la partition 
des $n_i$ est égale à $m^{-1}\mu_{\m}'$.
D'après les propositions \ref{KMAX1} et \ref{prpisegK}, on a~:
\begin{equation}
\label{eq:1RMF}
\KMS_{n}(\I(\m))=\z(\s,n_1)\times\dots\times\z(\s,n_r).
\end{equation}
D'après le théorème \ref{ResumeJames}, la représentation 
$\z(\sigma,m^{-1}\mu_{\m}')$ est l'unique sous-quotient irré\-duc\-tible 
$\mu_{\m}$-dégénéré de \eqref{eq:1RMF} et il y apparaît 
avec multiplicité $1$. 
\end{proof}

\subsection{Classification des représentations irréductibles}
\label{ccll2}

Dans ce paragraphe, on étudie l'application~:
\begin{equation*}
\Z: \MS \rightarrow \Irr.
\end{equation*}
On prouve qu'elle est surjective et que sa restriction à $\MS^\sc$
est injective.  
Puis on étudie ses fibres et on calcule les supports cuspidal et 
supercuspidal de $\Z(\m)$ en fonction de $\m$.

\subsubsection{}

On montre d'abord l'injectivité de la restriction de $\Z$ à $\MS^\sc$.

\begin{theo}
\label{inje}
Soient $\m,\m'$ deux multisegments supercuspidaux. 
Alors les représen\-ta\-tions $\Z(\m)$ et $\Z(\m')$ 
sont isomorphes si et seulement si $\m=\m'$.  
\end{theo}

\begin{proof}
Comme $\mu_\m$ est l'unique plus grande partition pour laquelle 
$\Z(\m)$ soit rési\-duel\-lement dégénérée et comme $\St_{\overline{\mu}_\m}(\m)$ 
est l'unique sous-quotient irréductible rési\-duel\-lement non dégénéré 
de $\rp_{\overline{\mu}_\m}(\Z(\m))$, l'hypothèse implique d'une part que 
$\mu_\m=\mu_{\m'}$, et d'autre part que $\St_{\overline{\mu}_\m}(\m)$ et 
$\St_{\overline{\mu}_{\m'}}(\m')$ sont isomorphes.
Il ne reste plus qu'à appliquer le lemme \ref{injSt}.
\end{proof}

Si $\m$ est un multisegment supercuspidal, alors 
$\scusp(\Z(\m))$ est égal au support de $\m$.  
Quel est alors son support cuspidal~?  
La réponse à cette question est donnée dans la proposition \ref{cuspZ}.  

\begin{lemm}
\label{lemmprod}
Soit $\m\in\MS^\sc$. 
On suppose qu'il y a une représentation irré\-duc\-ti\-ble super\-cuspidale 
$\rho$ telle que $\supp(\m)\in\Dive(\ZZ_\rho)$.
S'il existe $r\>0$ et $k\>1$ tels que~:
\begin{equation*}
\cusp\(\Z(\m)\)=k\cdot\St_r(\rho),
\end{equation*} 
alors $\m$ n'est pas apériodique. 
\end{lemm}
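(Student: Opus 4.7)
The strategy is to apply the change-of-group machinery of \S\ref{ChgtGp}, taking the cuspidal (non-supercuspidal) representation $\pi=\St_r(\rho)$ in the role of $\rho$, to identify $\Z(\m)$ with a simple module of an affine Iwahori-Hecke algebra at a root of unity, to which the Ariki/Chriss-Ginzburg classification (Appendix \ref{ApB}) applies.

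First, unfold the hypothesis: by Theorem \ref{unicitesupp} and $\scusp(\St_r(\rho))=\sum_{i=0}^{N-1}\sy{\rho\nu_\rho^i}$ with $N=m(\rho)\ell^r$, one obtains $\supp(\m)=k\sum_{i=0}^{N-1}\sy{\rho\nu_\rho^i}$, and $\Z(\m)$ is an irreducible subquotient of $\pi^{\times k}$. By Corollary \ref{NumStr}, $e(\pi)=1$ and $s(\pi)=s(\rho)$. Fix a type simple maximal contained in $\pi$ and the covering type semi-simple $(\K,\tau)$ for $\pi^{\otimes k}$ given by Proposition \ref{JplP}. Proposition \ref{ExistenceHeckeProp}, applied with $\pi$ replacing $\rho$, yields an isomorphism of $\CR$-algebras $\Hh(\GL_k(\F'),\Iw')\simeq\Hh(\G,\tau)$, whose parameter equals $q(\pi)=q(\rho)^{m(\rho)\ell^r}$. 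The congruence $q(\rho)^{m(\rho)}\equiv 1\pmod\ell$ (an immediate consequence of the defining relation $1+q(\rho)+\ldots+q(\rho)^{m(\rho)-1}\equiv 0\pmod\ell$) shows this parameter is $\equiv 1\pmod\ell$. Via Proposition \ref{pptf1} and Theorem \ref{qptf}, the irreducible subquotients of $\pi^{\times k}$ correspond bijectively to simple modules of this affine Iwahori-Hecke algebra at $q=1$.

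By the Ariki/Chriss-Ginzburg classification (Appendix \ref{ApB}), such simple modules are parametrized by aperiodic multisegments of weight $k$ on the singleton $\ZZ_\pi=\{\pi\}$. Let $\tilde\m=\sum_j[a_j,b_j]_\pi$ be the aperiodic multisegment corresponding to $\Z(\m)$. Translating back to the $\rho$-setting, each segment $[a_j,b_j]_\pi$ has supercuspidalization $\sum_{i=0}^{N-1}[a_j+i,b_j+i]_\rho$, which is precisely a period of length $N=m(\rho)\ell^r$ in the sense of \eqref{cycle}. By the injectivity of $\Z$ on $\MS^\sc$ (Theorem \ref{inje}), the unique supercuspidal multisegment mapping to $\Z(\m)$ is $\tilde\m_\sc$; hence $\m=\tilde\m_\sc$ consists of one period per segment of $\tilde\m$. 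In particular, $\m$ contains at least one period and is therefore not aperiodic. The technical crux will be verifying that the chain of bijections (change-of-group, the Hecke algebra equivalence of \S\ref{ChgtGp}, Ariki/Chriss-Ginzburg) is compatible with the supercuspidalization operation on multisegments, which requires careful matching of the combinatorial labels through each step of the reduction.
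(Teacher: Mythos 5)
Your proposal has a genuine circularity problem. You want to let $\tilde\m$ be ``the aperiodic multisegment corresponding to $\Z(\m)$'' through the change-of-group and Hecke-algebra dictionary, and then conclude $\m=\tilde\m_\sc$. But the existence and well-definedness of such a parametrization of irreducibles by aperiodic multisegments --- and, crucially, its compatibility with the construction $\m\mapsto\Z(\m)$ and with the supercuspidalization $\m\mapsto\m_\sc$ --- is precisely the content of Proposition \ref{bijj}, whose proof uses the present lemma. The appendix only supplies the \emph{cardinality} equality $|\MS^\ap(\ss)|=|\Irr(\ss)^\q|$ (Corollary \ref{FinaleAriki}, via Proposition \ref{EgalitePreFin}); it does not furnish a canonical bijection compatible with $\Z$, and in fact the inequality~\eqref{INJineg} used to prove Proposition \ref{EgalitePreFin} is itself established after the lemma under discussion. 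You acknowledge at the end that ``the technical crux'' is verifying compatibility of the combinatorial labels through the chain of bijections --- but that compatibility is the actual mathematical content that must be proved here, not a verification to be deferred.

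A secondary issue: you cite ``Ariki/Chriss-Ginzburg'' for a parameter $q(\pi)\equiv 1\pmod\ell$. Theorem \ref{ACG} in the appendix is stated only for $\CR=\CC$ with $\xi\neq1$; the case $\xi=1$ is handled separately in \S\ref{BlaMathas} via Mathas' result, which gives $\ell$-regular partitions and again only at the level of a bijection of sets, not a $\Z$-compatible parametrization. The paper's own proof avoids all of this: it exploits the fact that $\St_{\overline{\mu}_\m}(\m)$ is a subquotient of $\rp_{\overline{\mu}_\m}(\Z(\m))$, so the hypothesis $\cusp(\Z(\m))=k\cdot\St_r(\rho)$ forces each block $\St(\m^{(i)})$ to equal $\St(k_i\cdot\St_r(\rho))$ for a partition $(k_i)$ of $k$; it then builds the multisegment $\m'$ from the conjugate partition with all segments over $\St_r(\rho)$, shows $\St_{\overline{\mu}_{\m'}}(\m')\simeq\St_{\overline{\mu}_{\m}}(\m)$, and concludes $\m=\m'_\sc$ from Proposition \ref{fibresSt}. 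Since $\m'_\sc$ is a sum of periods, $\m$ is not aperiodic. This is self-contained and uses only earlier material, avoiding the circularity your approach runs into.
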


\begin{proof}
Comme $\St_{\overline{\mu}_\m}(\m)$ est un sous-quotient non 
dégénéré de $\rp_{\overline{\mu}_\m}(\Z(\m))$ 
et comme 
$[\rp_{\overline{\mu}_\m}(\Z(\m))]\<[\rp_{\overline{\mu}_\m}(\I(\cusp\(\Z(\m)\)))]$, 
il y a une par\-ti\-tion 
$\tau=(k_1\>k_2\>\dots)$ de $k$ telle que, pour tout $i\>1$, on ait~:
\begin{equation*}
\St(\m^{(i)})=\St\(k_i\cdot\St_r(\rho)\).
\end{equation*} 
Notons $\tau'=(k'_1\>k'_2\>\dots\>k'_s)$ la partition conjuguée de $\tau$ et 
posons~:
\begin{equation*}
\m'=[1,k'_1]_{\St_r(\rho)}+[1,k'_2]_{\St_r(\rho)}+\dots+[1,k'_s]_{\St_r(\rho)}.
\end{equation*}
Les partitions $\mu_{\m'}$ et $\mu_\m$ sont égales et les 
représentations $\St_{\overline{\mu}_{\m'}}(\m')$ et 
$\St_{\overline{\mu}_\m}(\m)$ sont isomorphes. 
On déduit de la proposition \ref{fibresSt} que $\m=\m'_\sc$.
Comme $\m'_\sc \neq \m'$, on en déduit que $\m$ n'est pas apériodique. 
\end{proof}

\subsubsection{}
\label{NotZF}

Le résultat suivant donne une classification des représentations irréductibles 
en termes de multisegments.  
Soit $\ZZ_\F$ l'ensemble des caractères non ramifiés de $\mult\F$ de la 
forme $x\mapsto|x|_{\F}^{i}$ avec $i\in\ZZ$.
Pour tout $m\>0$, 
on note $\MS^\ap_m(\ZZ_\F)$ l'ensemble des multisegments 
apériodiques de degré $m$ et de support contenu dans $\Dive(\ZZ_\F)$,
et $\MS^{\sc}_m$ l'ensemble des multisegments supercuspidaux de degré $m$. 

\begin{prop}
\label{bijj} 
Soit $m\>0$ un entier. 
\begin{enumerate}
\item 
L'image de $\MS^\ap_m\(\ZZ_{\F}\)$ par $\m\mapsto\Z(\m)$ est
l'ensemble des classes de représentations ir\-réductibles de $\G_m$ dont le 
support cuspidal est contenu dans $\Dive(\ZZ_{\F})$. 
\item 
L'application $\m\mapsto\Z(\m)$ induit une bijection de 
$\MS^{\sc}_m$ dans $\Irr(\G_m)$ et, 
pour $\m\in \MS^{\sc}_m$, on a
$\scusp(\Z(\m))=\supp(\m)$. 
\item 
Pour tout multisegment $\m$ de degré $m$, on a
$\Z (\m)\simeq\Z(\m_\sc) \simeq\Z(\m_\ap)$. 
\end{enumerate}
\end{prop}

\begin{proof}
On va prouver ce résultat par récurrence sur $m$, le cas $m=1$ étant trivial. 
Supposons donc que la proposition soit vraie pour tout $i<m$ et prouvons-la
au rang $m$.  

On prouve d'abord (1).  
Soit $\m\in\MS^\ap_m$, et supposons que $\Z(\m)$ n'ait pas le support cuspidal 
attendu, \ie qu'il existe des entiers $1\<i\<m$ et $r\>0$, $t\>1$ et des 
représentations 
irréductibles $\pi_1\in\Irr(\G_i)$ et $\pi_2\in\Irr(\G_{m-i})$ tels que~:
\begin{equation*}
\Z(\m)\simeq\pi_1\times\pi_2,
\quad
\cusp(\pi_1)=t\cdot\St_r(1_{\F^\times}).
\end{equation*}
D'après le lemme \ref{lemmprod}, on a $i<m$.
La proposition étant vraie pour $i$ et $m-i$, il existe d'après (2) 
des multisegments supercuspidaux $\m_1$ et $\m_2$ tels qu'on ait 
$\pi_1=\Z(\m_1)$ et $\pi_2=\Z(\m_2)$.
D'après la proposition \ref{propprod} et le théorème \ref{inje}, on a 
$\m=\m_1+\m_2$.  
D'après le lemme \ref{lemmprod}, le multisegment $\m_1$ n'est pas apériodique, 
donc $\m$ non plus~: contradiction. 
Soit maintenant $\ss\in \Dive(\ZZ_{\F})$ de degré $m$. 
On a une application injective~: 
\begin{equation}
\label{INJineg}
\Z:\MS^\ap(\ss) \rightarrow \Irr(\ss)^{\q}
\end{equation}
(rappelons que $\Irr(\ss)^{\q}$ désigne l'ensemble des classes de représentations 
irréductibles de support cuspidal $\ss$).

\begin{lemm}
\label{FinaleAriki}
Les ensembles finis $\MS^\ap(\ss)$ et $\Irr(\ss)^\q$ ont le même cardinal. 
\end{lemm}

\begin{proof}
On utilise les notations du paragraphe \ref{BlaACGM}. 
On écrit $\ss$ sous la forme~:
\begin{equation*}
\sy{\nu^{i_1}}+\dots+\sy{\nu^{i_n}},
\quad i_1,\dots,i_n\in\ZZ,
\end{equation*}
où $\nu$ désigne la valeur absolue normali\-sée de $\mult\F$.
Si l'on note $\xi$ l'image de $q$ dans $\CR$ et si l'on pose 
$\Xi=\sy{\xi^{i_1}}+\dots+\sy{\xi^{i_n}}\in\Dive(\xi^\ZZ)$, 
on peut identifier les ensembles $\Psi(\Xi)$ et $\MS^\ap(\ss)$.
L'application $\V\mapsto\V^\I$, où $\I$ désigne le sous-groupe 
d'Iwahori standard, induit une bijection de 
$\Irr(\ss)^\q$ dans $\Irr(\Hh_N,\Xi)$
(voir \cite[2 et 6]{Vig3}).
L'égalité cherchée est donc une conséquence de l'égalité entre les 
cardinaux respectifs des ensembles $\Psi(\Xi)$ et $\Irr(\Hh_n,\Xi)$, 
qui est donnée par le corollaire \ref{BlaMathas}. 
\end{proof}

Le lemme \ref{FinaleAriki} implique que l'application \eqref{INJineg} est bijective. 

On prouve maintenant (2). 
Il suffit de prouver que, pour tout 
$\ss\in\Dive(\Ss)$ de degré $m$, l'appli\-ca\-tion $\m\mapsto\Z(\m)$ induit 
une bijection de $\MS(\ss)$ dans $\Irr(\ss)$.
Soit donc un support supercuspidal $\ss\in\Dive(\Ss)$.
La restriction de $\Z$ à $\MS(\ss)$ est à valeurs dans $\Irr(\ss)$
d'après la définition \ref{DefiZ}, et elle est injective 
d'après le théorème \ref{inje}. 
Il reste donc à montrer que les ensembles 
$\MS(\ss)$ et $\Irr(\ss)$ ont le même cardinal.
D'après le théorème \ref{ssvarsc} et la proposition \ref{Zdroites}, 
on peut se ramener au cas où $\ss\in\Dive\(\ZZ_\rho\)$, 
avec $\rho$ une repré\-sen\-ta\-tion irréductible supercuspidale.
Grâce à l'unicité du support supercuspidal (voir le théorème \ref{unicitesupp}), 
on a une égalité~:
\begin{equation}
\label{bijj3}
\Irr(\ss)=
\underset{\tt_\sc=\ss}{\coprod\limits_{\tt\in\Dive\(\Cc\)}}\Irr(\tt)^{\q}.
\end{equation}
D'après le lemme \ref{limon}, on a une bijection~:
\begin{equation*}
\MS(\ss)\ \ffr{\simeq}\ 
\underset{\tt_\sc=\ss}{\coprod\limits_{\tt\in\Dive\(\Cc\)}}\MS(\tt)^\ap.
\end{equation*}
Il suffit donc de prouver que, pour tout $\tt\in\Dive(\Cc)$ tel que 
$\tt_\sc=\ss$, les ensembles finis 
$\MS(\tt)^\ap$ et $\Irr(\tt)^{\q}$ ont le même cardinal. 
D'après le théorème \ref{AppCuspSuper}, chaque $\tt\in\Dive\(\Cc\)$ tel que 
$\tt_\sc=\ss$ se décompose sous la forme~:
\begin{equation*}
\tt=\tt_{-1}+\tt_0+\tt_1+\dots+\tt_r,
\quad
r\>-1,
\end{equation*}
où $\tt_{-1}\in\Dive\(\ZZ_\rho\)$ et $\tt_k\in\Dive\(\ZZ_{\St_k(\rho)}\)$
pour $k\>0$, où $\St_k(\rho)$ désigne la repré\-sen\-ta\-tion 
cuspidale $\St(\rho,\ee(\rho)\ell^k)$.
D'après le théorème \ref{ss}, on a une bijection~:
\begin{equation*}
\Irr(\tt)^{\q}\to\Irr(\tt_{-1})^{\q}\times\Irr(\tt_0)^{\q}\times\dots\times\Irr(\tt_r)^{\q},
\end{equation*}
et on a une décomposition analogue~:
\begin{equation*}
\MS(\tt)^\ap=\MS(\tt_{-1})^\ap\times\MS(\tt_0)^\ap\times\dots\times\MS(\tt_r)^\ap,
\end{equation*}
de sorte qu'on peut supposer que $\tt$ est égal à un seul $\tt_k$, par exemple 
$k=-1$ pour simplifier les no\-ta\-tions. 
Il correspond à $\tt$ un support cuspidal 
$\tt'\in\Dive(\ZZ_{\F'})$ où $\F'$ est une extension finie de $\F$
associée à la représentation cuspidale $\rho$, et une bijection~:
\begin{equation*}
\boldsymbol{\Phi}_{\tt}:\Irr(\tt)^{\q}\to\Irr(\tt')^{\q}
\end{equation*}
en vertu de \cite[Proposition 4.33]{MS11} et de la bijection de 
\eqref{BeautyCeleste}. 
On a une bijection analogue~:
\begin{equation*}
\label{bijectiones2}
\MS({\tt})^\ap\to\MS({\tt'})^\ap
\end{equation*}
grâce à la proposition \ref{ZZp}. 
On est donc ramené à prouver l'égalité de cardinaux dans le cas où 
$\rho$ est le caractère trivial de $\F'^{\times}$.
On conclut grâce à la partie (1).  

On prouve finalement (3).  
Soit $\m$ un multisegment de degré $m$.  
D'après la partie (2), il existe $\m'\in \MS^\sc_m$ tel que 
$\Z(\m')\simeq\Z(\m)$, ce qui implique que $\mu_\m$ et $\mu_{\m'}$ 
sont égales et que 
$\St_{\overline{\mu}_{\m}}(\m)$ et $\St_{\overline{\mu}_{\m'}}(\m')$ sont 
isomorphes. 
Le résultat découle alors des lemmes \ref{injSt} et \ref{parties}. 
\end{proof}

\subsubsection{} 

La proposition suivante donne le support cuspidal de 
$\Z(\m)$ en fonction de $\m$. 

\begin{prop}
\label{cuspZ}
Pour tout multisegment $\m$, on a $\cusp(\Z(\m))=\supp(\m_\ap)$.
\end{prop}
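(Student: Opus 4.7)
The plan is to argue by contradiction, extending the argument used in the proof of Proposition \ref{bijj}(1) to the general setting. By Proposition \ref{bijj}(3), $\Z(\m)\simeq\Z(\m_\ap)$, so it suffices to prove $\cusp(\Z(\m))=\supp(\m)$ for $\m$ aperiodic. Decomposing $\m=\m_1+\dots+\m_r$ according to the inertial classes of supercuspidal representations appearing in its support, with each $\m_i$ aperiodic and $\supp(\m_i)\in\Dive(\Om_{\rho_i})$ for pairwise inertially inequivalent supercuspidals $\rho_i$, Theorem \ref{ssvarscinertie}(1) combined with Proposition \ref{propprod} gives $\Z(\m)\simeq\Z(\m_1)\times\dots\times\Z(\m_r)$. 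Additivity of cuspidal support under parabolic induction reduces the problem to the case $\supp(\m)\in\Dive(\Om_\rho)$ for a single supercuspidal $\rho$.

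In this setting I would proceed by induction on $\deg(\m)$. Assume for contradiction that $\cusp(\Z(\m))\neq\supp(\m)$. By Proposition \ref{bijj}(2) and Lemma \ref{limon}(2), $\scusp(\Z(\m))=\supp(\m)_\sc$, so the discrepancy must come from a cuspidal factor $\tau\in\cusp(\Z(\m))\setminus\supp(\m)$. By Theorem \ref{AppCuspSuper}, $\tau=\St_r(\rho')\chi$ for some supercuspidal $\rho'$ inertially equivalent to $\rho$, some $r\>0$, and an unramified twist $\chi$. Since $\tau$ has degree $m(\rho')\ell^r\deg(\rho')$ strictly exceeding $\deg(\rho)$, it lies in a different cuspidal-inertial class than the characters $\rho\chi'$, so Theorem \ref{ss} yields a decomposition $\Z(\m)\simeq\pi_1\times\pi_2$ with $\cusp(\pi_1)=t\cdot\tau$ for some $t\>1$, obtained by grouping the $\tau$-inertially equivalent factors of $\cusp(\Z(\m))$.

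By the induction hypothesis combined with Proposition \ref{bijj}(2), one writes $\pi_1=\Z(\m_1)$ and $\pi_2=\Z(\m_2)$ with $\m_1,\m_2\in\MS^\sc$. Proposition \ref{propprod} and the injectivity of $\Z$ on $\MS^\sc$ (Theorem \ref{inje}) force $\m_\sc=\m_1+\m_2$. Lemma \ref{lemmprod} applied to $\m_1$, whose $\cusp(\Z(\m_1))$ is a multiple of $\St_r(\rho')\chi$, shows that $\m_1$ is not aperiodic, and through the bijection of Lemma \ref{unimap} this contradicts the aperiodicity of $\m$.

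The main obstacle is to handle the case where $\supp(\m)$ already contains cuspidal factors of the form $\St_s(\rho)\chi$ for some $s\>0$, rather than only characters inertially equivalent to $\rho$; in that situation the degree comparison used to separate $\tau$ from $\supp(\m)$ no longer applies directly. To overcome this, one further decomposes $\m$ according to cuspidal-inertial classes, writing $\m=\m_{-1}+\m_0+\m_1+\dots$ where $\m_{-1}$ has segments based on supercuspidals $\rho\chi$ and each $\m_s$ for $s\>0$ has segments based on $\St_s(\rho)\chi$. Theorem \ref{ssvarscinertie} splits $\Z(\m)$ as a product over these summands, reducing the problem to each piece. Within each summand, the support lies in a single cuspidal-inertial class, and the classification of Theorem \ref{AppCuspSuper} forces any ``extra'' cuspidal $\tau$ to lie in a different cuspidal-inertial class, so the argument above proceeds without further modification.
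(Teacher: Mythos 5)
Your proposal correctly reduces to the aperiodic case via \ref{bijj}(3), and the argument you sketch does work when $\m$ is \emph{supercuspidal} and aperiodic (which is precisely the setting of Proposition \ref{bijj}(1) that you are imitating). The trouble, which you correctly identify but do not actually resolve, occurs as soon as $\supp(\m)$ contains a cuspidal non-supercuspidal $\St_s(\rho)\chi$. Your fix has two genuine gaps. First, you invoke Theorem \ref{ssvarscinertie} to split $\Z(\m)$ along the pieces $\m_{-1},\m_0,\m_1,\dots$; but that theorem applies to pairwise inertially inequivalent \emph{supercuspidals} $\rho_i$, whereas all of your pieces $\m_s$ share the \emph{same} supercuspidal inertial class $\Om_\rho$ — the cuspidals $\St_s(\rho)$ differ among themselves at the cuspidal-inertial level, not the supercuspidal one, so the splitting is not licensed. (Theorem \ref{ss}(2) would split along cuspidal-inertial classes, but to apply it you already need to know $\cusp(\Z(\m))$, which is what you are computing.) Second, even granting the splitting, within a single cuspidal-inertial class $\Om_{\St_s(\rho)}$ an ``extra'' factor $\tau\in\cusp(\Z(\m_s))\setminus\supp(\m_s)$ is \emph{not} forced to lie in a different inertial class: $\tau$ could well be another unramified twist of $\St_s(\rho)$, and then your final step collapses, because Lemma \ref{lemmprod} requires a supercuspidal multisegment, and because $\m_\sc$ being non-aperiodic is no contradiction at all — $\m$ aperiodic certainly does not imply $\m_\sc$ aperiodic (that is the whole point of the bijection $\m\mapsto\m_\sc$ between $\MS^\ap$ and $\MS^\sc$).

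The paper's own proof avoids all of this by a different and considerably shorter argument: it inducts on the \emph{length} $n$ of $\m$ (not its degree, and without any decomposition by inertial classes), proving simultaneously that $\Z$ bijects $\MS^\ap_{(n)}$ onto the irreducibles whose cuspidal support has $n$ terms, and that $\cusp(\Z(\m))=\supp(\m)$ for $\m\in\MS^\ap_{(n)}$. Given $\m$ aperiodic of length $n$, since $\Z(\m)$ is a subquotient of $\I(\m)$ the geometric lemma forces $\cusp(\Z(\m))$ to be a ``regrouping'' of the $n$ cuspidal constituents of $\supp(\m)$, so it has $r\<n$ terms, with $r=n$ if and only if no grouping occurs, i.e.\ $\cusp(\Z(\m))=\supp(\m)$. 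If $r<n$, the induction hypothesis produces an aperiodic $\m'$ of length $r$ with $\Z(\m')=\Z(\m)$, and the already established bijection $\Z:\MS^\ap\to\Irr$ forces $\m=\m'$, a contradiction on lengths. This sidesteps entirely the case analysis on whether $\supp(\m)$ contains non-supercuspidal factors, which is precisely where your proposal breaks down.
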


\begin{proof}
D'après la proposition \ref{bijj}(3), on peut supposer que $\m=\m_\ap$. 
On va mon\-trer le lemme suivant par récurrence sur la longueur de $\m$. 
On note $\MS^\ap_{(n)}$ l'ensemble des multi\-segments 
apé\-rio\-diques de longueur $n$. 

\begin{lemm}
L'application $\m\mapsto\Z(\m)$ induit une bijection entre $\MS^\ap_{(n)}$ 
et l'ensemble 
des classes de représentations irréductibles de support cuspidal de longueur $n$ et, 
pour tout multi\-segment $\m\in\MS^\ap_{(n)}$, on a $\cusp(\Z(\m))=\supp(\m)$.
\end{lemm}

\begin{proof}
D'après la proposition \ref{bijj} et le paragraphe \ref{bijection_multi}, 
l'application~: 
$$\Z:\MS^\ap\rightarrow \Irr$$
est une bijection. 
Si $n=1$, le résultat est trivial.  
On suppose donc que $n\>2$ et on pose $\tt=\supp(\m)$.
Soient $\rho_1,\dots,\rho_r$ des représentations irréductibles cuspidales 
telles que $\Z(\m)$ soit un quotient de $\rho_1 \times \dots \times \rho_{r}$. 
Du lemme géométrique on déduit que $r\< n$ et, si $r=n$, on a~: 
\begin{equation*}
\tt=\sy{\rho_1}+\dots+\sy{\rho_n},
\end{equation*}
\ie que $\cusp(\Z(\m))=\supp(\m)$.
Supposons donc que $r<n$. 
Par hypothèse de récurrence, il existe un multisegment 
apériodique $\m'$ de longueur $r$ tel que $\Z(\m')=\Z(\m)$, 
ce qui donne une contradiction. 
\end{proof}

Ceci met fin à la preuve de la proposition \ref{cuspZ}. 
\end{proof}

On résume ici les resultats obtenus dans cette section.

\begin{theo}
\label{bijj2thm} 
\begin{enumerate}
\item 
L'application $\Z:\MS\to\Irr$ est surjective.  
\item
Étant donnés $\m,\m'\in\MS$, on a $\Z(\m)\simeq\Z(\m')$ 
si et seulement si $\m_\sc^{}=\m'_\sc$. 
\item
Pour tout $\m\in\MS$, on a
$\scusp(\Z(\m))=\supp(\m_\sc)$ et 
$\cusp(\Z(\m))=\supp(\m_\ap)$.
\item Pour tout $\m \in\MS$, on a $\Z(\m)^\vee\simeq\Z(\m^\vee)$. 
\end{enumerate}
\end{theo}

\begin{rema}
Dans le cas où $\D=\F$, la surjectivité de l'application $\Z$ n'est pas 
prouvée en détail dans \cite{Vig1}.
Notre argument est différent de celui suggéré dans la note de 
\cite[p.~606]{Vig1}.
\end{rema}

\begin{rema}
L'application $\m\mapsto\Z(\m)$ dépend du choix de la racine carrée $\sqrt{q}$ 
effectué au paragraphe \ref{MenelasEtHermione}.
Pour éliminer cette dépendance, on peut renormaliser $\Z$ en posant~:
\begin{equation*}
\widehat{\Z}([a_1,b_1]_{\rho_1}+\dots+[a_r,b_r]_{\rho_r}) = 
\Z([a_1,b_1]_{\rho_1\nu^{(b_1-a_1)/2}}+\dots+[a_r,b_r]_{\rho_r\nu^{(b_r-a_r)/2}}).
\end{equation*}
L'application $\m\mapsto\widehat{\Z}(\m)$ vérifie les propriétés 1, 2 et 4 
du théorème \ref{bijj2thm}, mais pas la propriété 3 telle quelle.
En modifiant convenablement la définition du support d'un multisegment, 
on peut obtenir pour $\widehat{\Z}$ une propriété analogue à 3.
\end{rema}

\subsection{Réduction modulo $\ell$}
\label{rrre}

Soit $\ell$ un nombre premier différent de $p$,
soit un entier $m\>1$ et soit $\G=\G_m$. 

\begin{theo}
\label{reductionsegment1} 
Soit $\tilde{\rho}$  une $\qlb$-représentation irréductible  cuspidale entière
de $\G$.  
On suppose que $\rho=\r_\ell(\tilde{\rho})$ est une $\flb$-représentation 
irréductible.
\begin{enumerate}
\item 
Soient $a\<b$ des entiers. 
Alors la $\qlb$-représentation $\Z([a,b]_{\tilde{\rho}})$ est entière et~: 
\begin{equation*}
\r_\ell\left(\Z([a,b]_{\tilde{\rho}})\right)=\Z\left([a,b]_{\rho}\right).
\end{equation*}
\item 
Soit un multisegment
$\tilde\m=[a_1,b_1]_{\tilde\rho}+\dots+[a_r,b_r]_{\tilde\rho}$
et posons~:
\begin{equation*}
\m=[a_1,b_1]_{\rho}+\dots+[a_r,b_r]_{\rho}.
\end{equation*}
Alors $\Z(\tilde\m)$ est une représentation entière et $\Z(\m)$ est un facteur 
irréductible de $\r_\ell(\Z(\tilde\m))$.  
\end{enumerate}
\end{theo}

\begin{proof}
On commence par montrer (1).  
On note $\Delta=[a,b]_\rho$ et $\tilde\Delta=[a,b]_{\tilde\rho}$ et on pose $n=b-a+1$. 
On fixe un type simple maximal $\l=\k\otimes\s$ contenu 
dans $\rho$, et on forme le morphisme $\KMS_{n}$ comme au paragraphe 
\ref{PropZLS}. 
Soit $\tilde\k\otimes\tilde\s$ un type simple maximal 
contenu dans $\tilde\rho$ et relevant $\k\otimes\s$. 
Il corres\-pond à ce relèvement (\S\ref{DefKMS}) une $\b$-extension 
$\tilde\k_{n}$ et un foncteur~:
\begin{equation*}
\tilde\KM_{n}:\Rr_{\qlb}(\G_{mn})\to\Rr_{\qlb}({\GB}_{m'n}). 
\end{equation*}
Remarquons, dans un premier temps, que $\r_\ell\left(\Z(\Delta)\right)$ 
ne contient pas de représentation cuspidale.
En effet, la représentation $\KMS_{n}{(\Pi(\tilde\Delta)})$ 
contient, par la proposition \ref{CalculKMSI} et le paragraphe \ref{Clothier}, 
la représentation $\st(\tilde{\s},n)$ avec multiplicité $1$.  
Comme $\KMS_{n}{(\L(\tilde\Delta))}$ contient $\st(\tilde\s,n)$ 
(voir la proposition \ref{prpisegK}\eqref{prpisegK2}), la réduction 
$\r_\ell(\KMS_{n}{(\L(\tilde\Delta))})$ contient $\st(\s,n)$. 
Comme $ \Z(\tilde\Delta) \neq \L(\tilde\Delta)$ dès que $n\neq1$, 
la réduction $\r_\ell(\KMS_{n}{(\Z(\tilde\Delta))})$ ne contient pas de représentation 
cuspidale.  
On déduit que $\r_\ell(\Z(\tilde\Delta))$ ne 
contient pas de re\-pré\-sen\-tation cuspidale. 

Par récurrence, on peut supposer que pour tout sous-groupe parabolique propre 
$\P=\P_\a$, la réduction $\r_\ell(\rp_\a(\Z(\tilde\Delta))$ 
est nulle ou irréductible.  
Si $\r_\ell(\Z(\tilde\Delta))$ n'est pas irréductible, comme elle ne 
contient pas de représentation cuspidale, il existe un sous-groupe 
parabolique propre $\P=\P_\a$ tel que 
$\r_\ell(\rp_\a(\Z(\tilde\Delta)))$ soit de 
longueur au plus $2$~: contradiction. 

Montrons maintenant (2). 
Compte tenu de ce qui précède et du paragraphe \ref{inds}, on a l'égalité 
$\r_\ell(\I(\tilde\m)) =\sy{\I(\m)}$.  
Comme le foncteur de Jacquet commute aussi à la réduction
modulo $\ell$ (voir le paragraphe \ref{jacqs})
et que dans le cas fini la réduction modulo $\ell$ d'une 
représentation non dégénérée contient une représentation non dégénérée, 
le module de Jacquet~:
\begin{equation*}
\rp_{\mu_{\m}}(\r_\ell(\Z(\tilde\m)))
\end{equation*}
contient une représentation non dégénérée.
Par définition de $\Z( \m)$, on a le résultat annoncé. 
\end{proof}

\begin{theo}
\label{reductionsegment2}
Pour tout $m\>1$, l'homomorphisme de réduction~:
\begin{equation}
\label{Ackroyd}
\r_{\ell}:\Gg_{\qlb}(\G_m)^{{\rm en}}\to\Gg_{\flb}^{}(\G_m)
\end{equation}
est surjectif.
\end{theo}

\begin{proof}
Soit $\ss\in\Dive(\Ss_{\flb})$ un support supercuspidal, et soit 
$\Gg_{\flb}^{}(\G_m,\ss)$ le sous-groupe de $\Gg_{\flb}^{}(\G_m)$
engendré par $\Irr(\ss)$. 

\begin{lemm}
\label{vite!}
L'ensemble des représentations de la forme
$\sy{\I(\m)}$, avec $\m\in \MS^{\sc}(\ss)$, est une base de 
$\Gg_{\flb}^{}(\G_m,\ss)$. 
\end{lemm}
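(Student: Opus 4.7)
The plan is to compare the family $\{[\I(\m)]\}_{\m \in \MS^{\sc}(\ss)}$ with the basis $\{[\Z(\m)]\}_{\m \in \MS^{\sc}(\ss)}$ of $\Gg_{\flb}(\G_m, \ss)$ provided by Proposition \ref{bijj}(2), and to show that the transition matrix between them is upper unitriangular (for a suitable ordering), hence invertible over $\ZZ$. Since both families are indexed by the same finite set $\MS^{\sc}(\ss)$, this will simultaneously establish spanning and linear independence.

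First I would verify that every irreducible subquotient of $\I(\m)$ belongs to $\Irr(\ss)$. This follows from the additivity of the supercuspidal support under parabolic induction (a consequence of the uniqueness Theorem \ref{unicitesupp}) together with the equality $\supp(\m) = \ss$. Combining this with Proposition \ref{bijj}(2), every irreducible subquotient of $\I(\m)$ is of the form $\Z(\m')$ for a unique $\m' \in \MS^{\sc}(\ss)$, so one may write
\begin{equation*}
[\I(\m)] = \sum_{\m' \in \MS^{\sc}(\ss)} c_{\m, \m'} \cdot [\Z(\m')],
\end{equation*}
and by Definition \ref{DefiZ} one has $c_{\m, \m} = 1$.

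The triangularity then follows from Proposition \ref{ejemplofund}: if $c_{\m, \m'} \neq 0$, then $\Z(\m')$ is an irreducible, residually $\mu_{\m'}$-d\'eg\'en\'er\'ee subquotient of $\I(\m)$, so $\mu_{\m'} \in {\rm Part}(\m)$ and thus $\mu_{\m'} \trianglelefteq \mu_{\m}$; moreover, when $\mu_{\m'} = \mu_{\m}$, the uniqueness statement of Proposition \ref{ejemplofund} forces $\Z(\m') \simeq \Z(\m)$, and then the injectivity of $\Z$ on supercuspidal multisegments (Theorem \ref{inje}) gives $\m' = \m$. Ordering $\MS^{\sc}(\ss)$ by any total refinement of the partial order pulled back from $\trianglelefteq$ via $\m \mapsto \mu_{\m}$, the matrix $(c_{\m, \m'})$ is upper unitriangular, hence invertible over $\ZZ$, and the family $\{[\I(\m)]\}_{\m \in \MS^{\sc}(\ss)}$ is a basis.

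I do not anticipate any serious obstacle: the argument is essentially the classical Zelevinski triangularity argument, transported to the modular setting, with the partition $\mu_{\m}$ of Definition \ref{DefMum1} playing the role of the comparison invariant and Proposition \ref{ejemplofund} and Theorem \ref{inje} providing the necessary uniqueness and injectivity statements. The only slightly delicate point is ensuring that all irreducible subquotients of $\I(\m)$ share the supercuspidal support $\ss$, but this is a direct consequence of the results of Section \ref{Sec9}.
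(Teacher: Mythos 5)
Your proposal is correct and takes essentially the same approach as the paper: the paper's proof also rests on the basis $\{[\Z(\m)]\}_{\m\in\MS^{\sc}(\ss)}$ (via Th\'eor\`eme \ref{bijj2thm}) and an equal-cardinality argument, and establishes linear independence by picking, among the $\m$ with nonzero coefficient, one with $\mu_{\m}$ maximal and invoking Proposition \ref{ejemplofund} together with Th\'eor\`eme \ref{inje} to derive a contradiction --- which is precisely the ``maximal-term'' reading of your upper-unitriangular transition matrix.
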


\begin{proof}
D'après le théorème \ref{bijj2thm}, l'ensemble des $\sy{\Z(\m)}$, 
$\m\in\MS^{\sc}(\ss)$, est une base de $\Gg_{\flb}^{}(\G_m,\ss)$.  
Il suffit donc de prouver que l'ensemble des 
$\sy{\I(\m)}$, $\m\in\MS^{\sc}(\ss)$,
qui a le même cardinal, engendre le $\ZZ$-module libre 
$\Gg_{\flb}^{}(\G_m,\ss)$.  

On peut supposer que $\supp(\ss)\in\Dive(\ZZ_\rho)$ avec $\rho$ une 
représentation supercuspidale. 
Supposons qu'il existe $\m'\in\MS^{\sc}(\ss)$ tel que 
$\Z(\m')$ n'appartienne pas au sous-$\ZZ$-module engendré par les 
$\sy{\I(\m)}$, $\m\in\MS^{\sc}(\ss)$, et choisissons $\m'$ tel que 
$\mu_{\m'}$ soit minimal. 
D'après la proposition \ref{ejemplofund}, on a une décomposition~: 
\begin{equation*}
\sy{\I(\m')}=\sy{\Z(\m')}+
\sum\limits_{\m} a_\m\sy{\Z(\m)}
\end{equation*}
où $\m$ décrit les multisegments dans $\MS^{\sc}(\ss)$
tels que $\mu_\m\lhd\mu_{\m'}$ 
(où $\lhd$ désigne la relation d'ordre stricte associée à $\trianglelefteq$) 
et où les $a_\m$ sont des entiers $\>0$.
Par minimalité de $\mu_{\m'}$, les $\sy{\Z(\m)}$ avec 
$\mu_\m\lhd\mu_{\m'}$ appartien\-nent au 
sous-groupe engendré par les 
$\sy{\I(\m)}$ avec $\m\in\MS^{\sc}(\ss)$,
donc $\sy{\Z(\m')}$ aussi, ce qui conduit à une contradiction. 
\end{proof}

Soit $\pi$ une $\flb$-représentation irréductible de 
$\G_m$ et soit $\ss$ son support supercuspidal.  
D'après le lemme \ref{vite!}, 
il existe des $a_\m\in \ZZ$, $\m\in \MS^{\sc}(\ss)$ tels que~: 
$$\sy{\pi}=\underset{\m\in \MS^{\sc}(\ss)}{\sum}a_\m\left[\I(\m)\right].$$
Par les théorèmes \ref{reductionsegment1}(1) 
et \ref{RappelLiftSupercusp}, 
pour tout $\m\in \MS^{\sc}(\ss)$, 
il existe un multisegment $\tilde\m$ tel que 
$\r_\ell(\left[\I(\tilde\m)\right])=\left[\I(\m)\right]$. 
On en déduit que~: 
$$\sy{\pi}=\r_\ell\Big(\sum\limits_{\m} a_\m\left[\I(\tilde\m)\right]\Big),$$
où la somme porte sur les $\m\in \MS^{\sc}(\ss)$, 
et donc \eqref{Ackroyd} est surjectif.
\end{proof}

\begin{rema}
\begin{enumerate}
\item 
En général, la réduction modulo $\ell$ 
d'une représentation de la forme $\L(\tilde\Delta)$ n'est pas irréductible.  
Par exemple, si $\tilde\rho$ est la représentation triviale de $\F^\times$, 
si $\tilde\Delta=\left[0,1\right]_{\tilde{\rho}}$ et si $q$ est d'ordre $2$
modulo $\ell$, la représentation $\L(\tilde\Delta)$ contient, d'après
\cite{Vig3}, un caractère et une représentation cuspidale non supercuspidale.
\item 
Si $n(\Delta)<\ee(\rho)$,
alors $\r_\ell(\L(\tilde\Delta))=\L(\Delta)$ (voir \cite{MS}).
\item 
En général, on ne peut pas relever une représentation de la forme $\L(\Delta)$. 
Par exemple, si $\rho$ est la représentation triviale de $\F^\times$, 
si $\Delta=\left[0,2\right]_\rho$ et si $q$ est d'ordre $3$ modulo $\ell$, 
la représentation $\L(\Delta)$ ne peut pas être relevée.  
\item 
Si $\r_\ell(\tilde\rho)$ n'est pas irréductible, alors 
$\r_\ell(\Z(\tilde\Delta))$ ne l'est pas non plus. 
Par exemple, si $\tilde\rho$ est une $\qlb$-représentation 
cuspidale entière telle que $\r_\ell(\tilde\rho)=\sy{\rho}+\sy{\rho'}$ 
avec $\rho$ et $\rho'$ irréductibles non isomorphes, on a~:
\begin{equation*}
\r_\ell(\Z(\left[0,1\right]_{\tilde\rho}))=
\Z(\left[0,1\right]_{\rho}) + \Z( \left[0,1\right]_{\rho'}) 
+ \sy{\rho \times \rho'}.
\end{equation*}
\item 
Si $\rho$ est une $\flb$-représentation cuspidale telle qu'il existe 
une $\qlb$-représentation cuspidale irréductible $\tilde\rho$ qui relève 
$\rho$, alors $\Z(\left[a,b\right]_{\rho})$ peut être aussi caractérisé 
comme la réduction modulo $\ell$ 
de $\Z(\left[a,b\right]_{\tilde\rho})$.  
Cette définition ne dépend pas du choix de $\tilde\rho$
(voir \cite[Proposition 2.2.3]{Dat5}). 
\end{enumerate}
\end{rema}

\providecommand{\bysame}{\leavevmode ---\ }
\providecommand{\og}{``}
\providecommand{\fg}{''}
\providecommand{\smfandname}{\&}
\providecommand{\smfedsname}{\'eds.}
\providecommand{\smfedname}{\'ed.}
\providecommand{\smfmastersthesisname}{M\'emoire}
\providecommand{\smfphdthesisname}{Th\`ese}

\end{document}